\begin{document}
\newcommand\Mand{\ \text{and}\ }
\newcommand\Mfor{\ \text{for}\ }
\newcommand\Real{\mathbb{R}}
\newcommand\RR{\mathbb{R}}
\newcommand\im{\operatorname{Im}}
\newcommand\re{\operatorname{Re}}
\newcommand\sign{\operatorname{sign}}
\newcommand\sphere{\mathbb{S}}
\newcommand\BB{\mathbb{B}}
\newcommand\HH{\mathbb{H}}
\newcommand\ZZ{\mathbb{Z}}
\newcommand\codim{\operatorname{codim}}
\newcommand\Sym{\operatorname{Sym}}
\newcommand\End{\operatorname{End}}
\newcommand\Span{\operatorname{span}}
\newcommand\Ran{\operatorname{Ran}}
\newcommand\ep{\epsilon}
\newcommand\Cinf{\cC^\infty}
\newcommand\dCinf{\dot \cC^\infty}
\newcommand\CI{\cC^\infty}
\newcommand\dCI{\dot \cC^\infty}
\newcommand\Cx{\mathbb{C}}
\newcommand\Nat{\mathbb{N}}
\newcommand\dist{\cC^{-\infty}}
\newcommand\ddist{\dot \cC^{-\infty}}
\newcommand\pa{\partial}
\newcommand\Card{\mathrm{Card}}
\renewcommand\Box{{\square}}
\newcommand\WF{\mathrm{WF}}
\newcommand\WFb{\mathrm{WF}_\bl}
\newcommand\Vf{\mathcal{V}}
\newcommand\Vb{\mathcal{V}_\bl}
\newcommand\Vz{\mathcal{V}_0}
\newcommand\Hom{\mathrm{Hom}}
\newcommand\Id{\mathrm{Id}}
\newcommand\sgn{\operatorname{sgn}}
\newcommand\ff{\mathrm{ff}}
\newcommand\supp{\operatorname{supp}}
\newcommand\vol{\mathrm{vol}}
\newcommand\Diff{\mathrm{Diff}}
\newcommand\Diffd{\mathrm{Diff}_{\dagger}}
\newcommand\Diffs{\mathrm{Diff}_{\sharp}}
\newcommand\Diffb{\mathrm{Diff}_\bl}
\newcommand\Diffz{\mathrm{Diff}_0}
\newcommand\Psib{\Psi_\bl}
\newcommand\Psibc{\Psi_{\mathrm{bc}}}
\newcommand\Tb{{}^{\bl} T}
\newcommand\Sb{{}^{\bl} S}
\newcommand\zT{{}^{0} T}
\newcommand\Tz{{}^{0} T}
\newcommand\zS{{}^{0} S}
\newcommand\dom{\mathcal{D}}
\newcommand\cA{\mathcal{A}}
\newcommand\cB{\mathcal{B}}
\newcommand\cE{\mathcal{E}}
\newcommand\cG{\mathcal{G}}
\newcommand\cH{\mathcal{H}}
\newcommand\cU{\mathcal{U}}
\newcommand\cO{\mathcal{O}}
\newcommand\cF{\mathcal{F}}
\newcommand\cM{\mathcal{M}}
\newcommand\cQ{\mathcal{Q}}
\newcommand\cR{\mathcal{R}}
\newcommand\cI{\mathcal{I}}
\newcommand\cL{\mathcal{L}}
\newcommand\cK{\mathcal{K}}
\newcommand\cC{\mathcal{C}}
\newcommand\Ptil{\tilde P}
\newcommand\ptil{\tilde p}
\newcommand\chit{\tilde \chi}
\newcommand\yt{\tilde y}
\newcommand\zetat{\tilde \zeta}
\newcommand\xit{\tilde \xi}
\newcommand\sigmah{\hat\sigma}
\newcommand\zetah{\hat\zeta}
\newcommand\loc{\mathrm{loc}}
\newcommand\compl{\mathrm{comp}}
\newcommand\reg{\mathrm{reg}}
\newcommand\GBB{\textsf{GBB}}
\newcommand\GBBsp{\textsf{GBB}\ }
\newcommand\bl{{\mathrm b}}
\newcommand{\sH}{\mathsf{H}}
\newcommand{\cte}{\digamma}
\newcommand\cl{\operatorname{cl}}
\newcommand\hsf{\mathcal{S}}
\newcommand\Div{\operatorname{div}}
\newcommand\hilbert{\mathfrak{X}}

\newcommand\xib{{\underline{\xi}}}
\newcommand\etab{{\underline{\eta}}}
\newcommand\zetab{{\underline{\zeta}}}

\newcommand\xibh{{\underline{\hat \xi}}}
\newcommand\etabh{{\underline{\hat \eta}}}
\newcommand\zetabh{{\underline{\hat \zeta}}}

\newcommand\psit{\tilde\psi}
\newcommand\rhot{\tilde\rho}

\setcounter{secnumdepth}{3}
\newtheorem{lemma}{Lemma}[section]
\newtheorem{prop}[lemma]{Proposition}
\newtheorem{thm}[lemma]{Theorem}
\newtheorem{cor}[lemma]{Corollary}
\newtheorem{result}[lemma]{Result}
\newtheorem*{thm*}{Theorem}
\newtheorem*{prop*}{Proposition}
\newtheorem*{cor*}{Corollary}
\newtheorem*{conj*}{Conjecture}
\numberwithin{equation}{section}
\theoremstyle{remark}
\newtheorem{rem}[lemma]{Remark}
\newtheorem*{rem*}{Remark}
\theoremstyle{definition}
\newtheorem{Def}[lemma]{Definition}
\newtheorem*{Def*}{Definition}

\newcommand{\mar}[1]{{\marginpar{\sffamily{\scriptsize #1}}}}
\newcommand\av[1]{\mar{AV:#1}}

\renewcommand{\theenumi}{\roman{enumi}}
\renewcommand{\labelenumi}{(\theenumi)}

\title[The wave equation on asymptotically Anti-de Sitter spaces]
{The wave equation on asymptotically\\
Anti-de Sitter spaces}
\author[Andras Vasy]{Andr\'as Vasy}
\date{December 24, 2010. Original version from November 28, 2009.}
\thanks{This work is partially supported by
the National Science Foundation under
grant DMS-0801226, and
a Chambers Fellowship from Stanford University.}
\address{Department of Mathematics, Stanford University, Stanford, CA
94305-2125, U.S.A.}
\email{andras@math.stanford.edu}
\keywords{Asymptotics, wave equation, Anti-de Sitter space, propagation
of singularities}
\subjclass{35L05, 58J45}

\begin{abstract}
In this paper we describe the behavior
of solutions of the
Klein-Gordon equation, $(\Box_g+\lambda)u=f$,
on Lorentzian manifolds $(X^\circ,g)$
which are anti-de Sitter-like (AdS-like) at infinity.
Such manifolds are Lorentzian analogues of the so-called
Riemannian conformally compact (or asymptotically hyperbolic) spaces, in
the sense that the metric is conformal to a smooth Lorentzian metric $\hat g$
on $X$, where $X$ has a non-trivial boundary, in the sense that
$g=x^{-2}\hat g$, with $x$ a boundary defining function.
The boundary is conformally time-like for these spaces,
unlike asymptotically de Sitter spaces studied in
\cite{Vasy:De-Sitter, Baskin:Parametrix}, which are similar but with
the boundary being conformally space-like.

Here we show local well-posedness for the Klein-Gordon equation, and also
global well-posedness under global assumptions on the
(null)bicharacteristic flow, for $\lambda$ below
the Breitenlohner-Freedman bound, $(n-1)^2/4$.
These have been known under additional
assumptions, \cite{Breitenlohner-Freedman:Positive,
Breitenlohner-Freedman:Stability,Holzegel:Massive}.
Further, we describe the propagation of
singularities of solutions and obtain the asymptotic behavior (at $\pa X$)
of regular solutions. We also define the scattering operator,
which in this case is an analogue of the hyperbolic Dirichlet-to-Neumann
map. Thus, it is shown that below the Breitenlohner-Freedman bound,
the Klein-Gordon equation behaves much like it would for the
conformally related metric, $\hat g$, with Dirichlet boundary conditions,
for which propagation of singularities was shown by Melrose, Sj\"ostrand
and Taylor \cite{Melrose-Sjostrand:I, Melrose-Sjostrand:II,
Taylor:Grazing, Melrose-Taylor:Kirchhoff},
though the precise form of the asymptotics is different.
\end{abstract}

\maketitle

\section{Introduction}
In this paper we consider asymptotically anti de Sitter (AdS) type
metrics on $n$-dimensional manifolds with boundary $X$, $n\geq 2$.
We recall the actual definition of AdS space below, but for our purposes
the most important feature is the asymptotic of the metric on these
spaces, so we start by making a bold general definition.
Thus, an asymptotically AdS type space is a manifold with boundary
$X$ such that
$X^\circ$ is equipped with a pseudo-Riemannian metric $g$ of signature
$(1,n-1)$ which near the boundary $Y$ of $X$ is of the form
\begin{equation}\label{eq:g-form}
g=\frac{-dx^2+h}{x^2},
\end{equation}
$h$ a smooth symmetric 2-cotensor on $X$ such that with respect to some
product decomposition of $X$ near $Y$, $X=Y\times[0,\ep)_x$,
$h|_Y$ is a section of $T^*Y\otimes T^*Y$ (rather than
merely\footnote{In fact,
even this most general setting would necessitate only minor changes,
except that the `smooth asymptotics' of Proposition~\ref{prop:asymp}
would have variable order, and
the restrictions on $\lambda$ that arise here, $\lambda<(n-1)^2/4$,
would have to be modified.}
$T^*_Y X\otimes T^*_Y X$) and is a
Lorentzian metric on $Y$ (with signature $(1,n-2)$). Note that
$Y$ is time-like with respect to the conformal metric
$$
\hat g=x^2g,\ \text{so}\ \hat g=-dx^2+h\ \text{near}\ Y,
$$
i.e.\ the dual metric $\hat G$ of $\hat g$ is negative definite
on $N^*Y$, i.e.\ on $\Span\{dx\}$, in contrast with the asymptotically 
de Sitter-like setting studied in
\cite{Vasy:De-Sitter} when the boundary is space-like. Moreover, $Y$ is
{\em not} assumed to be compact; indeed, under the assumption (TF) below,
which is useful for global well-posedness of the wave equation,
it never is.
Let the wave operator $\Box=\Box_g$ be the Laplace-Beltrami operator
associated to this metric, and let
$$
P=P(\lambda)=\Box_g+\lambda
$$
be the Klein-Gordon
operator, $\lambda\in\Cx$. The convention with the positive sign for
the `spectral parameter' $\lambda$ preserves the sign of $\lambda$
relative to the $dx^2$ component of the metric in both the Riemannian
conformally compact and the Lorentzian de Sitter-like cases, and
hence is convenient when describing the asymptotics. We remark
that if $n=2$ then up to a change of the (overall) sign of the metric,
these spaces are asymptotically de Sitter, hence the
results of \cite{Vasy:De-Sitter} apply. However, some of the
results are different even then, since in the two settings
the role of the time variable is reversed, so the formulation
of the results differs as the role of `initial' and `boundary' conditions
changes.

These asymptotically AdS-metrics are also analogues of the Riemannian
`conformally compact', or asymptotically hyperbolic, metrics,
introduced by Mazzeo and Melrose \cite{Mazzeo-Melrose:Meromorphic}
in this form, which
are of the form $x^{-2}(dx^2+h)$ with $dx^2+h$ smooth Riemannian on $X$,
and $h|_Y$ is a section of $T^*Y\otimes T^*Y$. These have been studied
extensively, in part due to the connection to AdS metrics (so some
phenomena might be expected to be similar for AdS and asymptotically
hyperbolic metrics) and their Riemannian signature, which makes the
analysis of related PDE easier. We point out that
hyperbolic space actually solves the Riemannian version of
Einstein's equations, while de Sitter and anti-de Sitter space satisfy
the actual hyperbolic Einstein equations.
We refer to
the works of Fefferman and Graham \cite{Fefferman-Graham:Conformal},
Graham and Lee \cite{Graham-Lee:Einstein}
and Anderson \cite{Anderson:Einstein} among others for analysis
on conformally compact spaces. We also refer to the works of
Witten \cite{Witten:AdS},
Graham and Witten \cite{Graham-Witten:Conformal} and
Graham and Zworski \cite{Graham-Zworski:Scattering},
and further references in these works,
for results in the Riemannian setting which are of physical relevance.
There is also
a large body of literature on asymptotically de Sitter spaces.
Among others, Anderson and Chru\'sciel
studied the geometry of asymptotically
de Sitter spaces
\cite{Anderson:Structure, Anderson:Existence,
Anderson-Chrusciel:asymptotically}, while in \cite{Vasy:De-Sitter}
the asymptotics of solutions of the Klein-Gordon equation were
obtained, and in \cite{Baskin:Parametrix} the forward fundamental
solution was constructed as a Fourier integral operator.
It should be pointed out that the de Sitter-Schwarzschild metric in fact
has many similar features with asymptotically de Sitter spaces
(in an appropriate sense, it simply has two de Sitter-like ends).
A weaker version of the asymptotics in this case
is contained in the part of works of Dafermos and
Rodnianski
\cite{Dafermos-Rodnianski:Price, Dafermos-Rodnianski:Red-shift,
Dafermos-Rodnianski:Sch-DS}
(they also
study a non-linear problem),
and local energy decay was studied by Bony and H\"afner
\cite{Bony-Haefner:Decay}, in part based on the stationary resonance
analysis of S\'a Barreto and Zworski \cite{Sa-Barreto-Zworski:Distribution};
stronger asymptotics (exponential decay to constants) was
shown in a series of papers with
Ant\^onio S\'a Barreto and Richard Melrose
\cite{Melrose-SaBarreto-Vasy:Semiclassical,Melrose-SaBarreto-Vasy:Asymptotics}.

For the universal cover of AdS space itself, the Klein-Gordon
equation was studied by Breitenlohner and Freedman
\cite{Breitenlohner-Freedman:Positive,
Breitenlohner-Freedman:Stability}, who showed its solvability for
$\lambda<(n-1)^2/4$, $n=4$, and uniqueness for $\lambda<5/4$,
in our normalization. Analogues of these results were extended to the Dirac
equation by Bachelot \cite{Bachelot:Dirac}; and on exact AdS space
there is an explicit solution due to Yagdjian and Galstian \cite{Yagdjian-Galstian:AdS}.
Finally, for a
class of perturbations of the universal cover of AdS, which still
possess a suitable Killing vector field,
Holzegel \cite{Holzegel:Massive}
recently showed well-posedness for $\lambda<(n-1)^2/4$ by
imposing a boundary condition, see \cite[Definition~3.1]{Holzegel:Massive}.
He also obtained certain estimates on the derivatives of the solution,
as well as pointwise bounds.

Below we consider solutions of $P u=0$, or indeed $Pu=f$ with $f$
given. Before describing our results, first we recall a formulation
of the conformal problem, namely $\hat g=x^2 g$, so $\hat g$ is
Lorentzian smooth on $X$, and $Y$ is time-like -- at the end of the
introduction we give a full summary of basic results
in the `compact' and `conformally compact'
Riemannian and Lorentzian settings, with
space-like as well as time-like boundaries in the latter case. Let
$$
\hat P=\Box_{\hat g};
$$
adding $\lambda$ to the operator makes no difference in this case
(unlike for $P$). Suppose that $\hsf$ is
a space-like hypersurface in $X$ intersecting $Y$
(automatically transversally). Then the Cauchy problem for the Dirichlet
boundary condition,
$$
\hat Pu=f,\ u|_{Y}=0,\ u|_{\hsf}=\psi_0,\ Vu|_{\hsf}=\psi_1,
$$
$f$, $\psi_0$, $\psi_1$ given, $V$ a vector field transversal
to $\hsf$, is locally well-posed (in appropriate
function spaces) near $\hsf$.
Moreover, under a global condition on the generalized broken bicharacteristic
(or \GBB)
flow and $\hsf$, which we recall below in Definition~\ref{def:gen-br-bich},
the equation is globally well-posed.

Namely, the global geometric assumption is
that
\begin{equation}\tag{TF}\begin{split}
&\text{there exists}\ t\in\CI(X)\ \text{such that for every
\GBB}\ \gamma,\ t\circ\rho\circ\gamma:\RR\to\RR\\
&\text{is either strictly increasing or strictly decreasing
and has range $\RR$,}
\end{split}\end{equation}
where $\rho:T^*X\to X$ is the bundle projection.
In the above formulation of the problem, we would assume
that $\hsf$ is a level set, $t=t_0$ -- note that locally
this is always true in view of the Lorentzian nature of the metric and
the conditions on $Y$ and $\hsf$. 
As is often the case in the presence
of boundaries, see e.g.\ \cite[Theorem~24.1.1]{Hor} and the subsequent remark,
it is convenient to consider the special case of the Cauchy
problem with vanishing
initial data and $f$ supported to one side of $\hsf$, say in $t\geq t_0$;
one can phrase
this as solving
$$
\hat Pu=f,\ u|_{Y}=0,\ \supp u\subset \{t\geq t_0\}.
$$
This forward
Cauchy problem is globally well-posed for
$f\in L^2_{\loc}(X)$, $u\in \dot H^1_{\loc}(X)$, and the analogous
statement also holds for the backward Cauchy problem.
Here we use H\"ormander's notation $\dot H^1(X)$, see \cite[Appendix~B]{Hor},
to avoid confusion
with the `zero Sobolev spaces' $H_0^s(X)$, which we recall momentarily.
In addition,
(without any global assumptions) singularities
of solutions, as measured by the b-wave front set, $\WFb$, relative
to either $L^2_{\loc}(X)$ or $\dot H^1_{\loc}(X)$,
propagate along \GBBsp as was shown by Melrose, Sj\"ostrand
and Taylor \cite{Melrose-Sjostrand:I, Melrose-Sjostrand:II,
Taylor:Grazing, Melrose-Taylor:Kirchhoff}, see also
\cite{Sjostrand:Propagation-I} in the analytic setting. Here recall that in
$X^\circ$, bicharacteristics are integral curves of the Hamilton
vector field $\sH_p$ (on $T^*X^\circ\setminus o$)
of the principal symbol $\hat p=\sigma_2(\hat P)$ inside
the characteristic set,
$$
\Sigma=\hat p^{-1}(\{0\}).
$$
We also recall
that the notion of a $\CI$ and an analytic \GBBsp is somewhat different
due to the behavior at diffractive points, with the analytic
definition being more permissive (i.e.\ weaker). Throughout this
paper we use the analytic definition, which we now recall.

First, we need the notion of the compressed characteristic set, $\dot\Sigma$
of $\hat P$. This can be obtained by replacing, in $T^*X$, $T^*_{Y}X$
by its quotient $T^*_YX/N^*Y$, where $N^*Y$ is the conormal bundle of
$Y$ in $X$. One denotes then by $\dot\Sigma$ the image $\hat\pi(\Sigma)$
of $\Sigma$ in this
quotient. One can give a topology to $\dot\Sigma$, making a set $O$
open if and only if $\hat\pi^{-1}(O)$ is open in $\Sigma$. This notion
of the compressed characteristic set is rather intuitive, since working
with the quotient encodes the law of reflection: points with the same
tangential but different normal momentum at $Y$ are identified, which,
when combined with the conservation of kinetic energy (i.e.\ working on the
characteristic set) gives the standard law of reflection. However, it
is very useful to introduce another (equivalent) definition already at
this point since it arises from structures which we also need.

The alternative point of view (which is what one needs in the proofs)
is that the analysis of solutions of the wave equation takes
place on the b-cotangent bundle, $\Tb^*X$  (`b' stands for boundary),
introduced by Melrose.
We refer to \cite{Melrose:Atiyah} for a very detailed description,
\cite{Vasy:Propagation-Wave} for a concise discussion.
Invariantly one can define $\Tb^*X$ as follows. First, let $\Vb(X)$
be the set of all $\CI$ vector fields on $X$ tangent to the boundary.
If $(x,y_1,\ldots,y_{n-1})$ are local coordinates on $X$,
with $x$ defining $Y$, elements of $\Vb(X)$ have the
form
\begin{equation}\label{eq:b-vf-in-coords}
a\,x\pa_x+\sum_{j=1}^{n-1} b_j \,\pa_{y_j},
\end{equation}
with $a$ and $b_j$ smooth. It follows immediately that $\Vb(X)$ is the set
of all smooth sections of a vector bundle, $\Tb X$: $x,y_j,a,b_j$, $j=1,
\ldots,n-1$, give
local coordinates in terms of \eqref{eq:b-vf-in-coords}.
Then $\Tb^*X$ is defined as the dual bundle of $\Tb X$.
Thus, points in the b-cotangent bundle, $\Tb^*X$, of $X$
are of the form
$$
\xib\,\frac{dx}{x}+\sum_{j=1}^{n-1}\zetab_j\,dy_j,
$$
so $(x,y,\xib,\zetab)$ give coordinates on $\Tb^*X$.
There is a natural map $\pi:T^*X\to\Tb^*X$ induced by the corresponding
map between sections
$$
\xi\,dx+\sum_{j=1}^{n-1} \zeta_j\, dy_j=(x\xi)\,\frac{dx}{x}
+\sum_{j=1}^{n-1} \zeta_j\,dy_j,
$$
thus
\begin{equation}\label{eq:pi-in-coords}
\pi(x,y,\xi,\zeta)=(x,y,x\xi,\zeta),
\end{equation}
i.e.\ $\xib=x\xi$, $\zetab=\zeta$. Over the interior of $X$ we can
identify $T^*_{X^\circ}X$ with $\Tb^*_{X^\circ}X$, but this identification
$\pi$ becomes singular (no longer a diffeomorphism) at $Y$.
We denote the image of
$\Sigma$ under $\pi$ by
$$
\dot\Sigma=\pi(\Sigma),
$$
called the compressed characteristic set. Thus, $\dot\Sigma$ is a subset
of the vector bundle $\Tb^*X$, hence is equipped with a topology which
is equivalent to the one define by the quotient, see
\cite[Section~5]{Vasy:Propagation-Wave}. The definition of {\em analytic}
\GBBsp
then becomes:

\begin{Def}\label{def:gen-br-bich}
{\em Generalized broken bicharacteristics,} or \GBB, are
continuous maps $\gamma:I\to\dot\Sigma$, where $I$ is an interval, satisfying
that
for all $f\in\Cinf(\Tb^*X)$ real valued,
\begin{equation*}\begin{split}
&\liminf_{s\to s_0}\frac{(f\circ\gamma)(s)-(f\circ\gamma)(s_0)}{s-s_0}\\
&\ \geq \inf\{\sH_p(\pi^* f)(q):
\ q\in\pi^{-1}(\gamma(s_0))\cap\Sigma\}.
\end{split}\end{equation*}
\end{Def}

Since
the map $p\mapsto \sH_p$ is a derivation, $\sH_{ap}=a\sH_p$ at $\Sigma$,
so bicharacteristics are merely reparameterized if $p$ is replaced
by a conformal multiple. In particular,
if $P$ is the Klein-Gordon operator, $\Box_g+\lambda$, for an
asymptotically AdS-metric $g$, the bicharacteristics over $X^\circ$
are, up to reparameterization, those of $\hat g$. We make this into
our definition of \GBB.

\begin{Def}
The compressed characteristic set $\dot\Sigma$ of $P$ is that
of $\Box_{\hat g}$.

Generalized broken bicharacteristics, or \GBB, of $P$ are \GBBsp 
{\em in the analytic sense} of
the smooth Lorentzian metric $\hat g$.
\end{Def}

We now give a formulation for the global problem. For this
purpose we need to recall one more class of differential operators
in addition to $\Vb(X)$ (which is
the set of $\CI$ vector fields {\em tangent to the boundary}).
Namely, we denote the set of $\CI$ vector fields
{\em vanishing at the boundary} by $\Vz(X)$. In local
coordinates $(x,y)$, these have the form
\begin{equation}\label{eq:z-vf-in-coords}
a\,x\pa_x+\sum_{j=1}^n b_j (x\pa_{y_j}),
\end{equation}
with $a,b_j\in\CI(X)$; cf.\ \eqref{eq:b-vf-in-coords}.
Again, $\Vz(X)$ is the set of all $\CI$ sections of a vector bundle,
$\Tz X$, which over $X^\circ$ can be naturally identified with $T_{X^\circ} X$;
we refer to \cite{Mazzeo-Melrose:Meromorphic} for a detailed discussion
of 0-geometry and analysis, and to \cite{Vasy:De-Sitter} for a summary.
We then let $\Diffb(X)$, resp.\ $\Diffz(X)$, be the set of
differential operators generated by $\Vb(X)$, resp.\ $\Vz(X)$, i.e\ they
are locally finite sums of products of these vector fields with
$\CI(X)$-coefficients. In particular,
$$
P=\Box_g+\lambda\in\Diffz^2(X),
$$
which explains the relevance of $\Diffz(X)$. This can be seen easily
from $g$ being in fact a non-degenerate smooth symmetric
bilinear form on $\Tz X$; the conformal factor $x^{-2}$ compensates
for the vanishing factors of $x$ in \eqref{eq:z-vf-in-coords},
so in fact this is {\em exactly} the same statement as $\hat g$
being Lorentzian on $T X$.

Let
$H^k_0(X)$ denote the zero-Sobolev space relative to
$$
L^2(X)=L^2_0(X)=L^2(X,dg)=L^2(X,x^{-n}d\hat g),
$$
so if $k\geq 0$ is an integer then
$$
u\in H^k_0(X)\ \text{iff for all}\ L\in\Diff^k_0(X),\ Lu\in L^2(X);
$$
negative values of $k$ give Sobolev spaces by dualization.
For our problem, we need a space of `very nice' functions corresponding
to $\Diffb(X)$. We obtain this by
replacing $\Cinf(X)$ with the space of
conormal functions to the boundary relative to a fixed space of functions,
in this case $H^k_0(X)$, i.e.\ functions
$v\in H^k_{0,\loc}(X)$
such that $Qv\in H^k_{0,\loc}(X)$ for every
$Q\in\Diffb(X)$ (of any order). The finite order regularity version of this
is $H^{k,m}_{0,\bl}(X)$, which is given for $m\geq 0$ integer by
$$
u\in H^{k,m}_{0,\bl}(X)\ \Longleftrightarrow\ 
u\in H^k_0(X)\ \text{and}\ \forall Q\in\Diffb^m(X),\ Qu\in H^k_0(X),
$$
while for $m<0$ integer, $u\in H^{k,m}_{0,\bl}(X)$ if
$u=\sum Q_j u_j$, $u_j\in H^{k,0}_{0,\bl}(X)$, $Q_j\in\Diffb^m(X)$. Thus,
$H^{-k,-m}_{0,\bl}(X)$ is the dual space of $H^{k,m}_{0,\bl}(X)$, relative
to $L^2_0(X)$. Note that in $X^\circ$, there is no distinction between $\Vb(X)$,
$\Vz(X)$, or indeed simply $\Vf(X)$ (smooth vector fields on $X$), so
over compact subsets $K$ of $X^\circ$, $H^{k,m}_{0,\bl}(X)$ is the same
as $H^{k+m}(K)$. On the other hand, at $Y=\pa X$, $H^{k,m}_{0,\bl}(X)$ distinguishes
precisely between regularity relative to $\Vz(X)$ and $\Vb(X)$.

Although the finite speed of propagation means that the wave equation
has a local character in $X$, and thus compactness of the slices
$t=t_0$ is immaterial, it is convenient to assume
\begin{equation}\tag{PT}
\text{the map}\ t:X\to\RR
\ \text{is proper.}
\end{equation}
Even as stated, the propagation of singularities results (which form
the heart of the paper) do not assume this, and the assumption
is made elsewhere merely to make the formulation and proof of the energy
estimates and existence slightly simpler, in that one does not have
to localize in spatial slices this way.

Suppose $\lambda<(n-1)^2/4$.
Suppose
\begin{equation}\label{eq:data-spaces}
f\in H^{-1,1}_{0,\bl,\loc}(X),\ \supp f\subset \{t\geq t_0\}.
\end{equation}
We want to find $u\in H^1_{0,\loc}(X)$ such that
\begin{equation}\label{eq:mixed-problem}
Pu=f,\ \supp u\subset \{t\geq t_0\}.
\end{equation}
We show that this is locally well-posed near $\hsf$.
Moreover, under the previous global assumption on \GBB, this problem
is globally well-posed:

\begin{thm}(See Theorem~\ref{thm:well-posed}.)
Assume that (TF) and (PT) hold. Suppose $\lambda<(n-1)^2/4$.
The forward Dirichlet problem, \eqref{eq:mixed-problem}, has a unique
global solution $u\in H^1_{0,\loc}(X)$, and for all compact $K\subset X$
there exists a compact $K'\subset X$ and a constant
$C>0$ such that for all $f$ as in \eqref{eq:data-spaces},
the solution $u$ satisfies
$$
\|u\|_{H^1_0(K)}\leq C\|f\|_{H^{-1,1}_{0,\bl}(K')}.
$$
\end{thm}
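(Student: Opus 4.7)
The plan is to reduce the theorem to a local-in-$t$ energy estimate and then globalize via (TF) and (PT). The role of the hypothesis $\lambda<(n-1)^2/4$ is confined to a single Hardy-type inequality that keeps the energy coercive.

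For the local estimate, I would pick a smooth $g$-timelike vector field $W$ on $X$, constructed from the gradient of $t$ and modified near $Y$ so that $Wu\in L^2_0(X)$ whenever $u\in H^1_0(X)$. Pairing $Pu=f$ against $Wu$ in $L^2_0(X)$ and integrating by parts over a slab $\Omega=\{t_0\le t\le t_1\}$ gives an energy identity whose leading term on the left is the difference of the induced $H^1_0$-energies on the two spatial slices $\{t=t_i\}$. The Dirichlet condition $u|_Y=0$, encoded by membership in $H^1_0(X)$, together with the tangency properties of $\Vz(X)$ at $Y$, eliminates any genuine boundary contribution along $Y$. The only term whose sign is not automatically favourable is the mass contribution $\lambda\int_\Omega|u|^2\,dg$.

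The decisive analytic step is the 0-Hardy inequality: integrating by parts in $x$ against $dg=x^{-n}\,dx\wedge d\vol_h$, one obtains, for $u\in H^1_0(X)$ supported near $Y$,
\[
\Bigl(\tfrac{n-1}{2}\Bigr)^{2}\int_X |u/x|^2\,dg \leq \int_X |(x\pa_x)u|^2\,dg + \text{lower order},
\]
so $\lambda\int|u|^2\,dg=\lambda\int|u/x|^2 x^2\,dg$ can be absorbed into the 0-energy with coefficient strictly less than $1$ exactly when $\lambda<(n-1)^2/4$. Combined with a Gronwall iteration in $t$, this produces the slab estimate
\[
\|u\|_{H^1_0(\Omega)}\leq C\bigl(\|f\|_{H^{-1,1}_{0,\bl}(\Omega)}+\|u\|_{H^1_0(\{t=t_0\})}\bigr),
\]
in which the negative-order norm on $f$ is handled by absorbing a factor from $\Vb(X)$ onto the multiplier, using that $\Vb$ preserves $\Diffz$ under commutation. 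Local solvability in $\Omega$ then follows by applying the analogous estimate for the backward problem of $P^*$ via a Hahn--Banach / Riesz argument, and uniqueness is immediate from the homogeneous case.

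To globalize, I would iterate the slab estimate along the level sets of $t$: by (PT), every compact $K\subset X$ meets only finitely many slabs $\{t_j\le t\le t_{j+1}\}$, on each of which the local estimate applies with uniform constants, and the local solutions agree on overlaps by uniqueness. Composing the finitely many slab inequalities then produces the claimed $K$-to-$K'$ bound and a global $u\in H^1_{0,\loc}(X)$. I expect the main technical hurdle to be the boundary analysis at $Y$: elements of $H^1_0(X)$ have no classical trace there, so the Dirichlet condition must be encoded entirely through the Hardy relation $u/x\in L^2_0(X)$, and the multiplier $W$ and all commutators generated by it must be arranged so that no uncontrolled contribution at $Y$ survives --- which is exactly where $(n-1)^2/4$ shows up as the sharp threshold.
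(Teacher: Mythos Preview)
Your uniqueness and a priori estimate argument is essentially the paper's: a twisted commutator with a timelike b-multiplier $Z=\chi W$, the stress--energy tensor controlling the 0-gradient, and the Hardy inequality (the paper's Lemma~\ref{lemma:local-sharp-Poincare} and Proposition~\ref{prop:Poincare}) absorbing the $\lambda$-term precisely when $\lambda<(n-1)^2/4$. So far so good.

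The gap is in existence. Your energy estimate reads $\|u\|_{H^1_0}\lesssim\|Pu\|_{H^{-1,1}_{0,\bl}}$, with an asymmetry: one extra b-derivative on the right. When you dualize via Hahn--Banach using the backward estimate for $P^*=P$, the functional you construct is continuous for the $H^{-1,1}_{0,\bl}$-norm on $\operatorname{Ran}P^*$, so it extends to an element of the dual space $H^{1,-1}_{0,\bl,\loc}(X)$, \emph{not} to $H^1_{0,\loc}(X)$. This is exactly what happens in the paper (Lemma~\ref{lemma:weak-local-exist} and Proposition~\ref{prop:weak-exist}). Your slab iteration then glues together an object in $H^{1,-1}_{0,\bl,\loc}(X)$, and your final stability bound does not apply to it because that bound was proved only for $u\in H^1_{0,\loc}(X)$.

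The paper closes this one-derivative gap by invoking propagation of singularities (Theorem~\ref{thm:prop-sing}): since $u\in H^{1,-1}_{0,\bl,\loc}$ vanishes for $t<t_0$ and $Pu=f\in H^{-1,1}_{0,\bl,\loc}$, the b-wave front set $\WFb^{1,0}(u)$ is empty and hence $u\in H^1_{0,\loc}$. This is a genuinely microlocal step. You could in principle avoid full microlocalization by commuting $P$ with elements of $\Psib^{-1}(X)$ and proving a shifted estimate $\|\phi\|_{H^{1,-1}_{0,\bl}}\lesssim\|P^*\phi\|_{H^{-1}_0}$ (the paper remarks after Theorem~\ref{thm:well-posed-precise} that this is possible), but that requires controlling the commutator $[P,A]\in\Diff_0^2\Psib^{-2}(X)$ against the lower-order norm of $\phi$, which is exactly the kind of bootstrapping your sketch does not supply. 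As written, your argument produces a weak solution one b-derivative short of the claimed space.
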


\begin{rem}
In fact, one can be quite explicit about $K'$ in view of (PT), since $u|_{t\in [t_0,t_1]}$
can be estimated by $f|_{t\in I}$, $I$ open containing $[t_0,t_1]$.
\end{rem}

We also prove microlocal elliptic regularity and describe the
propagation of singularities of solutions,
as measured by $\WFb$ relative to $H^1_{0,\loc}(X)$. We define this notion
in Definition~\ref{def:WFb} and discuss it there in more detail.
However, we recall the
definition of the standard wave front set $\WF$ on manifolds
without boundary $X$
that immediately generalizes to the b-wave front set $\WFb$.
Thus, one says that $q\in T^*X\setminus o$ is {\em not} in the wave front set
of a distribution $u$ if there exists $A\in\Psi^0(X)$ such
$\sigma_0(A)(q)$ is invertible and
$QAu\in L^2(X)$ for all $Q\in\Diff(X)$ -- this is equivalent to
$Au\in\Cinf(X)$ by the Sobolev embedding theorem. Here $L^2(X)$ can be replaced
by $H^m(X)$ instead, with $m$ arbitrary. Moreover, $\WF^m$ can also
be defined analogously: we require $Au\in L^2(X)$ for $A\in\Psi^m(X)$
elliptic at $q$. Thus, $q\notin\WF(u)$ means that $u$ is `microlocally
$\CI$ at $q$', while $q\notin\WF^m(u)$ means that $u$ is `microlocally
$H^m$ at $q$'.

In order to microlocalize $H^{k,m}_{0,\bl}(X)$, we need
pseudodifferential operators, here extending $\Diffb(X)$ (as that
is how we measure regularity). These are the b-pseudodifferential
operators $A\in\Psib^m(X)$ introduced by Melrose, their principal symbol
$\sigma_{\bl,m}(A)$ is a homogeneous degree $m$ function on
$\Tb^*X\setminus o$; we again refer to
\cite{Melrose:Atiyah, Vasy:Propagation-Wave}.
Then we say that $q\in\Tb^*X\setminus o$
is {\em not} in $\WFb^{k,\infty}(u)$ if there exists $A\in\Psib^0(X)$ with
$\sigma_{\bl,0}(A)(q)$ invertible and such that $Au$ is $H^k_0$-conormal
to the boundary. One also defines $\WFb^{k,m}(u)$: $q\notin\WFb^m(u)$
if there exists $A\in\Psib^m(X)$ with
$\sigma_{\bl,0}(A)(q)$ invertible and such that $Au\in H^k_{0,\loc}(X)$.
One can also extend these definitions to $m<0$.

With this definition we have the following theorem:

\begin{thm} (See Proposition~\ref{prop:elliptic} and
Theorem~\ref{thm:prop-sing}.)
Suppose that $P=\Box_g+\lambda$, $\lambda<(n-1)^2/4$,
$m\in\RR$ or $m=\infty$.
Suppose $u\in H^{1,k}_{0,\bl,\loc}(X)$ for some $k\in\RR$.
Then
$$
\WFb^{1,m}(u)\setminus\dot\Sigma\subset\WFb^{-1,m}(Pu).
$$
Moreover,
$$
(\WFb^{1,m}(u)\cap\dot\Sigma)\setminus\WFb^{-1,m+1}(Pu)
$$
is a union of maximally extended generalized broken bicharacteristics
of the conformal metric $\hat g$ in
$$
\dot\Sigma\setminus\WFb^{-1,m+1}(Pu).
$$

In particular, if $P u=0$ then $\WFb^{1,\infty}(u)\subset\dot\Sigma$
is a union of maximally extended generalized broken bicharacteristics
of $\hat g$.
\end{thm}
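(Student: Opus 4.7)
The plan is to prove both statements within the b-microlocal framework developed in \cite{Melrose:Atiyah, Vasy:Propagation-Wave}. Although $P\in\Diffz^2(X)$, the b-pseudodifferential calculus $\Psib(X)$ is the right tool for measuring $\WFb$-regularity, and commutators $[P,A]$ with $A\in\Psib(X)$ can be analyzed at the symbolic level. The symbolic observation is that under $\pi:T^*X\to\Tb^*X$ the characteristic variety $\Sigma=\hat p^{-1}(0)$ pushes forward to $\dot\Sigma$; up to a conformal factor, $P$ and $\hat g$ share this compressed characteristic set and their bicharacteristics coincide (modulo reparameterization) inside $X^\circ$.

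For the elliptic statement, let $q\in\Tb^*X\setminus o$ with $q\notin\dot\Sigma$. Then $\pi^{-1}(q)\cap\Sigma=\emptyset$, which at interior points reduces to the classical ellipticity of $\hat p$ and at boundary points means that an appropriately rescaled b-principal symbol of $P$ is non-vanishing at $q$. I would construct a microlocal b-parametrix $B\in\Psib^{-2}(X)$ at $q$ with $BP=\Id+E$, $E\in\Psib^{-\infty}(X)$ microlocally smoothing near $q$. Applied to $u\in H^{1,k}_{0,\bl,\loc}(X)$ this yields $u=B(Pu)-Eu$ microlocally near $q$; the hypothesis $q\notin\WFb^{-1,m}(Pu)$ combined with the a priori regularity of $u$ then gives $q\notin\WFb^{1,m}(u)$.

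The propagation statement is the heart of the argument; I would carry it out by a positive commutator estimate of Melrose--Sj\"ostrand type, adapted to $\Psib$ as in \cite{Vasy:Propagation-Wave}. Given a GBB $\gamma$ and a point $\gamma(s_0)\in\dot\Sigma$ across which one wishes to propagate b-regularity, one builds a commutant $A\in\Psib^{2m-1}(X)$ with symbol $a$ concentrated in a small conic neighborhood of a forward segment of $\gamma$, so that $\pi^*a$ satisfies a one-sided differential inequality along $\sH_{\hat p}$: namely $\sH_{\hat p}(\pi^*a)=-b^2+e$ with $b\in S^m$ elliptic at $\gamma(s_0)$ and $e$ supported where b-regularity is already known (on the backward side, plus an error absorbed by the regularization). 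Expanding $\langle i[P,A]u,u\rangle$ by the b-symbol calculus produces $\|Bu\|^2$ plus terms controlled by a priori information, while pairing it as $\langle i(PA-AP)u,u\rangle$ and using $u\in H^{1,k}_{0,\bl}$, $Pu\in H^{-1,m+1}_{0,\bl}$ microlocally gives an opposite bound; together they yield the desired b-Sobolev control of $Bu$. Because $u$ has only finite a priori b-regularity, I would insert regularizers $\Lambda_\epsilon\in\Psib^{-1}(X)$ uniformly bounded in $\Psib^0(X)$, derive uniform-in-$\epsilon$ estimates, and pass to $\epsilon\to0$.

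The main obstacle is the boundary. Integration by parts in the commutator pairing generates a contribution at $Y$ which, a priori, could destroy the positivity of the commutator; this is where $\lambda<(n-1)^2/4$ enters decisively. Below this Breitenlohner--Freedman threshold, both indicial roots $\tfrac{n-1}{2}\pm\sqrt{\tfrac{(n-1)^2}{4}-\lambda}$ are real, and a Hardy-type inequality for the zero-Sobolev calculus (applied to $u\in H^1_{0,\loc}(X)$, which enforces Dirichlet-type vanishing) controls the boundary term with the right sign. A further subtlety is the construction of the commutant at boundary points of hyperbolic, glancing, and diffractive character, where the Melrose--Sj\"ostrand recipe must be transcribed into $\Psib$ and matched with the analytic definition of GBB in Definition~\ref{def:gen-br-bich}. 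Once the propagation estimate is established, the last statement of the theorem is immediate: if $Pu=0$ then $\WFb^{-1,m+1}(Pu)=\emptyset$ for every $m$, so the two parts of the theorem combined say that $\WFb^{1,\infty}(u)\subset\dot\Sigma$ and is a union of maximally extended GBB of $\hat g$.
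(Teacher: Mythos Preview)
Your overall architecture (elliptic estimate off $\dot\Sigma$, positive-commutator propagation on $\dot\Sigma$, Melrose--Sj\"ostrand patching into \GBB s) matches the paper, but two of your key mechanisms are off.

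\textbf{The elliptic step does not go via a b-parametrix.} The operator $P=\Box_g+\lambda$ lies in $\Diffz^2(X)$, not in $\Psib^2(X)$; there is no homogeneous scalar b-principal symbol to invert, and for $B\in\Psib^{-2}(X)$ the composite $BP$ lands in $\Diffz^2\Psib^{-2}(X)$, not in $\Psib^0(X)$, so the equation $BP=\Id+E$ with $E$ b-smoothing is not available. The paper instead controls the Dirichlet form: for $A_r=\Lambda_r A$ one writes $\langle dA_r u,dA_r u\rangle_G+\lambda\|A_r u\|^2=\langle PA_r u,A_r u\rangle$, bounds the right side using $\WFb^{-1,m}(Pu)$ and the commutator $[P,A_r]\in\Diffz^2\Psib^{s-1}(X)$, freezes coefficients at $q$, and exploits that away from $\dot\Sigma$ the tangential part $\zetab\cdot B(y)\zetab$ has a definite sign. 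The step that actually needs $\lambda<(n-1)^2/4$ is not a boundary term but the Poincar\'e/Hardy inequality $\|A_r u\|^2\leq c^{-1}\|xD_xA_r u\|^2$ with $c$ close to $(n-1)^2/4$, which makes $(1-\epsilon)\|xD_xA_r u\|^2-\lambda\|A_r u\|^2$ coercive.

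\textbf{In the propagation step, the commutator is not a scalar b-symbol and the role of $\lambda$ is not a boundary contribution.} Because $P\in\Diffz^2(X)$, the commutator $\imath[A_r^*A_r,P]$ is not organized by a single Hamilton vector field identity $\sH_{\hat p}(\pi^*a)=-b^2+e$; rather it decomposes as $(xD_x)^*C^\sharp(xD_x)+(xD_x)^*xC'+xC''(xD_x)+x^2C^\flat$ with $C^\bullet\in\Psib(X)$ (this is Proposition~\ref{prop:commutator}), and one must separately control the $(xD_x)$-pieces using the Dirichlet-form lemma and, at glancing points, a smallness lemma (Lemma~\ref{lemma:Dt-Dx}) saying $\|xD_xA_r u\|$ is $O(\delta^{1/2})\|xD_{y_{n-1}}A_r u\|$ near $\cG$. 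There is no integration-by-parts boundary term to absorb; the constraint $\lambda<(n-1)^2/4$ enters exactly as in the elliptic case, through the Poincar\'e inequality used to dominate the $-\lambda\|A_r u\|^2$ contribution inside these estimates. Your remark about indicial roots is pertinent to the asymptotic expansion (Proposition~\ref{prop:asymp}) but is not the mechanism in the commutator argument.
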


As a consequence of the preceding theorem, we obtain the following more general, and
precise, well-posedness result.

\begin{thm}(See Theorem~\ref{thm:well-posed-precise}.)
Assume that (TF) and (PT) hold.
Suppose that $P=\Box_g+\lambda$, $\lambda<(n-1)^2/4$,
$m\in\RR$, $m'\leq m$.
Suppose $f\in H^{-1,m+1}_{0,\bl,\loc}(X)$. Then
\eqref{eq:mixed-problem} has a unique solution in $H^{1,m'}_{0,\bl,\loc}(X)$,
which in fact lies in $H^{1,m}_{0,\bl,\loc}(X)$,
and for all compact $K\subset X$
there exists a compact $K'\subset X$ and a constant
$C>0$ such that
$$
\|u\|_{H^{1,m}_0(K)}\leq C\|f\|_{H^{-1,m+1}_{0,\bl}(K')}.
$$
\end{thm}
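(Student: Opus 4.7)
The plan is to bootstrap from the basic $H^1_{0,\loc}$ well-posedness of Theorem~\ref{thm:well-posed}, using the microlocal elliptic regularity of Proposition~\ref{prop:elliptic} and the propagation of singularities result of Theorem~\ref{thm:prop-sing}, with (TF) and (PT) converting the microlocal statements into global ones.

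For uniqueness, suppose $u\in H^{1,m'}_{0,\bl,\loc}(X)$ satisfies $Pu=0$ and $\supp u\subset\{t\geq t_0\}$. Fix an arbitrary $M\in\RR$. Since $Pu=0$, $\WFb^{-1,M}(Pu)=\WFb^{-1,M+1}(Pu)=\emptyset$, so elliptic regularity gives $\WFb^{1,M}(u)\subset\dot\Sigma$, and propagation of singularities says $\WFb^{1,M}(u)$ is a union of maximally extended \GBB in $\dot\Sigma$. By (TF) every such maximal \GBB has $t$ taking all values in $\RR$, so it contains points at which $t<t_0$; there $u$ vanishes, hence the b-wave front set is empty, forcing $\WFb^{1,M}(u)=\emptyset$ for every $M$. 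Thus $u\in H^{1,\infty}_{0,\bl,\loc}\subset H^1_{0,\loc}$, and the uniqueness in Theorem~\ref{thm:well-posed} yields $u=0$.

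For existence with $m\geq 0$, the hypothesis $f\in H^{-1,m+1}_{0,\bl,\loc}$ implies $f\in H^{-1,1}_{0,\bl,\loc}$, so Theorem~\ref{thm:well-posed} supplies $u\in H^1_{0,\loc}$ solving~\eqref{eq:mixed-problem} together with the $m=0$ estimate. The identical wave-front chase, now with $\WFb^{-1,m+1}(f)=\emptyset$, gives $\WFb^{1,m}(u)=\emptyset$, hence $u\in H^{1,m}_{0,\bl,\loc}$. To obtain the quantitative estimate one runs the positive-commutator versions of elliptic regularity and propagation of singularities underlying the cited theorems, chaining microlocal estimates along each \GBB from a region in $\{t<t_0\}$ (where $u=0$ supplies arbitrarily good b-regularity as input) to any chosen compact $K$, employing (TF) together with (PT) to keep the source region $K'$ compact, and inducting on integer $m$ to supply the a priori regularity required by the commutator arguments. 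For $m<0$, choose $f_j\to f$ in $H^{-1,m+1}_{0,\bl,\loc}$ with $f_j\in H^{-1,1}_{0,\bl,\loc}$ and $\supp f_j\subset\{t\geq t_0\}$ via b-mollification in the interior; the already-proved nonnegative-order estimate makes the corresponding solutions $u_j$ Cauchy in $H^{1,m}_{0,\bl,\loc}$, and their limit is the desired $u$.

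The main obstacle is the quantitative step: the qualitative wave-front inclusion is an immediate corollary of the two cited microlocal theorems, but turning it into a norm estimate with the sharp loss of one b-derivative and a \emph{concrete} compact source region $K'$ requires careful bookkeeping of the microlocalisers in the propagation estimate, ensuring that the \GBB flow from $K$ back into $\{t<t_0\}$ sweeps only through a precompact part of $\Tb^*X$. This is precisely where (TF) and (PT) are used in tandem.
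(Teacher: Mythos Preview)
Your proposal is essentially the same argument as the paper's, and is correct. One minor technical point: in the $m<0$ existence step, you ask for regularisations $f_j$ with $\supp f_j\subset\{t\geq t_0\}$. A b-pseudodifferential mollifier $A_r\in\Psibc^{-\infty}(X)$ with properly supported kernel will in general smear supports slightly in the $y$-directions (including $t$), so one cannot guarantee this exactly. The paper sidesteps this by fixing $T_0<t_0$, requiring only $\supp f_j\subset\{t>T_0\}$, obtaining the limit $u$ with $\supp u\subset\{t\geq T_0\}$, and then invoking uniqueness (applied to $u$ restricted to $\{t<t_0\}$, where $Pu=f=0$) to conclude that in fact $u$ vanishes on $\{t<t_0\}$. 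Also, in that step you should cite the order-$m$ propagation estimate (valid for all real $m$, since Theorem~\ref{thm:prop-sing} applies to $u\in H^{1,k}_{0,\bl,\loc}$ for any $k\leq 0$) rather than ``the already-proved nonnegative-order estimate'', since the $f_j$ converge only in $H^{-1,m+1}_{0,\bl,\loc}$.
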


While we prove this result using the propagation of singularities, thus
a relatively sophisticated theorem, it could also be derived without
full microlocalization, i.e.\ without localizing the propagation of
energy in phase space.

We also generalize propagation of singularities to the case $\im\lambda\neq 0$
($\re\lambda$ arbitrary), in which case we prove one sided propagation
depending on the sign of $\im\lambda$. Namely, if $\im\lambda>0$,
resp.\ $\im\lambda<0$,
$$
(\WFb^{1,m}(u)\cap\dot\Sigma)\setminus\WFb^{-1,m+1}(Pu)
$$
is
a union of {\em maximally forward, resp.\ backward}, extended
generalized broken bicharacteristics
of the conformal metric $\hat g$. There is no difference between
the case $\im\lambda=0$ and $\re\lambda<(n-1)^2/4$, resp.\ $\im\lambda\neq 0$,
at the elliptic set,
i.e.\ the statement
$$
\WFb^{1,m}(u)\setminus\dot\Sigma\subset\WFb^{-1,m}(Pu).
$$
holds even if $\im\lambda\neq 0$. We refer to Proposition~\ref{prop:elliptic}
and Theorem~\ref{thm:prop-sing-im} for details.

These results indicate already that for $\im\lambda\neq 0$ there are many
interesting questions to answer, and in particular that one cannot
think of $\lambda$ as `small'; this will be the focus of future work.

In particular, if $f$ is conormal relative to $H^1_0(X)$
then $\WFb^{1,\infty}(u)=\emptyset$. Let
$\sqrt{}$ denote the branch square root function on $\Cx\setminus(-\infty,0]$
chosen so that takes positive values on $(0,\infty)$. The simplest conormal
functions are those in $\CI(X)$
that vanish to infinite order (i.e.\ with all derivatives) at the
boundary; the set of these is denoted by $\dCI(X)$.
If we assume
$f\in\dCI(X)$ then
$$
u=x^{s_+(\lambda)}v,\ v\in\CI(X),
\ s_+(\lambda)=\frac{n-1}{2}+\sqrt{\frac{(n-1)^2}{4}-\lambda},
$$
as we show in Proposition~\ref{prop:asymp}.
Since the indicial roots of $\Box_g+\lambda$ are
\begin{equation}\label{eq:spm-def}
s_\pm(\lambda)=\frac{n-1}{2}\pm\sqrt{\frac{(n-1)^2}{4}-\lambda},
\end{equation}
this
explains the interpretation of this problem as a `Dirichlet problem',
much like it was done in the Riemannian conformally compact case by
Mazzeo and Melrose \cite{Mazzeo-Melrose:Meromorphic}: asymptotics
corresponding to the growing
indicial root, $x^{s_-(\lambda)}v_-$, $v_-\in\CI(X)$, is ruled out.

For $\lambda<(n-1)^2/4$,
one can then easily solve the problem with inhomogeneous `Dirichlet' boundary
condition, i.e.\ given $v_0\in\CI(Y)$ and $f\in\dCI(X)$, both supported in
$\{t\geq t_0\}$,
$$
Pu=f,\ u|_{t<t_0}=0,\ u=x^{s_-(\lambda)}v_-
+x^{s_+(\lambda)}v_+,\ v_\pm\in\CI(X),\ v_-|_Y=v_0,
$$
if $s_+(\lambda)-s_-(\lambda)=2\sqrt{\frac{(n-1)^2}{4}-\lambda}$
is not an integer.
If $s_+(\lambda)-s_-(\lambda)$
is an integer, the same conclusion holds if we replace
$v_-\in\CI(X)$ by $v_-=\CI(X)
+x^{s_+(\lambda)-s_-(\lambda)}\log x\,\CI(X)$; see
Theorem~\ref{thm:inhomog-Dirichlet}.

The operator $v_-|_Y\to v_+|_Y$ is the analogue of the Dirichlet-to-Neumann
map, or the scattering operator. In the De Sitter setting
the setup is somewhat different as both pieces of
scattering data are specified either at
past or future infinity, see \cite{Vasy:De-Sitter}. Nonetheless, one
expects that the result of \cite[Section~7]{Vasy:De-Sitter},
that the scattering
operator is a Fourier integral
operator associated to the \GBBsp relation can be extended to the present
setting, at least if the boundary is totally geodesic
with respect
to the conformal metric $\hat g$, and the metric is even with respect to the boundary
in an appropriate sense.
Indeed, in an ongoing project, Baskin
and the author are extending Baskin's construction of
the forward fundamental solution on asymptotically De Sitter spaces,
\cite{Baskin:Parametrix},
to the even totally geodesic asymptotically AdS setting.
In addition, it is an interesting question what the `best' problem
to pose is when $\im\lambda\neq 0$; the results of this paper suggest
that the global problem (rather than local, Cauchy data versions)
is the best behaved.
One virtue of the parametrix construction is that we expect to be able
answer Lorentzian analogues of questions related to the work of
Mazzeo and Melrose \cite{Mazzeo-Melrose:Meromorphic}, which would
bring the Lorentzian world of AdS spaces significantly
closer (in terms of results) to the Riemannian world of conformally
compact spaces.
We singled out the totally geodesic
condition and evenness since they hold on actual AdS space, which we now discuss.

We now recall the structure of the actual AdS space to justify
our terminology. Consider $\RR^{n+1}$ with the pseudo-Riemannian
metric of signature $(2,n-1)$ given by
$$
-dz_1^2-\ldots-dz_{n-1}^2+dz_n^2+dz_{n+1}^2,
$$
with $(z_1,\ldots,z_{n+1})$ denoting coordinates on $\RR^{n+1}$,
and the hyperboloid
$$
z_1^2+\ldots+z_{n-1}^2-z_n^2-z_{n+1}^2=-1
$$
inside it.
Note that $z_n^2+z_{n+1}^2\geq 1$ on the hyperboloid, so we can
(diffeomorphically)
introduce polar coordinates in these two variables, i.e.\ we
let $(z_n,z_{n+1})=R\theta$, $R\geq 1$, $\theta\in\sphere^1$.
Then the hyperboloid is of the form
$$
z_1^2+\ldots+z_{n-1}^2-R^2=-1
$$
inside $\RR^{n-1}\times (0,\infty)_R\times\sphere^1_\theta$. As $dz_j$,
$j=1,\ldots,n-1$, $d\theta$ and $d(z_1^2+\ldots+z_{n-1}^2-R^2)$ are
linearly independent at the hyperboloid,
$$
z_1,\ldots,z_{n-1},\theta
$$
give local coordinates on it, and indeed these are global in the
sense that the hyperboloid $X^\circ$
is identified with $\RR^{n-1}\times\sphere^1$
via these. A straightforward calculation shows that the metric on
$\RR^{n+1}$ restricts to give a Lorentzian metric $g$ on the hyperboloid.
Indeed, away from $\{0\}\times\sphere^1$, we obtain a convenient
form of the metric by using polar coordinates $(r,\omega)$
in $\RR^{n-1}$, so $R^2=r^2+1$:
$$
g=-(dr)^2-r^2\, d\omega^2+(dR)^2+R^2\,d\theta^2
=-(1+r^2)^{-1}\,dr^2-r^2\,d\omega^2+(1+r^2)\,d\theta^2,
$$
where $d\omega^2$ is the standard round metric; a similar description
is easily obtained near $\{0\}\times\sphere^1$ by using the standard
Euclidean variables.

We can compactify the hyperboloid by compactifying $\RR^{n-1}$ to a ball
$\overline{\BB^{n-1}}$
via inverse polar coordinates $(x,\omega)$, $x=r^{-1}$,
$$
(z_1,\ldots,z_{n-1})=x^{-1}\omega,\ 0<x<\infty,\ \omega\in\sphere^{n-2}.
$$
Thus, the interior of $\overline{\BB^{n-1}}$ is identified with
$\RR^{n-1}$, and the boundary $\sphere^{n-2}$ of $\overline{\BB^{n-1}}$
is added at $x=0$ to compactify $\RR^{n-1}$. We let
$$
X=\overline{\BB^{n-1}}\times\sphere^1
$$
be this compactification of $X^\circ$; a collar neighborhood of $\pa X$
is identified with
$$
[0,1)_x\times \sphere^{n-2}_\omega\times\sphere^1_\theta.
$$
In this collar neighborhood the Lorentzian metric takes the form
$$
g=\frac{1}{x^2}\Big(-(1+x^2)^{-1}\,dx^2-d\omega^2+(1+x^2)\,d\theta^2\Big),
$$
which is of the desired form, and the conformal metric is
$$
\hat g=-(1+x^2)^{-1}\,dx^2-d\omega^2+(1+x^2)\,d\theta^2
$$
with respect to which the boundary, $\{x=0\}$, is indeed time-like.
Note that the induced metric on the boundary is $-d\omega^2+d\theta^2$, up
to a conformal multiple.

As already remarked, $\hat g$ has the special feature that $Y$ is
totally geodesic, unlike e.g.\ the case of $\BB^{n-1}\times\sphere^1$
equipped with a product Lorentzian metric, with $\BB^{n-1}$ carrying
the standard Euclidean metric.

For global results, it is useful to work on the universal cover
$\tilde X=
\overline{\BB^{n-1}}\times\RR_t$ of $X$, where $\RR_t$ is the universal
cover of $\sphere^1_\theta$; we use $t$ to emphasize the time-like nature
of this coordinate. The local geometry is
unchanged, but now $t$ provides a global parameter along generalized broken
bicharacteristics, and satisfies the assumptions (TF) and (PT) for our theorems.

We use this opportunity to summarize the results, already referred
to earlier, for analysis on conformally compact Riemannian
or Lorentzian spaces, including a comparison with the conformally
related problem, i.e.\ for $\Delta_{\hat g}$ or $\Box_{\hat g}$.
We assume Dirichlet boundary condition (DBC) when relevant for the
sake of definiteness, and global hyperbolicity for the hyperbolic
equations, and do not state the function spaces or optimal forms
of regularity results.

\begin{enumerate}
\item
Riemannian: $(\Delta_{\hat g}-\lambda) u=f$, with
DBC is well-posed for $\lambda\in\Cx\setminus[0,\infty)$; moreover, if $f\in\dCI(X)$, then
$u\in\CI(X)$. (Also works outside a discrete set of poles $\lambda$
in $[0,\infty)$.)
\item
Lorentzian, $\pa X=Y_+\cup Y_-$ is spacelike, $f$ supported in $t\geq t_0$,
$\lambda\in\Cx$:
$(\Box_{\hat g}-\lambda)u=f$,
$u$ supported in $t\geq t_0$, 
is well-posed.
If $f\in\dCI(X)$, the solution is $\CI$ up to $Y_\pm$.
\item
Lorentzian, $\pa X$ is timelike, $f$ supported in $t\geq t_0$,
$\lambda\in\Cx$:
$(\Box_{\hat g}-\lambda)u=f$,
with DBC at $Y$, $u$ supported in $t\geq t_0$,
is well-posed. If $f\in\dCI(X)$, the solution is $\CI$ up to $Y_\pm$.
\end{enumerate}

\begin{figure}[ht]
\begin{center}
\mbox{\epsfig{file=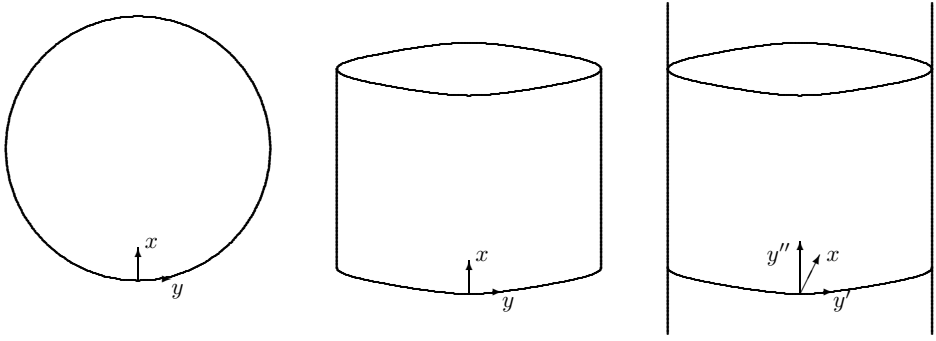, scale=0.88}}
\end{center}
\caption{On the left, a Riemannian example, $\overline{\BB^2}$, in the middle,
an example of spacelike
boundary, $[0,1]_x\times\sphere^1_y$ with $x$ timelike,
on the right, the case of timelike boundary, $\overline{\BB^2_{x,y'}}
\times\RR_{y''}$, with $y''$ timelike.}
\label{fig:conformal-spaces}
\end{figure}

We now go through the original problems. Let $s_\pm(\lambda)$ be as in
\eqref{eq:spm-def}.

\begin{enumerate}
\item
Asymptotically hyperbolic, $\lambda\in\Cx\setminus[0,+\infty)$:
There is a unique solution of
$(\Delta_{g}-\lambda) u=f$, $f\in\dCI(X)$, such that
$u=x^{s_+(\lambda)}v$, $v\in\CI(X)$. (Analogue of DBC; Mazzeo and Melrose
\cite{Mazzeo-Melrose:Meromorphic}.)
(Indeed, $u=(\Delta_g-\lambda)^{-1}f$, and this can be extended
to $\lambda\in[0,+\infty)$, apart from finitely many poles
in $[0,(n-1)^2/4]$, and analytically continued further.)
\item
Asymptotically de Sitter, $\lambda\in\Cx$:
For $f$ supported in $t\geq t_0$, there is a unique solution of
$(\Box_{g}-\lambda) u=f$ supported in $t\geq t_0$. Moreover,
for $f\in\dCI(X)$,
$u=x^{s_+(\lambda)}v_++x^{s_-(\lambda)}v_-$, $v_\pm\in\CI(X)$ and
$v_\pm|_{Y_-}$ is specified, provided that $s_+(\lambda)-s_-(\lambda)\notin
\ZZ$. (See \cite{Vasy:De-Sitter}.)
\item
Asymptotically Anti de Sitter, $\lambda\in\RR\setminus[(n-1)^2/4,+\infty)$:
For $f\in\dCI(X)$ supported in $t\geq t_0$, there is a unique solution of
$(\Box_{g}-\lambda) u=f$ such that
$u=x^{s_+(\lambda)}v$, $v\in\CI(X)$ and
$\supp u\subset\{t\geq t_0\}$.
\end{enumerate}

The structure of this paper is the following. In Section~\ref{sec:Poincare}
we prove a Poincar\'e inequality that we use to allow the
sharp range $\lambda<(n-1)^2/4$ for $\lambda$ real.
Then in Section~\ref{sec:energy}
we recall the structure of energy estimates on manifolds without
boundary as these are then adapted to our `zero geometry' in
Section~\ref{sec:Diff0-Diffb}. In Section~\ref{sec:Diff0-Psib}
we introduce microlocal tools to study operators such as $P$, namely
the zero-differential-b-pseudodifferential
calculus, $\Diffz\Psib(X)$. In Section~\ref{sec:GBB} the structure
of \GBBsp is recalled. In Section~\ref{sec:elliptic} we study
the Dirichlet form and prove microlocal elliptic regularity.
Finally, in Section~\ref{sec:prop-sing},
we prove the propagation of singularities for $P$.

I am very grateful to Dean Baskin, Rafe Mazzeo and Richard Melrose for helpful
discussions. I would also like to thank the careful referee whose comments
helped to improve the exposition significantly and also led to the removal of
some very confusing typos.

\section{Poincar\'e inequality}\label{sec:Poincare}
Let $h$ be a conformally compact Riemannian metric, i.e.\ a positive
definite inner product on $\zT X$, hence by duality on $\zT^*X$; we
denote the latter by $H$.
We denote the corresponding space of $L^2$ sections of $\zT^*X$ by
$L^2(X;\zT^*X)=L^2_0(X;\zT^*X)$.
While the inner product on $L^2(X;\zT^*X)$ depends on
the choice of $h$, the corresponding norms are independent of $h$,
at least over compact subsets $K$ of $X$. We first prove a Hardy-type
inequality:

\begin{lemma}\label{lemma:local-sharp-Poincare}
Suppose $V_0\in\Vf(X)$ is real with $V_0 x|_{x=0}=1$, and let $V\in\Vb(X)$
be given by $V=xV_0$. Given any compact subset $K$ of $X$ and
$\tilde C<\frac{n-1}{2}$, there exists $x_0>0$ such that
if $u\in\dCI(X)$ is supported in $K$ then for $\psi\in\CI(X)$
supported in $x<x_0$,
\begin{equation}\label{eq:local-sharp-Poincare}
\tilde C\|\psi u\|_{L^2_0(X)}\leq \|\psi Vu\|_{L^2_0(X)}.
\end{equation}
\end{lemma}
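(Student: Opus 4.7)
The inequality is a sharp Hardy-type estimate in the weighted space $L^2_0 = L^2(X,x^{-n}d\hat g)$, with constant $(n-1)/2$ matching the Breitenlohner--Freedman bound. The plan is a one-dimensional Hardy inequality adapted to the boundary geometry, using the key identity
$$
x^{-n} = -\tfrac{1}{n-1}\,\partial_x(x^{-(n-1)}),
$$
followed by replacement of $x\partial_x$ by $V=xV_0$, controlling the discrepancy by $O(x_0)$ thanks to $V_0x|_{x=0}=1$.

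Concretely, I would first work in local coordinates $(x,y)$ adapted to the boundary, writing $V_0=a\partial_x+T$ with $a|_{x=0}=1$ and $T$ tangential, so that $V=xa\partial_x+xT$. Set $\phi=\psi u\in\dCI(X)$, which is compactly supported in $K\cap\{x<x_0\}$. Then integrate by parts:
$$
\int|\phi|^2 x^{-n}\,dx\,dy
= \tfrac{1}{n-1}\int \partial_x(|\phi|^2)\,x^{-(n-1)}\,dx\,dy
= \tfrac{2}{n-1}\operatorname{Re}\int \overline{\phi}\,(x\partial_x\phi)\,x^{-n}\,dx\,dy.
$$
The boundary term at $x=0$ vanishes because $\phi\in\dCI$ vanishes to infinite order, and at $x=x_0$ because $\psi$ is cut off. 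Cauchy--Schwarz on the right gives the model Hardy inequality $\frac{n-1}{2}\|\phi\|_{L^2_0}\le\|x\partial_x\phi\|_{L^2_0}$.

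Next, I would pass from $x\partial_x$ to $V$. Since $V_0=a\partial_x+T$ with $a=1+O(x)$ and $T$ tangential, $x\partial_x = a^{-1}V_0 - a^{-1}T$, and on $\operatorname{supp}\phi\subset\{x<x_0\}$ the factor $a^{-1}$ is $1+O(x_0)$. The tangential piece contributes through $xT$, which is itself a $\Vb$-vector field and is $O(x_0)$ small on the support. A bit of bookkeeping — regrouping $a^{-1}V_0\phi=a^{-1}\cdot x^{-1}V\phi$ and using $x<x_0$ — gives
$$
\|x\partial_x\phi\|_{L^2_0} \le (1+Cx_0)\|V\phi\|_{L^2_0} + Cx_0\|\phi\|_{L^2_0},
$$
so combining with the model Hardy bound yields
$$
\Bigl(\tfrac{n-1}{2}-Cx_0\Bigr)\|\phi\|_{L^2_0}\le (1+Cx_0)\|V\phi\|_{L^2_0};
$$
for $x_0$ small this gives $\tilde C'\,\|\psi u\|_{L^2_0}\le\|V(\psi u)\|_{L^2_0}$ for any $\tilde C'<(n-1)/2$. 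Finally, since $V(\psi u)=\psi Vu+(V\psi)u$ and $V\psi=xV_0\psi$ carries an extra factor of $x\le x_0$, the error $\|(V\psi)u\|_{L^2_0}$ is $O(x_0)$-small relative to $\|\psi u\|_{L^2_0}$, so after a further shrinking of $x_0$ we land at $\tilde C\|\psi u\|_{L^2_0}\le\|\psi Vu\|_{L^2_0}$ for any prescribed $\tilde C<(n-1)/2$.

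\paragraph{Main obstacle.}
The delicate point is that both the tangential error $\|(xT)\phi\|_{L^2_0}$ and the commutator error $\|(V\psi)u\|_{L^2_0}$ a priori involve quantities that are not controlled by $\|\psi u\|_{L^2_0}$ alone. The resolution — which is the crux of the argument — is that every term which is ``extraneous'' from the point of view of pure $x$-differentiation inherits a factor of $x$ from the fact that $V\in\Vb$ and $V_0 x|_{x=0}=1$, so that on $\operatorname{supp}\psi\subset\{x<x_0\}$ each such term comes with an $O(x_0)$ coefficient. This $O(x_0)$-smallness, combined with the strict inequality $\tilde C<(n-1)/2$ which leaves room to absorb the errors, is exactly what makes the sharp local Hardy inequality go through.
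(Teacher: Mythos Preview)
Your step~2 has a genuine gap. Writing $V_0=a\partial_x+T$ with $T$ tangential, you correctly get $x\partial_x\phi=a^{-1}(V\phi-xT\phi)$, but the claim that $\|xT\phi\|_{L^2_0}$ is $O(x_0)$ relative to $\|\phi\|_{L^2_0}$ or $\|V\phi\|_{L^2_0}$ is false: the factor of $x$ gives $\|xT\phi\|\le x_0\|T\phi\|$, and $\|T\phi\|$ involves \emph{tangential derivatives} of $\phi=\psi u$ which are simply not controlled by the quantities appearing in the inequality. The same failure mode recurs in step~3, where $(V\psi)u=x(V_0\psi)u$ and $\|V_0\psi\|_\infty$ is unbounded over the admissible $\psi$; the ``extra factor of $x$'' observation in your obstacle paragraph does not close either gap, because the remaining factor is a derivative, not a zeroth-order coefficient.

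The paper's argument sidesteps this entirely by never isolating $x\partial_x$. One writes the identity
\[
\langle(V\chi)u,u\rangle=-2\operatorname{Re}\langle\chi u,Vu\rangle-\langle(\operatorname{div} V)\chi u,u\rangle
\]
directly for $V$, and computes $\operatorname{div} V=-(n-1)(V_0x)+x\operatorname{div}_{\hat g}V_0=-(n-1)+xa$ with $a\in\CI(X)$. The crucial point is that the tangential part of $V_0$ contributes only to the \emph{zeroth-order} coefficient $a$; it never acts as a derivative on $u$. Choosing $\chi=\chi_0\circ x$ with $\chi_0'\le 0$ and $\chi_0\equiv 1$ on $[0,x_0]$ gives $V\chi=x(V_0x)(\chi_0'\circ x)\le 0$, so that term can be dropped and one reads off $\bigl((n-1)-2x_0\sup_K|a|\bigr)\|\chi^{1/2}u\|^2\le 2\|\chi^{1/2}u\|\,\|\chi^{1/2}Vu\|$.

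A word on your step~3: the inequality $\tilde C\|\psi u\|\le\|\psi Vu\|$ in fact cannot hold for \emph{all} $\psi$ supported in $\{x<x_0\}$ uniformly in $u\in\dCI_K(X)$. Take $u$ compactly supported in $\{0<x<x_0\}$ with $u\equiv 1$ on $[x_0/4,x_0/2]$ and $\psi$ supported in that interval; then $\psi Vu=0$ while $\psi u\ne 0$. What the paper's proof actually delivers (and what the downstream applications use) is the inequality with $\psi=\chi^{1/2}$ for the specific monotone $\chi$ above; the sign of $V\chi$ is precisely what replaces your attempted commutator estimate.
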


Recall here that $\dCI(X)$ denotes elements of $\CI(X)$ that vanish at
$Y=\pa X$ to infinite order, and the subscript $\compl$ on
$\dCI_{\compl}(X)$ below indicates that in addition the support of the function
under consideration is compact.

\begin{proof}
For any $V\in\Vb(X)$ real, and $\chi\in\CI_\compl(X)$, $u\in\dCI_{\compl}(X)$,
we have, using $V^*=-V-\Div V$,
\begin{equation*}\begin{split}
&\langle (V\chi)u,u\rangle=\langle [V,\chi]u,u\rangle
=\langle \chi u, V^* u\rangle-\langle Vu,\chi u\rangle\\
&\qquad=-\langle \chi u, Vu\rangle-\langle Vu,\chi u\rangle
-\langle \chi u,(\Div V)u\rangle.
\end{split}\end{equation*}
Now, if $V=xV_0$, $V_0\in\Vf(X)$ transversal to $\pa X$, and if
we write $dg=x^{-n} d\hat g$, $d\hat g$ a smooth non-degenerate
density then in local coordinates $z_j$ such that $d\hat g=J|dz|$,
$V_0=\sum V_0^j\pa_j$,
\begin{equation*}\begin{split}
\Div V&=x^nJ^{-1}\sum\pa_j(x^{-n}J xV_0^j)\\
&=-(n-1)\sum_j V_0^j(\pa_j x)
+xJ^{-1}\sum\pa_j(J V_0^j)=-(n-1) (V_0 x)+x\Div_{\hat g} V_0,
\end{split}\end{equation*}
where the subscript $\hat g$ in $\Div_{\hat g} V_0$ denotes that the
divergence is with respect to $\hat g$.
Thus, assuming $V_0\in\Vf(X)$ with $V_0 x|_{x=0}=1$,
$$
\Div V=-(n-1)+xa,\ a\in\CI(X).
$$
Let $x'_0>0$ be such that $V_0 x>\frac{1}{2}$ in $x\leq x'_0$.
Thus, if $0\leq\chi_0\leq 1$, $\chi_0\equiv 1$ near $0$, $\chi_0'\leq 0$,
$\chi_0$ is supported in $x\leq x'_0$, $\chi=\chi_0\circ x$, then
$$
V\chi=x(V_0x)(\chi_0'\circ x)\leq 0,
$$
hence $\langle (V\chi)u,u\rangle\leq 0$
$$
\langle \chi((n-1)+xa)u,u\rangle\leq 2\|\chi^{1/2}u\| \|\chi^{1/2}Vu\|,
$$
and thus given any $\tilde C<(n-1)/2$ there is $x_0>0$ such that
for $u$ supported in $K$,
\begin{equation*}
\tilde C\|\chi^{1/2}u\|\leq \|\chi^{1/2}Vu\|,
\end{equation*}
namely we take $x_0<x'_0/2$ such that
$(n-1)/2-\tilde C>(\sup_{K}|a|) x_0$, choose $\chi_0\equiv 1$ on $[0,x_0]$,
supported in $[0,2x_0)$. This completes the proof of the lemma.
\end{proof}

The basic Poincar\'e estimate is:

\begin{prop}\label{prop:nonlocal-Poincare-no-weight}
Suppose $K\subset X$ compact, $K\cap\pa X\neq\emptyset$,
$O$ open with $K\subset O$, $O$ arcwise connected to $\pa X$,
$K'=\overline{O}$ compact.
There exists $C>0$
such that for $u\in H^1_{0,\loc}(X)$ one has
\begin{equation}\label{eq:sharp-Poincare}
\|u\|_{L^2_0(K)}\leq C\|du\|_{L^2_0(O;\zT^*X)},
\end{equation}
where the norms are relative to the metric $h$.
\end{prop}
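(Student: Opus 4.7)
The strategy combines the sharp Hardy-type estimate of Lemma~\ref{lemma:local-sharp-Poincare} near $\pa X$ with a Rellich compactness argument in the interior, using the arcwise connectedness of $O$ to $\pa X$ to bridge the two regions. By density of $\dCI(X)$-functions with support in $K'$ in the corresponding subspace of $H^1_{0,\loc}(X)$ (equivalently, by multiplying by a cutoff $\phi\in\CI_{\compl}(X)$ equal to $1$ on $\overline O$ and approximating), it suffices to prove the estimate for $u\in\dCI(X)$ with $\supp u\subset K'$.

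Fix $\tilde C\in(0,(n-1)/2)$ and let $x_0>0$ be the constant furnished by Lemma~\ref{lemma:local-sharp-Poincare} applied to $K'$. Choose $\psi\in\CI(X)$ with $\psi\equiv 1$ on $\{x\le x_0/2\}$ and $\supp\psi\subset\{x<x_0\}$. The lemma gives $\tilde C\,\|\psi u\|_{L^2_0}\le\|\psi Vu\|_{L^2_0}$, and since $V=xV_0\in\Vz(X)$, the pointwise bound $|Vu|\le|V|_h\,|du|_h$ (with $|V|_h$ bounded on $K'$) yields
\begin{equation*}
\|\psi u\|_{L^2_0(X)}\le C_1\,\|du\|_{L^2_0(O;\zT^*X)}.
\end{equation*}
This provides the sharp control on the boundary layer $\{x\le x_0/2\}$; it remains to bound $u$ on the compact set $K\cap\{x>x_0/2\}\subset X^\circ$.

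For the interior, I argue by contradiction. If the estimate failed, a sequence $u_n\in\dCI(X)$ with $\supp u_n\subset K'$ would satisfy $\|u_n\|_{L^2_0(K)}=1$ but $\|du_n\|_{L^2_0(O)}\to 0$; the previous step forces $\|\psi u_n\|_{L^2_0}\to 0$, hence $u_n\to 0$ in $L^2_0$ on the overlap $\{x_0/4\le x\le x_0/2\}$. Set $W=O\cap\{x>x_0/5\}$; then $\overline W$ is a compact subset of $X^\circ$ on which $L^2_0$ and the $\zT^*X$-norm are equivalent to the standard $L^2$ and $T^*X$ norms up to bounded factors. Classical Rellich extracts a subsequence converging in $L^2(W)$ to some $u_\infty$ with $du_\infty=0$ weakly, so $u_\infty$ is locally constant on $W$. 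The arcwise connectedness hypothesis enters here: each $p\in W$ admits a path in $O$ joining $p$ to $\pa X$, which by continuity must traverse the overlap $\{x_0/4\le x\le x_0/2\}\subset W$ before exiting $W$ at $\{x=x_0/5\}$, so each connected component of $W$ meets the overlap on an open set. Local constancy combined with $u_\infty=0$ on the overlap then forces $u_\infty\equiv 0$ on $W$; but $K\cap\{x>x_0/2\}\subset W$, so $\|u_n\|_{L^2_0(K)}\to 0$, contradicting $\|u_n\|_{L^2_0(K)}=1$.

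The chief obstacle is exactly this bridging of boundary and interior control: the weight $x^{-n}$ in $L^2_0$ precludes Rellich compactness at $\pa X$, so the sharp Hardy bound of Lemma~\ref{lemma:local-sharp-Poincare} is essential in a full boundary neighborhood; the arcwise connectedness hypothesis is then precisely what propagates the vanishing of the weak limit from the overlap into the interior via the local constancy arising from $du_\infty=0$.
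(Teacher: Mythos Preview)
Your overall strategy---Hardy near the boundary, then Poincar\'e-type control in the interior, bridged by connectedness---matches the paper's. But the Rellich step has a real gap. You normalize $\|u_n\|_{L^2_0(K)}=1$ and obtain $\|du_n\|_{L^2(W)}\to 0$, yet Rellich on $W$ requires $\|u_n\|_{H^1(W)}$ bounded, and nothing you have controls $\|u_n\|_{L^2(W)}$: the set $K$ may be much smaller than $W$, and the Hardy bound only controls the strip $\{x\le x_0/2\}$. Without an $L^2(W)$ bound you cannot extract an $L^2(W)$-convergent subsequence, so the argument stalls precisely where you need it.

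This is repairable. One option is to first prove the auxiliary interior estimate $\|u\|_{L^2(W)}\le C\big(\|du\|_{L^2(W)}+\|u\|_{L^2(\{x_0/4\le x\le x_0/2\}\cap O)}\big)$ by a Rellich contradiction normalized on $W$ (so that $H^1(W)$-boundedness is automatic), and then combine with the Hardy bound. The paper instead bypasses compactness entirely: after the Hardy step it simply invokes the standard Poincar\'e inequality with boundary data, estimating $u$ on $\{x>x_0/2\}$ in $L^2$ by $du$ on $\{x>x_0/2\}$ together with $u$ on the overlap $\{x_0/4<x<x_0/2\}$, the arcwise connectedness to $Y$ being exactly what makes that direct estimate hold.

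A smaller issue: your reduction to $\supp u\subset K'=\overline O$ is inconsistent---a cutoff equal to $1$ on $\overline O$ has support strictly larger than $\overline O$, while if you allow $\supp u$ to spill outside $\overline O$ then $\|\psi Vu\|_{L^2_0(X)}$ is no longer bounded by $\|du\|_{L^2_0(O;\zT^*X)}$. The paper avoids this by introducing a second cutoff $\phi$, pulled back from $Y$ along the $V_0$-flow, which is identically $1$ near $K\cap Y$ and has $\supp\phi\cap\{x<x_0\}\subset O$; Lemma~\ref{lemma:local-sharp-Poincare} is then applied to $\psi_0\phi u$ rather than to $\psi u$, keeping everything inside $O$.
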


\begin{proof}
It suffices to prove the estimate for $u\in\dCI(X)$, for then the
proposition follows by the density of $\dCI(X)$ in $H^1_{0,\loc}(X)$ and
the continuity of both sides in the $H^1_{0,\loc}(X)$ topology.

Let $V_0,V$ be as in Lemma~\ref{lemma:local-sharp-Poincare}, and let
$\phi_0\in\CI_{\compl}(Y)$ identically $1$ on a neighborhood
of $K\cap Y$, supported in $O$, and let $x_0>0$ be as in the Lemma
with $K$ replaced by $K'$.
We pull back $\phi_0$ to a function $\phi$ defined on a neighborhood
of $Y$ by the $V_0$-flow; thus, $V_0\phi=0$.
By decreasing $x_0$ if needed, we may assume
that $\phi$ is defined and is $\CI$ in $x<x_0$, and $\supp\phi\cap\{x<x_0\}
\subset O$.
Now, let
$\psi\in\CI(X)$ identically $1$ where $x<x_0/2$, supported where
$x<3x_0/4$, and let $\psi_0\in\CI(X)$ be identically $1$
where $x<3x_0/4$, supported in $x<x_0$; thus
$\psi_0 \phi\in\CI_{\compl}(X)$.
Then, by Lemma~\ref{lemma:local-sharp-Poincare} applied
to $\psi_0\phi u$,
\begin{equation}\label{eq:sharp-Poincare-localized}
\tilde C\|\psi \phi u\|_{L^2_0(X)}=\tilde C\|\psi \psi_0\phi u\|_{L^2_0(X)}
\leq \|\psi V(\psi_0\phi u)\|_{L^2_0(X)}
=\|\psi \phi Vu\|_{L^2_0(X)}.
\end{equation}
The full proposition follows
by the standard Poincar\'e estimate and arcwise connectedness of $K$ to
$Y$ (hence to $x<x_0/2$), since one can estimate
$u|_{x>x_0/2}$ in $L^2$ in terms of $du|_{x>x_0/2}$ in $L^2$ and
$u|_{x_0/4<x<x_0/2}$.
\end{proof}

We can get a more precise estimate of the constants if we restrict
to a neighborhood of a space-like hypersurface $\hsf$; it is convenient
to state the result under our global assumptions. {\em Thus,
(TF) and (PT) are assumed to hold from here on in this section.}

\begin{prop}\label{prop:Poincare-no-weight}
Suppose $V_0\in\Vf(X)$ is real with $V_0 x|_{x=0}=1$, $V_0 t\equiv 0$ near
$Y$
and let $V\in\Vb(X)$
be given by $V=xV_0$. Let $I$ be a compact interval.
Let $C<(n-1)/2$, $\gamma>0$.
Then there exist $\ep>0$, $x_0>0$ and $C'>0$ such that the following holds.

For $t_0\in I$, $0<\delta<\ep$
and for $u\in H^1_{0,\loc}(X)$
one has
\begin{equation}\begin{split}\label{eq:sharp-const-Poincare}
&\|u\|_{L^2_0(\{p:\ t(p)\in [t_0,t_0+\ep])\}}\\
&\leq
C^{-1}\|V u\|_{L^2_0(\{p:\ t(p)\in[t_0-\delta,t_0+\ep],\ x(p)\leq x_0\})}
+\gamma\|du\|_{L^2_0(\{p:\ t(p)\in[t_0-\delta,t_0+\ep]\})}\\
&\qquad\qquad+C'\|u\|_{L^2_0(\{p:\ t(p)\in[t_0-\delta,t_0]\})},
\end{split}\end{equation}
where the norms are relative to the metric $h$.
\end{prop}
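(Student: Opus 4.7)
The plan is to split the slab $\{t \in [t_0, t_0+\ep]\}$ at $x = x_0/2$ and handle the two pieces by different arguments. On the near-boundary piece $A := \{t \in [t_0,t_0+\ep],\ x \leq x_0/2\}$ I would run a variant of the Hardy-type argument of Lemma~\ref{lemma:local-sharp-Poincare}, using a product cutoff in $(x,t)$, to obtain a constant arbitrarily close to $\frac{n-1}{2}$. On the complementary interior piece $B := \{t \in [t_0,t_0+\ep],\ x > x_0/2\}$, where $h$ restricts to a smooth Riemannian metric, I would apply the fundamental theorem of calculus in $t$ to bound $\|u\|_{L^2_0(B)}$ by $\|du\|_{L^2_0}$ plus the trace-like term $\|u\|_{L^2_0(\{t\in[t_0-\delta,t_0]\})}$. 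By density of $\dCI(X)$ in $H^1_{0,\loc}(X)$ it suffices to treat $u \in \dCI(X)$.

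For the boundary step, choose $\chi_0 \in \CI(\RR)$ with $0 \leq \chi_0 \leq 1$, $\chi_0 \equiv 1$ on $(-\infty,1/2]$, $\chi_0 \equiv 0$ on $[1,\infty)$, $\chi_0' \leq 0$; and a nonnegative $\psi_0 \in \CI(\RR)$ with $\psi_0 \equiv 1$ on $[t_0,t_0+\ep]$ and $\supp\psi_0 \subset [t_0-\delta,\,t_0+\ep+\eta]$, for an auxiliary slack $\eta > 0$ that will be let to zero. Set $\tilde\chi(p) := \chi_0(x(p)/x_0)\,\psi_0(t(p))$. Since $V_0 t \equiv 0$ near $Y$, for $x_0$ small one has
\[
V\tilde\chi = x(V_0 x)\, x_0^{-1}\chi_0'(x/x_0)\,\psi_0(t) \leq 0
\]
on $\supp\tilde\chi$. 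The integration-by-parts computation from the proof of Lemma~\ref{lemma:local-sharp-Poincare}, together with $\Div V = -(n-1)+xa$, then yields
\[
\langle \tilde\chi((n-1) - xa)u,u\rangle = 2\re\langle\tilde\chi u,Vu\rangle + \langle (V\tilde\chi) u,u\rangle \leq 2\|\tilde\chi^{1/2}u\|\,\|\tilde\chi^{1/2}Vu\|.
\]
Fixing $\tilde C \in (C,(n-1)/2)$ and shrinking $x_0$ so that $(n-1) - xa \geq 2\tilde C$ on the compact set $\{x \leq x_0,\ t \in [\inf I - \ep,\ \sup I + \ep]\}$ (compactness via (PT)) gives $\tilde C\|\tilde\chi^{1/2}u\| \leq \|\tilde\chi^{1/2}Vu\|$. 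Using $\tilde\chi \equiv 1$ on $A$, $0 \leq \tilde\chi \leq 1$, and $\supp\tilde\chi \subset \{t\in[t_0-\delta,t_0+\ep+\eta],\ x \leq x_0\}$, then letting $\eta \to 0^+$ by dominated convergence, produces
\[
\|u\|_{L^2_0(A)} \leq \tilde C^{-1}\|Vu\|_{L^2_0(\{t\in[t_0-\delta,t_0+\ep],\ x\leq x_0\})}.
\]

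For the interior step on $B$, I would use $u(t,y) = u(s,y) + \int_s^t\pa_\tau u\,d\tau$; averaging over $s \in [t_0-\delta,t_0]$ and squaring via Cauchy--Schwarz gives the pointwise bound
\[
|u(t,y)|^2 \leq 2\delta^{-1}\int_{t_0-\delta}^{t_0}|u(s,y)|^2\,ds + 2(\ep+\delta)\int_{t_0-\delta}^{t_0+\ep}|\pa_\tau u|^2\,d\tau.
\]
Integrating over $(t,y) \in B$ and using that, relative to $h$, $|\pa_t u|$ is controlled by $|du|$ with some constant $C_h$ on the compact set $\{x \geq x_0/2,\ t \in [\inf I - \ep,\ \sup I + \ep]\}$, one obtains
\[
\|u\|_{L^2_0(B)}^2 \leq 2\ep\delta^{-1}\|u\|_{L^2_0(\{t\in[t_0-\delta,t_0]\})}^2 + 2C_h^2\ep(\ep+\delta)\|du\|_{L^2_0(\{t\in[t_0-\delta,t_0+\ep]\})}^2.
\]
Combining via the triangle inequality and $\sqrt{a+b} \leq \sqrt a + \sqrt b$ yields \eqref{eq:sharp-const-Poincare}, with the $\|Vu\|$ coefficient $\tilde C^{-1} < C^{-1}$, the $\|du\|$ coefficient $C_h\sqrt{2\ep(\ep+\delta)} < 2C_h\ep < \gamma$ (which forces $\ep < \gamma/(2C_h)$, using $\delta < \ep$), and $C' = \sqrt{2\ep/\delta}$.

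The main technical point is that the averaging argument produces a trace-term coefficient of order $\delta^{-1/2}$, so $C'$ naturally depends on $\delta$; the statement is therefore best read as asserting, for each $\delta \in (0,\ep)$, the existence of a $C' = C'(\delta) > 0$, uniform in $t_0 \in I$ and $u$. A minor subsidiary issue is that one cannot simultaneously have a smooth $\psi_0$ equal to $1$ on $[t_0,t_0+\ep]$ and supported in $[t_0-\delta,t_0+\ep]$, hence the slack $\eta > 0$ at the right endpoint, removed in the limit $\eta \to 0^+$ (justified by $Vu \in L^2_{0,\loc}$).
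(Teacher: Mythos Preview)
Your proof is correct and follows essentially the same strategy as the paper's: a Hardy-type estimate near the boundary (the paper uses the characteristic function of $[t_0,t_0+\ep]$ as $t$-cutoff, which works directly since $V_0t=0$ near $Y$, rather than your smooth cutoff with a limiting argument) and a standard Poincar\'e-in-$t$ estimate in the interior (the paper applies it to $\chi(t)u$ with $\chi$ supported in $(t_0-\delta,\infty)$, rather than your averaging in $s$). Your observation that $C'$ must depend on $\delta$ is correct and matches the paper's implicit treatment; in fact your averaging gives $C'\sim\delta^{-1/2}$, slightly sharper than the cutoff method's $C'\sim\delta^{-1}$, though this is immaterial for the applications.
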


\begin{proof}
We proceed as in the proof of
Proposition~\ref{prop:nonlocal-Poincare-no-weight}, using
that the $t$-preimage of the enlargement of the interval by distance $\leq 1$
points is still compact by (PT); we always use $\ep<1$ correspondingly.
We simply let
$\phi=\tilde\phi\circ t$, where $\tilde\phi$ is the characteristic
function of $[t_0,t_0+\ep]$. Thus $V_0 \phi$ vanishes near $Y$;
at the cost of possibly decreasing $x_0$ we may assume that
it vanishes in $x<x_0$.
By \eqref{eq:sharp-Poincare-localized}, with $C=\tilde C<(n-1)/2$,
$\psi\equiv 1$
on $[0,x_0/4)$, supported in $[0,x_0/2)$,
\begin{equation}\label{eq:cutoff-Mellin}
\|\psi\phi u\|_{L^2_0(X)}\leq C^{-1}\|\psi V \phi u\|
=C^{-1}\|\psi \phi Vu\|.
\end{equation}
Thus, it remains to give a bound for
$\|(1-\psi)u\|_{L^2_0(\{p:\ t(p)\in [t_0,t_0+\ep])\}}$.

Let $\hsf$ be the space-like hypersurface in $X$ given by $t=t_0$,
$t_0\in I$.
Now let $W\in\Vb(X)$ be transversal to $\hsf$.
The standard Poincar\'e estimate (whose weighted version we prove
below in Lemma~\ref{lemma:weighted-Poincare}) obtained by integrating from
$t=t_0-\delta$ yields that for $u\in\dCI(X)$ with $u|_{t=t_0-\delta}=0$,
\begin{equation}\label{eq:standard-Poincare}
\|u\|_{L^2_0(\{p:\ t(p)\in[t_0-\delta,t_0+\ep]\})}\leq
C'(\ep+\delta)^{1/2}\|W u\|_{L^2_0(\{p:\ t(p)\in[t_0-\delta,t_0+\ep]\})},
\end{equation}
with $C'(\ep+\delta)\to 0$ as $\ep+\delta\to 0$.
Applying this with $u$ supported where $x\in(x_0/8,\infty)$
\begin{equation}\label{eq:standard-Poincare-mod}
\|u\|_{L^2_0(\{p:\ t(p)\in[t_0-\delta,t_0+\ep]\})}\leq
C''(\ep+\delta)^{1/2}\|xW u\|_{L^2_0(\{p:\ t(p)\in[t_0-\delta,t_0+\ep]\})},
\end{equation}
with $C''(\ep+\delta)\to 0$ as $\ep+\delta\to 0$.
As we want $0<\delta<\ep$, we choose $\ep>0$
such that
$$
C''(2\ep)^{1/2}<\gamma.
$$
Let $\chi\in\CI_{\compl}(\RR;[0,1])$
be identically $1$ on $[t_0,\infty)$, and be supported in
$(t_0-\delta,\infty)$. Applying \eqref{eq:standard-Poincare} to
$\chi(t) u$,
\begin{equation*}\begin{split}
&\|u\|_{L^2_0(\{p:\ t(p)\in[t_0,t_0+\ep]\})}\\
&\qquad\leq
C''(\ep+\delta)^{1/2}\|xW u\|_{L^2_0(\{p:\ t(p)\in[t_0-\delta,t_0+\ep]\})}\\
&\qquad\qquad+C''(\ep+\delta)^{1/2}\|x\chi'(t) (Wt) u\|_{L^2_0(\{p:\ t(p)\in[t_0-\delta,t_0]\})}.
\end{split}\end{equation*}
In particular, this can be applied with $u$ replaced by
$(1-\psi)u$.
This completes the proof.
\end{proof}

We also need a weighted version of this result.
We first recall a Poincar\'e inequality with weights.

\begin{lemma}\label{lemma:weighted-Poincare}
Let $C_0>0$.
Suppose that $W\in\Vb(X)$ real, $|\Div W|\leq C_0$,
$0\leq \chi\in\CI_{\compl}(X)$, and $\chi\leq -\gamma (W\chi)$ for $t\geq t_0$,
$0<\gamma<1/(2C_0)$. Then there exists $C>0$
such that for $u\in H^1_{0,\loc}(X)$ with $t\geq t_0$ on $\supp u$,
$$
\int |W\chi|\,|u|^2\,dg\leq C\gamma\int\chi |Wu|^2\,dg.
$$
\end{lemma}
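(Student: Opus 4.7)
The plan is to derive the inequality from a standard integration-by-parts identity for $W$, using that $W\in\Vb(X)$ is tangent to $Y=\pa X$ so that there are no boundary contributions when testing against functions in $\dCI(X)$.

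First I would reduce to the case $u\in\dCI(X)$ with $\supp u\subset\{t\geq t_0\}$. Since $\chi$ has compact support, all quantities appearing in the inequality are well-defined for $u\in H^1_{0,\loc}(X)$; a density/cutoff argument analogous to the opening of Proposition~\ref{prop:nonlocal-Poincare-no-weight} then reduces matters to smooth $u$ vanishing to infinite order at $Y$. For such $u$, the function $\chi u^2$ lies in $\dCI_{\compl}(X)$, and integrating $W(\chi u^2)$ against $dg$ and using $W^*=-W-\Div W$ (with no boundary terms, as $W$ is tangent to $Y$ and $\chi u^2$ is compactly supported) gives
\begin{equation*}
\int (W\chi)u^2\,dg+2\int \chi u(Wu)\,dg=-\int(\Div W)\chi u^2\,dg.
\end{equation*}

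Since the hypothesis $\chi\leq-\gamma(W\chi)$ forces $W\chi\leq 0$ on $\supp u$, this rewrites as
\begin{equation*}
\int |W\chi|u^2\,dg=2\int \chi u(Wu)\,dg+\int(\Div W)\chi u^2\,dg.
\end{equation*}
I would then estimate the right side: the first term by Cauchy--Schwarz with a parameter $\alpha>0$, bounded by $\alpha\int\chi u^2\,dg+\alpha^{-1}\int\chi|Wu|^2\,dg$, and the second by $C_0\int\chi u^2\,dg$ using the bound on $\Div W$. Substituting $\chi\leq\gamma|W\chi|$ on the right yields
\begin{equation*}
(1-\alpha\gamma-C_0\gamma)\int|W\chi|u^2\,dg\leq \alpha^{-1}\int\chi|Wu|^2\,dg.
\end{equation*}
Choosing $\alpha=\beta/\gamma$ for a small fixed $\beta$ (say $\beta=1/4$) and using $\gamma<1/(2C_0)$ makes the prefactor $1-\beta-C_0\gamma$ uniformly positive, leaving the desired bound with a constant of the form $C=\beta^{-1}(1-\beta-C_0\gamma)^{-1}$.

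The only subtle point is the density step, since $u\in H^1_{0,\loc}(X)$ with a support restriction does not immediately lie in the closure of $\dCI$-functions satisfying the same support restriction; however, one can truncate by $\chi_\eta(t)=\chi_0((t-t_0)/\eta)$ with $\chi_0$ supported in $[0,\infty)$ and equal to $1$ on $[1,\infty)$, then mollify, observing that both sides of the inequality are continuous in the $H^1_0$-topology on the compact set $\supp\chi$. The rest is routine Cauchy--Schwarz bookkeeping; no microlocal input is needed.
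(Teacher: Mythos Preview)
Your proof is correct and follows essentially the same approach as the paper: integrate $W(\chi|u|^2)$ by parts using $W^*=-W-\Div W$, apply Cauchy--Schwarz, and absorb the $\chi|u|^2$ term via the hypothesis $\chi\leq\gamma|W\chi|$. The only cosmetic difference is that the paper applies Cauchy--Schwarz directly as $2\|\chi^{1/2}u\|\,\|\chi^{1/2}Wu\|$ and then divides through by $(\int|W\chi|\,|u|^2)^{1/2}$, whereas you use the parametrized form with $\alpha$; both give a constant of the form $C\gamma$ with $C$ uniform for $\gamma<1/(2C_0)$. (Minor point: write $|u|^2$ rather than $u^2$ to cover complex $u$.)
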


\begin{proof}
We compute, using $W^*=-W-\Div W$,
\begin{equation*}\begin{split}
&\langle (W\chi)u,u\rangle=\langle [W,\chi]u,u\rangle
=\langle \chi u, W^* u\rangle-\langle Wu,\chi u\rangle\\
&\qquad=-\langle \chi u, Wu\rangle-\langle Wu,\chi u\rangle
-\langle \chi u,(\Div W)u\rangle,
\end{split}\end{equation*}
so
\begin{equation*}\begin{split}
&\int |W\chi| \,|u|^2\,dg= -\langle (W\chi)u,u\rangle
\leq 2\|\chi^{1/2}u\|_{L^2} \|\chi^{1/2} Wu\|_{L^2}
+C_0\|\chi^{1/2} u\|^2_{L^2}\\
&\leq 2\left(\int \gamma |W\chi|\,|u|^2\,dg\right)^{1/2}
\|\chi^{1/2} Wu\|_{L^2}
+C_0\int \gamma|W\chi| \,|u|^2\,dg.
\end{split}\end{equation*}
Dividing through by $(\int |W\chi|\,|u|^2\,dg)^{1/2}$ and rearranging yields
\begin{equation*}
(1-C_0\gamma) \left(\int |W\chi| \,|u|^2\,dg\right)^{1/2}\leq 2\gamma^{1/2}
\|\chi^{1/2} Wu\|_{L^2},
\end{equation*}
hence the claim follows.
\end{proof}

Our Poincar\'e inequality (which could also be named Hardy, in view of
the relationship of \eqref{eq:local-sharp-Poincare} to the Hardy
inequality) is then:

\begin{prop}\label{prop:Poincare}
Suppose $V_0\in\Vf(X)$ is real with $V_0 x|_{x=0}=1$, $V_0 t\equiv 0$ near
$Y$, and let $V\in\Vb(X)$
be given by $V=xV_0$. Let $I$ be a compact interval.
Let $C<(n-1)/2$.
Then there exist $\ep>0$, $x_0>0$,
$C'>0$, $\gamma_0>0$ such that the following holds.

Suppose $t_0\in I$, $0<\gamma<\gamma_0$.
Let $\chi_0\in\CI_{\compl}(\RR)$, $\chi=\chi_0\circ t$ and
$0\leq \chi_0\leq -\gamma\chi_0'$ on $[t_0,t_0+\ep]$, $\chi_0$ supported
in $(-\infty,t_0+\ep]$, $\delta<\ep$.
For $u\in H^1_{0,\loc}(X)$
one has
\begin{equation}\begin{split}\label{eq:sharp-const-weighted-Poincare}
&\| |\chi'|^{1/2} u\|_{L^2_0(\{p:\ t(p)\in [t_0,t_0+\ep])\}}\\
&\leq
C^{-1}\| |\chi'|^{1/2} Vu\|_{L^2_0(\{p:\ t(p)\in[t_0-\delta,t_0+\ep],
\ x(p)\leq x_0\})}\\
&\qquad\qquad
+C'\gamma\| \chi^{1/2} du\|_{L^2_0(\{p:\ t(p)\in[t_0-\delta,t_0+\ep]\})}\\
&\qquad\qquad+C'\|u\|_{L^2_0(\{p:\ t(p)\in[t_0-\delta,t_0]\})},
\end{split}\end{equation}
where the norms are relative to the metric $h$.
\end{prop}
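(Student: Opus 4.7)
The plan is to mirror the proof of Proposition~\ref{prop:Poincare-no-weight}, with two substitutions: the sharp cutoff $\phi=\mathbb{1}_{[t_0,t_0+\ep]}$ is replaced by the smooth weight $-\chi'(t)$ (which is $\geq 0$ on $[t_0,t_0+\ep]$ by hypothesis), and the standard space-time Poincar\'e is replaced by the weighted Lemma~\ref{lemma:weighted-Poincare}. By density of $\dCI(X)$ in $H^1_{0,\loc}(X)$ it suffices to prove \eqref{eq:sharp-const-weighted-Poincare} for $u\in\dCI(X)$; supports are compact by (PT). Introduce a spatial cutoff $\psi\in\CI(X)$ with $\psi\equiv 1$ on $\{x\leq x_0/4\}$ and $\supp\psi\subset\{x\leq x_0/2\}$, and split $u=\psi u+(1-\psi)u$.

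For the near-boundary piece $\psi u$, run the integration-by-parts identity from the proof of Lemma~\ref{lemma:local-sharp-Poincare} with the smooth weight $w=-\chi'(t)\psi^2$ in place of $\chi$. The hypothesis $V_0t\equiv 0$ near $Y$ gives $V(-\chi')=x(V_0t)(-\chi_0''\circ t)=0$ on $\supp\psi$ once $x_0$ is small, so only a $V\psi^2$ error survives. Combined with $\Div V=-(n-1)+xa$, Cauchy--Schwarz on the $Vu$ cross-term, and absorption of the $xa$-term, this yields
\[
C\||\chi'|^{1/2}\psi u\|_{L^2_0(\{t\in[t_0,t_0+\ep]\})}\leq \||\chi'|^{1/2}\psi Vu\|_{L^2_0(\{t\in[t_0,t_0+\ep],\,x\leq x_0\})}+E_1,
\]
where $E_1$ is an error supported in $\{x_0/4\leq x\leq x_0/2\}$; the sharp constant $C<(n-1)/2$ is realized by choosing $x_0$ small enough that $(n-1)/2-x_0\sup_{\supp\psi}|a|>C$. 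For the piece $(1-\psi)u$, apply Lemma~\ref{lemma:weighted-Poincare} with a b-vector field $W\in\Vb(X)$ satisfying $Wt=1$ and $|\Div W|\leq C_0$ on the compact slab of interest (which exists by (PT)); set $\gamma_0=1/(2C_0)$. The hypothesis $0\leq\chi_0\leq-\gamma\chi_0'$ on $[t_0,t_0+\ep]$ is exactly $\chi\leq-\gamma W\chi$ there. Apply the lemma to $v=(1-\psi)\chi_1(t)u$, where $\chi_1\in\CI_\compl(\RR;[0,1])$ satisfies $\chi_1\equiv 1$ for $t\geq t_0$ and $\supp\chi_1\subset\{t\geq t_0-\delta\}$. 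The $W\chi_1$ term from Leibniz, supported in $\{t_0-\delta\leq t\leq t_0\}$, supplies the $C'\|u\|_{L^2_0(\{t\in[t_0-\delta,t_0]\})}$ term, while the $W(1-\psi)u$ and $(1-\psi)Wu$ pieces are absorbed into $C'\gamma\|\chi^{1/2}du\|_{L^2_0(\{t\in[t_0-\delta,t_0+\ep]\})}$, using that the metric is uniformly nondegenerate on $\{x\geq x_0/4\}$. The transitional error $E_1$ is absorbed into the same Step~2 output by rerunning the weighted Poincar\'e on a slightly enlarged temporal cutoff over the annular region $\{x_0/4\leq x\leq x_0/2\}$.

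The main obstacle is the coupled choice of small parameters: the near-boundary step requires $x_0$ small to extract the sharp constant $(n-1)/2$, while the bulk step requires $\gamma$ small to absorb the $|Wv|^2$-terms into the $\chi^{1/2}du$-term, and these two smallness conditions communicate through the relation $|\chi'|\geq\gamma^{-1}\chi$ on $[t_0,t_0+\ep]$, which controls how cheaply $E_1$ can be converted into a $\chi$-weighted $du$-norm. One fixes the constants in the order $C\mapsto x_0\mapsto C_0\mapsto\gamma_0\mapsto\ep$, and uniformity in $t_0\in I$ follows from compactness of $I$ together with the fact that all constants ($\sup|a|$, $C_0$, the transition-region Poincar\'e constant) are taken over the $t$-preimage of a fixed compact enlargement of $I$, which is compact by (PT).
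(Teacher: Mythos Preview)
Your approach is correct and follows the same two-step strategy as the paper: the sharp Hardy estimate near the boundary with the $|\chi'|$ weight, and the weighted Poincar\'e lemma in the bulk, glued by a spatial cutoff. The paper streamlines both halves. For the near-boundary piece it does not rederive the Hardy identity with weight $(-\chi')\psi^2$ and then chase the transition error $E_1$; instead it invokes \eqref{eq:cutoff-Mellin} directly with $\phi=|\chi'|^{1/2}$, and since $V_0t\equiv 0$ near $Y$ gives $V|\chi'|^{1/2}=0$ on $\supp\psi$, there is no transition error at all. For the bulk piece, rather than applying Lemma~\ref{lemma:weighted-Poincare} to $(1-\psi)\chi_1u$ and absorbing the Leibniz contributions from $W(1-\psi)$, the paper constructs a spatial cutoff $\rho\in\CI_\compl(X^\circ)$ with $\rho\equiv 1$ on $\supp(1-\psi)$ and $W\rho=0$ (pulled back from $\{t=t_0\}$ by the $W$-flow), so that $W(\chi\rho^2)=\rho^2W\chi$ and the lemma applies with no commutator terms. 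Your explicit temporal cutoff $\chi_1$ and your discussion of the order $C\mapsto x_0\mapsto C_0\mapsto\gamma_0$ are correct and in fact more carefully stated than the paper's somewhat elliptical treatment of the region $t<t_0$.
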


\begin{proof}
Let $\hsf$ be the space-like hypersurface in $X$ given by $t=t_0$, $t_0\in I$.
We apply Lemma~\ref{lemma:weighted-Poincare} with $W\in\Vb(X)$ transversal
to $\hsf$ as follows.

One has from \eqref{eq:cutoff-Mellin} applied with $\phi$ replaced
by $|\chi'|^{1/2}$ that
\begin{equation*}
\|\psi |\chi'|^{1/2}u\|_{L^2_0(X)}
\leq \tilde C^{-1}\|\psi |\chi'|^{1/2} V u\|.
\end{equation*}
We now use Lemma~\ref{lemma:weighted-Poincare} with $\chi$ replaced by
$\chi\rho^2$, $\rho\equiv 1$ on $\supp(1-\psi)$,
$\rho\in\CI_{\compl}(X^\circ)$,
to estimate
$\|(1-\psi) |W\chi|^{1/2}u\|_{L^2_0(X)}$. We choose $\rho$ so that in addition
$W\rho=0$; this can be done by pulling back a function $\rho_0$ from
$\hsf$ under the $W$-flow. We may also assume that $\rho$ is supported
where $x\geq x_0/8$ in view of $x\geq x_0/4$ on $\supp (1-\psi)$ (we
might need to shorten the time interval we consider, i.e.\ $\ep>0$,
to accomplish this).
Thus, $W(\rho^2\chi)=\rho^2W\chi$, and hence
$$
\int \rho^2|W\chi|\,|u|^2\,dg\leq C\gamma\int \rho^2\chi |Wu|^2\,dg.
$$
As
$x\geq x_0/8$ on $\supp\rho$,
one can estimate $\int \chi\rho^2 |Wu|^2\,dg$ in terms of
$\int \chi |du|^2_H\,dg$ (even though $h$ is a Riemannian 0-metric!),
giving the desired result.
\end{proof}

\section{Energy estimates}\label{sec:energy}
We recall energy estimates on manifolds without boundary
in a form that will be particularly convenient in the next sections.
Thus, we work on $X^\circ$, equipped with a Lorentz metric $g$,
and dual metric $G$; let $\Box=\Box_g$ be the d'Alembertian, so
$\sigma_2(\Box)=G$. We consider a `twisted commutator' with a vector
field $V=-\imath Z$, where $Z$ is a real vector field, typically of
the form $Z=\chi W$, $\chi$ a cutoff function.
Thus, we compute
$\langle -\imath(V^*\Box-\Box V) u,u\rangle$ -- the point being that the
use of $V^*$ eliminates zeroth order terms and hence is useful when we
work not merely modulo lower order terms.

Note that $-\imath(V^*\Box-\Box V)$ is
a second order, real, self-adjoint operator, so if its principal symbol
agrees with that of $d^* C d$ for some real self-adjoint
bundle endomorphism $C$, then
in fact both operators are the same as the difference is $0$th order
and vanishes on constants. Correspondingly, there are no $0$th order
terms to estimate, which is useful as the latter tend to involve higher
derivatives of $\chi$, which in turn tend to be large relative to $d\chi$.
The principal symbol in turn is easy to calculate, for the operator is
\begin{equation}\label{eq:V-twisted-commutator}
-\imath(V^*\Box-\Box V)=-\imath(V^*-V)\Box+\imath[\Box,V],
\end{equation}
whose principal symbol is
\begin{equation*}
-\imath\sigma_0(V^*-V)G+H_G \sigma_1(V).
\end{equation*}

In fact, it is easy to perform this calculation explicitly in local
coordinates $z_j$ and dual coordinates $\zeta_j$. Let
$dg=J\,|dz|$, so $J=|\det g|^{1/2}$.
We write the components of the metric tensors as $g_{ij}$ and
$G^{ij}$, and $\pa_j=\pa_{z_j}$ when this does not cause confusion.
We also write $Z=\chi W=\sum_j Z^j\pa_j$. {\em In the remainder of
this section only,
we adopt the standard summation convention.} Then
\begin{equation*}\begin{split}
&(-\imath Z)^*=\imath Z^*=-\imath J^{-1}\pa_j J Z^j,\\
&-\Box=J^{-1}\pa_i JG^{ij}\pa_j,
\end{split}\end{equation*}
so
\begin{equation*}\begin{split}
&-\imath (V^*-V)u=-\imath ((-\imath Z)^*+\imath Z)u=(Z^*+Z)u=(-J^{-1}\pa_j J Z^j+Z^j\pa_j)u\\
&\qquad\qquad=-J^{-1}(\pa_j JZ^j)u=-(\operatorname{div} Z) u,\\
&H_G= G^{ij}\zeta_i\pa_{z_j}+G^{ij}\zeta_j\pa_{z_i}
-(\pa_{z_k}G^{ij})\zeta_i\zeta_j\pa_{\zeta_k},
\end{split}\end{equation*}
(the first two terms of $H_G$
are the same after summation, but it is convenient
to keep them separate) hence
\begin{equation*}
H_G\sigma_1(V)=G^{ij}(\pa_{z_j}Z^k)\zeta_i\zeta_k
+G^{ij}(\pa_{z_i}Z^k)\zeta_j\zeta_k
-Z^k(\pa_{z_k}G^{ij})\zeta_i\zeta_j.
\end{equation*}
Relabelling the indices, we deduce that
\begin{equation*}\begin{split}
&-\imath\sigma_0(V^*-V)G+H_G \sigma_1(V)\\
&=(-J^{-1}(\pa_k JZ^k)G^{ij}
+G^{ik}(\pa_k Z^j)+G^{jk}(\pa_k Z^i)-Z^k\pa_k G^{ij})\zeta_i\zeta_j,
\end{split}\end{equation*}
with the first and fourth terms combining into $-J^{-1}\pa_k(JZ^k G^{ij})
\zeta_i\zeta_j$, so
\begin{equation}\begin{split}\label{eq:B_ij-formula}
&-\imath(V^*\Box-\Box V)=d^*Cd,\ C_{ij}=g_{i\ell}B_{\ell j}\\
&B_{ij}=-J^{-1}\pa_k(JZ^kG^{ij})
+G^{ik}(\pa_k Z^j)+G^{jk}(\pa_k Z^i),
\end{split}\end{equation}
where $C_{ij}$ are the matrix entries of $C$ relative to the basis $\{dz_s\}$
of the fibers of the cotangent bundle.

We now want to expand $B$ using $Z=\chi W$, and separate the terms
with $\chi$ derivatives, with the idea being that we choose the
derivative of $\chi$ large enough relative to $\chi$ to
dominate the other terms. Thus,
\begin{equation}\begin{split}\label{eq:B_ij-exp-gen}
B_{ij}&=
G^{ik}(\pa_k Z^j)+G^{jk}(\pa_k Z^i)-J^{-1}\pa_k(JZ^kG^{ij})\\
&=(\pa_k \chi) (G^{ik}W^j+G^{jk}W^i-G^{ij}W^k)\\
&\qquad\qquad\qquad+\chi
(G^{ik}(\pa_k Z^j)+G^{jk}(\pa_k Z^i)-J^{-1}\pa_k(JZ^kG^{ij}))
\end{split}\end{equation}
and multiplying the first term on the right hand side by
$\pa_i u\,\overline{\pa_j u}$ (and summing over $i,j$)
gives
\begin{equation}\begin{split}\label{eq:stress-energy}
E_{W,d\chi}(du)&=
(\pa_k \chi) (G^{ik}W^j+G^{jk}W^i-G^{ij}W^k)\pa_i u\,\overline{\pa_j u}\\
&=( du,d\chi)_G \,\overline{du(W)}
+du(W)\,( d\chi,du)_G-d\chi(W) (du,du)_G,
\end{split}\end{equation}
which is twice the sesquilinear stress-energy tensor
associated to the wave $u$. This is well-known to be positive definite in
$du$, i.e.\ for covectors $\alpha$,
$E_{W,d\chi}(\alpha)\geq 0$ vanishing if and only if $\alpha=0$,
when
$W$ and $d\chi$ are both forward time-like for smooth Lorentz metrics,
see e.g.\ \cite[Section~2.7]{Taylor:Partial-I} or \cite[Lemma~24.1.2]{Hor}.
In the present setting, the metric is degenerate at the boundary,
but the analogous result still holds, as we show below.

If we replace the wave operator by the Klein-Gordon operator $P=\Box+\lambda$,
$\lambda\in\Cx$,
we obtain an additional term
\begin{equation*}\begin{split}
&-\imath\lambda (V^*-V)+2\im\lambda V=
-\imath\re\lambda (V^*-V)+\im\lambda (V+V^*)\\
&\qquad=
-\imath\re\lambda\operatorname{div}V+\im\lambda (V+V^*)
\end{split}\end{equation*}
in
$$
-\imath(V^*P-P^*V)
$$
as compared to \eqref{eq:V-twisted-commutator}. With $V=-\imath Z$, $Z=\chi W$,
as above, this contributes
$-\re\lambda (W\chi)$
in terms containing derivatives of $\chi$ to $-\imath(V^*P-P^*V)$.
In particular,
\begin{equation}\begin{split}\label{eq:interior-energy-estimate}
&\langle -\imath (V^*P-P^*V)u,u\rangle\\
&\qquad=\int E_{W,d\chi}(du)\,dg-\re\lambda \langle(W\chi) u,u\rangle\\
&\qquad\qquad+\im\lambda
(\langle \chi Wu,u\rangle+\langle u,\chi W u\rangle)
+\langle \chi R\,du,du\rangle+\langle \chi R'u,u\rangle,
\end{split}\end{equation}
$R\in\CI(X^\circ;\End(T^*X^\circ))$, $R'\in\CI(X^\circ)$.

Now suppose that $W$ and $d\chi$ are either both time like (either forward
or backward; this merely changes an overall sign).
The point of \eqref{eq:interior-energy-estimate} is that one controls
the left hand side if one controls $Pu$ (in the extreme case, when
$Pu=0$, it simply vanishes), and one can regard all terms on the right
hand side after $E_{W,d\chi}(du)$ as terms one can control
by a small multiple of the positive definite quantity
$\int E_{W,d\chi}(du)\,dg$ due to
the Poincar\'e inequality if one arranges that $\chi'$ is large relative to
$\chi$, and thus one can control $\int E_{W,d\chi}(du)\,dg$ in terms
of $Pu$.

In fact, one does not expect that $d\chi$ will be
non-degenerate time-like everywhere:
then one decomposes the energy terms into a region
$\Omega_+$ where one has the desired definiteness,
and a region $\Omega_-$ where this need not hold, and then one
can estimate $\int
E_{W,d\chit}(du)\,dg$
in $\Omega_+$
in terms of its behavior in $\Omega_-$ and $Pu$: thus one propagates
energy estimates (from $\Omega_-$ to $\Omega_+$), provided one
controls $Pu$. Of course, if $u$ is supported in $\Omega_+$, then one
automatically controls $u$ in $\Omega_-$, so we are back to the
setting that $u$ is controlled by $Pu$.
This easily gives uniqueness of solutions,
and a standard functional analytic argument by duality gives solvability.

It turns out that in the asymptotically AdS case one
can proceed similarly, except that the term $\re\lambda \langle(W\chi) u,u\rangle$
is not negligible any more at $\pa X$, and neither is $\im\lambda
(\langle \chi Wu,u\rangle+\langle u,\chi W u\rangle)$. In fact,
the $\re\lambda$ term is the `same size' as the stress energy tensor at
$\pa X$,
hence the need for an upper bound for it, while the $\im\lambda$ term
is even larger, hence the need for the assumption $\im\lambda=0$
because although $\chi$ is not differentiated (hence in some sense
`small'), $W$ is a vector field that is too large compared to the
vector fields the stress energy tensor can estimate at $\pa X$:
it is a b-vector field, rather than a 0-vector field: we explain these
concepts now.

\section{Zero-differential operators and b-differential operators}
\label{sec:Diff0-Diffb}
We start by recalling that $\Vb(X)$ is the Lie algebra of $\CI$ vector
fields on $X$ tangent to $\pa X$, while $\Vz(X)$ is the Lie algebra
of $\CI$ vector fields vanishing at $\pa X$. Thus, $\Vz(X)$ is a Lie
subalgebra of $\Vb(X)$. Note also that both $\Vz(X)$ and $\Vb(X)$
are $\CI(X)$-modules under multiplication from the left, and they act
on $x^k\CI(X)$, in the case of $\Vz(X)$ in addition
mapping $\CI(X)$ into $x\CI(X)$.
The Lie subalgebra property can be strengthened as follows.

\begin{lemma}\label{lemma:Vz-ideal}
$\Vz(X)$ is an ideal in $\Vb(X)$.
\end{lemma}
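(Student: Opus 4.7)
The plan is to show that $[V,W]\in\Vz(X)$ for every $V\in\Vb(X)$ and $W\in\Vz(X)$, working in local coordinates $(x,y_1,\ldots,y_{n-1})$ near $Y$. Two complementary routes are available.

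The cleanest is a pointwise argument, using the intrinsic characterization that $\Vz(X)$ consists precisely of those smooth vector fields whose restriction to $\pa X$ (as a section of $TX|_{\pa X}$) vanishes. Fix $p\in\pa X$; in coordinates with $p=(0,y_0)$, the $k$-th component of the commutator at $p$ is
\[
[V,W]^k(p)=\sum_{j} V^j(p)\,\pa_j W^k(p)-\sum_j W^j(p)\,\pa_j V^k(p).
\]
The second sum vanishes because $W(p)=0$. In the first sum, $V$ tangent to $Y$ gives $V^0(p)=0$, while $W$ vanishing along $Y$ gives $W^k(0,y)\equiv 0$, hence $\pa_{y_j}W^k(p)=0$ for $j\geq 1$. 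Every term is therefore zero, so $[V,W]|_p=0$; since $p\in\pa X$ was arbitrary, $[V,W]\in\Vz(X)$.

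Alternatively, one can verify the claim on local frames and then extend by Leibniz. In coordinates, $\Vb(X)$ is freely generated over $\CI(X)$ by $V_0=x\pa_x$ and $V_j=\pa_{y_j}$ ($1\leq j\leq n-1$), while $\Vz(X)$ is freely generated by $W_0=x\pa_x$ and $W_j=x\pa_{y_j}$. A short direct computation yields $[V_0,W_j]=W_j$ for $j\geq 1$ and $[V_i,W_j]=0$ in all other cases, so every frame commutator lies in $\Vz(X)$. For general $V=\sum a^i V_i$ and $W=\sum b^j W_j$ with $a^i,b^j\in\CI(X)$, the Leibniz identity
\[
[V,W]=\sum_{i,j}\bigl(a^i b^j[V_i,W_j]+a^i(V_i b^j)W_j-b^j(W_j a^i)V_i\bigr)
\]
produces three kinds of terms: the first is in $\Vz(X)$ by the frame computation, the second because $W_j\in\Vz(X)$, and the third requires the small observation that $W_j a^i$ vanishes at $\pa X$ (as $W_j$ does), so $W_j a^i=x\tilde a^i_j$ with $\tilde a^i_j\in\CI(X)$; then $xV_0=xW_0\in\Vz(X)$ and $xV_j=W_j\in\Vz(X)$ for $j\geq 1$, showing the third term lies in $\Vz(X)$.

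There is no serious obstacle; the only subtle point in the second approach is remembering that $x\Vb(X)\subset\Vz(X)$, which is what absorbs the $b^j(W_j a^i)V_i$ contribution. The first, pointwise approach avoids this bookkeeping entirely and is the presentation I would choose.
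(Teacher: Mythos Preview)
Your proof is correct. Both routes you give work, and they are genuinely different from the paper's argument. The paper exploits the identification $\Vz(X)=x\Vf(X)$ from the outset: writing the zero vector field as $xV'$ with $V'\in\Vf(X)$, it expands $[xV',W]=(-Wx)V'+x[V',W]$ and observes that $Wx\in x\CI(X)$ (since $W$ is tangent to $Y$) while $[V',W]\in\Vf(X)$, so both summands lie in $x\Vf(X)=\Vz(X)$. This is a one-line product-rule computation that never touches coordinates of the commutator itself.

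Your pointwise argument instead uses the characterization of $\Vz(X)$ as vector fields whose restriction to $\pa X$ vanishes, and checks $[V,W]|_{\pa X}=0$ component by component; your frame argument verifies the result on generators and extends by Leibniz. Both are perfectly valid and arguably more elementary in spirit, though the paper's factorization $V=xV'$ is slicker and foreshadows the repeated use of $x\Vb(X)\subset\Vz(X)$ later in the paper (indeed this is the ``subtle point'' you flag in your second approach). Either of your arguments would serve as a fine replacement.
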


\begin{proof}
Suppose $V\in\Vz(X)$, $W\in\Vb(X)$. Then, as $V$ vanishes at $\pa X$,
there exists $V'\in\Vf(X)$ such that $V=xV'$. Thus,
$$
[V,W]=[xV',W]=[x,W]V'+x[V',W].
$$
Now, as $W$ is tangent to $Y$, $[x,W]=-Wx\in x\CI(X)$, and as $V',W\in\Vf(X)$,
$[V',W]\in\Vf(X)$, so $[V,W]\in x\Vf(X)=\Vz(X)$.
\end{proof}

As usual, $\Diffz(X)$ is the algebra generated by $\Vz(X)$, while
$\Diffb(X)$ is the algebra generated
by $\Vb(X)$. We combine these in the following
definition, originally introduced in \cite{Vasy:De-Sitter}
(indeed, even weights $x^r$ were allowed there).

\begin{Def}
Let $\Diffz^k\Diffb^m(X)$ be the (complex) vector space
of operators on $\dCI(X)$ of the
form
$$
\sum P_j Q_j,\ P_j\in\Diffz^k(X),\ Q_j\in\Diffb^k(X),
$$
where the sum is locally finite, and let
$$
\Diffz\Diffb(X)=\cup_{k=0}^\infty\cup_{m=0}^\infty\Diffz^k\Diffb^m(X).
$$
\end{Def}

We recall that
this space is closed under composition, and that commutators have
one lower order in the 0-sense than products, see
\cite[Lemma~4.5]{Vasy:De-Sitter}:

\begin{lemma}\label{lemma:Diff-filtered-ring}
$\Diffz\Diffb(X)$ is a filtered ring under composition with
$$
A\in\Diffz^k\Diffb^m(X),\ B\in\Diffz^{k'}\Diffb^{m'}(X)
\Rightarrow AB\in\Diffz^{k+k'}\Diffb^{m+m'}(X).
$$
Moreover, composition is commutative to leading order in $\Diffz$,
i.e.\ for $A,B$ as above, with $k+k'\geq 1$,
$$
[A,B]\in\Diffz^{k+k'-1}\Diffb^{m+m'}(X).
$$
\end{lemma}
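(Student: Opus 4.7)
My plan is to reduce both statements to a single preparatory commutator bound, combined with Lemma~\ref{lemma:Vz-ideal} and the trivial inclusion $\Vz(X)\subset\Vb(X)$—the latter giving the absorption rule $\Diffz^{a+1}\Diffb^{b-1}(X)\subset\Diffz^a\Diffb^b(X)$, which will be used repeatedly.

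First I would prove the preparatory claim: for every $V\in\Vb(X)$ and every $S\in\Diffz^{k'}(X)$, $[V,S]\in\Diffz^{k'}(X)$. The proof is an induction on $k'$: writing $S$ as a sum of monomials $W_1\cdots W_j$ with $W_i\in\Vz(X)$ and $j\leq k'$, the expansion $[V,W_1\cdots W_j]=\sum_i W_1\cdots W_{i-1}[V,W_i]W_{i+1}\cdots W_j$ replaces each occurrence of $V$ by $[V,W_i]\in\Vz(X)$ via Lemma~\ref{lemma:Vz-ideal}, so every summand remains a product of $j\leq k'$ zero-vector fields. From this I would deduce the filtered ring property through the intermediate statement $\Diffb^m(X)\cdot\Diffz^{k'}(X)\subset \Diffz^{k'}\Diffb^m(X)$, proved by induction on $m$: given $V\in\Vb(X)$ and $RT\in\Diffz^{k'}\Diffb^{m-1}(X)$ decomposed as $\sum S_jU_j$, the identity $V S_j U_j = S_j V U_j + [V,S_j]\,U_j$ together with the preparatory claim places both summands in $\Diffz^{k'}\Diffb^m(X)$. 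The full product statement then follows by sandwiching: $AB=\sum P_j(Q_jP'_i)Q'_i$.

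For the commutator statement I would apply the bilinear Leibniz identity
\[
[PQ,\,P'Q']=[P,P']\,Q'Q + P'[P,Q']\,Q + P[Q,P']\,Q' + PP'\,[Q,Q']
\]
with $P\in\Diffz^k$, $Q\in\Diffb^m$, $P'\in\Diffz^{k'}$, $Q'\in\Diffb^{m'}$, and bound each of the four commutators separately. The pure ones are the standard order-drops inside the algebras generated by $\Vz(X)$ and $\Vb(X)$, giving $[P,P']\in\Diffz^{k+k'-1}(X)$ and $[Q,Q']\in\Diffb^{m+m'-1}(X)$. For the mixed ones, iterating the preparatory claim and using the already established filtered ring property to move each $[V_i,P']\in\Diffz^{k'}(X)$ to the left past the preceding $\Vb$-factors yields $[Q,P']\in\Diffz^{k'}\Diffb^{m-1}(X)$, and symmetrically $[P,Q']\in\Diffz^k\Diffb^{m'-1}(X)$.

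The main (and essentially only) subtle point is reconciling the resulting bounds on the four Leibniz pieces into a single conclusion. Three of them—namely $P'[P,Q']Q$, $P[Q,P']Q'$, and $PP'[Q,Q']$—land directly in $\Diffz^{k+k'}\Diffb^{m+m'-1}(X)$, which by the absorption rule sits inside $\Diffz^{k+k'-1}\Diffb^{m+m'}(X)$. It is the remaining `pure-$\Diffz$' term $[P,P']\,Q'Q$ that forces the precise form of the claim, since it naturally lives only in $\Diffz^{k+k'-1}\Diffb^{m+m'}(X)$ with no $\Diffb$-drop available; together all four terms lie in the claimed space, completing the proof.
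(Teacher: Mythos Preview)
Your argument is correct. The paper does not actually prove this lemma here---it is quoted from \cite[Lemma~4.5]{Vasy:De-Sitter}---so there is no in-paper proof to compare against; your approach (reduce to monomials in vector fields, invoke Lemma~\ref{lemma:Vz-ideal}, and expand $[PQ,P'Q']$ by the four-term Leibniz identity) is the natural one and matches the spirit of the paper's proof of the closely related Lemma~\ref{lemma:Diffb-commutant}.

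One small imprecision worth flagging: your intermediate bound $[Q,P']\in\Diffz^{k'}\Diffb^{m-1}(X)$ tacitly assumes $Q$ is a sum of monomials $V_1\cdots V_j$ with $j\geq 1$. A pure function part $g\in\CI(X)\subset\Diffb^m(X)$ contributes instead $[g,P']\in\Diffz^{k'-1}(X)$, which for $m\geq 1$ still lies in $\Diffz^{k'}\Diffb^{m-1}(X)$ via the increasing filtrations, while for $m=0$ one may simply absorb $Q=g$ into $P$. Either way the final conclusion is unaffected.
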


Here we need an improved property regarding commutators with
$\Diffb(X)$ (which would a priori only gain in the 0-sense by the preceding
lemma). It is this lemma that necessitates the lack of weights on
the $\Diffb(X)$-commutant.

\begin{lemma}\label{lemma:Diffb-commutant}
For $A\in\Diffb^s(X)$, $B\in\Diffz^k\Diffb^m(X)$, $s\geq 1$,
$$
[A,B]\in\Diffz^k\Diffb^{s+m-1}(X).
$$
\end{lemma}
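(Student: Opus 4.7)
The plan is to reduce to the case $s=1$ (i.e.\ $A \in \Vb(X)$) by a Leibniz induction, and to handle that case by using Lemma~\ref{lemma:Vz-ideal} to control the $\Diffz$-order of the commutator. The key intermediate claim is:
\begin{equation*}
A \in \Vb(X),\ P \in \Diffz^k(X)\ \Longrightarrow\ [A,P] \in \Diffz^k(X).
\end{equation*}
I would prove this by induction on $k$. For $k=0$, $P \in \CI(X)$ and $[A,P] = AP \in \CI(X)$ since $A$ is tangent to $\pa X$. For the inductive step, it suffices to consider generators of the form $P = V P'$ with $V \in \Vz(X)$ and $P' \in \Diffz^{k-1}(X)$ (and sums of these with $\CI(X)$ coefficients); then
\begin{equation*}
[A, V P'] = [A,V] P' + V [A,P'],
\end{equation*}
and by Lemma~\ref{lemma:Vz-ideal}, $[A,V] \in \Vz(X)$, so the first term lies in $\Vz(X) \cdot \Diffz^{k-1}(X) \subset \Diffz^k(X)$, while the inductive hypothesis gives $[A,P'] \in \Diffz^{k-1}(X)$, so the second term lies in $\Diffz^k(X)$ as well.

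Next, I would establish the $s=1$ case of the lemma. For $A \in \Vb(X)$ and $B = \sum_j P_j Q_j$ with $P_j \in \Diffz^k(X)$, $Q_j \in \Diffb^m(X)$, Leibniz gives
\begin{equation*}
[A, P_j Q_j] = [A,P_j] Q_j + P_j [A,Q_j].
\end{equation*}
By the claim above, $[A,P_j] \in \Diffz^k(X)$, so $[A,P_j] Q_j \in \Diffz^k \Diffb^m(X)$; and since $\Diffb(X)$ is a filtered ring, $[A,Q_j] \in \Diffb^m(X)$, so $P_j [A,Q_j] \in \Diffz^k \Diffb^m(X)$. Summing, $[A,B] \in \Diffz^k \Diffb^m(X) = \Diffz^k \Diffb^{s+m-1}(X)$ when $s=1$. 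Along the way one also gets the useful multiplication rule
\begin{equation*}
A \cdot (P_j Q_j) = P_j (A Q_j) + [A,P_j] Q_j \in \Diffz^k \Diffb^{m+1}(X),
\end{equation*}
i.e.\ $\Vb(X) \cdot \Diffz^k \Diffb^m(X) \subset \Diffz^k \Diffb^{m+1}(X)$, and hence by iteration $\Diffb^r(X) \cdot \Diffz^k \Diffb^m(X) \subset \Diffz^k \Diffb^{r+m}(X)$.

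Finally, for general $s \geq 1$ I would induct on $s$. Writing a generator of $\Diffb^s(X)$ as $A = A_1 A''$ with $A_1 \in \Vb(X)$ and $A'' \in \Diffb^{s-1}(X)$,
\begin{equation*}
[A_1 A'', B] = A_1 [A'', B] + [A_1, B] A''.
\end{equation*}
By the inductive hypothesis, $[A'', B] \in \Diffz^k \Diffb^{s+m-2}(X)$, so the displayed multiplication rule gives $A_1 [A'',B] \in \Diffz^k \Diffb^{s+m-1}(X)$. By the $s=1$ case, $[A_1, B] \in \Diffz^k \Diffb^m(X)$, so $[A_1, B] A'' \in \Diffz^k \Diffb^{s+m-1}(X)$ using that $\Diffz^k \Diffb^m(X) \cdot \Diffb^{s-1}(X) \subset \Diffz^k \Diffb^{s+m-1}(X)$ (immediate from the definition). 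The main obstacle is the first step: a priori, Lemma~\ref{lemma:Diff-filtered-ring} only tells us $[A, P] \in \Diffz^{k-1}\Diffb^{s}(X)$, which would let the $\Diffz$-order increase uncontrollably in the induction; it is precisely the ideal property $[\Vb(X), \Vz(X)] \subset \Vz(X)$ that lets us keep the $\Diffz$-order at $k$ while gaining the single order in $\Diffb$.
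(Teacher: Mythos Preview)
Your proof is correct and follows essentially the same approach as the paper: both reduce, via Leibniz expansion and induction, to the two generator-level checks $[W,V]\in\Vz(X)$ for $W\in\Vb(X)$, $V\in\Vz(X)$ (Lemma~\ref{lemma:Vz-ideal}) and $[W,f]\in\CI(X)$ for $f\in\CI(X)$. The paper's write-up is terser, noting that subleading $\Diffb$-order terms are handled directly by the filtered ring property (Lemma~\ref{lemma:Diff-filtered-ring}), while you spell out the inductions on $k$ and $s$ explicitly; the content is the same.
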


\begin{proof}
We first note that only the leading terms
in terms of $\Diffb$ order in both commutants
matter for the conclusion, for otherwise the composition result,
Lemma~\ref{lemma:Diff-filtered-ring}, gives
the desired conclusion.
We again write elements of $\Diffz\Diffb(X)$ as
locally finite sums of products
of vector fields and functions, and then, using
Lemma~\ref{lemma:Diff-filtered-ring}
and expanding the commutators, we are reduced to checking
that
\begin{enumerate}
\item
$V\in\Vz(X)$, $W\in\Vb(X)$, $[W,V]=-[V,W]\in\Diffz^1(X)$, which
follows from Lemma~\ref{lemma:Vz-ideal},
\item
and for $W\in\Vb(X)$, $f\in\CI(X)$, $[W,f]=Wf\in\CI(X)=\Diffb^0(X)$.
\end{enumerate}
In both cases thus, the commutator drops b-order by 1 as compared
to the product, completing the proof of the lemma.
\end{proof}

We also remark the following:

\begin{lemma}\label{lemma:b-to-0-conversion}
For each non-negative integer $l$ with $l\leq m$,
$$
x^l\Diffz^k\Diffb^m(X)\subset \Diffz^{k+l}\Diffb^{m-l}(X).
$$
\end{lemma}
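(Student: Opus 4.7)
The plan is to prove the $l=1$ case first, namely
$$
x\,\Diffz^k\Diffb^m(X)\subset\Diffz^{k+1}\Diffb^{m-1}(X),\qquad m\geq 1,
$$
and then iterate: once this is known,
$$
x^l\,\Diffz^k\Diffb^m=x^{l-1}\bigl(x\,\Diffz^k\Diffb^m\bigr)\subset x^{l-1}\,\Diffz^{k+1}\Diffb^{m-1}\subset\Diffz^{k+l}\Diffb^{m-l}
$$
by induction on $l$; the hypothesis $l\leq m$ keeps the b-order nonnegative at each step.

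The geometric input for the $l=1$ claim is that $x\,\Vb(X)\subset\Vz(X)$: in local coordinates, using \eqref{eq:b-vf-in-coords} and \eqref{eq:z-vf-in-coords}, any $W=a\,x\pa_x+\sum b_j\pa_{y_j}\in\Vb(X)$ satisfies $xW=ax(x\pa_x)+\sum b_j(x\pa_{y_j})\in\Vz(X)$. This already yields the base case $k=0$: writing $Q\in\Diffb^m(X)$ as a locally finite sum of products $f\,W_1\cdots W_s$ with $s\leq m$, $f\in\CI(X)$, $W_i\in\Vb(X)$, one has $xQ=\sum f(xW_1)W_2\cdots W_s$ when $s\geq 1$, which lies in $\Vz(X)\cdot\Diffb^{m-1}(X)\subset\Diffz^1\Diffb^{m-1}(X)$, while the terms $xf$ with $s=0$ sit in $\CI(X)=\Diffb^0(X)\subset\Diffb^{m-1}(X)$ since $m\geq 1$.

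For the inductive step in $k$ (at fixed $l=1$), write an element of $\Diffz^k\Diffb^m(X)$ as $PQ$ with $P\in\Diffz^k(X)$, $Q\in\Diffb^m(X)$, and split
$$
xPQ=P(xQ)+[x,P]Q.
$$
The first term lies in $\Diffz^k\cdot\Diffz^1\Diffb^{m-1}=\Diffz^{k+1}\Diffb^{m-1}$ by Lemma~\ref{lemma:Diff-filtered-ring} together with the $k=0$ case. For the second term, the crucial subsidiary statement is the \emph{refined} commutator bound
$$
[x,\Diffz^k(X)]\subset x\,\Diffz^{k-1}(X),
$$
which I will establish by expanding $[x,V_1\cdots V_k]=-\sum_i V_1\cdots V_{i-1}(V_ix)V_{i+1}\cdots V_k$ via Leibniz, using $V_ix\in x\,\CI(X)$ for $V_i\in\Vz(X)$ (since $V_i=xV_i'$ with $V_i'\in\Vf(X)$), and then pulling the resulting factor of $x$ to the outside by iterating the operator identity $V\circ x=x\circ V+(Vx)$ for $V\in\Vz(X)$ (noting $Vx\in x\,\CI(X)$). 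With this refined claim in hand, $[x,P]Q\in x\,\Diffz^{k-1}\Diffb^m\subset\Diffz^k\Diffb^{m-1}\subset\Diffz^{k+1}\Diffb^{m-1}$ by the inductive hypothesis in $k$, which closes the argument.

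The only non-routine point is that one needs the refined bound $[x,\Diffz^k]\subset x\,\Diffz^{k-1}$ rather than the weaker $[x,\Diffz^k]\subset\Diffz^{k-1}$ supplied directly by Lemma~\ref{lemma:Diff-filtered-ring}; extracting the \emph{outside} factor of $x$ is precisely what allows the 0-order gain to be converted into a b-order loss in the next step of the induction on $k$. Everything else is bookkeeping using that $\CI(X)$-modules and ring closure properties of $\Diffz\Diffb(X)$ are already in place.
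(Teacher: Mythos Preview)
Your proof is correct; the key geometric input $x\Vb(X)\subset\Vz(X)$ and the induction on $l$ are exactly what the paper uses. The paper's proof is a single sentence (``immediate consequence of $x\Vb(X)\subset x\Vf(X)=\Vz(X)$''), so you have supplied considerably more detail than the author chose to.

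That said, your route through the refined commutator bound $[x,\Diffz^k]\subset x\Diffz^{k-1}$ and the induction on $k$ is more elaborate than necessary. A shorter way to handle the $\Diffz^k$ factor is to observe directly that $x\Diffz^k\subset\Diffz^k\cdot x$: for $V\in\Vz(X)$ one has $xV=(V-g)x$ with $g=x^{-1}(Vx)\in\CI(X)$ (since $Vx\in x\CI(X)$), and iterating through a product of such $V$'s gives $xP=P'x$ with $P'\in\Diffz^k(X)$. Then $xPQ=P'(xQ)\in\Diffz^k\cdot\Diffz^1\Diffb^{m-1}=\Diffz^{k+1}\Diffb^{m-1}$ in one step, using only your $k=0$ case and Lemma~\ref{lemma:Diff-filtered-ring}. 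Equivalently, one can use that $\Diffz^k\Diffb^m=\Diffb^m\Diffz^k$ (which follows from Lemmas~\ref{lemma:Diff-filtered-ring} and~\ref{lemma:Diffb-commutant} by induction on $m$) to place the b-factors on the left and absorb $x$ there directly. Either of these is presumably what the paper's ``immediate'' is gesturing at; your argument reaches the same conclusion by a parallel but slightly longer path.
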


\begin{proof}
This result is an immediate consequence of $x\Vb(X)\subset x\Vf(X)=\Vz(X)$.
\end{proof}

Integer ordered Sobolev spaces, $H^{k,m}_{0,\bl}(X)$ were defined
in the introduction. It is immediate from our definitions that for
$P\in\Diffz^r\Diffb^s(X)$,
$$
P:H^{k,m}_{0,\bl}(X)\to H^{k-r,s-m}_{0,\bl}(X)
$$
is continuous.

A particular consequence of Lemma~\ref{lemma:Diffb-commutant}
is that if $V\in\Vb(X)$, $P\in\Diffz^m(X)$,
the $[P,V]\in\Diffz^m(X)$.

We also note that for $Q\in\Vb(X)$, $Q=-\imath Z$, $Z$ real,
we have $Q^*-Q\in\CI(X)$, where
the adjoint is taken with respect to the $L^2=L^2_0(X)$ inner product.
Namely:

\begin{lemma}\label{lemma:divergence}
Suppose $Q\in\Vb(X)$, $Q=-\imath Z$, $Z$ real. Then
$Q^*-Q\in\CI(X)$, and
with
\begin{equation*}
Q=a_0(xD_x)+\sum a_j D_{y_j},
\end{equation*}
$$
Q^*-Q=\operatorname{div} Q=J^{-1} (D_x (x a_0 J)+\sum D_{y_j}(a_j J)).
$$ with the metric density given
by $J\,|dx\,dy|$, $J\in x^{-n}\CI(X)$.
\end{lemma}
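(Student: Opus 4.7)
The plan is to reduce everything to the standard divergence identity for real vector fields against a smooth density, and then verify that the boundary-order counting works out thanks to the factor of $x$ in front of $\pa_x$ that is built into the definition of $\Vb(X)$.

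First I would write $Z = x a_0 \pa_x + \sum_j a_j \pa_{y_j}$, where $a_0, a_j \in \CI(X)$ are real (they are real because $Z$ is, and the decomposition $Z = x a_0 \pa_x + \sum a_j \pa_{y_j}$ is unique). For any real vector field $Z$ and any smooth positive density $\mu$ on $X^\circ$, integration by parts against $\mu$ gives $Z^* = -Z - \operatorname{div}_\mu Z$, where $\operatorname{div}_\mu Z$ is determined by $\cL_Z \mu = (\operatorname{div}_\mu Z)\mu$. Applied to $\mu = dg = J\,|dx\,dy|$, this yields in the chosen local coordinates
\[
\operatorname{div}_g Z = J^{-1}\bigl(\pa_x(x a_0 J) + \sum_j \pa_{y_j}(a_j J)\bigr).
\]
Since $Q = -\imath Z$, I have $Q^* = \imath Z^* = Q - \imath \operatorname{div}_g Z$, so
\[
Q^* - Q = -\imath \operatorname{div}_g Z = J^{-1}\bigl(D_x(x a_0 J) + \sum_j D_{y_j}(a_j J)\bigr),
\]
which is exactly the claimed formula, once one sets $\operatorname{div} Q := Q^* - Q$.

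Next I would check that this expression is in $\CI(X)$ up to $Y$, which is the only nontrivial part of the statement. Write $J = x^{-n} J_0$ with $J_0 \in \CI(X)$, $J_0 > 0$. The $y_j$-terms are immediately smooth since
\[
J^{-1} D_{y_j}(a_j J) = J_0^{-1} D_{y_j}(a_j J_0) \in \CI(X).
\]
For the $x$-term, the key cancellation is $x a_0 J = x^{1-n} a_0 J_0$, so
\[
D_x(x a_0 J) = -\imath(1-n) x^{-n} a_0 J_0 - \imath x^{1-n} \pa_x(a_0 J_0),
\]
and multiplying by $J^{-1} = x^n J_0^{-1}$ gives $-\imath(1-n) a_0 - \imath x J_0^{-1} \pa_x(a_0 J_0) \in \CI(X)$.

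The one point to watch is this last weight-counting step: the factor $x$ in front of $\pa_x$ in the definition of $\Vb(X)$ is exactly what is needed to compensate for the $x^{-n}$ singularity of the 0-metric density and leave something smooth up to the boundary. This is consistent with (and in fact generalizes) the computation $\Div V = -(n-1) + xa$ already carried out in Section~\ref{sec:Poincare}, and it explains why the lemma holds for $Z \in \Vb(X)$ but would fail for a generic $Z \in \Vf(X)$ which is not tangent to $Y$.
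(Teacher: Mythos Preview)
Your proof is correct. The paper actually states this lemma without proof, treating it as a standard computation; your argument is exactly the natural one, and in fact parallels the explicit divergence computation the paper already carried out in the proof of Lemma~\ref{lemma:local-sharp-Poincare} (where $\Div V = -(n-1)(V_0 x) + x\Div_{\hat g}V_0$ was derived by the same weight-counting).
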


Combining these results we deduce:

\begin{prop}
Suppose $Q\in\Vb(X)$, $Q=-\imath Z$, $Z$ real. Then
\begin{equation}\label{eq:twisted-commutator}
-\imath(Q^*\Box-\Box Q)=d^* C d,
\end{equation}
where $C\in\CI(X;\End(\zT^*X))$ and in the basis
$\{\frac{dx}{x},\frac{dy_1}{x},\ldots,\frac{dy_{n-1}}{x}\}$,
$$
C_{ij}=\sum_\ell g_{i\ell}\sum_k\Big(-J^{-1}\pa_k(J a_k \hat G^{\ell j})
+\hat G^{\ell k}(\pa_k a_j)+\hat G^{jk}(\pa_k a_\ell)\Big).
$$
\end{prop}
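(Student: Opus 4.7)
My approach is to apply the general twisted-commutator identity \eqref{eq:B_ij-formula} from Section~\ref{sec:energy}, specialized to $V = Q$, and then translate the result into the 0-geometric framework via the conformal relation $g = x^{-2}\hat g$.

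First I would note that, since $\Box = \Box_g$ is self-adjoint with principal symbol $G$ and $Q = -\imath Z$ with $Z$ real, the identities \eqref{eq:V-twisted-commutator} and \eqref{eq:B_ij-formula} apply verbatim on $X^\circ$, giving
$$
-\imath(Q^*\Box - \Box Q) = d^*Cd, \qquad C_{ij} = g_{i\ell}B_{\ell j},
$$
with $B_{\ell j}$ as in \eqref{eq:B_ij-formula}, expressed in the standard coordinate basis $\{dz_i\}$ in terms of the components $Z^k$. By Lemma~\ref{lemma:divergence}, writing $Q = a_0(xD_x) + \sum_{j\geq 1} a_j D_{y_j}$ amounts to $Z^0 = xa_0$ and $Z^j = a_j$ for $j\geq 1$. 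I would then substitute the conformal identities $G^{ij} = x^2\hat G^{ij}$ and $J = x^{-n}\hat J$, where $\hat J = |\det\hat g|^{1/2}$ is smooth and nonvanishing, into $B_{\ell j}$, and re-express $d^*Cd$ in the 0-cotangent basis $\{dz_i/x\}$ of $\zT^*X$; the stated formula for $C_{ij}$ then emerges by rearranging the Leibniz expansions so that the b-coefficients $a_k$ and the conformal dual metric $\hat G^{\ell j}$ appear in place of $Z^k$ and $G^{\ell j}$.

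The hard part is showing that, after this substitution, all a priori singular $x$-weights cancel and $C$ is smooth up to $Y$ as an endomorphism of $\zT^*X$. The main obstacle sits in the $k=0$ term of the divergence $-J^{-1}\pa_k(JZ^k G^{\ell j})$: here $Z^0 = xa_0$ reflects $Q$ being a b-vector field (rather than a smooth one), and the combination $JZ^0 G^{\ell j} = x^{3-n}\hat J a_0 \hat G^{\ell j}$, once differentiated by $\pa_x$, produces Leibniz terms of several $x$-weights that must conspire to give a smooth expression when the overall factor of $x^{-2}$ coming from $g_{i\ell} = x^{-2}\hat g_{i\ell}$ is absorbed into the 0-basis rescaling. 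This cancellation is guaranteed in principle by the earlier results of Section~\ref{sec:Diff0-Diffb}: since $\Box \in \Diffz^2(X)$ and Lemma~\ref{lemma:Diffb-commutant} gives $[Q,\Box]\in\Diffz^2(X)$, the operator $-\imath(Q^*\Box - \Box Q) = -\imath(Q^*-Q)\Box + \imath[\Box,Q]$ lies in $\Diffz^2(X)$ and thus has coefficients smooth up to $\pa X$ when expressed relative to the 0-structure; verifying the explicit formula stated in the proposition is then a matter of careful Leibniz bookkeeping.
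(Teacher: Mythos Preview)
Your approach is correct and closely parallels the paper's, but the paper sidesteps the explicit Leibniz bookkeeping you describe as ``the hard part.'' Both arguments begin identically: invoke \eqref{eq:B_ij-formula} over $X^\circ$ to match the principal symbols, and use Lemma~\ref{lemma:Diffb-commutant} (together with $Q^*-Q\in\CI(X)$) to place $-\imath(Q^*\Box-\Box Q)$ in $\Diffz^2(X)$. Where you propose to verify smoothness of $C$ by tracking the $x$-weight cancellations term by term, the paper instead observes that once both sides of \eqref{eq:twisted-commutator} are known to lie in $\Diffz^2(X)$, are real and formally self-adjoint, and have the same principal symbol (matched over $X^\circ$ and extended by continuity to $\zT^*X$), their difference is a first-order real self-adjoint operator---hence has vanishing principal symbol, so is zeroth order---and it annihilates constants, so vanishes identically. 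This structural argument delivers the exact identity without ever expanding the conformal substitution, which is cleaner; your direct route would also work but is more laborious.
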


\begin{proof}
We write
$$
-\imath(Q^*\Box-\Box Q)=-\imath(Q^*-Q)\Box-\imath [Q,\Box]\in\Diffz^2(X),
$$
and
compute the principal symbol, which we check agrees with that of
$d^*C d$. One way of achieving this is to do the computation over $X^\circ$;
by continuity if the symbols agree here, they agree on $\zT^*X$. But
over the interior this is the standard computation leading
to \eqref{eq:B_ij-formula}; in coordinates
$z_j$, with dual coordinates $\zeta_j$, writing $Z=\sum Z^j\pa_{z_j}$,
$G=\sum G^{ij}\pa_{z_i}\pa_{z_j}$,
both sides have principal symbol
$$
\sum_{ij}B_{ij}\zeta_i\zeta_j,\ B_{ij}=\sum_k\Big(-J^{-1}\pa_k(J Z^k G^{ij})
+G^{ik}(\pa_k Z^j)+G^{jk}(\pa_k Z^i)\Big).
$$

Now both sides of \eqref{eq:twisted-commutator} are elements
of $\Diffz^2(X)$, are formally self-adjoint, real,
and have the same principal symbol. Thus, their difference is
a first order, self-adjoint and real operator; it follows that its
principal symbol vanishes, so in fact this difference is zeroth order.
Since it annihilates constants (as both sides do), it actually vanishes.
\end{proof}

We particularly care about the terms in which the coefficients $a_j$ are
differentiated, with the idea being that we write $Z=\chi W$, and choose the
derivative of $\chi$ large enough relative to $\chi$ to
dominate the other terms.
Thus, as in \eqref{eq:stress-energy},
\begin{equation}\begin{split}\label{eq:B_ij-exp}
B_{ij}&=
\sum_k(\pa_k \chi) (G^{ik}W^j+G^{jk}W^i-G^{ij}W^k)\\
&\qquad\qquad\qquad+\chi
\sum_k(G^{ik}(\pa_k Z^j)+G^{jk}(\pa_k Z^i)-J^{-1}\pa_k(JZ^kG^{ij}))
\end{split}\end{equation}
and multiplying the first term on the right hand side by
$\pa_i u\,\overline{\pa_j u}$ (and summing over $i,j$)
gives
\begin{equation*}
\sum_{i,j,k}
(\pa_k \chi) (G^{ik}W^j+G^{jk}W^i-G^{ij}W^k)\pa_i u\,\overline{\pa_j u},
\end{equation*}
which is twice the sesquilinear stress-energy tensor
$\frac{1}{2}E_{W,d\chi}(du)$
associated to the wave $u$. As we mentioned before, this is
positive definite when
$W$ and $d\chi$ are both forward time-like for smooth Lorentz metrics.
In the present setting, the metric is degenerate at the boundary,
but the analogous result still holds since
\begin{equation}\begin{split}\label{eq:stress-energy-deg}
E_{W,d\chi}(du)&=
\sum_{i,j,k}(\pa_k \chi) (\hat G^{ik}W^j+\hat G^{jk}W^i
-\hat G^{ij}W^k)(x\pa_i u)\,\overline{x\pa_j u}\\
&=( x\,du,d\chi)_{\hat G} \,\overline{x\,du(W)}
+x\,du(W)\,( d\chi,x\,du)_{\hat G}-d\chi(W) (x\,du,x\,du)_{\hat G},
\end{split}\end{equation}
so the Lorentzian non-degenerate nature of $\hat G$ proves the (uniform)
positive
definiteness in $x\,du$, considered as an element of $T^*_q X$,
hence in $du$, regarded as an element of $\Tz^*_q X$.
Indeed, we recall the quick proof here since we need to improve on
this statement to get an optimal result below.

Thus, we wish to show that for $\alpha\in T^*_qX$, $W\in T_q X$,
$\alpha$ and $W$ forward time-like,
$$
\hat E_{W,\alpha}(\beta)=
(\beta,\alpha)_{\hat G} \,\overline{\beta(W)}
+\beta(W)\,( \alpha,\beta)_{\hat G}-\alpha(W) (\beta,\beta)_{\hat G}
$$
is positive definite as a quadratic form in $\beta$. Since replacing
$W$ by a positive multiple does not change the positive definiteness,
we may assume, as we do below, that $(W,W)_{\hat G}=1$. Then we may
choose local coordinates $(z_1,\ldots,z_n)$ such that $W=\pa_{z_n}$ and
$\hat g|_q=dz_n^2-(dz_1^2+\ldots+dz_{n-1}^2)$, thus
$\hat G|_q=\pa_{z_n}^2-(\pa_{z_1}^2+\ldots+\pa_{z_{n-1}}^2)$. Then
$\alpha=\sum\alpha_j\,dz_j$ being forward time-like means that $\alpha_n>0$
and $\alpha_n^2>\alpha_1^2+\ldots+\alpha_{n-1}^2$. Thus,
\begin{equation}\begin{split}\label{eq:stress-energy-pos-def}
&\hat E_{W,\alpha}(\beta)=(\beta_n\alpha_n
-\sum_{j=1}^{n-1}\beta_j\alpha_j)\overline{\beta_n}
+\beta_n(\alpha_n\overline{\beta_n}-\sum_{j=1}^{n-1}\alpha_j\overline{\beta_j})
-\alpha_n(|\beta_n|^2-\sum_{j=1}^{n-1}|\beta_j|^2)\\
&\qquad=\alpha_n\sum_{j=1}^n|\beta_j|^2
-\beta_n\sum_{j=1}^{n-1}\alpha_j\overline{\beta_j}
-\sum_{j=1}^{n-1}\beta_j\alpha_j\overline{\beta_n}\\
&\qquad
\geq \alpha_n\sum_{j=1}^n|\beta_j|^2
-2|\beta_n|(\sum_{j=1}^{n-1}\alpha_j^2)^{1/2}(\sum_{j=1}^{n-1}|\beta_j|^2)^{1/2}\\
&\qquad\geq \alpha_n\sum_{j=1}^n|\beta_j|^2
-2|\beta_n|\alpha_n(\sum_{j=1}^{n-1}|\beta_j|^2)^{1/2}
=\alpha_n\Big(|\beta_n|-(\sum_{j=1}^{n-1}|\beta_j|^2)^{1/2}\Big)^2\geq 0,
\end{split}\end{equation}
with the last inequality strict if
$|\beta_n|\neq (\sum_{j=1}^{n-1}|\beta_j|^2)^{1/2}$, and the
preceding one (by the strict forward time-like character of
$\alpha$) strict if $\beta_n\neq 0$ and $\sum_{j=1}^{n-1}|\beta_j|^2\neq 0$.
It is then immediate that at least one of these inequalities is
strict unless $\beta=0$, which is the claimed positive definiteness.

We claim that we can make a stronger statement
if $U\in T_q X$ and $\alpha(U)=0$
and $(U,W)_{\hat g}=0$ (thus $U$ is necessarily space-like,
i.e.\ $(U,U)_{\hat g}<0$):
$$
\hat E_{W,\alpha}(\beta)+c\,\frac{\alpha(W)}{(U,U)_{\hat g}} |\beta(U)|^2,
\ c<1,
$$
is positive definite in $\beta$. Indeed, in this case (again assuming
$(W,W)_{\hat g}=1$) we can choose coordinates
as above such that $W=\pa_{z_n}$, $U$ is a multiple of $\pa_{z_1}$,
namely $U=(-(U,U)_{\hat g})^{1/2}\pa_{z_1}$, $\hat g|_q=dz_n^2
-(dz_1^2+\ldots+dz_{n-1}^2)$. To achieve this, we complete
$e_n=W$ and $e_1=(-(U,U)_{\hat g})^{-1/2} U$ (which are orthogonal
by assumption)
to a $\hat g$ normalized orthogonal basis $(e_1,e_2,\ldots,e_n)$
of $T_q X$, and then
choose coordinates such that the coordinate vector fields are given by the
$e_j$ at $q$. Then $\alpha$ forward time-like means
that $\alpha_n>0$
and $\alpha_n^2>\alpha_1^2+\ldots+\alpha_{n-1}^2$, and
$\alpha(U)=0$ means that $\alpha_1=0$. Thus, with $c<1$,
\begin{equation*}\begin{split}
&\hat E_{W,\alpha}(\beta)+c\,\frac{\alpha(W)}{(U,U)_{\hat g}} |\beta(U)|^2\\
&=(\beta_n\alpha_n
-\sum_{j=2}^{n-1}\beta_j\alpha_j)\overline{\beta_n}
+\beta_n(\alpha_n\overline{\beta_n}
-\sum_{j=2}^{n-1}\alpha_j\overline{\beta_j})\\
&\qquad-\alpha_n(|\beta_n|^2-\sum_{j=1}^{n-1}|\beta_j|^2)
-c\alpha_n|\beta_1|^2\\
&\geq (1-c)\alpha_n|\beta_1|^2\\
&\qquad+\Big((\beta_n\alpha_n
-\sum_{j=2}^{n-1}\beta_j\alpha_j)\overline{\beta_n}
+\beta_n(\alpha_n\overline{\beta_n}
-\sum_{j=2}^{n-1}\alpha_j\overline{\beta_j})\\
&\qquad\qquad-\alpha_n(|\beta_n|^2-\sum_{j=2}^{n-1}|\beta_j|^2)\Big).
\end{split}\end{equation*}
On the right hand side the term in the large parentheses is the same
kind of expression as in \eqref{eq:stress-energy-pos-def}, with the
terms with $j=1$ dropped, thus is positive definite in
$(\beta_2,\ldots,\beta_n)$, and for $c<1$, the first term is positive
definite in $\beta_1$, so the left hand side is indeed positive definite
as claimed. Rewriting this in terms of $G$ in our setting,
we obtain that for $c<1$
$$
E_{W,d\chi}(du)-c (W\chi)|xU u|^2
$$
is positive definite in $du$, considered an element of $\Tz^*_qX$,
when $q\in\pa X$, and hence is positive definite sufficiently
close to $\pa X$.

Stating the result as a lemma:

\begin{lemma}\label{lemma:energy-normal-deriv}
Suppose $q\in \pa X$, $U,W\in T_q X$, $\alpha\in T_q^*X$ and $\alpha(U)=0$
and $(U,W)_{\hat g}=0$. Then
$$
E_{W,\alpha}(\beta)+c\,\frac{\alpha(W)}{(U,U)_{\hat g}} |\beta(xU)|^2,
\ c<1,
$$
is positive definite in $\beta\in\zT^*_q X$.
\end{lemma}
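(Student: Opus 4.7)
The plan is to organize the refined positive-definiteness computation carried out just before the lemma, which already does most of the work, into three clean steps: (i) reduce from $\zT^*_qX$ to $T^*_qX$ so the statement becomes a non-degenerate Lorentzian one, (ii) put the setup into a convenient normal form using orthonormal coordinates, and (iii) split off the $\beta_1'$-piece and conclude by the Cauchy--Schwarz/timelike argument of \eqref{eq:stress-energy-pos-def}.

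For step (i), I would fix local coordinates $(x,y_1,\ldots,y_{n-1})$ near $q$ and use the frames $\frac{dx}{x}, \frac{dy_j}{x}$ for $\zT^*X$ and $dx, dy_j$ for $T^*X$. Given $\beta=\xib\,\frac{dx}{x}+\sum\zetab_j\,\frac{dy_j}{x}\in\zT^*_qX$, form $\beta'=\xib\,dx+\sum\zetab_j\,dy_j\in T^*_qX$ with the same coefficients. By \eqref{eq:stress-energy-deg} one has $E_{W,\alpha}(\beta)=\hat E_{W,\alpha}(\beta')$, and a direct coordinate check gives $\beta(xU)=\beta'(U)$ for $U\in T_qX$. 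Thus the claim reduces to showing that
\[
\hat E_{W,\alpha}(\beta')+c\,\frac{\alpha(W)}{(U,U)_{\hat g}}|\beta'(U)|^2
\]
is positive definite in $\beta'\in T^*_qX$, i.e.\ a purely non-degenerate Lorentzian statement.

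For step (ii), rescale $W$ by a positive scalar so that $(W,W)_{\hat g}=1$ (which affects neither the hypotheses nor positive definiteness). The orthogonality $(U,W)_{\hat g}=0$ forces $U$ to be spacelike, hence $(U,U)_{\hat g}<0$. Completing $e_n=W$ and $e_1=(-(U,U)_{\hat g})^{-1/2}U$ to a $\hat g$-orthonormal basis $(e_1,\ldots,e_n)$ and choosing coordinates whose coordinate vectors at $q$ are the $e_j$ yields $\hat g|_q=dz_n^2-\sum_{j<n}dz_j^2$; the hypothesis $\alpha(U)=0$ becomes $\alpha_1=0$, and a direct computation reduces the additional term to $-c\alpha_n|\beta_1'|^2$.

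For step (iii), substitute $\alpha_1=0$ in \eqref{eq:stress-energy-pos-def}: all cross terms involving $\beta_1'$ vanish, and combining with $-c\alpha_n|\beta_1'|^2$ one obtains the splitting
\[
\hat E_{W,\alpha}(\beta')-c\alpha_n|\beta_1'|^2=(1-c)\alpha_n|\beta_1'|^2+R,
\]
where $R$ is the analog of \eqref{eq:stress-energy-pos-def} in the reduced variables $(\beta_2',\ldots,\beta_n')$. Since $\alpha_1=0$, the strict timelike inequality $\alpha_n^2>\sum_{1<j<n}\alpha_j^2$ persists, so the Cauchy--Schwarz step of \eqref{eq:stress-energy-pos-def} still applies to give $R\geq 0$, with equality only when $\beta_2'=\cdots=\beta_n'=0$. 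Together with $(1-c)\alpha_n>0$ for $c<1$, this gives positive definiteness of the full expression. The only genuine subtlety is in step (i): making sure the coefficient-wise identification $\zT^*_qX\cong T^*_qX$ used to rewrite the quadratic form at the boundary is simultaneously compatible with \eqref{eq:stress-energy-deg} and with the pairing $\beta(xU)$. Once this identification is recorded, the rest is a minor strengthening of the calculation already performed in \eqref{eq:stress-energy-pos-def}.
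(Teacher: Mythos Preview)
Your proposal is correct and follows essentially the same approach as the paper: the paper carries out exactly the normalization and splitting you describe in steps (ii) and (iii) in the paragraphs immediately preceding the lemma, and then remarks that ``rewriting this in terms of $G$'' transfers the result to $\zT^*_qX$, which is your step (i) done in reverse order. Your explicit recording of the coefficient-wise identification $\beta\leftrightarrow\beta'$ and the verification $\beta(xU)=\beta'(U)$ is a useful clarification of what the paper leaves implicit in that final transfer.
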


At this point we modify the choice of our time function $t$ so that
we can construct $U$ and $W$ satisfying the requirements of the lemma.

\begin{lemma}
Assume (TF) and (PT).
Given $\delta_0>0$ and a compact interval $I$
there exists a function $\tau\in\CI(X)$ such that
$|t-\tau|<\delta_0$ for $t\in I$, $d\tau$ is time-like in the same component
of the time-like cone as $dt$, and $\hat G(d\tau,dx)=0$ at $x=0$.
\end{lemma}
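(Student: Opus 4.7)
The plan is to reduce the $\hat G$-orthogonality to a first-order condition on $\tau$ at $Y$, then produce $\tau$ by a small, concentrated perturbation of $t$ near $\pa X$. The main obstacle will be keeping $d\tau$ inside the (open) time-like cone despite the fact that the perturbation is forced to have an order-one $\pa_x$-derivative at $Y$.

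First I would unpack the condition at the boundary. Since $\hat g = -dx^2 + h$ near $Y$ with $h|_Y$ tangential, one has $\hat G^{xx}|_Y = -1$ and $\hat G^{xj}|_Y = 0$ in product coordinates, so $\hat G(d\tau,dx)|_{x=0} = -\pa_x\tau|_{x=0}$, and the desired condition is just $\pa_x\tau|_{x=0}=0$. Let $f := \pa_x t|_{x=0}\in\CI(Y)$; time-likeness of $dt$ gives $|d_Y t|_h^2 - f^2 = \hat G(dt,dt)|_{x=0} > 0$, and by (PT) this is bounded below by some $c_0 > 0$ uniformly on the compact set $t^{-1}(I)\cap Y$.

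Second I would construct $\tau$ explicitly. Extend $f$ to $F\in\CI(X)$ (for instance set $F = f$ on the collar $Y\times[0,\ep_0)$ and cut off in $x$). Pick $Q\in\CI_\compl([0,\infty))$ with $Q(0)=0$, $Q'(0)=1$, and $Q'$ taking values in $[-\delta,1]$ for a small $\delta>0$ to be specified; compact support plus $Q(0)=0$ force $\int Q' = 0$, so some negative values of $Q'$ are unavoidable, but one can make $Q$ descend arbitrarily slowly so that $|Q'|$ is as small as desired on the descent. For small $\eta>0$ set
\[
\tau := t - \eta F\, Q(x/\eta).
\]
Then $\pa_x\tau|_{x=0} = f - F|_{x=0}\,Q'(0) = 0$, and $|\tau-t|\leq \eta\sup|F|\sup|Q|<\delta_0$ once $\eta$ is small (on $t^{-1}(I)$, which is compact by (PT)).

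The main step, and main obstacle, is time-likeness of $d\tau$. Off $\supp Q(x/\eta)$, $\tau=t$. Inside the collar, at $x=\eta\sigma$, one has $d\tau = dt - F\,Q'(\sigma)\,dx - \eta Q(\sigma)\,dF$, and expanding $\hat G$ around $x=0$ (where $\hat G(dx,dx)=-1$, $\hat G(dt,dx)=-f$, $\hat G(dt,dt) = |d_Y t|_h^2 - f^2$) yields
\[
\hat G(d\tau,d\tau)\big|_{x=\eta\sigma} = |d_Y t|_h^2 - f^2\bigl(1 - Q'(\sigma)\bigr)^2 + O(\eta).
\]
Since $Q'(\sigma)\in[-\delta,1]$ gives $(1-Q'(\sigma))^2\le(1+\delta)^2$, the right hand side is at least $c_0 - \delta(2+\delta)\sup f^2 + O(\eta)$, which is uniformly positive once $\delta$ and $\eta$ are chosen small. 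To pin down the cone component I would apply the same estimate to the family $\tau_s := t - s\eta F\,Q(x/\eta)$, $s\in[0,1]$: it gives $\hat G(d\tau_s,d\tau_s) = |d_Y t|_h^2 - f^2(1-sQ'(\sigma))^2 + O(\eta)$, still uniformly positive for all $s$, so openness of each time-like cone component places $d\tau=d\tau_1$ in the same component as $dt=d\tau_0$. The crux of the whole argument is that strict time-likeness of $dt$ provides uniform slack $|d_Y t|_h > |f|$ on the relevant compact region, which is exactly the room in the Lorentz cone needed to absorb the order-one change in the $dx$-component of $d\tau$ forced by the boundary condition $\pa_x\tau|_{x=0}=0$.
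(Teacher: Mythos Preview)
Your proof is correct and follows the same overall strategy as the paper: perturb $t$ by a correction supported near $Y$ so as to kill $\hat G(d\tau,dx)|_{x=0}$, then check that $d\tau$ remains in the forward time-like cone. The implementations differ in the choice of correction. The paper subtracts $x\,\chi(x^\delta/\ep)\,\gamma$ with $\gamma=\hat G(dt,dx)/\hat G(dx,dx)$ and $0\le\chi\le1$; the algebraic identity
\[
\hat G(dt-\alpha\gamma\,dx,\,dt-\alpha\gamma\,dx)=\hat G(dt,dt)-(2\alpha-\alpha^2)\,\frac{\hat G(dt,dx)^2}{\hat G(dx,dx)}\ \ge\ \hat G(dt,dt)
\]
for $\alpha\in[0,1]$ (using $\hat G(dx,dx)<0$) shows that the main term actually \emph{increases} the quadratic form, so no quantitative slack is needed for it; compactness via (PT) is only invoked to control the small residual terms coming from $x\,d\gamma$ and from differentiating $\chi$ (the latter made small by the auxiliary exponent $\delta$). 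Your rescaled profile $Q(x/\eta)$, being compactly supported with $Q(0)=0$ and $Q'(0)=1$, must let $Q'$ dip slightly negative, so the analogue of $\alpha$ leaves $[0,2]$ and you cannot invoke that identity; instead you spend the uniform slack $c_0=\inf_{t^{-1}(I)\cap Y}\hat G(dt,dt)>0$ to absorb the $(1+\delta)^2$ overshoot in $(1-Q')^2$. This is a perfectly valid trade, and your homotopy $\tau_s$ for the cone-component argument goes through for the same reason. The paper's version is slightly slicker (the Gram--Schmidt-type subtraction is geometrically natural and separates the order-one correction from the genuinely small errors), while yours is more hands-on and perhaps easier to motivate; both rely on (PT) in the end.
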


\begin{proof}
Let $\chi\in\CI_{\compl}([0,\infty))$, identically $1$ near $0$,
$0\leq \chi\leq 1$, $\chi'\leq 0$, supported
in $[0,1]$, and for $\ep,\delta>0$
to be specified let
$$
\tau=t- x\chi\left(\frac{x^\delta}{\ep}\right)
\frac{\hat G(dt,dx)}{\hat G(dx,dx)}.
$$
Note that on the support of $\chi\left(\frac{x^\delta}{\ep}\right)$,
$x\leq \ep^{1/\delta}$, so if $\ep^{1/\delta}$ is sufficiently small,
$\hat G(dx,dx)<0$, and bounded away from $0$, there in view of (PT)
and as $\hat G(dx,dx)<0$ at $Y$.

At $x=0$
$$
d\tau=dt-\frac{\hat G(dt,dx)}{\hat G(dx,dx)}\,dx,
$$
so $\hat G(d\tau,dx)=0$.
As already noted, on the support of $\chi\left(\frac{x^\delta}{\ep}\right)$,
$x\leq \ep^{1/\delta}$, so for $t\in I$, $I$ compact, in view of (PT),
\begin{equation}\label{eq:t-tau-diff}
|\tau-t|\leq C \ep^{1/\delta},
\end{equation}
with $C$ independent of $\ep,\delta$.
Next,
$$
d\tau=dt-\alpha \gamma\,dx-\tilde\alpha\gamma\,dx-\beta\mu,
$$
where
\begin{equation*}\begin{split}
&\alpha=\chi\left(\frac{x^\delta}{\ep}\right),
\ \gamma=\frac{\hat G(dt,dx)}{\hat G(dx,dx)},
\ \tilde\alpha=\delta \frac{x^\delta}{\ep}\chi'\left(\frac{x^\delta}{\ep}\right),\\
&\beta=x\chi\left(\frac{x^\delta}{\ep}\right),
\ \mu=d\left(\frac{\hat G(dt,dx)}{\hat G(dx,dx)}\right).
\end{split}\end{equation*}
Now,
\begin{equation*}\begin{split}
\hat G(dt-\alpha\gamma dx,dt-\alpha\gamma dx)
&=\hat G(dt,dt)-2\alpha\gamma \hat G(dt,dx)+\alpha^2\gamma^2\hat G(dx,dx)\\
&=\hat G(dt,dt)-(2\alpha-\alpha^2)\frac{\hat G(dt,dx)^2}{\hat G(dx,dx)},
\end{split}\end{equation*}
which is $\geq \hat G(dt,dt)$ if $2\alpha-\alpha^2\geq 0$,
i.e.\ $\alpha\in[0,2]$. But $0\leq\alpha\leq 1$, so
$$
\hat G(dt-\alpha\gamma dx,dt-\alpha\gamma dx)\geq \hat G(dt,dt)>0
$$
indeed,
i.e.\ $dt-\alpha\gamma dx$ is timelike. Since $dt-\rho\alpha\gamma\, dx$
is still time-like for $0\leq \rho\leq 1$, $dt-\alpha\gamma dx$ is
in the same component of time-like covectors as $dt$, i.e.\ is
forward oriented.
Next, observe that with $C'=\sup s|\chi'(s)|$,
$$
|\tilde\alpha|\leq C'\delta,\ |\beta|\leq \ep^{1/\delta},
$$
so over compact sets
$\tilde\alpha\gamma\,dx+\beta\mu$ can be made arbitrarily small
by first choosing $\delta>0$ sufficiently small and then $\ep>0$
sufficiently small. Thus, $\hat G(d\tau,d\tau)$ is forward
time-like as well. Reducing $\ep>0$ further if
needed, \eqref{eq:t-tau-diff} completes the proof.
\end{proof}

This lemma can easily be made global.

\begin{lemma}\label{lemma:t-global-construction}
Assume (TF) and (PT).
Given $\delta_0>0$
there exists a function $\tau\in\CI(X)$ such that
$|t-\tau|<\delta_0$ for $t\in \RR$, $d\tau$ is time-like in the same component
of the time-like cone as $dt$, and $\hat G(d\tau,dx)=0$ at $x=0$.

In particular, $\tau$ also satisfies (TF) and (PT).
\end{lemma}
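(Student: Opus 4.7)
The plan is to patch together the local solutions provided by the preceding lemma using a partition of unity in $t$. I would fix a locally finite open cover $\{J_k\}_{k\in\ZZ}$ of $\RR$ by bounded intervals, compact intervals $I_k\supset\overline{J_k}$, and a subordinate partition of unity $\{\phi_k\}\subset\CI(\RR)$, all pulled back to $X$ via $t$. Applying the preceding lemma to each $I_k$ produces $\tau_k\in\CI(X)$ satisfying the three conditions on $I_k$. Inspecting the explicit formula used in that proof, the correction $\tau_k-t=-x\chi(x^\delta/\ep_k)\hat G(dt,dx)/\hat G(dx,dx)$ vanishes on $\pa X$, is supported in $\{x\le\ep_k^{1/\delta}\}$, and by shrinking $\ep_k$ both the sup of $|\tau_k-t|$ and the uniform deviation of $d\tau_k$ from $dt$ (measured in any auxiliary Riemannian norm on $T^*X$) become as small as we like.

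Now set $\tau:=\sum_k\phi_k(t)\,\tau_k$. Since $\sum_k\phi_k\equiv 1$, $\tau-t=\sum_k\phi_k(\tau_k-t)$, hence $|\tau-t|<\delta_0$ as soon as each $|\tau_k-t|<\delta_0$. Because $(\tau_k-t)|_{x=0}\equiv 0$ for every $k$, the $d\phi_k$-contributions to $d(\tau-t)|_{x=0}$ vanish and
$$
d\tau|_{x=0}=\sum_k\phi_k\,d\tau_k|_{x=0},
$$
a convex combination of covectors annihilated by $\hat G(\cdot,dx)$, so $\hat G(d\tau,dx)=0$ at $x=0$ passes to $\tau$. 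For the timelike condition, I would decompose
$$
d\tau=\sum_k\phi_k\,d\tau_k+A\,dt,\qquad A:=\sum_k\phi_k'(t)(\tau_k-t).
$$
The convex combination $\sum_k\phi_k\,d\tau_k$ lies in the open, convex forward-timelike cone, and writing $d\tau_k=dt+\omega_k$ it is uniformly close to $dt$ provided the $\ep_k$ are small. Requiring further $\sup|\tau_k-t|\le 2^{-|k|}/(1+\sup|\phi_k'|)$ makes $|A|$ uniformly as small as desired. The resulting perturbation of $dt$ keeps $d\tau$ in the forward-timelike cone.

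Finally, (TF) and (PT) for $\tau$ follow immediately. The uniform bound $|\tau-t|<\delta_0$ gives $\{\tau\le c\}\subset\{t\le c+\delta_0\}$, which is compact by (PT) for $t$; and along any \GBB $\gamma$, the composition $\tau\circ\rho\circ\gamma$ is strictly monotone (since $d\tau$ is forward timelike, hence nonvanishing on the null Hamilton directions of $\hat G$) with range $\RR$, the latter inherited from the corresponding property of $t$. The main technical obstacle is maintaining the forward-timelike property globally against the $d\phi_k$-contributions, which is resolved by the summability built into the bounds on the local corrections $\eta_k$.
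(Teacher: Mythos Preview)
Your approach differs from the paper's: rather than patching with a partition of unity, the paper lets the parameters $\ep,\delta$ in the local formula vary smoothly with $t$, setting
\[
\tau=t-x\,\chi\!\left(\frac{x^{\delta(t)}}{\ep(t)}\right)\frac{\hat G(dt,dx)}{\hat G(dx,dx)},
\]
and then directly controlling the two extra error terms in $d\tau$ produced by $\ep'(t)$ and $\delta'(t)$ (both small once $|\ep'|,|\delta'|\le 1$). This keeps the structure of the local proof intact and avoids any convex-combination argument.

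There is a genuine gap in your timelike step. The claim that $d\tau_k$ is uniformly close to $dt$ when $\ep_k$ is small is false: at $x=0$ the local lemma gives $d\tau_k=dt-\gamma\,dx$ with $\gamma=\hat G(dt,dx)/\hat G(dx,dx)$, \emph{independent of} $\ep_k$ (this nonzero $dx$-component is precisely what enforces $\hat G(d\tau_k,dx)=0$). Shrinking $\ep_k$ only shrinks the $x$-support of $\tau_k-t$, not the size of $d(\tau_k-t)$ on that support. Convexity still yields that $\omega:=\sum_k\phi_k\,d\tau_k$ is forward timelike, but stability under the perturbation $A\,dt$ requires a quantitative lower bound on $\hat G(\omega,\omega)$ balanced against an upper bound on $\hat G(\omega,dt)$, and both depend on the geometry over the compact slab $t^{-1}(J_k)$; a single uniform bound on $|A|$ of the form $\sum 2^{-|k|}$ need not suffice as $|k|\to\infty$. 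The repair is to choose the bound on $|\tau_k-t|$ adaptively in $k$ (on each slab only finitely many $\tau_l$ contribute and the smallness required is a local-in-$t$ condition), which is workable but is not the argument you wrote. The paper's parameter-varying construction sidesteps this bookkeeping entirely.

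A minor slip in your (PT) check: $\{t\le c+\delta_0\}$ is not compact. The correct containment is $\tau^{-1}([a,b])\subset t^{-1}([a-\delta_0,\,b+\delta_0])$, which is compact by properness of $t$, and $\tau^{-1}([a,b])$ is a closed subset.
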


\begin{proof}
We proceed as above, but let
$$
\tau=t- x\chi\left(\frac{x^{\delta(t)}}{\ep(t)}\right)
\frac{\hat G(dt,dx)}{\hat G(dx,dx)}.
$$
We then have two additional terms,
$$
-x^{1-\delta(t)}\delta'(t)\log x\frac{x^{\delta(t)}}{\ep(t)}
\chi'\left(\frac{x^\delta(t)}{\ep(t)}\right)
\frac{\hat G(dt,dx)}{\hat G(dx,dx)}\,dt,
$$
and
$$
x\frac{\ep'(t)}{\ep(t)}
\frac{x^{\delta(t)}}{\ep(t)}\chi'\left(\frac{x^{\delta(t)}}{\ep(t)}\right)
\frac{\hat G(dt,dx)}{\hat G(dx,dx)}\,dt,
$$
in $d\tau$. Note that on the support of both terms
$x\leq \ep(t)^{1/\delta(t)}$, while
$\frac{x^{\delta(t)}}{\ep(t)}\chi'\left(\frac{x^{\delta(t)}}{\ep(t)}\right)$ is
uniformly bounded. Thus, if $\delta(t)<1/3$, $|\delta'(t)|\leq 1$,
$|\ep'(t)|\leq 1$,
the factors in front of $dt$ in both terms is bounded in absolute value
by $C\ep(t)\frac{\hat G(dt,dx)}{\hat G(dx,dx)}$.
Now for any $k$ there are $\delta_k,\ep_k>0$, which we may assume are in
$(0,1/3)$ and are decreasing with $k$,
such that on $I=[-k,k]$,
$\tau$ so defined, satisfies all the requirements if $0<\ep(t)<\ep_k$,
$0<\delta(t)<\delta_k$
on $I$ and $|\ep'(t)|\leq 1$, $|\delta'(t)|\leq 1$. But now in view
of the bounds on $\ep_k$ and $\delta_k$ it is straightforward
to write down $\ep(t)$ and $\delta(t)$ with the desired properties, e.g.\ by
approximating the piecewise linear function which takes the
value $\ep_k$ at $\pm(k-1)$, $k\geq 2$,
to get $\ep(t)$, and similarly with $\delta$, finishing
the proof.
\end{proof}

{\em From this point on, within this section,
we assume that (TF) and (PT) hold.
From now on we simply replace $t$ by $\tau$.} We let
$W=\hat G(dt,.)$, $U_0=\hat G(dx,.)$. Thus, {\em at} $x=0$,
$$
dt(U_0)=\hat G(dx,dt)=0,\ (U_0,W)_{\hat g}=\hat G(dx,dt)=0.
$$
We extend $U_0|_Y$ to a vector field $U$ such that $Ut=0$, i.e.\ $U$ is
tangent to the level surfaces of $t$.
Then we have on all of $X$,
\begin{equation}\label{eq:W-time-like}
W(dt)=\hat G(dt,dt)>0,
\end{equation}
and
\begin{equation}\label{eq:U-space-like}
U(dx)=\hat G(dx,dx)<0
\end{equation}
on a neighborhood of $Y$,
with uniform upper and lower bounds (bounding away from $0$)
for both \eqref{eq:W-time-like} and \eqref{eq:U-space-like} on
compact subsets of $X$.

Using Lemma~\ref{lemma:energy-normal-deriv} and the equations just above,
we thus deduce that for $\chi=\tilde\chi\circ t$, $c<1$,
$\rho\in\CI(X)$, identically $1$ near $Y$, supported sufficiently
close to $Y$, $Q=-\imath Z$, $Z=\chi W$,
\begin{equation}\begin{split}\label{eq:energy-estimate}
&\langle -\imath(Q^*P-P^* Q)u,u\rangle\\
&\qquad=\int E_{W,d\chi}(du)\,dg
-\re\lambda \langle (W\chi) u,u\rangle\\
&\qquad\qquad\qquad+
\im\lambda(\langle \chi Wu,u\rangle+\langle u,\chi Wu\rangle)
+\langle
\chi R du,du\rangle+\langle \chi R'u,u\rangle\\
&\qquad
=\langle (\chi' A+\chi R)du,du\rangle+\langle c\rho(W\chi)xU u,xU u\rangle
-\re\lambda\langle (W\chi) u,u\rangle\\
&\qquad\qquad\qquad
+\im\lambda(\langle \chi Wu,u\rangle+\langle u,\chi Wu\rangle)
+\langle \chi R'u,u\rangle
\end{split}\end{equation}
with $A,R\in\CI(X;\End(\zT^*X))$, $R'\in\CI(X)$
and $A$ is positive definite, all independent of $\chi$.
Here $\rho$ is used since $E_{W,d\chi}(du)-c (W\chi)|xU u|^2$
is only positive definite near $Y$.

Fix $t_0<t_0+\ep<t_1$. Let
$\chi_0(s)=e^{-1/s}$ for $s>0$, $\chi_0(s)=0$ for $s<0$, $\chi_1\in\CI(\RR)$
identically 1 on $[1,\infty)$, vanishing on $(-\infty,0]$,
Thus,
$s^2\chi_0'(s)=\chi_0(s)$ for $s\in\RR$.
Now
consider
$$
\tilde\chi(s)=\chi_0(-\digamma^{-1}(s-t_1))\chi_1((s-t_0)/\ep),
$$
so
$$
\supp\tilde\chi\subset [t_0,t_1]
$$
and
$$
s\in [t_0+\ep,t_1]\Rightarrow \tilde\chi'=-\digamma^{-1}
\chi_0'(-\digamma^{-1}(s-t_1)),
$$
so
$$
s\in [t_0+\ep,t_1]\Rightarrow \tilde\chi=-\digamma^{-1} (s-t_1)^2\tilde\chi',
$$
so for $\digamma>0$ sufficiently large, this is bounded by a small
multiple of $\tilde\chi'$, namely
\begin{equation}\label{eq:chip-gamma-est}
s\in [t_0+\ep,t_1]\Rightarrow \tilde\chi=-\gamma\tilde\chi',
\ \gamma=(t_1-t_0)^2\digamma^{-1}.
\end{equation}
In particular, for sufficiently large
$\digamma$,
$$
-(\chi' A+\chi R)\geq -\chi' A/2
$$
on $[t_0+\ep,t_1]$.
In addition, by \eqref{eq:sharp-const-weighted-Poincare}
and \eqref{eq:chip-gamma-est}, for
$\re\lambda<(n-1)^2/4$, and $c'>0$ sufficiently close to $1$
\begin{equation*}\begin{split}
-\langle \re\lambda (W\chi) u,u\rangle \leq c'
\langle \rho (-W\chi)xU u,xU u\rangle+C'\digamma^{-1}\| \chi^{1/2}du\|^2
\end{split}\end{equation*}
while
$$
|\langle\chi R'u,u\rangle|\leq C'\|\chi^{1/2}u\|^2
$$
and
\begin{equation}\begin{split}\label{eq:chi-u-est}
&\|\chi^{1/2}u\|^2\leq C'\digamma^{-1}\langle (-W\chi)u,u\rangle\\
&\qquad\leq
C''\digamma^{-1}
\langle (-W\chi)xU u,xU u\rangle+C''\digamma^{-2}
\| \chi^{1/2}du\|^2.
\end{split}\end{equation}
However,
$\im\lambda(\langle \chi Wu,u\rangle+\langle u,\chi Wu\rangle)$
is too large to be controlled by the stress energy tensor since
$W$ is a b-vector field, but not a 0-vector field. Thus, in order to
control the $\im\lambda$ term for $t\in[t_0+\ep,t_1]$,
we need to assume that $\im\lambda=0$.
Then, writing $Qu=Q^*u +(Q-Q^*)u$, and choosing $\digamma>0$ sufficiently
large to absorb the first term on the right hand side of
\eqref{eq:chi-u-est},
\begin{equation}\begin{split}
&\langle-\chi' A du,du\rangle/2\leq
-\langle -\imath P u,Qu\rangle+\langle \imath P u, Qu\rangle
+\gamma\langle(-\chi') d u,d u\rangle\\
&\leq 2C\|\chi^{1/2}W P u\|_{H^{-1}_0(X)} \,\|\chi^{1/2} u\|_{H^1_0(X)}+
2C\|(-\chi')^{1/2} P u\|_{L^2_0(X)} \,\|(-\chi')^{1/2} u\|_{L^2_0(X)}\\
&\qquad\qquad+
C\gamma \|(-\chi')^{1/2}du\|^2\\
&\leq 2C\delta^{-1}(\|W P u\|^2_{H^{-1}_0(X)}
+\|P u\|^2_{L^2_0(X)})+2C\delta
(\|\chi^{1/2} u\|^2_{H^1_0(X)}+\|(-\chi')^{1/2}u\|_{L^2(X)}^2)\\
&\qquad\qquad+C\digamma^{-1} \|(-\chi')^{1/2}du\|^2.
\end{split}\end{equation}
For sufficiently small $\delta>0$ and sufficiently large $\digamma>0$
we absorb all but the first parenthesized term on the right
hand side into the left hand
side by the positive definiteness of $A$ and the Poincar\'e inequality,
Proposition~\ref{prop:Poincare}, to conclude that
for $u$ supported in $[t_0+\ep,t_1]$,
\begin{equation}\label{eq:Pu-est}
\|(-\chi')^{1/2}du\|_{L^2_0(X;\Tz^*X)}\leq C\|P u\|_{H^{-1,1}_{0,\bl}(X)}.
\end{equation}
In view of the Poincar\'e inequality we conclude:

\begin{lemma}\label{lemma:energy-est}
Suppose $\lambda<(n-1)^2/4$,
$t_0<t_0+\ep<t_1$, $\chi$ as above.
For $u\in\dCI(X)$ supported in $[t_0+\ep,t_1]$
one has
\begin{equation}\label{eq:Pu-est-2}
\|(-\chi')^{1/2}u\|_{H^1_0(X)}\leq C\|P u\|_{H^{-1,1}_{0,\bl}(X)}.
\end{equation}
\end{lemma}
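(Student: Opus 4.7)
The only gap between the already-proven gradient bound \eqref{eq:Pu-est} and the claimed estimate \eqref{eq:Pu-est-2} is control of the zeroth-order piece $\|(-\chi')^{1/2}u\|_{L^2_0(X)}$ in terms of $\|Pu\|_{H^{-1,1}_{0,\bl}(X)}$. My plan is to extract this by applying the weighted Poincar\'e inequality (Proposition \ref{prop:Poincare}) to $u$ with the same cut-off $\chi=\tilde\chi\circ t$ used in the energy computation, and then to chain the result with \eqref{eq:Pu-est}.

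First I would verify the Poincar\'e hypotheses. The relation \eqref{eq:chip-gamma-est} gives $\tilde\chi=-\gamma\tilde\chi'$ on $[t_0+\ep,t_1]$ with $\gamma=(t_1-t_0)^2\digamma^{-1}$, so taking $\digamma$ sufficiently large makes $\gamma<\gamma_0$, matching Proposition~\ref{prop:Poincare} applied with its $t_0$ replaced by $t_0+\ep$. Applying it to $u$ and using $\supp u\subset\{t\geq t_0+\ep\}$ to kill the trailing boundary term $\|u\|_{L^2_0(\{t\in[t_0-\delta,t_0]\})}$ produces
\begin{equation*}
\||\chi'|^{1/2}u\|_{L^2_0}\leq C^{-1}\||\chi'|^{1/2}Vu\|_{L^2_0(\{x\leq x_0\})}+C'\gamma\|\chi^{1/2}du\|_{L^2_0}.
\end{equation*}

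Next I would bound both terms on the right by a multiple of $\||\chi'|^{1/2}du\|_{L^2_0(X;\Tz^*X)}$. Since $V=xV_0$ with $V_0\in\Vf(X)$ and $x\leq x_0$ on the relevant domain, $\||\chi'|^{1/2}Vu\|_{L^2_0}\leq C\||\chi'|^{1/2}du\|_{L^2_0(X;\Tz^*X)}$ directly. For the second term I would use $\chi^{1/2}\leq\gamma^{1/2}|\chi'|^{1/2}$ on $\supp u$, a consequence of \eqref{eq:chip-gamma-est}, so that $\|\chi^{1/2}du\|\leq \gamma^{1/2}\||\chi'|^{1/2}du\|$. Consequently $\||\chi'|^{1/2}u\|_{L^2_0}\leq C\||\chi'|^{1/2}du\|_{L^2_0(X;\Tz^*X)}$, which by \eqref{eq:Pu-est} is at most $C\|Pu\|_{H^{-1,1}_{0,\bl}(X)}$.

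Finally, to conclude, I would unpack $\|(-\chi')^{1/2}u\|_{H^1_0(X)}^2$ as being comparable to $\||\chi'|^{1/2}u\|_{L^2_0}^2+\||\chi'|^{1/2}du\|_{L^2_0(X;\Tz^*X)}^2$ modulo a commutator contribution of the form $\|d((-\chi')^{1/2})u\|_{L^2_0}$. The commutator is harmless because \eqref{eq:chip-gamma-est} forces $\chi''=-\chi'/\gamma$ on $\supp u$, whence $|d((-\chi')^{1/2})|\leq C(-\chi')^{1/2}$ and this contribution is absorbed into the $L^2_0$-bound just established. Both pieces are then dominated by $\|Pu\|_{H^{-1,1}_{0,\bl}(X)}$, yielding \eqref{eq:Pu-est-2}. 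The main work is bookkeeping the mutual comparability of $\chi$, $-\chi'$, $\chi''$ dictated by \eqref{eq:chip-gamma-est}, and arranging $\digamma$ large enough simultaneously for the energy identity producing \eqref{eq:Pu-est} and for Proposition~\ref{prop:Poincare}; no new analytic ingredient is required.
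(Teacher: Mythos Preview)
Your overall route---apply the weighted Poincar\'e inequality of Proposition~\ref{prop:Poincare} to pass from \eqref{eq:Pu-est} to \eqref{eq:Pu-est-2}---is exactly the paper's one-line argument, and your first two steps are correct. The flaw is in the commutator step. You differentiate \eqref{eq:chip-gamma-est} as though it were an identity to conclude $\tilde\chi''=-\tilde\chi'/\gamma$, but \eqref{eq:chip-gamma-est} is only the \emph{inequality} $\tilde\chi\leq -\gamma\tilde\chi'$ (the ``$=$'' there is loose notation; the exact relation, displayed two lines earlier in the paper, is $\tilde\chi=-\digamma^{-1}(s-t_1)^2\tilde\chi'$, with a variable coefficient). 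From the true identity one finds $|\tilde\chi''|/(-\tilde\chi')\sim (t_1-s)^{-2}\to\infty$ as $s\to t_1$, so your claimed pointwise bound $|d((-\chi')^{1/2})|\leq C(-\chi')^{1/2}$ fails near $t=t_1$, and the commutator cannot be absorbed into the already-established $L^2_0$ bound for $(-\chi')^{1/2}u$ as you assert.

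This is easily repaired and does not require a new idea. One option: observe that $(-\tilde\chi')^{1/2}(s)=\digamma^{-1/2}r^{-1}e^{-1/(2r)}$ (with $r=\digamma^{-1}(t_1-s)$) extends to a function in $\Cinf([t_0+\ep,t_1])$ vanishing to infinite order at $t_1$, so $d((-\chi')^{1/2})$ is a smooth bounded $1$-form and the commutator contributes at most $C\|u\|_{L^2_0}$; the latter is controlled by rerunning \eqref{eq:Pu-est} with $t_1$ pushed slightly to the right, so that $-\chi'$ is bounded below on $\supp u$. Alternatively---and consistently with how the estimate is actually used in \eqref{eq:Pphi-est}--\eqref{eq:Pphi-est-2}---one may read $\|(-\chi')^{1/2}u\|_{H^1_0}$ as shorthand for the weighted norm $\|(-\chi')^{1/2}u\|_{L^2_0}+\|(-\chi')^{1/2}du\|_{L^2_0(X;\Tz^*X)}$, in which case no commutator arises and your first two steps already complete the proof.
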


\begin{rem}\label{rem:uniform-length}
Note that if $I$ is compact then there is $T>0$
such that for $t_0\in I$ we can take any $t_1\in (t_0,t_0+T]$, i.e.\ the
time interval over which we can make the estimate is uniform over such
compact intervals $I$.
\end{rem}

This lemma gives local in time uniqueness immediately,
hence iterative
application of the lemma, together with Remark~\ref{rem:uniform-length},
yields:

\begin{cor}\label{cor:unique}
Suppose $\lambda<(n-1)^2/4$.
For $f\in H^{-1,1}_{0,\bl,\loc}(X)$ supported in $t>t_0$, there is at most one
$u\in H^{1}_{0,\loc}(X)$ such that
$\supp u\subset \{p: t(p)\geq t_0\}$ and $Pu=f$.
\end{cor}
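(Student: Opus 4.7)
By linearity, it suffices to show that any $v\in H^1_{0,\loc}(X)$ satisfying $Pv=0$ and $\supp v\subset\{t\geq t_0\}$ vanishes identically; applying this to $v=u_1-u_2$ for two candidate solutions concludes the proof. My plan is to first establish $v\equiv 0$ on a short time slab just past $t_0$ via Lemma~\ref{lemma:energy-est}, and then iterate forward in time using the uniform slab length provided by Remark~\ref{rem:uniform-length}.

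Lemma~\ref{lemma:energy-est} is stated for $u\in\dCI(X)$ supported in a slab $[t_0+\ep,t_1]$; I first extend it by density to all $u\in H^1_{0,\loc}(X)$ with support in such a slab. Both sides of the estimate are continuous in the $H^1_0$, respectively $H^{-1,1}_{0,\bl}$, topologies, and (PT) ensures compactness of the slice $t\in[t_0+\ep,t_1]$, so a standard Friedrichs mollification produces an approximating sequence in $\dCI(X)$ with matching support along which $Pu_n$ converges in $H^{-1,1}_{0,\bl}$.

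To apply this extended estimate to $v$, which is supported in the unbounded set $\{t\geq t_0\}$, I multiply by a smooth time cutoff $\phi(t)$ equal to $1$ on $(-\infty,t_1-\eta]$ and $0$ on $[t_1,\infty)$, for some small $\eta>0$; then $\phi v$ is compactly supported in $[t_0,t_1]$. Using the shifted parameters $t_0^{\mathrm{new}}=t_0-\delta$, $\ep=\delta$, with $t_1\in(t_0,t_0+T]$ within the uniform range given by Remark~\ref{rem:uniform-length}, the extended estimate applied to $\phi v$ gives
\[
\|(-\chi')^{1/2}\phi v\|_{H^1_0(X)}\leq C\|P(\phi v)\|_{H^{-1,1}_{0,\bl}(X)}=C\|[P,\phi]v\|_{H^{-1,1}_{0,\bl}(X)},
\]
using $Pv=0$. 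The commutator $[P,\phi]\in\Diffz^1(X)$ has coefficients supported in $\supp\phi'\subset[t_1-\eta,t_1]$, so the right-hand side is finite and localized near $t_1$. On $\{t\leq t_1-\eta\}$ we have $\phi\equiv 1$ and $-\chi'$ bounded below by a positive constant, giving control of $\|v\|_{H^1_0(\{t_0\leq t\leq t_1-\eta\})}$ by the right-hand side. The quantitative point is the Carleman-type structure of $\chi(s)=\chi_0(-\digamma^{-1}(s-t_1))\chi_1((s-t_0)/\ep)$: both $\chi(s)$ and $-\chi'(s)$ carry a factor $e^{-\digamma/(t_1-s)}$ that is exponentially small near $s=t_1$ compared to its size on $[t_0,t_1-\eta]$. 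Invoking the $\chi^{1/2}$-weighted form of the right-hand side that actually appears inside the proof of Lemma~\ref{lemma:energy-est} (rather than the simplified unweighted version appearing in its statement), one sees that the ratio of the left-hand-side weight on $[t_0,t_1-\eta]$ to the right-hand-side weight on $[t_1-\eta,t_1]$ grows like a positive power of $\digamma$; sending $\digamma\to\infty$ then forces $v\equiv 0$ on $[t_0,t_1-\eta]$, and letting $\eta\to 0$ yields $v\equiv 0$ on $[t_0,t_1]$.

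The iteration is a continuation argument: let $T^*=\sup\{s\in\RR:v\equiv 0\text{ on }\{t\leq s\}\}$, so $T^*\geq t_0+T>t_0$ by the previous paragraph. If $T^*<+\infty$, then $v$ is supported in $\{t\geq T^*\}$ with $Pv=0$, and applying the short-slab result with $t_0$ replaced by $T^*$---using that Remark~\ref{rem:uniform-length} provides a uniform slab length $T'>0$ on any compact time interval containing $T^*$---extends $v\equiv 0$ to $\{t\leq T^*+T'\}$, contradicting maximality. Hence $T^*=+\infty$ and $v\equiv 0$. The delicate step is the quantitative Carleman-type absorption in the preceding paragraph, since the stated form of Lemma~\ref{lemma:energy-est} has dropped the $\chi^{1/2}$-weight on the right-hand side (by bounding $\chi\leq 1$), so one must work with the weighted estimate as it appears inside the proof of that lemma in order for the exponential smallness of $\chi$ near $t=t_1$ to defeat the commutator contribution from the temporal cutoff.
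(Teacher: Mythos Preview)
Your argument can be made to work, but it is considerably more elaborate than what the paper has in mind. The point you overlook is that the weight $\tilde\chi$ built just before Lemma~\ref{lemma:energy-est} already satisfies $\tilde\chi(t_1)=0$ (since $\chi_0(0)=0$), so the commutant $Q=-\imath\chi W$ is itself compactly supported in $\{t_0\leq t\leq t_1\}$. Hence the energy identity \eqref{eq:energy-estimate} underlying the lemma is well defined for $v\in H^1_{0,\loc}(X)$ supported merely in $\{t\geq t_0\}$ (shift $t_0$ down by $\ep$); with $Pv=0$ the left side $2\im\langle Pv,Qv\rangle$ vanishes outright, and the positivity argument already in the proof gives $\|(-\chi')^{1/2}v\|_{H^1_0}=0$, i.e.\ $v\equiv 0$ on $\{t_0<t<t_1\}$. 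Iteration via Remark~\ref{rem:uniform-length} then finishes. This is the paper's ``immediate'': no auxiliary cutoff $\phi$, hence no commutator error $[P,\phi]v$ to absorb, and no Carleman step. Your route buys nothing extra and introduces two points that need more care than you give them. First, your density extension presupposes $Pu\in H^{-1,1}_{0,\bl}$, which fails for generic $u\in H^1_0$; in your application it happens to hold because $[P,\phi]=2\phi'(t)\,x^2\hat G(dt,d\cdot)+\Box_g\phi$ lies in $x\Diffz^1(X)$ (note the extra factor of $x$), so $[P,\phi]v\in xL^2_0\subset H^{-1,1}_{0,\bl}$, but this deserves a sentence. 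Second, your Carleman comparison is misstated: the minimum of $-\chi'$ on $[t_0,t_1-\eta]$ and its maximum on $[t_1-\eta,t_1]$ are both attained at $s=t_1-\eta$ and coincide, so the ratio you describe is $O(1)$; you must instead restrict the left side to $[t_0,t_1-\eta']$ with $\eta'>\eta$, where the exponential gap $e^{\digamma(1/\eta-1/\eta')}\to\infty$ does the absorption, and then send $\eta'\downarrow\eta$, $\eta\downarrow 0$.
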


Via the standard functional analytic argument, we deduce from
\eqref{eq:Pu-est}:

\begin{lemma}\label{lemma:weak-local-exist}
Suppose $\lambda<(n-1)^2/4$, $I$ a compact interval.
There is $\sigma>0$ such that for $t_0\in I$, and
for $f\in H^{-1}_{0,\loc}(X)$ supported in $t>t_0$, there exists
$u\in H^{1,-1}_{0,\bl,\loc}(X)$,
$\supp u\subset \{p: t(p)\geq t_0\}$ and $Pu=f$ in $t<t_0+\sigma$.
\end{lemma}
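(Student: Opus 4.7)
The plan is to derive existence from the energy estimate of Lemma~\ref{lemma:energy-est} via the standard Hahn--Banach/duality argument. Since $\lambda$ is real, $P=\Box_g+\lambda$ is formally self-adjoint with respect to $L^2_0(X)$, and by time-reversal symmetry of the derivation (replacing $W$ and the cutoff $\chi$ by their time-reversed analogues), Lemma~\ref{lemma:energy-est} also holds in the reversed time direction. Concretely, after adapting the weight so that $-\chi'$ is bounded below by a positive constant on a fixed compact set, one obtains a backward estimate of the form
\begin{equation*}
\|\phi\|_{H^1_0(X)}\le C\|P\phi\|_{H^{-1,1}_{0,\bl}(X)},
\end{equation*}
valid for every $\phi\in\dCI(X)$ supported in a compact subset of $\{t_0\le t\le t_0+\sigma\}$, with $\sigma>0$ uniform in $t_0\in I$ by Remark~\ref{rem:uniform-length}.

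With this backward estimate in hand, let $K_1$ be a compact set containing this support region and define
\begin{equation*}
\ell(P\phi):=\overline{\langle\phi,f\rangle},\qquad \phi\in\dCI(X),\ \supp\phi\subset K_1,
\end{equation*}
where the pairing is the $H^1_0$--$H^{-1}_0$ duality. The backward estimate shows that $\phi\mapsto P\phi$ is injective on this class, so $\ell$ is well-defined on the subspace $V:=\{P\phi:\supp\phi\subset K_1\}\subset H^{-1,1}_{0,\bl,\compl}(X)$, and the bound
\begin{equation*}
|\langle\phi,f\rangle|\le\|f\|_{H^{-1}_0(K_1)}\|\phi\|_{H^1_0(X)}\le C\|f\|_{H^{-1}_0(K_1)}\|P\phi\|_{H^{-1,1}_{0,\bl}(X)}
\end{equation*}
makes $\ell$ continuous in the $H^{-1,1}_{0,\bl}$-norm. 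Hahn--Banach extends $\ell$ to a continuous functional on $H^{-1,1}_{0,\bl,\compl}(X)$ (with supports in $K_1$); by the duality of $H^{-1,1}_{0,\bl}$ with $H^{1,-1}_{0,\bl}$, there exists $u\in H^{1,-1}_{0,\bl,\loc}(X)$ with $\langle u,P\phi\rangle=\langle f,\phi\rangle$ for all admissible $\phi$. Unwinding, $Pu=f$ distributionally on the interior of $K_1$, which contains $\{t_0<t<t_0+\sigma\}$ after shrinking $\sigma$ slightly.

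To arrange $\supp u\subset\{t\ge t_0\}$, one restricts the admissible class of test functions further to those with support in $\{t\ge t_0\}$, which still satisfy the backward energy estimate. The dual element $u$ is then uniquely determined only modulo distributions supported in $\{t<t_0\}$, and any such contribution can be removed: since $f$ vanishes for $t<t_0$, the restriction $u|_{\{t<t_0\}}$ solves the homogeneous equation, and Corollary~\ref{cor:unique} applied to the backward problem forces this restriction to vanish once the support condition at $t=t_0$ is imposed.

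The main technical obstacle is the first step: carrying out the time-reversal of Lemma~\ref{lemma:energy-est} so that it delivers an \emph{unweighted} $H^1_0$ bound on the test function $\phi$, rather than the weighted $\|(-\chi')^{1/2}\,\cdot\|_{H^1_0}$ bound appearing literally there. This is arranged by choosing $\chi$ so that $-\chi'$ is bounded below by a positive constant on the support of $\phi$, which is possible if $\phi$ is supported in a subinterval strictly interior to the support of $\chi$; Remark~\ref{rem:uniform-length} guarantees the resulting $\sigma>0$ can be taken uniform in $t_0\in I$.
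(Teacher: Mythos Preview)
Your duality setup is the same as the paper's: the backward version of Lemma~\ref{lemma:energy-est} gives $\|\phi\|_{H^1_0}\le C\|P\phi\|_{H^{-1,1}_{0,\bl}}$ for test functions supported in $[t_0,t_0+\sigma]$, and Hahn--Banach on the range of $P$ produces $u\in H^{1,-1}_{0,\bl}$ with $Pu=f$ on $(t_0,t_0+\sigma)$. That part is fine.

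The gap is in your support argument. Your test functions are already supported in $\{t\ge t_0\}$, so ``restricting further'' changes nothing; the Hahn--Banach extension yields $u$ only as an element of $H^{1,-1}_{0,\bl}|_{[t_0,T_1]}$, with no information whatsoever about $\{t<t_0\}$. Extending such a $u$ by zero across $\{t=t_0\}$ does not a priori give $Pu=f$ there: $Pu-f$ could contain a layer supported on $\{t=t_0\}$, and nothing you have written rules this out. Your appeal to Corollary~\ref{cor:unique} does not help either, since that uniqueness statement is for $H^1_{0,\loc}$ solutions (you only have $u\in H^{1,-1}_{0,\bl,\loc}$), and in any case there is no vanishing Cauchy data from either side of $\{t=t_0\}$ to feed into it. The paper handles this by first observing that locally $\supp f\subset\{t>t_0+\delta_0\}$ for some $\delta_0>0$, and then, \emph{before} invoking Hahn--Banach, extending the functional $\ell$ to vanish on all test functions supported in $[t_0,t_0+\delta'_0)$ with $\delta'_0<\delta_0$; one checks this extension is well-defined and still continuous (using that $f$ vanishes there). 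The resulting $u$ then vanishes on an open neighborhood $[t_0,t_0+\delta'_0)$ of the time slice, so extension by zero to $\{t<t_0\}$ is automatically consistent with $Pu=f$. This is the step you are missing.
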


\begin{proof}
For any subspace
$\hilbert$ of $\dist(X)$ let
$\hilbert|_{[\tau_0,\tau_1]}$ consist of elements of $\hilbert$
restricted
to $t\in[\tau_0,\tau_1]$,
$\hilbert^\bullet_{[\tau_0,\tau_1]}$ consist of elements of $\hilbert$
supported
in $t\in[\tau_0,\tau_1]$. In particular, an element of
$\dCI_{\compl}(X)^\bullet_{[\tau_0,\tau_1]}$ vanishes to infinite
order at $t=\tau_0,\tau_1$. Thus, the dot over $\CI$ denotes the infinite
order vanishing at $\pa X$, while the $\bullet$ denotes the infinite
order vanishing at the time boundaries we artificially imposed.

We assume that $f$ is supported in $t>t_0+\delta_0$.
We use Lemma~\ref{lemma:energy-est}, with
the role of $t_0$ and $t_1$ reversed (backward in time propagation),
and our requirement on $\sigma$ is that it is sufficiently small
so that the backward version of the lemma is valid with $t_1=t_0+2\sigma$.
(This can be done uniformly over $I$ by Remark~\ref{rem:uniform-length}.)
Let
$T_1=t_1-\ep$ and $t_1$ be such that $t_0+\sigma=T_1'<T_1<t_1<t_0+2\sigma$.
Applying the estimate \eqref{eq:Pu-est}, using $P=P^*$, with
$u$ replaced by $\phi\in\dCI_{\compl}(X)^\bullet_{[t_0,T_1]}$
with $t_1$ in the role of $t_0$ there (backward estimate), $\tau_0\in[t_0,T_1)$
in the role of $t_0$, we obtain:
\begin{equation}\label{eq:Pphi-est}
\|(\chi')^{1/2}\phi\|_{H^1_0(X)|_{[\tau_0,T_1]}}
\leq C\|P^*\phi\|_{H^{-1,1}_{0,\bl}(X)|_{[\tau_0,T_1]}},
\ \phi\in\dCI_{\compl}(X)^\bullet_{[\tau_0,T_1]}.
\end{equation}
It is also useful to rephrase this as
\begin{equation}\label{eq:Pphi-est-2}
\|\phi\|_{H^1_0(X)|_{[\tau_0',T_1]}}
\leq C\|P^*\phi\|_{H^{-1,1}_{0,\bl}(X)|_{[\tau_0,T_1]}},
\ \phi\in\dCI_{\compl}(X)^\bullet_{[\tau_0,T_1]},
\end{equation}
when $\tau_0'>\tau_0$.
By \eqref{eq:Pphi-est},
$P^*:\dCI_\compl(X)^\bullet_{[t_0,T_1]}\to\dCI_\compl(X)^\bullet_{[t_0,T_1]}$
is injective.
Define
$$
(P^*)^{-1}:\Ran_{\dCI_\compl(X)^\bullet_{[t_0,T_1]}}P^*\to
\dCI_{\compl}(X)^\bullet_{[t_0,T_1]}
$$
by $(P^*)^{-1}\psi$ being the unique $\phi\in \dCI_{\compl}(X)^\bullet
_{[t_0,T_1]}$
such
that $P^*\phi=\psi$.
Now consider the conjugate
linear functional on $\Ran_{\dCI_\compl(X)^\bullet_{[t_0,T_1]}} P^*$
given by
\begin{equation}\label{eq:dual-map-def}
\ell:\psi\mapsto \langle f,(P^*)^{-1}\psi\rangle.
\end{equation}
In view of
\eqref{eq:Pphi-est}, and the support condition on $f$ (namely the
support is in $t>t_0+\delta_0$) and $\psi$ (the support is
in $t\leq T_1$)\footnote{We use below that we can thus regard
$f$ as an element of $H^{-1}_0(X)^\bullet_{[t_0+\delta_0,\infty)}$,
while $(P^*)^{-1}\psi$ as an element of $H^1_0(X)^\bullet_{(-\infty,T_1]}$,
so these can be naturally paired, with the pairing bounded
in the appropriate norms. We then write these norms
as $H^{-1}_0(X)|_{[t_0+\delta_0,T_1]}$ and $H^1_0(X)|_{[t_0+\delta_0,T_1]}$.},
\begin{equation*}\begin{split}
|\langle f,(P^*)^{-1}\psi\rangle|&\leq
\|f\|_{H^{-1}_0(X)|_{[t_0+\delta_0,T_1]}}
\,\|(P^*)^{-1}\psi\|_{H^1_0(X)|_{[t_0+\delta_0,T_1]}}\\
&\leq C\|f\|_{H^{-1}_0(X)|_{[t_0+\delta_0,T_1]}}
\|\psi\|_{H^{-1,1}_{0,\bl}(X)|_{[t_0,T_1]}},
\end{split}\end{equation*}
so $\ell$ is a continuous conjugate
linear functional if we equip
$\Ran_{\dCI_{\compl}(X)^\bullet_{[t_0,T_1]}} P^*$ with the
$H^{-1,1}_{0,\bl}(X)|_{[t_0,T_1]}$ norm.

If we did not care about the solution vanishing in $t<t_0+\delta_0$, we
could simply use Hahn-Banach to extend this to a continuous
conjugate linear functional $u$ on $H^{-1,1}_{0,\bl}(X)^\bullet_{[t_0,T_1]}$,
which can thus be identified
with an element of $H^{1,-1}_{0,\bl}(X)|_{[t_0,T_1]}$. This would give
$$
Pu(\phi)=\langle Pu,\phi\rangle=\langle u,P^*\phi\rangle=\ell(P^*\phi)
=\langle f,(P^*)^{-1}P^*\phi\rangle=\langle f,\phi\rangle,
$$
$\phi\in\dCI_{\compl}(X)^\bullet_{[t_0,T_1]}$, so $Pu=f$.

We do want the vanishing of $u$ in $(t_0,t_0+\delta_0)$, i.e.\ when applied
to $\phi$ supported in this region.
As a first step in this direction, let $\delta_0'\in(0,\delta_0)$, and
note that if
$$
\phi\in\dCI_\compl(X)^\bullet_{[t_0,t_0+\delta'_0)}
\cap \Ran_{\dCI_\compl(X)^\bullet_{[t_0,T_1]}} P^*
$$
then $\ell(\phi)=0$ directly by \eqref{eq:dual-map-def}, namely the
right hand side vanishes by the support condition on $f$. Correspondingly,
the conjugate linear map $L$ is well-defined on the algebraic sum
\begin{equation}\label{eq:smooth-sum-space}
\dCI_\compl(X)^\bullet_{[t_0,t_0+\delta'_0)}+\Ran_{\dCI_\compl(X)^\bullet_{[t_0,T_1]}} P^*
\end{equation}
by
$$
L(\phi+\psi)=\ell(\psi),\ \phi\in\dCI_\compl(X)^\bullet_{[t_0,t_0+\delta'_0)},
\ \psi\in\Ran_{\dCI_\compl(X)^\bullet_{[t_0,T_1]}} P^*.
$$
We claim that the functional $L$ is actually continuous when
\eqref{eq:smooth-sum-space} is equipped with the
$H^{-1,1}_{0,\bl}(X)|_{[t_0,T_1]}$ norm.
But this follows from
$$
|\langle f,(P^*)^{-1}\psi\rangle|
\leq C\|f\|_{H^{-1}_0(X)|_{[t_0+\delta_0,T_1]}}
\|\psi\|_{H^{-1,1}_{0,\bl}(X)|_{[t_0+\delta'_0,T_1]}}
$$
together with
$$
\|\psi\|_{H^{-1,1}_{0,\bl}(X)|_{[t_0+\delta'_0,T_1]}}\leq
\|\phi+\psi\|_{H^{-1,1}_{0,\bl}(X)|_{[t_0,T_1]}}
$$
since $\phi$ vanishes on $[t_0+\delta'_0,T_1]$. Correspondingly, by the
Hahn-Banach theorem, we
can extend $L$ to a continuous conjugate linear map
$$
u: H^{-1,1}_{0,\bl}(X)^\bullet_{[t_0,T_1]}\to\Cx,
$$
which can thus by identified
with an element of $H^{1,-1}_{0,\bl}(X)|_{[t_0,T_1]}$.
This gives
$$
Pu(\phi)=\langle Pu,\phi\rangle=\langle u,P^*\phi\rangle=\ell(P^*\phi)
=\langle f,(P^*)^{-1}P^*\phi\rangle=\langle f,\phi\rangle,
$$
$\phi\in\dCI_{\compl}(X)^\bullet_{[t_0,T_1]}$ supported in $(t_0,T_1)$, so $Pu=f$, and
in addition
$$
u(\phi)=0,\ \phi\in\dCI_{\compl}(X)^\bullet_{[t_0,t_0+\delta'_0]},
$$
so
\begin{equation}\label{eq:supp-u}
t\geq t_0+\delta'_0\ \text{on}\ \supp u.
\end{equation}
In particular, extending $u$ to vanish on $(-\infty,t_0+\delta'_0)$, which
is compatible with the existing definition in view of \eqref{eq:supp-u}, we
have a distribution solving the PDE, defined on $t<T_1$, with the
desired support condition. In particular, using
a cutoff function $\chi$ which is identically $1$ for $t\in(-\infty,T'_1]$,
is supported for $t\in(-\infty,T_1]$, $\chi u\in H^{1,-1}_{0,\bl}(X)$,
$\chi u$ vanishes for $t<t_0+\delta'_0$ as well as $t\geq T_1$,
and $Pu=f$ on $(-\infty,T'_1)$, thus completing the proof.
\end{proof}

\begin{prop}\label{prop:weak-exist}
Suppose $\lambda<(n-1)^2/4$.
For $f\in H^{-1}_{0,\loc}(X)$ supported in $t>t_0$, there exists
$u\in H^{1,-1}_{0,\bl,\loc}(X)$,
$\supp u\subset \{p: t(p)\geq t_0\}$ and $Pu=f$.
\end{prop}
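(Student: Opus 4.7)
The approach is to iterate Lemma~\ref{lemma:weak-local-exist}, propagating local existence to global existence. By Remark~\ref{rem:uniform-length} combined with (PT), the local existence time $\sigma$ from the lemma is bounded below uniformly as the initial time varies over any compact subinterval of $\RR$, so finitely many iterations suffice to reach any finite time.

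Starting from $u^{(0)}$ produced by Lemma~\ref{lemma:weak-local-exist} on some $\{t<T_0\}$, I will inductively extend. Suppose $u^{(N)}\in H^{1,-1}_{0,\bl,\loc}(X|_{t<T_N})$ satisfies $Pu^{(N)}=f$ on $\{t<T_N\}$ and $\supp u^{(N)}\subset\{t\geq t_0\}$. Fix a small $\delta>0$ and a cutoff $\psi\in\CI(\RR)$ with $\psi\equiv 1$ on $(-\infty,T_N-\delta]$ and $\psi\equiv 0$ on $[T_N-\delta/2,\infty)$. Set $w=(\psi\circ t)\,u^{(N)}$, extended by zero to all of $X$ (consistent since $w$ vanishes near $T_N-\delta/2$), and compute $g:=f-Pw$. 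Using $Pu^{(N)}=f$ on $\{t<T_N\}$, a direct calculation gives
\begin{equation*}
g = (1-\psi\circ t)f - [P,\psi\circ t]\,u^{(N)},
\end{equation*}
which is supported in $\{t\geq T_N-\delta\}$. Applying Lemma~\ref{lemma:weak-local-exist} with starting time $T_N-\delta$ to $g$ produces $v\in H^{1,-1}_{0,\bl,\loc}(X)$ with $\supp v\subset\{t\geq T_N-\delta\}$ and $Pv=g$ on $\{t<T_N-\delta+\sigma\}$. Setting $u^{(N+1)}:=w+v$ and $T_{N+1}:=T_N-\delta+\sigma$ (with $\delta<\sigma$ so that $T_{N+1}>T_N$), one has $Pu^{(N+1)}=Pw+g=f$ on $\{t<T_{N+1}\}$, and $u^{(N+1)}=u^{(N)}$ on $\{t<T_N-\delta\}$ by construction. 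The family $\{u^{(N)}\}$ is thus self-consistent and defines a global $u\in H^{1,-1}_{0,\bl,\loc}(X)$ with the required properties.

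The main obstacle is verifying that $g\in H^{-1}_{0,\loc}(X)$, the hypothesis space of Lemma~\ref{lemma:weak-local-exist}. Since $u^{(N)}$ a priori lies only in the weaker space $H^{1,-1}_{0,\bl,\loc}$, the naive estimate gives $Pw\in H^{-1,-1}_{0,\bl,\loc}$, which does not embed into $H^{-1}_{0,\loc}$. The saving observation is that $\psi$ depends only on $t$: for any $V\in\Vz(X)$, writing $V=xV'$ with $V'\in\Vf(X)$ yields $V\psi=x\,V'(t)\,\psi'(t)\in x\CI(X)$, whence $[P,\psi\circ t]\in x\Diffz^1(X)$. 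Writing $u^{(N)}=\sum Q_j v_j$ with $v_j\in H^1_{0,\loc}$ and $Q_j\in\Diffb^1(X)$, commuting $Q_j$ past $[P,\psi\circ t]$ via Lemma~\ref{lemma:Diffb-commutant}, and converting the resulting $x\Diffb^1$ factors to $\Diffz^1$ via Lemma~\ref{lemma:b-to-0-conversion}, one obtains $[P,\psi\circ t]\,u^{(N)}\in\Diffz^1\cdot L^2_{0,\loc}\subset H^{-1}_{0,\loc}$ by the duality $\Diffz^1\colon L^2_{0,\loc}\to H^{-1}_{0,\loc}$. Combined with the obvious $(1-\psi\circ t)f\in H^{-1}_{0,\loc}$, this gives $g\in H^{-1}_{0,\loc}$, justifying the application of Lemma~\ref{lemma:weak-local-exist} and closing the iteration.
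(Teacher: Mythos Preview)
Your proof is correct and follows essentially the same strategy as the paper: iterate the local existence result (Lemma~\ref{lemma:weak-local-exist}) forward in time, with the paper organizing the iteration via a partition of unity on $f$ and tracking the error $f_{k+1}=Pu_k-f$, while you cut off the partial solution $u^{(N)}$ and re-solve for the remainder. You are in fact more explicit than the paper on the one nontrivial point, namely that the new right-hand side at each step lies in $H^{-1}_{0,\loc}(X)$ rather than merely $H^{-1,-1}_{0,\bl,\loc}(X)$; your observation $[P,\psi\circ t]\in x\Diffz^1(X)$ combined with $x\Diffz^1\Diffb^1\subset\Diffz^2$ (via Lemma~\ref{lemma:b-to-0-conversion}) is exactly what is needed, and the paper leaves this implicit.
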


\begin{proof}
We subdivide the time line into intervals
$[t_j,t_{j+1}]$,
each of which is sufficiently short so that energy estimates hold even on
$[t_{j-2},t_{j+3}]$; this can be done in view of the uniform estimates
on the length of such intervals over compact subsets.
Using a partition of unity, we may assume
that $f$ is supported in $[t_{k-1},t_{k+2}]$, and need to construct
a global solution of $Pu=f$ with $u$ supported in $[t_{k-1},\infty)$.
First we obtain $u_k$ as above solving the PDE on $(-\infty,t_{k+2}]$
(i.e.\ $Pu_k-f$ is supported in $(t_{k+2},\infty)$) and
supported in $[t_{k-1},t_{k+3}]$. Let $f_{k+1}=Pu_k-f$, this is
thus supported in $[t_{k+2},t_{k+3}]$. We next solve $Pu_{k+1}=-f_{k+1}$
on $(-\infty,t_{k+3}]$ with a result supported in $[t_{k+1},t_{k+4}]$.
Then $P(u_k+u_{k+1})-f$ is supported in $[t_{k+3},t_{k+4}]$, etc.
Proceeding inductively, and noting that the resulting sum is locally
finite, we obtain the solution on all of $X$.
\end{proof}

Well-posedness of the solution will follow once we show that
for solutions $u\in H^{1,s'}_{0,\bl,\loc}(X)$
of $Pu=f$, $f\in H^{-1,s}_{0,\bl,\loc}(X)$ supported
in $t>t_0$, we in fact have
$u\in H^{1,s-1}_{0,\bl,\loc}(X)$; indeed, this is a consequence of the
propagation
of singularities. We state this as a theorem now, recalling the
standing assumptions as well:

\begin{thm}\label{thm:well-posed}
Assume that (TF) and (PT) hold.
Suppose $\lambda<(n-1)^2/4$.
For $f\in H^{-1,1}_{0,\bl,\loc}(X)$ supported in $t>t_0$, there exists a unique
$u\in H^{1}_{0,\loc}(X)$ such that
$\supp u\subset \{p: t(p)\geq t_0\}$ and $Pu=f$. Moreover,
for $K\subset X$ compact there is $K'\subset X$ compact, depending
on $K$ and $t_0$ only, such that
\begin{equation}\label{eq:stability}
\|u|_K\|_{H^1_0(X)}\leq \|f|_{K'}\|_{H^{-1,1}_{0,\bl}(X)}.
\end{equation}
\end{thm}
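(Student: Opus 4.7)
The plan is to assemble three pieces already in place in this section: weak existence (Proposition~\ref{prop:weak-exist}), uniqueness (Corollary~\ref{cor:unique}), and the local energy estimate (Lemma~\ref{lemma:energy-est}). The missing link is the regularity upgrade flagged by the author immediately before the statement: any $u\in H^{1,s'}_{0,\bl,\loc}(X)$ solving $Pu=f$ with $f\in H^{-1,s}_{0,\bl,\loc}(X)$ actually lies in $H^{1,s-1}_{0,\bl,\loc}(X)$. This is to be supplied by the propagation of singularities theorem (Theorem~\ref{thm:prop-sing}), which is proved later but may be taken as known here.

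First I would apply Proposition~\ref{prop:weak-exist}, using the inclusion $H^{-1,1}_{0,\bl,\loc}(X)\subset H^{-1}_{0,\loc}(X)$, to obtain some $u\in H^{1,-1}_{0,\bl,\loc}(X)$ with $Pu=f$ and $\supp u\subset\{t\geq t_0\}$. To upgrade to $u\in H^1_{0,\loc}(X)$, I would combine Proposition~\ref{prop:elliptic} (which handles the part of $\WFb^{1,0}(u)$ off $\dot\Sigma$) with Theorem~\ref{thm:prop-sing} at level $m=0$: the set $\WFb^{1,0}(u)\cap\dot\Sigma$ is a union of maximally extended \GBBsp in $\dot\Sigma\setminus\WFb^{-1,1}(f)$, and since $f\in H^{-1,1}_{0,\bl,\loc}(X)$ the wave front set $\WFb^{-1,1}(f)$ is empty, so the ambient set is all of $\dot\Sigma$. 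By (TF) every such \GBB\ reaches values $t<t_0$, where $u$ vanishes identically and hence $\WFb^{1,0}(u)$ is empty. This forces $\WFb^{1,0}(u)=\emptyset$ throughout, so $u\in H^1_{0,\loc}(X)$.

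Uniqueness is then immediate from Corollary~\ref{cor:unique}. For the stability bound, I would first use density of $\dCI(X)$ in $H^1_{0,\loc}(X)$ to apply Lemma~\ref{lemma:energy-est} to $u$ itself. Given compact $K\subset X$, the properness assumption (PT) bounds $t$ on $K$ by some $t_1$; I would partition $[t_0,t_1]$ into finitely many subintervals of length below the uniform threshold of Remark~\ref{rem:uniform-length}, choose cutoffs $\chi_j$ of the type used there with $\digamma$ sufficiently large, and iterate Lemma~\ref{lemma:energy-est} on each subinterval, using the vanishing of $u$ for $t<t_0$ to start the induction and the estimate on one subinterval to absorb the boundary term for the next. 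The resulting compact set $K'$ is a time slab containing the $t$-range of $K$ together with a small portion of the past; by (PT) it is compact, and it depends only on $K$ and $t_0$.

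The principal obstacle is the regularity upgrade, which depends entirely on the propagation of singularities theorem --- the microlocal heart of the paper. Once that input is available, the well-posedness statement is an essentially formal combination of the existence, uniqueness, and energy estimates already constructed in this section.
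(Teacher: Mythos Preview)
Your proposal is correct and follows essentially the same route as the paper: weak existence from Proposition~\ref{prop:weak-exist}, the regularity upgrade via Theorem~\ref{thm:prop-sing} (using that $u$ vanishes for $t<t_0$ and (TF) to conclude $\WFb^{1,0}(u)=\emptyset$), uniqueness from Corollary~\ref{cor:unique}, and the stability estimate from iterating Lemma~\ref{lemma:energy-est}. The paper's proof is terser but invokes exactly the same ingredients; your version just makes explicit the bicharacteristic argument for the wave front set and the iteration of the energy estimate over subintervals, both of which are the intended mechanisms.
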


\begin{rem}
While we used $\tau$ of Lemma~\ref{lemma:t-global-construction} instead
of $t$ throughout, the conclusion of this theorem is invariant
under this change (since $\delta_0>0$ is arbitrary in
Lemma~\ref{lemma:t-global-construction}),
and thus is actually valid for the original $t$ as well.
\end{rem}

\begin{proof}
Uniqueness and \eqref{eq:stability}
follow from Corollary~\ref{cor:unique} and the estimate
preceding it. By Proposition~\ref{prop:weak-exist}, this problem
has a solution $u\in H^{1,-1}_{0,\bl,\loc}(X)$ with the desired support
property. By the propagation
of singularities, Theorem~\ref{thm:prop-sing},
$u\in H^1_{0,\loc}(X)$ since $u$ vanishes for $t<t_0$.
\end{proof}

\section{Zero-differential operators and b-pseudodifferential operators}
\label{sec:Diff0-Psib}

In order to microlocalize, we need to replace $\Diffb(X)$ by $\Psib(X)$
and $\Psibc(X)$. We refer to \cite{Melrose:Atiyah} for a thorough
discussion and \cite[Section~2]{Vasy:Propagation-Wave}
for a concise introduction
to these operator algebras including all the facts that are required
here. In particular, the distinction between $\Psib(X)$
and $\Psibc(X)$ is the same as between $\Psi_{\cl}(\RR^n)$ and $\Psi(\RR^n)$
of {\em classical}, or {\em one step polyhomogeneneous},
resp.\ standard, pseudodifferential operators,
i.e.\ elements of the former ($\Psib(X)$, resp.\ $\Psi_{\cl}(\RR^n)$) are (locally)
quantizations of symbols with
a full one-step polyhomogeneous
asymptotic expansion (also called {\em classical} symbols),
while those of the latter ($\Psibc(X)$, resp.\ $\Psi(\RR^n)$)
are (locally) quantizations of symbols which merely satisfy symbolic estimates.
While the former are convenient since they have homogeneous principal
symbols, the latter are more
useful when one must use approximations (e.g.\ by smoothing operators), as
is often the case below.
Before proceeding, we recall that points in the b-cotangent bundle, $\Tb^*X$, of $X$
are of the form
$$
\xib\,\frac{dx}{x}+\sum_{j=1}^{n-1}\zetab_j\,dy_j.
$$
Thus, $(x,y,\xib,\zetab)$ give coordinates on $\Tb^*X$. If $(x,y,\xi,\zeta)$ are
the standard coordinates on $T^*X$ induced by local coordinates
on $X$, i.e.\ one-forms are written as
$\xi\,dx+\zeta\,dy$, then the map $\pi:T^*X\to\Tb^*X$ is given by
$\pi(x,y,\xi,\zeta)=(x,y,x\xi,\zeta)$.

To be a bit more concrete (but again we refer to \cite{Melrose:Atiyah}
and \cite[Section~2]{Vasy:Propagation-Wave} for more detail), we can define a
large subspace (which in fact is sufficient for our purposes here)
of $\Psibc^m(X)$ and $\Psib^m(X)$ locally by explicit {\em quantization maps};
these can be combined to a global quantization map by a partition of unity
as usual.
Thus, we have $q=q_m:S^m(\Tb^*X)\to
\Psibc^m(X)$, which restrict to $q:S^m_{\cl}(\Tb^*X)\to
\Psib^m(X)$, $\cl$ denoting classical symbols. Namely, over a local
coordinate chart $U$ with coordinates $(x,y)$, $y=(y_1,\ldots,y_{n-1})$,
and with $a$ supported in $\Tb^*_K X$,
$K\subset U$ compact, we may take
\begin{equation*}\begin{split}
&q(a)u(x,y)\\
&\qquad=(2\pi)^{-n}\int e^{\imath((x-x')\xi+(y-y')\cdot\zeta)}
\phi\big(\frac{x-x'}{x}\big) 
a(x,y,x\xi,\zeta) u(x',y')\,dx'\,dy'\,d\xi\,d\zeta,
\end{split}\end{equation*} 
understood as an oscillatory integral,
where $\phi\in\Cinf_\compl((-1/2,1/2))$ is identically $1$ near $0$,
and the integral in $x'$ is over $[0,\infty)$.
Note that $\phi$ is irrelevant as far as the behavior of Schwartz kernels near the
diagonal is concerned (it is identically $1$ there); it simply localizes to a neighborhood
of the diagonal. Somewhat inaccurately, one may write $q(a)$ as $a(x,y,xD_x,D_y)$,
so $a$ is symbolic in b-vector fields; a more accurate way of reflecting this
is to change variables, writing $\xib=x\xi$, $\zetab=\zeta$, so
\begin{equation}\begin{split}\label{eq:quant-b}
&q(a)u(x,y)\\
&\qquad=(2\pi)^{-n}\int e^{\imath(\tfrac{x-x'}{x}\xib+(y-y')\cdot\zetab)}
\phi\big(\frac{x-x'}{x}\big)
a(x,y,\xib,\zetab) u(x',y')\,\frac{dx'}{x}\,dy'\,d\xib\,d\zetab.
\end{split}\end{equation}

With this explicit quantization, the {\em principal symbol} $\sigma_{\bl,m}(A)$
of $A=q(a)$ is the class $[a]$ of $a$ in $S^m(\Tb^*X)/S^{m-1}(\Tb^*X)$. If
$a$ is classical, this class can be further identified with a homogeneous, of degree $m$,
symbol, i.e.\ an element of $S^m_{\hom}(\Tb^*X\setminus o)$. On the other
hand, the {\em operator wave front set} $\WFb'(A)$ of $A=q(a)$ can be defined by
saying that $p\in\Tb^*X\setminus o$ is {\em not} in $\WFb'(A)$ if $p$ has
a conic neighborhood $\Gamma$
in $\Tb^*X\setminus o$ such that $a=a(x,y,\xib,\zetab)$ is rapidly decreasing
(i.e.\ is an order $-\infty$ symbol) in $\Gamma$. Thus, on the complement of
$\WFb'(A)$, $A$ is microlocally order $-\infty$.

A somewhat better definition of $\Psibc(X)$ and $\Psib(X)$ is
directly in terms of the Schwartz kernels. The Schwartz kernels are well-behaved
on the b-double space $X^2_{\bl}=[X^2;(\pa X)^2]$ created by blowing
up the corner $(\pa X)^2$ in the product space $X^2=X\times X$; in particular they
are smooth away from the diagonal and vanish to infinite order
off the front face. In these terms $\phi$ above localizes to a neighborhood
of the diagonal which only intersects the boundary of $X^2_{\bl}$ in the front
face of the blow-up. The equivalence of the two descriptions can be
read off directly from \eqref{eq:quant-b}, which shows that the Schwartz kernel
is a right b-density valued (this is the factor $\frac{dx'}{x}\,dy'$ in
\eqref{eq:quant-b}) conormal distribution to $\tfrac{x-x'}{x}=0$, $y-y'=0$,
i.e.\ the lift of the diagonal to $X^2_{\bl}$.

The space $\Psibc(X)$ forms a filtered algebra, so for $A\in\Psibc^m(X)$,
$B\in\Psibc^{m'}(X)$, one has
$AB\in\Psibc^{m+m'}(X)$. In addition, the commutator satisfies
$[A,B]\in\Psibc^{m+m'-1}(X)$, i.e.\ it is one order lower than the product,
but there is no gain of decay at $\pa X$. However,
we also recall the following crucial lemma from
\cite[Section~2]{Vasy:Propagation-Wave}:

\begin{lemma}\label{lemma:xD_x-improvement}
For $A\in\Psibc^m(X)$, resp.\ $A\in\Psib^m(X)$, one has
$[xD_x, A]\in x\Psibc^m(X)$, resp.\ $[xD_x, A]\in x\Psib^m(X)$.
\end{lemma}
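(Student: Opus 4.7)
My plan is to exploit the explicit quantization map $q$ of \eqref{eq:quant-b} together with the b-structure of the double space $X^2_\bl$. By a partition of unity on $X$, any $A\in\Psibc^m(X)$ can be written as $A=q(a)+R$ with $R\in\Psib^{-\infty}(X)$ a smoothing residue; the classical case $A\in\Psib^m(X)$ is parallel, with $a$ classical. I handle the two pieces separately.

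For the quantization piece, after substituting $s=(x-x')/x$ in \eqref{eq:quant-b}, one obtains
$$q(a)u(x,y)=(2\pi)^{-n}\int e^{\imath(s\xib+(y-y')\cdot\zetab)}\phi(s)\,a(x,y,\xib,\zetab)\,u(x(1-s),y')\,ds\,dy'\,d\xib\,d\zetab,$$
so the $x$-dependence on the right sits only in $a$ and in the argument $x(1-s)$ of $u$. Applying $xD_x=-\imath x\pa_x$ and using the chain rule, the contribution where the derivative falls on $u$ equals $a\cdot(x'D_{x'}u)(x',y')\big|_{x'=x(1-s)}$, i.e.\ exactly the integrand of $q(a)(xD_x u)$; these two terms cancel in the commutator, leaving
$$[xD_x,q(a)]=q(-\imath x\pa_xa)=q\bigl(x\cdot(-\imath\pa_xa)\bigr).$$
Since $x$ depends only on the base point, \eqref{eq:quant-b} gives $q(xb)=x\cdot q(b)$ for any symbol $b$, and base-variable differentiation preserves both the classical and the general symbol class, so $-\imath\pa_xa\in S^m(\Tb^*X)$ (respectively $S^m_\cl$). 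Thus $[xD_x,q(a)]=x\cdot q(-\imath\pa_xa)$ lies in $x\Psibc^m(X)$, and in $x\Psib^m(X)$ in the classical case.

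For the smoothing remainder $R$, I would work at the level of Schwartz kernels, writing $(Ru)(z)=\int K_R(z,z')u(z')\,\tfrac{dx'}{x'}\,dy'$ with $K_R$ smooth on $X^2_\bl$ and vanishing to infinite order at the two side faces lb, rb. A direct integration by parts (the boundary term at $x'=0$ vanishes by the rb-vanishing of $K_R$) identifies the Schwartz kernel of $[xD_x,R]$ as $-\imath(x\pa_x+x'\pa_{x'})K_R$. The vector field $x\pa_x+x'\pa_{x'}$ on $X\times X$ lifts to a b-vector field on $X^2_\bl$ which, in the projective coordinates $(x,y,s,y')$ valid near the front face, equals $x\,\pa_x|_{s,y,y'}$, i.e.\ a defining function of ff times a smooth vector field. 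Hence the resulting kernel is smooth on $X^2_\bl$, still vanishes to infinite order at lb and rb, and carries an extra factor of $x$, which is precisely the statement $[xD_x,R]\in x\Psib^{-\infty}(X)$.

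The only real subtlety is the density bookkeeping in the remainder calculation: using a right b-density is what ensures the formal transpose of $x'D_{x'}$ produces no zeroth-order constant that would otherwise obstruct the extra factor of $x$. Once that is arranged, combining the two pieces and summing over the partition of unity yields the conclusion in both the $\Psib^m$ and $\Psibc^m$ settings.
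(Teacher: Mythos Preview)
Your argument is correct. The paper, however, takes a one-line structural route rather than an explicit computation: the normal (indicial) operator map $N$ from $\Psib^m(X)$ to the dilation-invariant b-calculus on the model half-space is an algebra homomorphism with kernel exactly $x\Psib^m(X)$, and $N(xD_x)$ is the dilation-invariant operator $xD_x$ on the model, which after Mellin transform is multiplication by the Mellin parameter and hence central. Thus $N([xD_x,A])=[N(xD_x),N(A)]=0$, so $[xD_x,A]\in\ker N=x\Psib^m(X)$ (and likewise for $\Psibc$). Your explicit computation is in effect a hands-on verification of this same mechanism: the substitution $s=(x-x')/x$ is a choice of projective coordinates on $X^2_\bl$ near the front face, and your identity $x\pa_x+x'\pa_{x'}=x\,\pa_x|_{s}$ is precisely the statement that the lifted radial vector field vanishes at the front face, i.e.\ that $N(xD_x)$ is central. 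Your route is self-contained and avoids invoking the normal-operator formalism, at the price of the separate treatment of the quantized and smoothing pieces; the paper's route is a one-liner once that formalism is in hand.
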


\begin{proof}
The lemma is an immediate consequence of $xD_x$ having a commutative
normal operator; see \cite{Melrose:Atiyah} for a detailed discussion
and \cite[Section~2]{Vasy:Propagation-Wave} for a brief explanation.
\end{proof}

{\em For simplicity of notation we state the results below through
Lemma~\ref{lemma:b-to-0-conversion-ps} for $\Psib(X)$;
they work equally well if one replaces $\Psib(X)$ by
$\Psibc(X)$ throughout.}

The analogue of Lemma~\ref{lemma:Diffb-commutant} with $\Diffb(X)$
replaced by $\Psib(X)$ still holds, without the awkward restriction
on positivity of b-orders (which is simply due to the lack of non-trivial
negative order differential operators).

\begin{Def}
Let $\Diffz^k\Psib^m(X)$ be the (complex) vector space
of operators on $\dCI(X)$ of the
form
$$
\sum P_j Q_j,\ P_j\in\Diffz^k(X),\ Q_j\in\Psib^m(X),
$$
where the sum is locally finite, and let
$$
\Diffz\Psib(X)=\cup_{k=0}^\infty\cup_{m\in\RR}^\infty\Diffz^k\Psib^m(X).
$$

We define $\Diffz^k\Psibc^m(X)$ similarly, by replacing $\Psib(X)$ by $\Psibc(X)$
throughout the definition.
\end{Def}

The ring structure (even with a weight $x^r$) of $\Diffz\Psib(X)$
was proved in \cite[Corollary~4.4 and
Lemma~4.5]{Vasy:De-Sitter}, which we recall here. We add to the statements
of \cite[Corollary~4.4 and
Lemma~4.5]{Vasy:De-Sitter} that $\Diffz\Psib(X)$ is also closed under adjoints
with respect to any weighted non-degenerate b-density, in particular
with respect to
a non-degenerate
0-density such as $|dg|$, for both $\Diffz(X)$ and $\Psib(X)$ are
closed under these adjoints and $(AB)^*=B^*A^*$.

\begin{lemma}\label{lemma:Psib-filtered-ring}
$\Diffz\Psib(X)$ is a filtered *-ring under composition (and
adjoints) with
$$
A\in\Diffz^k\Psib^m(X),\ B\in\Diffz^{k'}\Psib^{m'}(X)
\Rightarrow AB\in\Diffz^{k+k'}\Psib^{m+m'}(X),
$$
and
$$
A\in\Diffz^k\Psib^m(X)\Rightarrow A^*\in\Diffz^k\Psib^m(X),
$$
where the adjoint is taken with respect to a (i.e.\ {\em any fixed})
non-degenerate 0-density.
Moreover, composition is commutative to leading order in $\Diffz$,
i.e.\ for $A,B$ as above, $k+k'\geq 1$,
$$
[A,B]\in\Diffz^{k+k'-1}\Psib^{m+m'}(X).
$$
\end{lemma}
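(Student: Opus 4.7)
The plan is to reduce all three assertions to a single structural observation, namely the inclusion
\[
\Vz(X)\subset\Vb(X)\subset\Psib^1(X),
\]
which is immediate from the local coordinate expression $V=ax\pa_x+\sum_j b_j(x\pa_{y_j})$ for $V\in\Vz(X)$, each summand being a smooth coefficient times a b-vector field. Three consequences will be used throughout: (a) $\Diffz^k(X)\subset\Psib^k(X)$; (b) for $Q\in\Psib^m(X)$ and $V\in\Vz(X)$ the standard $\Psib$ commutator rule gives $[Q,V]\in\Psib^m(X)$; and, most importantly, (c) the order-trading inclusion $\Diffz^k\Psib^m(X)\subset\Diffz^{k-1}\Psib^{m+1}(X)$ for $k\geq 1$, obtained by recasting one $\Diffz^1$ factor as an element of $\Psib^1$. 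No reverse conversion is available; this one-way asymmetry is the principal technical obstacle.

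I would first prove the composition law $\Psib^m(X)\cdot\Diffz^{k'}(X)\subset\Diffz^{k'}\Psib^m(X)$ by induction on $k'$, uniformly in $m$. The case $k'=0$ is trivial since $\CI(X)\subset\Psib^0(X)$. For the inductive step, writing a typical element of $\Diffz^{k'}$ as $fVR'$ with $f\in\CI(X)$, $V\in\Vz(X)$, $R'\in\Diffz^{k'-1}(X)$, expand
\[
Q(fVR')=fV(QR')+f[Q,V]R'+[Q,f]VR'.
\]
The first term uses the inductive hypothesis on $QR'$. The second uses (b) to reduce to $\Psib^m\cdot\Diffz^{k'-1}$, handled by induction. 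For the third, $[Q,f]\in\Psib^{m-1}$, but the essential move is to absorb the factor $V$ into $\Psib^1$ via (a), so that $[Q,f]V\in\Psib^m$ and induction applies again. The full composition $\Diffz^k\Psib^m\cdot\Diffz^{k'}\Psib^{m'}\subset\Diffz^{k+k'}\Psib^{m+m'}$ follows from $(PQ)(P'Q')=P(QP')Q'$ and the separate ring structures of $\Diffz$ and $\Psib$. Adjoint closure is then immediate: $(PQ)^*=Q^*P^*\in\Psib^m\cdot\Diffz^k\subset\Diffz^k\Psib^m$, granted the known adjoint closure of $\Diffz^k(X)$ and $\Psib^m(X)$ individually with respect to the non-degenerate 0-density $|dg|$.

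For the commutator I would expand
\[
[PQ,P'Q']=[P,P']\,Q'Q+PP'[Q,Q']+P[Q,P']Q'-P'[Q',P]Q.
\]
Lemma~\ref{lemma:Diff-filtered-ring} puts the first summand in $\Diffz^{k+k'-1}\Psib^{m+m'}$. For the second, $[Q,Q']\in\Psib^{m+m'-1}$ gives $PP'[Q,Q']\in\Diffz^{k+k'}\Psib^{m+m'-1}$, which is absorbed into $\Diffz^{k+k'-1}\Psib^{m+m'}$ by (c). For the remaining two summands I would establish in parallel the refined commutator estimate
\[
[Q,P']\in\Diffz^{k'-1}\Psib^m(X)\quad\text{for}\quad Q\in\Psib^m(X),\ P'\in\Diffz^{k'}(X),
\]
by induction on $k'$, with base case $[Q,V]\in\Psib^m$ from (b) and inductive step running the same three-term expansion, now invoking the already-proved composition law. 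This refinement places $P[Q,P']Q'$ and $P'[Q',P]Q$ into $\Diffz^{k+k'-1}\Psib^{m+m'}$, finishing the claim. The main obstacle is the one-way character of (c): every gained unit of $\Diffz$ order is fragile and can easily be spent by a careless commutation, which is why the composition law must be proved before the refined commutator estimate, and why the $[Q,f]V$ term in the induction must be absorbed into $\Psib^m$ via $V\in\Psib^1$ rather than handled as an element of $\Psib^{m-1}\cdot\Vz$.
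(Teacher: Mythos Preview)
Your argument is correct. The induction for the composition law $\Psib^m\cdot\Diffz^{k'}\subset\Diffz^{k'}\Psib^m$ is clean, the four-term commutator expansion checks out, and the refined estimate $[Q,P']\in\Diffz^{k'-1}\Psib^m$ together with the one-way conversion (c) handles the cross terms; the only small omission is the edge case $k'=0$ (or $k=0$) in the commutator, where $P'\in\CI(X)$ gives $[Q,P']\in\Psib^{m-1}$ and one instead uses (c) on the $\Diffz^k$ factor, but this is routine.

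As for comparison: the paper does not actually prove this lemma here. It cites \cite[Corollary~4.4 and Lemma~4.5]{Vasy:De-Sitter} for the ring and commutator statements, and adds only the adjoint claim with exactly your one-line argument $(PQ)^*=Q^*P^*\in\Psib^m\cdot\Diffz^k\subset\Diffz^k\Psib^m$. So your write-up is a self-contained proof where the paper defers to an external reference; the underlying mechanism (expanding $\Diffz^k$ into products of $\Vz$-factors and $\CI$-coefficients, then commuting through $\Psib$ one factor at a time) is the natural one and matches the style of the paper's proof of the neighboring Lemma~\ref{lemma:b-comm-improve}.
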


Just like for differential operators,
we again have a lemma that improves the b-order (rather than
merely the 0-order) of the commutator
provided one of the commutants is in $\Psib(X)$. Again, it
is crucial here that there are no weights on $\Psib(X)$.

\begin{lemma}\label{lemma:b-comm-improve}
For $A\in\Psib^s(X)$, $B\in\Diffz^k\Psib^m(X)$,
$[A,B]\in\Diffz^k\Psib^{s+m-1}(X)$.
\end{lemma}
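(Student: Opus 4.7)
The plan is to imitate the structure of Lemma~\ref{lemma:Diffb-commutant}, but with the b-pseudodifferential calculus providing the gain in b-order. First I would write $B=\sum P_j Q_j$ with $P_j\in\Diffz^k(X)$, $Q_j\in\Psib^m(X)$ (locally finite) and expand
$$
[A,B]=\sum\big([A,P_j]Q_j+P_j[A,Q_j]\big).
$$
The second summand is immediate: $[A,Q_j]\in\Psib^{s+m-1}(X)$ by the standard b-calculus, so $P_j[A,Q_j]\in\Diffz^k\Psib^{s+m-1}(X)$ by Lemma~\ref{lemma:Psib-filtered-ring}. Thus it suffices to show $[A,P]\in\Diffz^k\Psib^{s-1}(X)$ whenever $P\in\Diffz^k(X)$, since then $[A,P_j]Q_j\in\Diffz^k\Psib^{s-1}(X)\cdot\Psib^m(X)\subset\Diffz^k\Psib^{s+m-1}(X)$.

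Next I would argue by induction on $k$. The base case $k=0$ (so $P=f\in\CI(X)$) is the standard fact $[A,f]\in\Psib^{s-1}(X)$. For the inductive step, writing a generator of $\Diffz^k$ as $V\cdot P'$ with $V\in\Vz(X)$ and $P'\in\Diffz^{k-1}$, or as $fP'$ with $f\in\CI(X)$, and expanding the commutator via the Leibniz rule, everything reduces to the single statement
$$
A\in\Psib^s(X),\ V\in\Vz(X)\ \Longrightarrow\ [A,V]\in\Diffz^1\Psib^{s-1}(X).
$$
To establish this, I would use the local description $V=c\,(xD_x)+xW$ with $c\in\CI(X)$ and $W\in\Vb(X)$ (obtained by splitting off the transversal piece of $V/x\in\Vf(X)$), and expand
$$
[A,V]=[A,c](xD_x)+c[A,xD_x]+[A,x]W+x[A,W].
$$
The term $[A,c](xD_x)$ lies in $\Psib^{s-1}\cdot\Vz\subset\Diffz^1\Psib^{s-1}(X)$ directly by Lemma~\ref{lemma:Psib-filtered-ring}, since $xD_x\in\Vz(X)$. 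Lemma~\ref{lemma:xD_x-improvement} gives $c[A,xD_x]\in x\Psib^s(X)$; the explicit oscillatory integral computation for $[A,x]$ shows $[A,x]\in x\Psib^{s-1}(X)$, so $[A,x]W\in x\Psib^s(X)$ as well; and $x[A,W]\in x\Psib^s(X)$ since $[A,W]\in\Psib^s(X)$.

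Thus everything hinges on the inclusion $x\Psib^s(X)\subset\Diffz^1\Psib^{s-1}(X)$, which I view as the main obstacle. The key observation driving it is the elementary fact that $x\Vb(X)\subset\Vz(X)$: indeed, if $W=ax\pa_x+\sum b_j\pa_{y_j}\in\Vb(X)$, then $xW=a x(x\pa_x)+\sum b_j(x\pa_{y_j})\in\Vz(X)$. Given $T\in\Psib^s(X)$, I would use the local expansion $T=T_0+\sum W_jT_j$ with $W_j\in\Vb(X)$, $T_j\in\Psib^{s-1}(X)$, $T_0\in\Psib^{s-1}(X)$ (the standard reduction of b-symbols of order $s$ to principal part plus remainder), whence
$$
xT=xT_0+\sum (xW_j)T_j\in\Psib^{s-1}(X)+\Vz(X)\cdot\Psib^{s-1}(X)\subset\Diffz^1\Psib^{s-1}(X),
$$
as required. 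Combining this with the four-term expansion above yields $[A,V]\in\Diffz^1\Psib^{s-1}(X)$, closing the induction and completing the proof. The technical care is entirely concentrated in the passage from $x\Vb$ to $\Vz$ and the verification that the local b-symbol decomposition survives multiplication by $x$; everything else is bookkeeping within the filtered ring structure of Lemma~\ref{lemma:Psib-filtered-ring}.
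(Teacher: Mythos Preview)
Your proof is correct and structurally close to the paper's, but the treatment of the key step $[A,V]\in\Diffz^1\Psib^{s-1}(X)$ for $V\in\Vz(X)$ differs. The paper writes $V=xV'$ with $V'\in\Vf(X)$, expands $[V,A]=[x,A]V'+x[V',A]$, and invokes an external lemma (\cite[Lemma~2.2]{Vasy:Propagation-Wave}) giving $[V',A]=\sum W'_jB'_j+C'_j$ with $W'_j\in\Vf(X)$, $B'_j,C'_j\in\Psib^{s-1}(X)$; then $xW'_j\in\Vz(X)$ and $[x,A]V'=B''(xV')$ land directly in the target space. You instead split $V=c(xD_x)+xW$ with $W\in\Vb(X)$, handle the $xD_x$ piece via Lemma~\ref{lemma:xD_x-improvement}, and funnel the remaining terms into $x\Psib^s(X)$, which you then convert using $x\Psib^s(X)\subset\Diffz^1\Psib^{s-1}(X)$. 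That inclusion is precisely Lemma~\ref{lemma:b-to-0-conversion-ps}, proved later in the paper by the same elliptic decomposition you sketch; so your route is self-contained within the paper's own toolbox (avoiding the external citation) at the cost of anticipating that later lemma. Both approaches are equally short once the ingredients are in hand.
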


\begin{proof}
Expanding elements of $\Diffz^k(X)$ as finite sums of
products of vector fields and functions, and using that $\Psib(X)$ is
commutative to leading order, we need to consider commutators
$[f,A]$, $f\in\CI(X)$, $A\in\Psib^s(X)$ and show that this is
in $\Psib^{s-1}(X)$, which is automatic as $\CI(X)\subset\Psib^0(X)$,
as well as $[V,A]$, $V\in\Vz(X)$, $A\in\Psib^s(X)$, and show that
this is in $\Diffz^1\Psib^{s-1}(X)$, i.e.
$$
[V,A]=\sum W_jB_j+C_j,\ B_j,C_j\in\Psib^{s-1}(X),\ W_j\in\Vz(X).
$$
But $V=xV'$, $V'\in\Vf(X)$,
and
$$
[V',A]=\sum_j W'_j B'_j+C'_j,\ W'_j\in\Vf(X),\ B'_j,C'_j\in\Psib^{s-1}(X),
$$
see \cite[Lemma~2.2]{Vasy:Propagation-Wave}, while
$B''=[x,A]x^{-1}\in \Psib^{s-1}(X)$, so
$$
[V,A]=[x,A]V'+x[V',A]=B''(xV')+\sum_j (xW'_j )B'_j+xC'_j,
$$
which is of the desired form once the first term is rearranged using
Lemma~\ref{lemma:Psib-filtered-ring},
i.e.\ explicitly $B''(xV')=(xV')B''+[B'',xV']$, with the
last term being an element of $\Psib^{s-1}(X)$.
\end{proof}

We also have an analogue of Lemma~\ref{lemma:b-to-0-conversion}.

\begin{lemma}\label{lemma:b-to-0-conversion-ps}
For any $l\geq 0$ integer,
$$
x^l\Diffz^k\Psib^m(X)\subset\Diffz^{k+l}\Psib^{m-l}(X).
$$
\end{lemma}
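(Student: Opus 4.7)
The plan is to reduce the general inclusion to the base case $l=1$, $k=0$---namely $x\Psib^m(X)\subset\Diffz^1\Psib^{m-1}(X)$---which I would establish by a symbolic argument on $\Tb^*X$, and then iterate via commutators. For the base case I work locally and write $A\in\Psib^m(X)$ as $q(a)+R$ with $a\in S^m(\Tb^*X)$ and $R\in\Psib^{-\infty}(X)$; the remainder satisfies $xR\in\Psib^{-\infty}\subset\Psib^{m-1}$ and is absorbed trivially. Choose a conic partition of unity $1=\chi_{-1}+\chi_0+\sum_{j=1}^{n-1}\chi_j$ on $\Tb^*X$, where $\chi_{-1}$ is compactly supported in the fibers, $\chi_0$ is supported where $|\xib|\geq c|\omega|$, and $\chi_j$, $j\geq 1$, is supported where $|\zetab_j|\geq c|\omega|$; here $|\omega|^2=\xib^2+\sum\zetab_j^2$. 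On $\supp\chi_0$ the symbol $\xib$ is elliptic, so we factor $xa\chi_0=\xib\cdot b_0$ with $b_0:=xa\chi_0/\xib\in S^{m-1}(\Tb^*X)$; since $\xib=\sigma_{\bl,1}(xD_x)$ and $xD_x\in\Vz(X)\subset\Diffz^1(X)$, the $\Psib$ symbol calculus gives $xq(a\chi_0)=(xD_x)\,q(b_0)$ modulo $\Psib^{m-1}(X)$. Analogously, on $\supp\chi_j$ for $j\geq 1$ we factor $xa\chi_j=(x\zetab_j)\,b_j$ with $b_j:=a\chi_j/\zetab_j\in S^{m-1}(\Tb^*X)$; since $x\zetab_j=\sigma_{\bl,1}(xD_{y_j})$ and $xD_{y_j}\in\Vz(X)$, we get $xq(a\chi_j)=(xD_{y_j})\,q(b_j)$ modulo $\Psib^{m-1}(X)$. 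Summing these pieces, noting that $xq(\chi_{-1}a)\in\Psib^{-\infty}\subset\Psib^{m-1}$, and patching by a partition of unity on $X$, I obtain $xA\in\Diffz^1\Psib^{m-1}(X)$.

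Next, I would upgrade the base case to $x\Diffz^k\Psib^m(X)\subset\Diffz^{k+1}\Psib^{m-1}(X)$ for every $k\geq 0$ by induction on $k$. For $P\in\Diffz^k$ and $Q\in\Psib^m$, write $xPQ=P(xQ)+[x,P]\,Q$. The first term lies in $\Diffz^k\cdot\Diffz^1\Psib^{m-1}\subset\Diffz^{k+1}\Psib^{m-1}$ by the base case and Lemma~\ref{lemma:Psib-filtered-ring}. For the commutator, observe that every $V\in\Vz(X)$ has the form $V=xV'$ with $V'\in\Vf(X)$, so $[x,V]=-Vx=-x(V'x)\in x\CI(X)$; by the Leibniz rule this gives $[x,P]\in x\Diffz^{k-1}(X)$, hence $[x,P]\,Q\in x\Diffz^{k-1}\Psib^m(X)$, and the inductive hypothesis on $k$ places this in $\Diffz^k\Psib^{m-1}\subset\Diffz^{k+1}\Psib^{m-1}$.

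The full statement then follows by induction on $l$: the case $l=0$ is trivial, and for $l\geq 1$ one writes $x^l\Diffz^k\Psib^m=x\cdot(x^{l-1}\Diffz^k\Psib^m)$, applies the outer inductive hypothesis to obtain $x^{l-1}\Diffz^k\Psib^m\subset\Diffz^{k+l-1}\Psib^{m-l+1}$, and then applies the previously established $l=1$ case with $(k+l-1,m-l+1)$ in place of $(k,m)$ to conclude $x\cdot\Diffz^{k+l-1}\Psib^{m-l+1}\subset\Diffz^{k+l}\Psib^{m-l}$. I expect the main obstacle to be the symbolic base case: one must verify that the conic decomposition of $a$ is compatible with the $\Psib$ calculus and that, for $V\in\{xD_x,xD_{y_j}\}$ and $b\in S^{m-1}$, the identity $\sigma_{\bl,m}(V\,q(b))=\sigma_{\bl,1}(V)\,b$ modulo $S^{m-1}$ lifts to operators modulo $\Psib^{m-1}(X)$. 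This is the standard leading-order composition formula reviewed in \cite[Section~2]{Vasy:Propagation-Wave}, and once it is in hand the commutator induction is entirely formal, in the same spirit as Lemmas~\ref{lemma:Diffb-commutant} and~\ref{lemma:b-comm-improve}.
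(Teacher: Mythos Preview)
Your proof is correct and follows essentially the same strategy as the paper: reduce to the base case $x\Psib^m(X)\subset\Diffz^1\Psib^{m-1}(X)$ and then induct. The only real difference is in how the base case is obtained. The paper uses an elliptic parametrix: writing $A=(xD_x)A_0+\sum_j D_{y_j}A_j+R$ via a parametrix $G\in\Psib^{-2}(X)$ for $L=(xD_x)^2+\sum_j D_{y_j}^2$, so that $xA=x(xD_x)A_0+\sum_j(xD_{y_j})A_j+xR$ with $x(xD_x),\,xD_{y_j}\in\Vz(X)$ and $A_j\in\Psib^{m-1}(X)$. Your conic partition of unity accomplishes the same factorization at the symbol level. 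The parametrix route is a bit slicker (one global step, no microlocal patching and no explicit lower-order bookkeeping), while your argument is more hands-on but entirely valid. Your explicit induction on $k$, using $[x,P]\in x\Diffz^{k-1}(X)$ for $P\in\Diffz^k(X)$, spells out the step the paper compresses into ``the rest follows by induction.''
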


\begin{proof}
It suffices to show that $x\Psib^m(X)\subset \Diffz^1\Psib^{m-1}(X)$,
for the rest follows by induction. Also, we may localize and assume that
$A$ is supported in a coordinate patch; note that
$$
\Psib^{-\infty}(X)\subset
\Diffz^1\Psib^{-\infty}(X)
$$
since $\CI(X)\subset\Diffz^1(X)$.
Thus, suppose $A\in\Psib^m(X)$.
Then there exist $A_j\in\Psib^{m-1}(X)$, $j=0,\ldots,n-1$, and
$R\in\Psib^{-\infty}(X)$ such that
$$
A=(xD_x)A_0+\sum_j D_{y_j}A_j+R;
$$
indeed, one simply needs to use the ellipticity of $L=(xD_x)^2+\sum D_{y_j}^2$
to achieve this by constructing a parametrix $G\in\Psib^{-2}(X)$
to it, and writing
$A=LGA+EA$, $E\in\Psib^{-\infty}(X)$. As $x(xD_x),xD_{y_j}\in\Vz(X)$, the
conclusion follows.
\end{proof}

As a consequence of our results thus far,
we deduce that $\Psib^0(X)$ is bounded on
$H^m_0(X)$, already stated in \cite[Lemma~4.7]{Vasy:De-Sitter}.

\begin{prop}\label{prop:H10-bd}
Suppose $m\in\ZZ$. Any $A\in\Psibc^0(X)$ with compact support
defines a bounded operator on $H^m_0(X)$, with
operator norm bounded by a seminorm of $A$ in $\Psibc^0(X)$.
\end{prop}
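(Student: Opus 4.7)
The plan is to reduce the general $m\in\ZZ$ case to the $L^2$-boundedness of $\Psibc^0(X)$ (the $m=0$ case) via induction and duality, and to prove the base case by the standard H\"ormander square-root argument adapted to the b-calculus.

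For $m=0$, I would implement the square-root trick. Given compactly supported $A\in\Psibc^0(X)$ with principal symbol $a=\sigma_{\bl,0}(A)$, pick $M$ with $M^2>\sup|a|^2$ and use the symbolic calculus (closure of $\Psibc(X)$ under composition and adjoints from Lemma~\ref{lemma:Psib-filtered-ring}, plus asymptotic summation and ellipticity of $M^2-A^*A$) to construct $B\in\Psibc^0(X)$ compactly supported with
$$
M^2 - A^*A = B^*B + R, \qquad R\in\Psibc^{-\infty}(X).
$$
Because $A$ is compactly supported, $R$ has Schwartz kernel compactly supported on $X^2_{\bl}$ with smoothness at the front face; a Schur-type estimate then shows that $R$ is bounded on $L^2_0(X)$, with operator norm controlled by a seminorm of $R$, hence ultimately of $A$. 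Pairing $(M^2-A^*A)u$ with $u$ gives
$$
\|Au\|^2_{L^2_0} \leq M^2\|u\|^2_{L^2_0} - \|Bu\|^2_{L^2_0} + |\langle Ru,u\rangle| \leq (M^2+\|R\|)\,\|u\|^2_{L^2_0},
$$
which is the desired $L^2_0(X)$ bound.

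For integer $m\geq 1$, I would argue by induction on $m$. Fix $u\in H^m_0(X)$ and $L\in\Diff_0^m(X)$ and write $LAu = A(Lu) + [L,A]u$. The first term is in $L^2_0(X)$ by the $m=0$ case, since $Lu\in L^2_0(X)$. For the commutator, Lemma~\ref{lemma:Psib-filtered-ring} (commutativity to leading order in $\Diffz$) gives $[L,A]\in\Diffz^{m-1}\Psibc^0(X)$; since $A$ has compact support, so does the Schwartz kernel of $[L,A]$, and after inserting cutoffs we can write it as a finite sum $\sum_j L_j A_j$ with $L_j\in\Diffz^{m-1}(X)$ and $A_j\in\Psibc^0(X)$ compactly supported. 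The inductive hypothesis applied to each $A_j$ gives $A_j u\in H^{m-1}_0(X)$, so $L_j A_j u\in L^2_0(X)$, closing the induction; seminorm bounds propagate because only finitely many seminorms of $A$ enter at each step. For integer $m\leq -1$, I use duality: Lemma~\ref{lemma:Psib-filtered-ring} shows $A^*\in\Psibc^0(X)$ is compactly supported, hence bounded on $H^{-m}_0(X)$ by the previous step, and dualizing relative to the $L^2_0(X)$ pairing yields boundedness of $A$ on $H^m_0(X)$.

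The main obstacle is really the base case, and within it the verification that the remainder $R\in\Psibc^{-\infty}(X)$ is $L^2_0(X)$-bounded with norm controlled by a fixed seminorm of $A$. Compactness of support is essential here: it allows a Schur-type kernel estimate on $X^2_{\bl}$ without any uniformity hypothesis at infinity, and it is also what lets the inductive and duality steps stay within the class of operators we can handle. Beyond this, the argument is essentially algebraic and rests only on the closure properties already recorded in Lemmas~\ref{lemma:Psib-filtered-ring} and \ref{lemma:b-comm-improve}.
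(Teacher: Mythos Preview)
Your proposal is correct and follows essentially the same route as the paper: $L^2$-boundedness as the base case, a commutator argument for positive $m$, and duality for negative $m$. The only variation is in the commutator step: you invoke Lemma~\ref{lemma:Psib-filtered-ring} to drop a $\Diffz$-order and then induct, whereas the paper (treating $m=1$ explicitly) uses Lemma~\ref{lemma:b-comm-improve} to obtain $[V,A]\in\Diffz^1\Psib^{-1}(X)\subset\Psibc^0(X)$ and thereby reduces directly to the $L^2$ case without a separate inductive appeal.
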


\begin{proof}
For $m\geq 0$ this is a special case of \cite[Lemma~4.7]{Vasy:De-Sitter},
though the fact that the operator norm is
bounded by a seminorm of $A$ in $\Psibc^0(X)$ was not explicitly stated
there though follows from the proof; $m<0$ follows by duality.

For the convenience of the reader we recall the proof in the
case we actually use in this paper, namely $m=1$ (then $m=-1$
follows by duality).
Any $A$ as in the statement of the proposition
is bounded on $L^2(X)$ with the stated properties.
Thus, we need to show that if $V\in\Vz(X)$, then $VA:H^1_0(X)\to L^2(X)$.
But $VA=AV+[V,A]$, $[V,A]\in\Diffz^1\Psib^{-1}(X)\subset\Psib^0(X)$, hence
$AV:H^1_0(X)\to L^2(X)$ and $[V,A]:L^2(X)\to L^2(X)$, with the
claimed norm behavior.
\end{proof}

If $q$ is a homogeneous function on $\Tb^* X\setminus o$, then we again
consider the
Hamilton vector field $\sH_q$ associated to it on $T^*X^\circ\setminus o$. A
change of coordinates calculation shows that in the
b-canonical coordinates given above
\begin{equation*}
\sH_q=(\pa_\xib q)x\pa_x-(x\pa_x q)\pa_\xib
+(\pa_\zetab q)\pa_y-(\pa_y q)\pa_\zetab,
\end{equation*}
so $\sH_q$ extends to a $\CI$ vector field on $\Tb^* X\setminus o$
which is tangent to $\Tb^*_{\pa X}X$.
If $Q\in\Psib^{m'}(X)$, $P\in\Psib^{m}(X)$, then $[Q,P]\in\Psib^{m+m'-1}(X)$
has principal symbol
$$
\sigma_{\bl,m+m'-1}([Q,P])=\frac{1}{\imath}\sH_q p.
$$

Using Proposition~\ref{prop:H10-bd} we can define a meaningful
$\WFb$ relative to $H^1_0(X)$. First we recall the definition of
the corresponding global function
space from \cite[Section~4]{Vasy:De-Sitter}:

For $k\geq 0$ the b-Sobolev spaces relative to $H^r_0(X)$ are given
by\footnote{We do not need weighted spaces, unlike
in \cite{Vasy:De-Sitter}, so we only state the definition in the
special case when the weight is identically $1$. On the other hand,
we are working on a non-compact space, so we must consider local
spaces and spaces of compactly supported functions
as in \cite[Section~3]{Vasy:Propagation-Wave}. Note also that we reversed
the index convention (which index comes first) relative to
\cite{Vasy:De-Sitter}, to match the notation for the wave front sets.}
\begin{equation*}
H^{r,k}_{0,\bl,\compl}(X)=\{u\in H^r_{0,\compl}(X):\ \forall A\in\Psib^k(X),
\ Au\in H^r_{0,\compl}(X)\}.
\end{equation*}
These can be normed by taking any properly supported
elliptic $A\in\Psib^k(X)$ and
letting
\begin{equation*}
\|u\|_{H^{r,k}_{0,\bl,\compl}(X)}^2=\|u\|_{H^r_0(X)}^2
+\|Au\|^2_{H^r_0(X)}.
\end{equation*}
Although the norm depends on the choice of $A$, for $u$ supported in a
fixed compact set, different choices
give equivalent norms, see \cite[Section~4]{Vasy:De-Sitter} for
details in the 0-setting (where supports are not an issue), and
\cite[Section~3]{Vasy:Propagation-Wave} for an analysis involving supports.
We also
let $H^{r,k}_{0,\bl,\loc}(X)$ be the subspace of $H^r_{0,\loc}(X)$
consisting of $u\in H^r_{0,\loc}(X)$ such that for any
$\phi\in\Cinf_\compl(X)$,
$\phi u\in H^{r,k}_{0,\bl,\compl}(X)$.

Here it is also useful to have Sobolev spaces with a negative amount
of b-regularity, in a manner completely analogous to
\cite[Definition~3.15]{Vasy:Propagation-Wave}:

\begin{Def}\label{Def:H1m-neg}
Let $r$ be an integer, $k<0$, and $A\in\Psib^{-k}(X)$ be elliptic on $\Sb^*X$
with proper support.
We let $H^{r,k}_{0,\bl,\compl}(X)$ be the space of all $u\in\dist(X)$ of the
form $u=u_1+Au_2$ with $u_1,u_2\in H^r_{0,\compl}(X)$.
We let
\begin{equation*}
\|u\|_{H^{r,k}_{0,\bl,\compl}(X)}
=\inf\{\|u_1\|_{H^r_0(X)}+\|u_2\|_{H^r_0(X)}:\ u=u_1+Au_2\}.
\end{equation*}
We also let $H^{r,k}_{0,\bl,\loc}(X)$ be the space of all $u\in\dist(X)$
such that $\phi u\in H^{r,k}_{0,\bl,\compl}(X)$ for all
$\phi\in\Cinf_\compl(X)$.
\end{Def}

As discussed for analogous spaces in \cite{Vasy:Propagation-Wave}
following Definition~3.15 there, this definition is independent of the
particular $A$ chosen, and different $A$ give equivalent norms for
distributions $u$ supported in a fixed compact set $K$. Moreover,
we have

\begin{lemma}
Suppose $r\in\ZZ$, $k\in\RR$. Any $B\in\Psibc^0(X)$ with compact support
defines a bounded operator on $H^{r,k}_{0,\bl}(X)$, with
operator norm bounded by a seminorm of $B$ in $\Psibc^0(X)$.
\end{lemma}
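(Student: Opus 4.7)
The plan is to treat the three cases $k=0$, $k>0$, and $k<0$ in succession, reducing each to the previously handled cases and tracking seminorm dependence of the operator norm throughout.

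The case $k=0$ is exactly Proposition~\ref{prop:H10-bd} applied with $m=r$; its proof already delivers the bound in terms of a $\Psibc^0$-seminorm of $B$.

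For $k>0$, fix a properly supported elliptic $\Lambda\in\Psib^k(X)$ defining the $H^{r,k}_{0,\bl,\compl}$-norm. The main task is to estimate $\|\Lambda Bu\|_{H^r_0}$. Writing $\Lambda B = B\Lambda + [\Lambda,B]$, the $B\Lambda u$ term is controlled by the $k=0$ case together with $\|\Lambda u\|_{H^r_0}\le \|u\|_{H^{r,k}_{0,\bl}}$. For the commutator, Lemma~\ref{lemma:Psib-filtered-ring} gives $[\Lambda,B]\in\Psibc^{k-1}(X)$. If $k\le 1$, this operator has nonpositive order, and the standard extension of the argument of Proposition~\ref{prop:H10-bd} shows it is bounded on $H^r_0$. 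If $k>1$, pick a properly supported elliptic $\Lambda'\in\Psib^{k-1}(X)$ with parametrix $G\in\Psib^{-(k-1)}(X)$ satisfying $G\Lambda' = I + R$ for some smoothing $R$, and write
\[
[\Lambda,B]u \;=\; \bigl([\Lambda,B]\,G\bigr)\,\Lambda'u \;-\; [\Lambda,B]\,R\,u.
\]
Now $[\Lambda,B]G\in\Psibc^0(X)$, and proper supports can be arranged so that all intermediate operators are compactly or properly supported. Applying the $k=0$ case to $[\Lambda,B]G$ gives $\|([\Lambda,B]G)\Lambda' u\|_{H^r_0}\le C\|\Lambda'u\|_{H^r_0}\le C\|u\|_{H^{r,k-1}_{0,\bl}}$, and the auxiliary continuous inclusion $H^{r,k}_{0,\bl}\hookrightarrow H^{r,k-1}_{0,\bl}$, itself obtained by the same parametrix manipulation (expressing one elliptic factor in terms of the other plus a smoothing remainder), absorbs this into the $H^{r,k}_{0,\bl}$-norm. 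The smoothing remainder $[\Lambda,B]Ru$ is trivially bounded on $H^r_0$.

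For $k<0$, the $L^2_0$-pairing identifies $H^{r,k}_{0,\bl,\loc}(X)$ with the topological dual of $H^{-r,-k}_{0,\bl,\compl}(X)$, as is built into Definition~\ref{Def:H1m-neg}. Boundedness of $B$ on $H^{r,k}_{0,\bl}$ is thus equivalent, by passage to adjoints, to boundedness of $B^*$ on $H^{-r,-k}_{0,\bl}$. Since $-k>0$ and $B^*\in\Psibc^0(X)$ by Lemma~\ref{lemma:Psib-filtered-ring} (the adjoint map being continuous on $\Psibc^0$), the already-treated $k>0$ case applies. The main technical obstacle is the parametrix manipulation in the $k>1$ subcase of the second paragraph: one must simultaneously arrange proper supports and convert the order-$(k-1)$ commutator into an operator whose effect on $u$ is bounded by the order-$(k-1)$ elliptic factor defining the $H^{r,k-1}_{0,\bl}$-norm.
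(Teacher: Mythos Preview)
Your proof is correct but takes a different route from the paper's. For $k>0$, you use the commutator decomposition $\Lambda B = B\Lambda + [\Lambda,B]$ and then a second parametrix to handle the commutator, requiring a case split on $k\le 1$ versus $k>1$; the paper instead uses a single parametrix identity: with $G\in\Psib^{-k}$ a parametrix for the elliptic $A$ (so $GA=\Id+E$, $E$ smoothing), one writes $ABu = (ABG)(Au) - (ABE)u$, where $ABG\in\Psibc^0$ and $ABE\in\Psibc^{-\infty}\subset\Psibc^0$, giving the bound in one step with no case analysis. For $k<0$, you invoke duality (passing to $B^*$ on $H^{-r,-k}_{0,\bl}$), whereas the paper works directly from Definition~\ref{Def:H1m-neg}: if $u=u_1+Au_2$ then, with a parametrix $G$ for $A$ satisfying $AG=\Id+F$, one has $Bu = Bu_1 + A(GBA)u_2 - (FBA)u_2$, exhibiting $Bu$ in the required form with $H^r_0$ pieces controlled by $\|u_1\|_{H^r_0}+\|u_2\|_{H^r_0}$. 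The paper's approach is more uniform and self-contained (same parametrix trick for both signs of $k$, no appeal to duality or to the auxiliary inclusion $H^{r,k}_{0,\bl}\hookrightarrow H^{r,k-1}_{0,\bl}$); your approach is also valid, though the claim that duality is ``built into'' Definition~\ref{Def:H1m-neg} is slightly overstated---the definition gives an explicit representation, and the duality is a consequence rather than the starting point.
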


\begin{proof}
Suppose $k\geq 0$ first. Then for an $A\in\Psib^k(X)$ as in the definition
above,
\begin{equation*}
\|Bu\|_{H^{r,k}_{0,\bl,\compl}(X)}^2=\|Bu\|_{H^r_0(X)}^2
+\|ABu\|^2_{H^r_0(X)}.
\end{equation*}
The first term on the right hand side is bounded in the desired manner
due to Proposition~\ref{prop:H10-bd}.
Letting $G\in\Psib^{-k}(X)$ be a properly supported parametrix for $A$
so $GA=\Id+E$, $E\in\Psib^{-\infty}(X)$, we have
$ABu=AB(GA-E)u=(ABG)Au-(ABE)u$, with $ABG\in\Psibc^0(X)$,
$ABE\in\Psibc^{-\infty}(X)\subset\Psibc^0(X)$, so
$$
\|ABu\|_{H^r_0(X)}\leq C\|Au\|_{H^r_0(X)}+C\|u\|_{H^r_0(X)}
$$
by Proposition~\ref{prop:H10-bd}, with $C$ bounded by a seminorm of $B$.
This completes the proof if $k\geq 0$.

For $k<0$, let $A\in\Psib^{-k}(X)$ be as in the definition. If $u=u_1+Au_2$,
and $G\in\Psib^k(X)$ is a parametrix for $A$ so $AG=\Id+F$,
$F\in\Psib^{-\infty}(X)$, hence
$$
Bu=Bu_1+BAu_2=Bu_1+(AG-F)BAu_2=Bu_1+A(GBA)u_2-(FBA)u_2.
$$
Now, $B,FBA,GBA\in\Psib^0(X)$ so $Bu\in H^{r,k}_{0,\bl,\compl}(X)$ indeed,
and choosing $u_1$, $u_2$ so that $\|u_1\|_{H^r_0(X)}+\|u_2\|_{H^r_0(X)}
\leq 2\|u\|_{H^{r,k}_{0,\bl,\compl}(X)}$ shows the desired continuity,
as well as that the operator norm of $B$ is bounded by a
$\Psibc^0(X)$-seminorm.
\end{proof}

Now we define the wave front set relative to $H^r_{0,\loc}(X)$.
We also allow negative a priori b-regularity relative to this space.

\begin{Def}\label{def:WFb}
Suppose $u\in H^{r,k}_{0,\loc}(X)$, $r\in\ZZ$, $k\in\RR$.
Then $q\in\Tb^*X\setminus o$
is {\em not} in $\WFb^{r,\infty}(u)$ if there is an
$A\in\Psib^0(X)$ such that
$\sigma_{\bl,0}(A)(q)$ is invertible and
$QAu\in H^r_{0,\loc}(X)$ for all
$Q\in\Diffb(X)$, i.e.\ if $Au\in H^{r,\infty}_{0,\bl,\loc}(X)$.

Moreover, $q\in\Tb^*X\setminus o$
is {\em not} in $\WFb^{r,m}(u)$ if there is an $A\in\Psib^m(X)$
such that
$\sigma_{\bl,0}(A)(q)$ is invertible and $Au\in H^r_{0,\loc}(X)$.
\end{Def}

Proposition~\ref{prop:H10-bd} implies that $\Psibc(X)$ acts microlocally,
i.e.\ preserves $\WFb$; see \cite[Section~3]{Vasy:Propagation-Wave}
for a similar argument. In particular, the proofs for both the qualitative and
quantitative version of microlocality go through without any significant
changes; one simply replaces the use of \cite[Lemma~3.2]{Vasy:Propagation-Wave}
by Proposition~\ref{prop:H10-bd}.

\begin{lemma}(cf.\ \cite[Lemma~3.9]{Vasy:Propagation-Wave}\label{lemma:WFb-mic}
Suppose that $u\in H^{r,k'}_{0,\bl,\loc}(X)$, $B\in\Psibc^k(X)$.
Then $\WFb^{r,m-k}(Bu)
\subset\WFb^{r,m}(u)\cap\WFb'(B)$.
\end{lemma}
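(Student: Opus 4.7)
The plan is to establish the two containments $\WFb^{r,m-k}(Bu) \subset \WFb'(B)$ and $\WFb^{r,m-k}(Bu) \subset \WFb^{r,m}(u)$ separately by the same strategy: for each $q$ outside the right hand side, I will exhibit an operator $A' \in \Psib^{m-k}(X)$ elliptic at $q$ with $A'Bu \in H^r_{0,\loc}(X)$, which by Definition~\ref{def:WFb} suffices. A key technical fact used repeatedly is that any compactly supported $T \in \Psib^{-\infty}(X)$ maps $H^{r,k'}_{0,\bl,\loc}(X)$ into $H^r_{0,\loc}(X)$: for $k' \geq 0$ this is immediate from Proposition~\ref{prop:H10-bd}; for $k' < 0$, one decomposes $\phi u = u_1 + L u_2$ locally as in Definition~\ref{Def:H1m-neg} with $L \in \Psib^{-k'}(X)$ and $u_j \in H^r_{0,\compl}(X)$, uses Lemma~\ref{lemma:Psib-filtered-ring} to note $TL \in \Psib^{-\infty}(X) \subset \Psibc^0(X)$, and then applies Proposition~\ref{prop:H10-bd} to both $Tu_1$ and $TLu_2$.

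For the containment in $\WFb'(B)$, if $q \notin \WFb'(B)$ I pick $A' \in \Psib^{m-k}(X)$ elliptic at $q$ with $\WFb'(A')$ disjoint from $\WFb'(B)$; then $A'B \in \Psib^{-\infty}(X)$, and the smoothing fact gives $A'Bu \in H^r_{0,\loc}(X)$. For the containment in $\WFb^{r,m}(u)$, suppose $q \notin \WFb^{r,m}(u)$, so there is $A \in \Psib^m(X)$ elliptic at $q$ with $Au \in H^r_{0,\loc}(X)$. I then take $C \in \Psib^{m-k}(X)$ elliptic at $q$ whose operator wave front set is a small conic neighborhood of $q$ contained in the elliptic set of $A$, and build a microlocal elliptic parametrix $G \in \Psib^{-m}(X)$ together with $E \in \Psib^0(X)$ that is microlocally the identity on $\WFb'(C)$, so that $GA = E + R$ with $R \in \Psib^{-\infty}(X)$. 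Rearranging as $CB = (CBG)A - CBR + CB(\Id - E)$ and applying to $u$ yields
\begin{equation*}
CBu = (CBG)(Au) - (CBR)u + \bigl(CB(\Id - E)\bigr)u.
\end{equation*}
The first summand is in $H^r_{0,\loc}(X)$ because $CBG \in \Psibc^0(X)$ by Lemma~\ref{lemma:Psib-filtered-ring} and $Au \in H^r_{0,\loc}(X)$, so Proposition~\ref{prop:H10-bd} applies; the second is controlled by the smoothing fact with $T = CBR$; and the third similarly, once one observes $\WFb'\bigl(CB(\Id - E)\bigr) \subset \WFb'(C) \cap \WFb'(\Id - E) = \emptyset$ by the choice of $E$, hence $CB(\Id - E) \in \Psib^{-\infty}(X)$.

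The only substantive input is the microlocal elliptic parametrix construction producing the pair $(G, E)$ with the stated operator wave front set properties, but this is entirely standard in the b-calculus (see \cite{Melrose:Atiyah} and \cite[Section~3]{Vasy:Propagation-Wave}); I expect no new obstacle beyond assembling the composition statement of Lemma~\ref{lemma:Psib-filtered-ring}, the $H^r_0$-continuity of $\Psibc^0(X)$ from Proposition~\ref{prop:H10-bd}, and the negative-order space recollection in Definition~\ref{Def:H1m-neg}.
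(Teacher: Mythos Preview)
Your proof is correct and follows essentially the same route as the paper, which does not give a self-contained argument but defers to \cite[Section~3]{Vasy:Propagation-Wave} with the remark that one need only replace the $L^2$-boundedness lemma there by Proposition~\ref{prop:H10-bd}. What you have written is precisely that argument spelled out: the two containments treated separately via microlocal parametrix and disjoint operator wave front sets, with Proposition~\ref{prop:H10-bd} supplying the $H^r_0$-boundedness of order-zero b-operators and Definition~\ref{Def:H1m-neg} handling the negative-regularity case for the smoothing remainders.
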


As in \cite[Section~3]{Vasy:Propagation-Wave}, the wave front set
microlocalizes the `b-singular support relative to $H^r_{0,\loc}(X)$',
meaning:

\begin{lemma}(cf.\ \cite[Lemma~3.10]{Vasy:Propagation-Wave})
\label{lemma:microloc-to-loc}
Suppose $u\in H^{r,k}_{0,\bl,\loc}(X)$, $p\in X$. If $\Sb^*_p X\cap
\WFb^{1,m}(u)=\emptyset$, then in a neighborhood of $p$,
$u$ lies in $H^{1,m}_{0,\bl}(X)$, i.e.\ there is $\phi\in\Cinf_\compl(X)$ with
$\phi\equiv 1$ near $p$ such that $\phi u\in H^{1,m}_{0,\bl}(X)$.
\end{lemma}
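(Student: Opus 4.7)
The plan is to adapt the standard microlocal-to-local elliptic regularity argument -- the analogue of \cite[Lemma~3.10]{Vasy:Propagation-Wave} -- to the present setting, with Proposition~\ref{prop:H10-bd} (boundedness of $\Psibc^0(X)$ on $H^1_0(X)$) providing the key functional-analytic input.

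First, by compactness of $\Sb^*_pX$ and the hypothesis, the definition of $\WFb^{1,m}$ produces finitely many properly supported $B_1,\dots,B_N\in\Psib^m(X)$ and conic open sets $\Gamma_i\subset\Tb^*X\setminus o$ with $B_i$ elliptic on $\Gamma_i$, $B_iu\in H^1_{0,\loc}(X)$, and (by the tube lemma applied to $\Sb^*_pX\subset\Sb^*X$) $\bigcup_i\Gamma_i\supset\Tb^*_UX\setminus o$ for an open neighborhood $U$ of $p$. Pick $\phi\in\Cinf_\compl(X)$ with $\supp\phi\subset U$, $\phi\equiv 1$ near $p$, a microlocal partition of unity $\Lambda_i\in\Psibc^0(X)$ with $\WFb'(\Lambda_i)\subset\Gamma_i$ and $\WFb'(I-\sum_i\Lambda_i)\cap\Tb^*_{\supp\phi}X=\emptyset$, and microlocal parametrices $G_i\in\Psib^{-m}(X)$ for $B_i$ on $\Gamma_i$ so that $\Lambda_iG_iB_i=\Lambda_i+T_i$ with $T_i\in\Psibc^{-\infty}(X)$. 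Fixing a globally elliptic properly supported $A\in\Psib^m(X)$, a direct manipulation yields
\[
A=\sum_iF_iB_i+R,\qquad F_i:=A\Lambda_iG_i\in\Psibc^0(X),
\]
with $R\in\Psibc^m(X)$ satisfying $\WFb'(R)\cap\Tb^*_{\supp\phi}X=\emptyset$.

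Applying to $u$ and multiplying by $\phi$ yields $\phi Au=\sum_i\phi F_i(B_iu)+\phi Ru$. Each summand in the sum lies in $H^1_{0,\loc}(X)$: $F_i\in\Psibc^0(X)$ is bounded on $H^1_0(X)$ by Proposition~\ref{prop:H10-bd}, and $B_iu\in H^1_{0,\loc}(X)$. The remainder $\phi R$ has empty b-operator wavefront set, hence is a residual operator in the b-calculus and maps $u\in H^{r,k}_{0,\bl,\loc}(X)$ into $H^1_{0,\loc}(X)$ locally. Thus $\phi Au\in H^1_{0,\loc}(X)$. Global ellipticity of $A$ furnishes a parametrix $G\in\Psib^{-m}(X)$ with $GA=I+E$, $E\in\Psibc^{-\infty}(X)$, and since $G\in\Psibc^0(X)$ and $[\phi,G]\in\Psibc^0(X)$ (using Lemma~\ref{lemma:b-comm-improve} for the commutator), Proposition~\ref{prop:H10-bd} shows $\phi u=G(\phi Au)+[\phi,G]Au-\phi Eu\in H^1_{0,\loc}(X)$. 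Repeating the argument with $\phi u$ replaced by $B\phi u$ for arbitrary $B\in\Psib^m(X)$ gives $B(\phi u)\in H^1_{0,\loc}(X)$, i.e.\ $\phi u\in H^{1,m}_{0,\bl,\compl}(X)$.

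The main technical obstacle is control of the remainder $\phi R$: $\Psib^{-\infty}(X)$ in isolation does not improve $0$-regularity, so one must exploit that $\WFb'(\phi R)=\emptyset$ to conclude that $\phi R$ is fully residual, its Schwartz kernel being smooth on the b-double space and vanishing to infinite order at all boundary faces; only this enables $\phi Ru\in H^1_{0,\loc}(X)$ starting from the possibly weaker a priori b-regularity of $u$. All other steps are bookkeeping of $\WFb'$ in compositions, which is straightforward given Lemma~\ref{lemma:Psib-filtered-ring}.
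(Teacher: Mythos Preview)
Your overall architecture (compactness of $\Sb^*_pX$, finite cover by ellipticity sets $\Gamma_i$, microlocal partition of unity, parametrices) is exactly the standard one, and is what the paper has in mind when it refers to \cite[Lemma~3.10]{Vasy:Propagation-Wave}; the paper does not give an independent proof.

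There is, however, a genuine error in your treatment of the remainder. The condition $\WFb'(\phi R)=\emptyset$ only says that $\phi R\in\Psib^{-\infty}(X)$, i.e.\ that its Schwartz kernel is smooth on $X^2_\bl$. It does \emph{not} force infinite-order vanishing at the front face; elements of $\Psib^{-\infty}(X)$ are generally nontrivial there, and $\phi R$ is not ``fully residual'' in the sense you assert. Fortunately this stronger conclusion is not needed. The hypothesis $\WFb^{1,m}(u)$ presupposes $u\in H^{1,k}_{0,\bl,\loc}(X)$ (the index $r$ in the statement should be read as $r=1$, consistently with Definition~\ref{def:WFb} and Corollary~\ref{cor:WF-to-H1}), so no gain of $0$-regularity is required: one only needs to pass from b-order $k$ to b-order $0$. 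Any $S\in\Psib^{-\infty}(X)$ does this, since for $k<0$ one writes $u=u_1+A'u_2$ with $u_j\in H^1_{0,\compl}(X)$, $A'\in\Psib^{-k}(X)$, and then $Su=Su_1+(SA')u_2$ with $S,SA'\in\Psib^{-\infty}(X)\subset\Psibc^0(X)$, hence $Su\in H^1_0(X)$ by Proposition~\ref{prop:H10-bd}. So your worry that ``$\Psib^{-\infty}(X)$ in isolation does not improve $0$-regularity'' is correct but irrelevant here.

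Your final paragraph is also more circuitous than necessary and contains loose ends (e.g.\ $G\in\Psib^{-m}$ need not lie in $\Psibc^0$ for $m<0$, and the term $[\phi,G]Au$ is not controlled by what you have established about $\phi Au$). The clean route is to run the decomposition on $A\phi$ directly for an arbitrary $A\in\Psib^m(X)$: since $\WFb'(A\phi)\subset\Sb^*_{\supp\phi}X\subset\bigcup_i\Gamma_i$, the parametrix construction yields $A\phi=\sum_iQ_iB_i+S$ with $Q_i\in\Psibc^0(X)$ and $S\in\Psib^{-\infty}(X)$, whence $A\phi u=\sum_iQ_i(B_iu)+Su\in H^1_{0,\loc}(X)$ by Proposition~\ref{prop:H10-bd} and the remark above. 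This gives $\phi u\in H^{1,m}_{0,\bl,\compl}(X)$ immediately, without the detour through a global parametrix of $A$.
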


\begin{cor}(cf.\ \cite[Corollary~3.11]{Vasy:Propagation-Wave})
\label{cor:WF-to-H1}
If $u\in H^{r,k}_{0,\bl,\loc}(X)$
and $\WFb^{r,m}(u)=\emptyset$, then $u\in H^{r,m}_{0,\bl,\loc}(X)$.

In particular, if $u\in H^{r,k}_{0,\bl,\loc}(X)$
and $\WFb^{r,m}(u)=\emptyset$ for all $m$,
then $u\in H^{r,\infty}_{0,\bl,\loc}(X)$, i.e.\ $u$ is conormal in the sense
that $Au\in H^r_{0,\loc}(X)$ for all
$A\in\Diffb(X)$ (or indeed $A\in\Psib(X)$).
\end{cor}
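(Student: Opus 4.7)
The plan is to reduce this to the preceding lemma (cf.\ \cite[Lemma~3.10]{Vasy:Propagation-Wave}) by a standard partition of unity argument, so that pointwise microlocal regularity is upgraded to local regularity everywhere.

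First, fix an arbitrary $\phi\in\Cinf_\compl(X)$; the goal is to show $\phi u\in H^{r,m}_{0,\bl,\compl}(X)$. Since $\WFb^{r,m}(u)=\emptyset$, for each $p\in\supp\phi$ we have in particular $\Sb^*_p X\cap\WFb^{r,m}(u)=\emptyset$, so by the preceding lemma there exists an open neighborhood $V_p$ of $p$ and a cutoff $\phi_p\in\Cinf_\compl(X)$, $\phi_p\equiv 1$ on $V_p$, with $\phi_p u\in H^{r,m}_{0,\bl,\compl}(X)$. By compactness of $\supp\phi$, extract a finite subcover $V_{p_1},\ldots,V_{p_N}$ and a subordinate partition of unity $\{\psi_i\}\subset\Cinf_\compl(X)$ with $\sum_i\psi_i\equiv 1$ on $\supp\phi$ and $\supp\psi_i\subset V_{p_i}$.

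Then $\phi=\sum_i\phi\psi_i=\sum_i(\phi\psi_i)\phi_{p_i}$, because $\phi_{p_i}\equiv 1$ on $V_{p_i}\supset\supp\psi_i$. Consequently
\begin{equation*}
\phi u=\sum_{i=1}^N(\phi\psi_i)(\phi_{p_i}u).
\end{equation*}
Each multiplier $\phi\psi_i\in\Cinf_\compl(X)\subset\Psibc^0(X)$, which by Lemma~\ref{lemma:WFb-mic} (or rather the boundedness statement preceding Definition~\ref{def:WFb}, applied on $H^{r,m}_{0,\bl}(X)$) maps $H^{r,m}_{0,\bl,\compl}(X)$ to itself. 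Hence $\phi u$ is a finite sum of elements of $H^{r,m}_{0,\bl,\compl}(X)$, so $\phi u\in H^{r,m}_{0,\bl,\compl}(X)$. Since $\phi\in\Cinf_\compl(X)$ was arbitrary, $u\in H^{r,m}_{0,\bl,\loc}(X)$ by definition.

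For the final claim, the assumption $\WFb^{r,m}(u)=\emptyset$ for all $m$ gives $u\in H^{r,m}_{0,\bl,\loc}(X)$ for all $m$, which is exactly the definition of $u\in H^{r,\infty}_{0,\bl,\loc}(X)$, i.e.\ conormality relative to $H^r_{0,\loc}(X)$. The main subtlety is merely to check that $\Cinf_\compl(X)$ multiplication preserves $H^{r,m}_{0,\bl}(X)$ for negative $m$; this is exactly what the boundedness of $\Psibc^0(X)$ on $H^{r,m}_{0,\bl}(X)$ (established in the lemma preceding Definition~\ref{def:WFb}) provides, and is the only nontrivial input beyond the already-proved pointwise lemma.
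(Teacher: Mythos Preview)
Your proof is correct and follows the standard partition-of-unity argument that the paper implicitly invokes by citing \cite[Corollary~3.11]{Vasy:Propagation-Wave}; the paper gives no separate proof here, so your argument is precisely what is intended.
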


Finally, we have the following quantitative bound for which we recall
the definition of the wave front set of bounded subsets of $\Psibc^k(X)$:

\begin{Def}(cf.\ \cite[Definition~3.12]{Vasy:Propagation-Wave})
Suppose that $\cB$ is a bounded subset of $\Psibc^k(X)$, and $q\in\Sb^*X$.
We say that $q\notin\WFb'(\cB)$ if there is some $A\in\Psib(X)$
which is elliptic at $q$ such that $\{AB:\ B\in\cB\}$ is a bounded
subset of $\Psib^{-\infty}(X)$.
\end{Def}

\begin{lemma}(cf.\ \cite[Lemma~3.13, Lemma~3.18]{Vasy:Propagation-Wave})
\label{lemma:WFb-mic-q}
Suppose that $K\subset\Sb^*X$ is compact, and $U$ a neighborhood of $K$
in $\Sb^*X$. Let $\tilde K\subset X$ compact,
and $\tilde U$ be a neighborhood of $\tilde K$ in $X$ with compact closure.
Let $Q\in\Psib^k(X)$ be elliptic on $K$ with $\WFb'(Q)
\subset U$, with Schwartz kernel supported in $\tilde K\times\tilde K$.
Let $\cB$ be a bounded subset of $\Psibc^k(X)$
with $\WFb'(\cB)\subset K$ and
Schwartz kernel supported in $\tilde K\times\tilde K$.
Then for any $s\leq 0$
there is a constant $C>0$ such that for $B\in\cB$, $u\in H^{r,s}_{0,\bl,\loc}(X)$
with $\WFb^{r,k}(u)\cap U=\emptyset$,
\begin{equation*}
\|Bu\|_{H^r_0(X)}\leq C(\|u\|_{H^{r,s}_{0,\bl}(\tilde U)}+\|Qu\|_{H^r_0(X)}).
\end{equation*}
\end{lemma}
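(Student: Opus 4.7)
The plan is to use a microlocal elliptic parametrix argument, exploiting the ellipticity of $Q$ on the set $K$ which contains the operator wave front set of the family $\cB$. Concretely, I want to construct a properly supported parametrix $G \in \Psib^{-k}(X)$, with kernel support close to the diagonal and $\WFb'(G) \subset U$, such that
$$
GQ = \Id + R, \qquad \WFb'(R) \cap K = \emptyset,
$$
which is the standard iterative parametrix construction using that $\sigma_{\bl,k}(Q)$ is elliptic on $K$. Multiplying on the left by $B \in \cB$ gives the decomposition $B = BGQ - BR$, so
$$
\|Bu\|_{H^r_0(X)} \leq \|(BG)Qu\|_{H^r_0(X)} + \|(BR)u\|_{H^r_0(X)}.
$$

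For the first term, since $\cB$ is bounded in $\Psibc^k(X)$ and $G \in \Psib^{-k}(X)$ is fixed, the family $\{BG : B\in\cB\}$ is bounded in $\Psibc^0(X)$ by the composition result (the analogue of Lemma~\ref{lemma:Psib-filtered-ring} for $\Psibc$). The boundedness lemma for $\Psibc^0(X)$ on $H^{r,k}_{0,\bl}$ then yields $\|(BG)Qu\|_{H^r_0(X)} \leq C_1 \|Qu\|_{H^r_0(X)}$, uniformly in $B\in\cB$. For the second term, since $\WFb'(B)\subset K$ and $\WFb'(R)\cap K=\emptyset$, we have $\WFb'(BR)=\emptyset$, and in fact $\{BR:B\in\cB\}$ is bounded in $\Psibc^{-N}(X)$ for every $N$. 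Taking $N=-s\geq 0$, the same boundedness lemma applied to Sobolev spaces of b-order $s$ (or equivalently, writing $BR$ as $A\cdot S$ with $A\in\Psib^{-N}$ elliptic and $S\in\Psibc^0$ bounded) gives $\|(BR)u\|_{H^r_0(X)} \leq C_2\|u\|_{H^{r,s}_{0,\bl}(\tilde U)}$, provided the kernel of $BR$ is supported in $\tilde K \times \tilde U$.

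Ensuring exactly this support condition is the main technical point, and indeed the main obstacle. The operator $B$ has Schwartz kernel supported in $\tilde K\times\tilde K \subset\tilde K\times\tilde U$ to begin with, so what must be arranged is that $G$ (and hence $R=GQ-\Id$) be chosen properly supported with kernel support so close to the diagonal that $BG$ and $BR$ retain kernel supports inside $\tilde K\times\tilde U$. This can be achieved by cutting off the Schwartz kernel of any initial parametrix near the diagonal using a cutoff supported in an arbitrarily small neighborhood of the diagonal in $X^2_{\bl}$, which alters the parametrix only by a smoothing operator and does not affect the symbolic identity $GQ=\Id+R$ up to another smoothing term that may be absorbed into $R$. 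Given that the kernel of $Q$ is already supported in $\tilde K\times\tilde K$, a sufficiently small diagonal neighborhood ensures that all compositions we need lie in $\tilde K\times\tilde U$.

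Combining the two estimates gives
$$
\|Bu\|_{H^r_0(X)} \leq C\bigl(\|Qu\|_{H^r_0(X)} + \|u\|_{H^{r,s}_{0,\bl}(\tilde U)}\bigr),
$$
with $C$ depending on seminorms of $G$ and of $\cB$ in $\Psibc^k(X)$, but not on the particular choice of $B\in\cB$. One should also verify that the hypothesis $\WFb^{r,k}(u)\cap U=\emptyset$ is used only through the fact that $Qu\in H^r_0(X)$ in a neighborhood of $\WFb'(Q)$ (so that the $H^r_0$ norm of $Qu$ on the right-hand side is finite); this is automatic from the microlocal elliptic regularity encoded in $\WFb^{r,k}(u)\cap U=\emptyset$ together with Lemma~\ref{lemma:WFb-mic} and Corollary~\ref{cor:WF-to-H1}.
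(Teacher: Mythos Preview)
Your argument is the standard microlocal parametrix argument and is correct; the paper does not include a proof here but simply cites \cite[Lemma~3.13, Lemma~3.18]{Vasy:Propagation-Wave}, where precisely this approach is used. One small slip: when you invoke ``the boundedness lemma for $\Psibc^0(X)$ on $H^{r,k}_{0,\bl}$'' to bound $\|(BG)Qu\|_{H^r_0}$ you actually only need Proposition~\ref{prop:H10-bd} (boundedness on $H^r_0$ itself), not the $H^{r,k}_{0,\bl}$ version.
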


We can use this lemma to obtain uniform bounds for pairings. We call
a subset $\cB$ of $\Diff^m_0\Psibc^{2k}(X)$ bounded if its elements are
locally finite linear combinations of a fixed, locally finite, collection
of elements of $\Diff^m_0(X)$ with
coefficients that lie in a bounded subset of $\Psibc^{2k}(X)$.

\begin{cor}
\label{cor:WFb-mic-q}
Suppose that $K\subset\Sb^*X$ is compact, and $U$ a neighborhood of $K$
in $\Sb^*X$. Let $\tilde K\subset X$ compact,
and $\tilde U$ be a neighborhood of $\tilde K$ in $X$ with compact closure.
Let $Q\in\Psib^k(X)$ be elliptic on $K$ with $\WFb'(Q)
\subset U$, with Schwartz kernel supported in $\tilde K\times\tilde K$.
Let $\cB$ be a bounded subset of $\Diffz^2\Psibc^{2k}(X)$
with $\WFb'(\cB)\subset K$ and
Schwartz kernel supported in $\tilde K\times\tilde K$.
Then there is a constant $C>0$ such that for $B\in\cB$,
$u\in H^{1,s}_{0,\bl,\loc}(X)$
with $\WFb^{1,k}(u)\cap U=\emptyset$,
\begin{equation*}
|\langle Bu,u\rangle|\leq C(\|u\|_{H^1_0(\tilde U)}+\|Qu\|_{H^1_0(X)})^2.
\end{equation*}
\end{cor}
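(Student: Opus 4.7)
The plan is to reduce to Lemma~\ref{lemma:WFb-mic-q} via a parametrix-factorization of the $\Psibc^{2k}$-factor, moving one copy of a properly supported elliptic $\Lambda\in\Psib^k(X)$ to the other side of the inner product. Any $B\in\cB$ can be written as $B=\sum_j L_j A_{B,j}$ with $L_j\in\Diffz^2(X)$ fixed (finitely many) and $A_{B,j}\in\Psibc^{2k}(X)$ uniformly bounded. Fix $\Lambda\in\Psib^k(X)$ properly supported, elliptic on $K$, with $\WFb'(\Lambda)\subset U$, and let $G\in\Psib^{-k}(X)$ be a properly supported parametrix so that $G\Lambda-I=R\in\Psib^{-\infty}(X)$. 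Taking adjoints gives $\Lambda^*G^*=I+R^*$, hence
\begin{equation*}
A_{B,j}=\Lambda^*\tilde A_{B,j}+E_{B,j},
\end{equation*}
where $\tilde A_{B,j}:=G^*A_{B,j}$ is uniformly bounded in $\Psibc^k(X)$ with $\WFb'(\tilde A_{B,j})\subset K$, and $E_{B,j}:=-R^*A_{B,j}$ is uniformly bounded in $\Psib^{-\infty}(X)$.

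For the main contribution, integration by parts gives $\langle L_j\Lambda^*\tilde A_{B,j}u,u\rangle=\langle\tilde A_{B,j}u,\Lambda L_j^*u\rangle$, and Cauchy--Schwarz reduces matters to bounding $\|\tilde A_{B,j}u\|_{H^1_0}$ and $\|\Lambda L_j^*u\|_{H^{-1}_0}$, each by $C(\|u\|_{H^1_0(\tilde U)}+\|Qu\|_{H^1_0})$. The first bound is immediate from Lemma~\ref{lemma:WFb-mic-q} applied to the bounded family $\{\tilde A_{B,j}\}\subset\Psibc^k(X)$, where $s\le 0$ permits replacing $\|u\|_{H^{1,s}_{0,\bl}(\tilde U)}$ by $\|u\|_{H^1_0(\tilde U)}$.

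For the second factor, I commute: $\Lambda L_j^*=L_j^*\Lambda+[\Lambda,L_j^*]$, with $[\Lambda,L_j^*]\in\Diffz^2\Psibc^{k-1}(X)$ by Lemma~\ref{lemma:b-comm-improve}, and $\WFb'(\Lambda L_j^*)\subset\WFb'(\Lambda)\subset U$. For the main piece, $\|L_j^*\Lambda u\|_{H^{-1}_0}\le C\|\Lambda u\|_{H^1_0}$, which is again controlled by Lemma~\ref{lemma:WFb-mic-q} applied to $\Lambda$. For the commutator, I decompose $[\Lambda,L_j^*]=\sum_\gamma N_\gamma C_\gamma$ with $N_\gamma\in\Diffz^2(X)$ fixed and $C_\gamma\in\Psibc^{k-1}(X)$ having $\WFb'(C_\gamma)\subset U$, then estimate $\|N_\gamma C_\gamma u\|_{H^{-1}_0}\le C\|C_\gamma u\|_{H^1_0}$ and apply Lemma~\ref{lemma:WFb-mic-q} at order $k-1$ (noting $\WFb^{1,k-1}(u)\cap U\subset\WFb^{1,k}(u)\cap U=\emptyset$, and relating a $\Psib^{k-1}$-elliptic $Q'$ on $K$ to the given $Q$ via a parametrix to get $\|Q'u\|_{H^1_0}\le C(\|u\|_{H^1_0}+\|Qu\|_{H^1_0})$). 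The smoothing corrections $\langle L_j E_{B,j}u,u\rangle$ are dispatched directly: since $E_{B,j}$ is uniformly smoothing with compact Schwartz-kernel support, $\|E_{B,j}u\|_{H^1_0}\le C\|u\|_{H^1_0(\tilde U)}$ and $|\langle L_j E_{B,j}u,u\rangle|\le C\|u\|_{H^1_0(\tilde U)}^2$. Summing over the finitely many $j$ yields the claim.

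The main obstacle is the algebraic packaging: one must verify that the factorization $A_{B,j}=\Lambda^*\tilde A_{B,j}+E_{B,j}$ and the commutator decomposition $[\Lambda,L_j^*]=\sum_\gamma N_\gamma C_\gamma$ respect the uniform boundedness in $B\in\cB$, and that the resulting $\Psibc$-factors have their wave front sets localized inside $U$ so that Lemma~\ref{lemma:WFb-mic-q} applies directly at each order. Once this algebraic reorganization is in place, the proof is essentially a double application of Lemma~\ref{lemma:WFb-mic-q} glued together by Cauchy--Schwarz.
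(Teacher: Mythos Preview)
Your argument is correct, but it takes a more circuitous path than the paper's. The paper's proof exploits a cleaner factorization: using Lemma~\ref{lemma:Psib-filtered-ring} it writes each $B\in\cB$ directly as a finite sum $\sum B'_{ij}P_i^*R_j\Lambda$ with $P_i,R_j\in\Diffz^1(X)$ fixed, $\Lambda\in\Psib^k(X)$ fixed and elliptic on $K$ with $\WFb'(\Lambda)$ controlled by $Q$, and $B'_{ij}$ ranging in a bounded subset of $\Psibc^k(X)$. The point of splitting the $\Diffz^2$-factor as $P_i^*R_j$ (rather than keeping it as a single $L_j\in\Diffz^2$) is that after moving $B'_{ij}P_i^*$ across the pairing one lands at the $L^2$ level:
\[
|\langle Bu,u\rangle|\le\sum_{ij}\|R_j\Lambda u\|_{L^2}\,\|P_i(B'_{ij})^*u\|_{L^2}\le\sum_{ij}\|\Lambda u\|_{H^1_0}\,\|(B'_{ij})^*u\|_{H^1_0},
\]
and a single application of Lemma~\ref{lemma:WFb-mic-q} to each factor finishes the proof. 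No commutators appear.

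Your route instead keeps the full $L_j\in\Diffz^2$, pairs at the $H^1_0$--$H^{-1}_0$ level, and then must commute $\Lambda$ past $L_j^*$; the resulting commutator $[\Lambda,L_j^*]\in\Diffz^2\Psibc^{k-1}(X)$ forces a second-tier invocation of Lemma~\ref{lemma:WFb-mic-q} at order $k-1$, together with some bookkeeping to keep the $\WFb'$ of the pieces inside $U$. This all works, but the extra layer is exactly what the paper's $P_i^*R_j$ splitting is designed to avoid. The moral is that when you have a $\Diffz^2$-factor and want to integrate by parts symmetrically, decomposing it as an adjoint of a first-order piece times a first-order piece lets you stay at the $L^2$ level and sidestep commutator terms entirely.
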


\begin{proof}
Using Lemma~\ref{lemma:Psib-filtered-ring} we can write $B$ as
$\sum B'_{ij} P_i^* R_j \Lambda$, where $P_i,R_j\in\Diffz^1(X)$,
$\Lambda\in\Psib^k(X)$ (which we take to be elliptic on $K$, but such
that $Q$ is elliptic on $\WFb'(\Lambda)$),
$B'_{ij}$ lies
in a bounded subset $\cB'$ of $\Psib^{k}(X)$ and the sum is finite. Then
\begin{equation*}\begin{split}
|\langle Bu,u\rangle|&\leq \sum_{ij}|\langle R_j\Lambda u,
P_i (B'_{ij})^* u\rangle|\\
&\leq \sum_{ij}\|R_j\Lambda u\|_{L^2(X)} \,\|P_i (B'_{ij})^* u\|_{L^2(X)}\\
&\leq \sum_{ij}\|\Lambda u\|_{H^1_0(X)} \,\|P_i (B'_{ij})^* u\|_{H^1_0(X)}\\
&\leq \sum C(\|u\|_{H^{1,s}_{0,\bl}(\tilde U)}+\|Qu\|_{H^1_0(X)})^2,
\end{split}\end{equation*}
where in the last step we used Lemma~\ref{lemma:WFb-mic-q}.
\end{proof}

It is useful to note that infinite
order b-regularity relative to $L^2_0(X)$ and $H^1_0(X)$ are the same.

\begin{lemma}\label{lemma:WFb-infty-independence}
For $u\in H^1_{0,\loc}(X)$,
$$
\WFb^{1,\infty}(u)=\WFb^{0,\infty}(u).
$$
\end{lemma}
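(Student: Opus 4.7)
The plan is to establish both inclusions, using the observation that $\Vz(X)\subset\Vb(X)$: a smooth vector field vanishing at $\pa X$ is, in particular, tangent to $\pa X$. In local coordinates this is immediate from $\eqref{eq:z-vf-in-coords}$ versus $\eqref{eq:b-vf-in-coords}$. Consequently $\Diffb^k(X)\cdot\Vz(X)\subset\Diffb^{k+1}(X)$, so differentiating by $\Vz$ preserves (indeed slightly weakens) b-regularity.

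For the inclusion $\WFb^{0,\infty}(u)\subset\WFb^{1,\infty}(u)$, suppose $q\notin\WFb^{1,\infty}(u)$ with witness $A\in\Psib^0(X)$ elliptic at $q$. Then $QAu\in H^1_{0,\loc}(X)\subset L^2_{0,\loc}(X)$ for every $Q\in\Diffb(X)$, so the same $A$ witnesses $q\notin\WFb^{0,\infty}(u)$. This direction uses only the trivial Sobolev embedding $H^1_0\subset L^2_0$.

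For the reverse inclusion $\WFb^{1,\infty}(u)\subset\WFb^{0,\infty}(u)$, suppose $q\notin\WFb^{0,\infty}(u)$ and let $A\in\Psib^0(X)$ be a corresponding witness, so that $QAu\in L^2_{0,\loc}(X)$ for every $Q\in\Diffb(X)$. I claim the same $A$ shows $q\notin\WFb^{1,\infty}(u)$; equivalently, that $QAu\in H^1_{0,\loc}(X)$ for every $Q\in\Diffb(X)$. By definition of $H^1_{0,\loc}$, this amounts to checking that $QAu\in L^2_{0,\loc}(X)$ (given by hypothesis) and that $V(QAu)\in L^2_{0,\loc}(X)$ for every $V\in\Vz(X)$. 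But $V(QAu)=(VQ)Au$, and $VQ\in\Diffb(X)$ since $V\in\Vz(X)\subset\Vb(X)$, so this membership holds by the same hypothesis applied with $Q$ replaced by $VQ$.

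There is essentially no obstacle: the statement is a soft structural fact expressing that the b-conormality conditions relative to $L^2_0$ and $H^1_0$ coincide precisely because $\Vz\subset\Vb$, so $\Vz$-derivatives are already controlled by the $\Vb$-conormality quantifier. The hypothesis $u\in H^1_{0,\loc}(X)$ is used only to ensure that both $\WFb^{0,\infty}(u)$ and $\WFb^{1,\infty}(u)$ are defined in the sense of Definition~\ref{def:WFb}.
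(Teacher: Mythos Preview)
Your proof is correct and essentially identical to the paper's: both directions rest on the inclusions $H^1_0\subset L^2_0$ and $\Vz(X)\subset\Vb(X)$ (equivalently $\Diffz(X)\subset\Diffb(X)$), applied exactly as you do. The paper's argument differs only in notation.
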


\begin{proof}
The complements of the two sides are the set of points $q\in\Sb^*X$ for which
there exist $A\in\Psib^0(X)$ (with compactly supported Schwartz
kernel, as one may assume) such that $\sigma_{\bl,0}(A)(q)$ is
invertible and $LAu\in H^1_0(X)$, resp.\ $LAu\in L^2_0(X)$.
Since $H^1_0(X)\subset L^2_0(X)$,
$\WFb^{0,\infty}(u)\subset\WFb^{1,\infty}(u)$ follows immediately. For the
converse, if $LAu\in L^2_0(X)$ for all $L\in\Diffb(X)$, then in
particular $\Diff_0(X)\subset\Diffb(X)$ shows that
$QLAu\in L^2_0(X)$ for $Q\in \Diff_0^1(X)$ and $L\in\Diffb(X)$,
so $LAu\in H^1_0(X)$, i.e.\ $\WFb^{1,\infty}(u)\subset\WFb^{0,\infty}(u)$,
completing the proof.
\end{proof}

We finally recall that $u\in\cA^k(X)$, i.e.\ that $u$ is conormal relative
to $x^k L^2_\bl(X)$,
means that
$Lu\in x^k L^2_\bl(X)$ for all $L\in\Diffb(X)$, so in particular
$u\in x^k L^2_\bl(X)$. Thus,
$$
\WFb^{0,\infty}(u)=\emptyset\ \text{if and only if}
\ u\in\cA^{(n-1)/2}(X),
$$
in view of $L^2_0(X)=x^{(n-1)/2}L^2_\bl(X)$.

\section{Generalized broken bicharacteristics}\label{sec:GBB}
We recall here the structure of the compressed characteristic
set and \GBBsp from \cite[Section~1-Section~2]{Vasy:Maxwell}. In that paper
$X$ is a manifold with corners, and $k$ is the codimension of the highest
codimension corner in the local coordinate chart. Thus, for application
to the present paper, the reader should take $k=1$ when referring to
\cite[Section~1-Section~2]{Vasy:Maxwell}.
It is often convenient to work on the cosphere bundle, here
$\Sb^*X$, which is
equivalent to working on conic subsets of $\Tb^*X\setminus o$.
In a region where, say,
\begin{equation}\label{eq:zeta-large}
|\xib|<C|\zetab_{n-1}|,\ |\zetab_j|<C|\zetab_{n-1}|,
j=1,\ldots,n-2,
\end{equation}
$C>0$ fixed, we can take
\begin{equation*}\begin{split}
&x,y_1,\ldots,y_{n-1},\xibh,\zetabh_1,
\ldots,\zetabh_{n-2},|\zetab_{n-1}|,\\
&\text{where}\ \xibh=\frac{\xib}{|\zetab_{n-1}|},
\ \zetabh_j=\frac{\zetab_j}{|\zetab_{n-1}|},
\end{split}\end{equation*}
as (projective) local coordinates on $\Tb^*X\setminus o$, hence
$$
x,y_1,\ldots,y_{n-1},\xibh,\zetabh_1,
\ldots,\zetabh_{n-2}
$$
as local coordinates on the image of this region under the quotient
map in $\Sb^*X$; cf.\ \cite[Equation~(1.4)]{Vasy:Maxwell}.

First, we choose local coordinates more carefully. In arbitrary local
coordinates
$$
(x,y_1,\ldots,y_{n-1})
$$
on a neighborhood $U_0$
of a point on $Y=\pa X$, so that $Y$ is given by $x=0$ inside $x\geq 0$,
any symmetric bilinear form on $T^*X$ can be written
as
\begin{equation}\label{eq:metric-form2}
\hat G(x,y)=A(x,y)\,\pa_x\,\pa_x
+\sum_{j}2 C_{j}(x,y)\,\pa_{x}\,\pa_{y_j}+\sum_{i,j}
B_{ij}(x,y)\,\pa_{y_i}\,\pa_{y_j}
\end{equation}
with $A,B,C$ smooth.
In view of \eqref{eq:g-form}, using $x$ given there and coordinates $y_j$
on $Y$ pulled by to a collar neighborhood of $Y$ by the product structure,
we have in addition
$$
A(0,y)=-1,\ C_j(0,y)=0,
$$
for all $y$, and $B(0,y)=(B_{ij}(0,y))$ is Lorentzian for all $y$.
Below we write covectors as
\begin{equation}\label{eq:T-star-X-coords}
\alpha=\xi\,dx+\sum_{i=1}^{n-1}\zeta_i\,dy_i.
\end{equation}
Thus,
\begin{equation}\label{eq:H-at-corner2}
\hat G|_{x=0}=-\pa_x^2
+\sum_{i,j=1}^{n-1}B_{ij}(0,y)\,\pa_{y_i}\,\pa_{y_j},
\end{equation}
and hence the metric function,
$$
p(q)=\hat G(q,q),\ q\in T^*X,
$$
is
\begin{equation}\label{eq:p-at-bdy2}
p|_{x=0}=-\xi^2+\zeta \cdot B(y)\zeta.
\end{equation}
Since $A(0,y)=-1<0$, $Y$
is indeed time-like in the sense
that the restriction of the dual metric
$\hat G$ to $N^*Y$ is negative definite,
for locally the conormal bundle $N^*Y$ is given by
$$
\{(x,y,\xi,\zeta):\ x=0,\ \zeta=0\}.
$$
We write
$$
h=\zeta\cdot B(y)\zeta
$$
for the metric function on the boundary.
Also, from \eqref{eq:p-at-bdy2},
\begin{equation}\label{eq:H_p-at-bdy}
\sH_p=-2\xi\cdot\pa_x+\sH_h+\beta \pa_\xi+xV,
\end{equation} 
where $V$ is a $\CI$ vector field in $\cU_0=T^*U_0$,
and $\beta$ is a $\CI$ function on
$\cU_0$.

It is sometimes convenient to improve the form of $B$ near a particular
point $p_0$, around which the coordinate system is centered.
Namely, as $B$ is Lorentzian, we can further arrange,
by adjusting the $y_j$ coordinates,
\begin{equation}\label{eq:B-orth-0}
\sum B_{ij}(0,0)\pa_{y_i}\pa_{y_j}=\pa_{y_{n-1}}^2-\sum_{i<n-1}\pa_{y_i}^2.
\end{equation}

We now recall from the introduction that $\pi:T^*X\to\Tb^*X$ is the
natural map corresponding to the identification of a section of $T^*X$ as
a section of $\Tb^*X$, and in local coordinates $\pi$ is given by
\begin{equation*}
\pi(x,y,\xi,\zeta)=(x,y,x\xi,\zeta).
\end{equation*}
Moreover, the image of the characteristic set
$\Sigma\subset T^*X\setminus o$, given by
$$
\Sigma=\{q\in T^*X:\ p(q)=0\},
$$
under $\pi$ is the compressed characteristic set,
$$
\dot\Sigma=\pi(\Sigma).
$$
Note that \eqref{eq:p-at-bdy2} gives that
\begin{equation}\begin{split}\label{eq:dot-Sigma-coords2}
\dot\Sigma\cap\cU_0\cap\Tb^*_Y X=\{(0,y,0,\zetab):\ 0\leq
\zetab \cdot B(y)\zetab,\ \zetab\neq 0\}.
\end{split}\end{equation}
In particular, in view of \eqref{eq:B-orth-0},
$\dot\Sigma\cap\cU_0$ lies in the region \eqref{eq:zeta-large}, at least
after we possibly shrink $U_0$ (recall that $\cU_0=T^*U_0$), as we assume from now.
We also remark that, using \eqref{eq:H_p-at-bdy},
\begin{equation}\label{eq:pi-H_p-at-bdy}
\pi_*|_{(x,y,\xi,\zeta)}\sH_p
=-2\xi\cdot(\pa_x+\xi\pa_{\xib})+\sH_h+x\beta \pa_{\xib}+x\pi_*V,
\end{equation}
and correspondingly
\begin{equation}\label{eq:pi-H_p-xib-at-bdy}
\sH_p\pi^*\xib\big|_{x=0}=-2\xi^2=2(p-\zeta\cdot B(y)\zeta)=-2\zeta\cdot B(y)\zeta,
\ (0,y,\xi,\zeta)\in\Sigma.
\end{equation} 
As we already noted, $\zetab_{n-1}$ cannot vanish on $\dot\Sigma\cap \cU_0$, so
\begin{equation}\begin{split}\label{eq:pi-H_p-xibh-at-bdy}
\sH_p\pi^*(\xib/|\zetab_{n-1}|)|_{x=0}
&=-2|\zeta_{n-1}|^{-1}\xi^2-x\xi|\zeta_{n-1}|^{-2}(\sH_h|\zeta_{n-1}|)\,\big\vert_{x=0}\\
&=-2|\zeta_{n-1}|^{-1}\zeta\cdot B(y)\zeta,
\qquad (0,y,\xi,\zeta)\in\Sigma.
\end{split}\end{equation}

In order to better understand the generalized broken bicharacteristics
for $\Box$,
we divide $\dot\Sigma$ into two subsets.
We thus define the {\em glancing set} $\cG$
as the set of points in $\dot\Sigma$ whose preimage under
$\hat\pi=\pi|_{\Sigma}$
consists of a single point, and define the {\em hyperbolic set} $\cH$
as its complement
in $\dot\Sigma$. Thus, $\Tb^*_{X^\circ} X\cap\dot\Sigma\subset\cG$
as $\pi$ is a diffeomorphism on $T^*_{X^\circ}X$, while
$q\in\dot\Sigma\cap\Tb^*_YX$ lies in $\cG$ if and only
if on $\hat\pi^{-1}(\{q\})$, $\xi=0$.
More explicitly, with the notation of
\eqref{eq:dot-Sigma-coords2},
\begin{equation}\begin{split}\label{eq:H-G-exp2}
&\cG\cap\cU_0\cap\Tb^*_Y X=\{(0,y,0,\zetab):\ \zetab \cdot B(y)\zetab=0,\ \zetab\neq 0\},\\
&\cH\cap\cU_0\cap\Tb^*_Y X=\{(0,y,0,\zetab):\ \zetab \cdot B(y)\zetab>0,\ \zetab\neq 0\}.
\end{split}\end{equation}
Thus, $\cG$ corresponds to generalized broken bicharacteristics which
are tangent to $Y$ in view of the vanishing of $\xi$ at $\hat\pi^{-1}(\cG)$
(recall that the $\pa_x$ component of $\sH_p$ is $-2\xi$), while
$\cH$ corresponds to generalized broken bicharacteristics which
are normal to $Y$. Note that if $Y$ is one-dimensional (hence $X$ is
2-dimensional), then
$\zetab\cdot B(y)\zetab=0$ necessarily implies $\zetab=0$, so in fact
$\cG\cap\Tb^*_YX=\emptyset$,
hence there are no glancing rays.

We next make the role of $\cG$ and $\cH$ more explicit, which
explains the relevant phenomena better. A
characterization of \GBBsp, which is equivalent to
Definition~\ref{def:gen-br-bich}, is

\begin{lemma}(See the discussion in \cite[Section~1]{Vasy:Propagation-EDP}
after the statement of Definition~1.1.)\label{lemma:gen-br-bichar}
A continuous map
$\gamma:I\to\dot\Sigma$,
where $I\subset\Real$ is an interval, is a \GBBsp (in the analytic sense
that we use here) if and only if it satisfies
the following requirements:

\begin{enumerate}
\item
If $q_0=\gamma(s_0)\in\cG$
then for all
$f\in\Cinf(\Tb^*X)$,
\begin{equation}\label{eq:cG-bich}
\frac{d}{ds}(f\circ \gamma)(s_0)=\sH_p (\pi^*f)(\tilde q_0),\ \tilde q_0=\hat\pi^{-1}(q_0).
\end{equation}

\item
If $q_0=\gamma(s_0)\in\cH$ then
there exists $\ep>0$ such that
\begin{equation}
s\in I,\ 0<|s-s_0|<\ep\Rightarrow\gamma(t)\notin \Tb^*_{Y} X.
\end{equation}

\end{enumerate}
\end{lemma}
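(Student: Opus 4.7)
The plan is to establish both implications by exploiting the structure of the preimage $\hat\pi^{-1}(\gamma(s_0))$: it is a single point in the glancing case and two points differing only in the sign of $\xi$ in the hyperbolic case. The key test functions will be coordinate functions on $\Tb^*X$, and the crucial technical device will be the b-variable $\xib$, which vanishes on $\dot\Sigma\cap\Tb^*_Y X$ and whose Hamiltonian derivative at the boundary equals $\sH_p(x\xi)|_{x=0}=-2\xi^2$, giving a quantity that distinguishes hyperbolic points.

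For the forward implication, I would start with $q_0=\gamma(s_0)\in\cG$, where $\hat\pi^{-1}(q_0)=\{\tilde q_0\}$. Here the infimum in Definition~\ref{def:gen-br-bich} reduces to a single value, so applying the liminf inequality to both $f$ and $-f$ yields that $(f\circ\gamma)'(s_0)$ exists and equals $\sH_p(\pi^*f)(\tilde q_0)$, which is (1). For $q_0\in\cH$, I would apply the defining inequality with $f=\pm\xib$: since $\sH_p(\pi^*\xib)|_{x=0}=-2\xi^2=-2|\xi_0|^2$ takes the same value at both preimage points $q_\pm$, the two inequalities combine to give $\lim_{s\to s_0}\xib(\gamma(s))/(s-s_0)=-2|\xi_0|^2\neq 0$ (using $\xib(\gamma(s_0))=0$ from \eqref{eq:dot-Sigma-coords2}). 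Thus $\xib\circ\gamma(s)\neq 0$ for $s\neq s_0$ in a punctured neighborhood, and since $\xib\equiv 0$ on $\dot\Sigma\cap\Tb^*_Y X$, this forces $\gamma(s)\notin\Tb^*_Y X$ there, giving (2).

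For the converse, assume $\gamma$ satisfies (1) and (2). Since $\Tb^*_{X^\circ}X\cap\dot\Sigma\subset\cG$, hypothesis (1) applies at every interior point; taking $f$ to be the coordinate functions on $\Tb^*X$ shows that $\tilde\gamma:=\hat\pi^{-1}\circ\gamma$ is differentiable on $\gamma^{-1}(\Tb^*_{X^\circ}X)$ and satisfies $\dot{\tilde\gamma}=\sH_p(\tilde\gamma)$, i.e.\ it is a standard bicharacteristic in the interior. The liminf inequality in Definition~\ref{def:gen-br-bich} is then automatic at glancing points (where (1) is equality). At $s_0$ with $\gamma(s_0)\in\cH$, hypothesis (2) places $\gamma(s)$ in $\Tb^*_{X^\circ}X$ for $0<|s-s_0|<\ep$, so on each side $\tilde\gamma$ is a bicharacteristic limiting as $s\to s_0^{\pm}$ to one of the two preimages $q_\pm$ (namely, the flow direction forces $\tilde\gamma(s_0^-)=q_+$ and $\tilde\gamma(s_0^+)=q_-$, since $x$-component of $\sH_p$ equals $-2\xi$). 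By continuity of $\sH_p(\pi^*f)$, the one-sided limits of $(f\circ\gamma)(s)-(f\circ\gamma)(s_0))/(s-s_0)$ as $s\to s_0^\pm$ exist and equal $\sH_p(\pi^*f)(q_\mp)$, respectively, so
\begin{equation*}
\liminf_{s\to s_0}\frac{(f\circ\gamma)(s)-(f\circ\gamma)(s_0)}{s-s_0}
=\min\{\sH_p(\pi^*f)(q_+),\sH_p(\pi^*f)(q_-)\},
\end{equation*}
which is exactly $\inf\{\sH_p(\pi^*f)(q):q\in\pi^{-1}(\gamma(s_0))\cap\Sigma\}$.

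The main obstacle is the bookkeeping in the converse near a hyperbolic point: one must identify the correct preimage point reached by each one-sided limit of the interior bicharacteristic and confirm that the limiting derivatives coincide with $\sH_p(\pi^*f)$ at those points. This rests on the observation from \eqref{eq:H_p-at-bdy} and \eqref{eq:pi-H_p-xib-at-bdy} that $\sH_p$ extends smoothly to $T^*X$ up to the boundary and that the interior flow has $x$-component $-2\xi$, so its direction (inward versus outward) at each preimage point is determined by the sign of $\xi$. The other potentially delicate point — knowing $\gamma$ is smooth on the interior portion — follows from (1) applied in the glancing stratum, since it yields the ODE $\dot{\tilde\gamma}=\sH_p(\tilde\gamma)$ directly, without any regularity assumption beyond continuity.
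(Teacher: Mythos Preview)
Your forward direction coincides with the paper's sketch: at $\cG$ the single preimage collapses the infimum, and at $\cH$ you test with $f=\pm\xib$ exactly as the paper does, using \eqref{eq:pi-H_p-xib-at-bdy} to get $\sH_p(\pi^*\xib)|_{x=0}=-2\xi^2<0$ and hence $\xib\circ\gamma$ nonzero on a punctured interval.

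For the converse the paper does \emph{not} give an argument: it only observes that (i) is equivalent to the definition at $\cG$ and then refers to Lebeau \cite{Lebeau:Propagation} for the hyperbolic case. Your treatment is therefore more self-contained. The argument you outline is correct, with one point worth making explicit. You assert that the interior bicharacteristic $\tilde\gamma$ on each half-interval extends continuously to $s_0$ with limit in $\{q_+,q_-\}$. This is not automatic from continuity of $\gamma$ in $\Tb^*X$ alone, since $\xi=\xib/x$ could in principle blow up; what forces boundedness of $\xi(s)$ is that $\tilde\gamma(s)\in\Sigma$ together with $p|_{x=0}=-\xi^2+\zeta\cdot B(y)\zeta$ and the convergence of $\zeta(s)$, so $\xi(s)^2$ stays bounded. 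Once $\tilde\gamma$ lies in a compact set, smoothness of $\sH_p$ on $T^*X$ up to the boundary (as you note from \eqref{eq:H_p-at-bdy}) lets the integral curve extend, and the limit lies in $\Sigma\cap\pi^{-1}(q_0)=\{q_+,q_-\}$. After that your identification of which side hits which preimage via $\dot x=-2\xi+O(x)$ is correct, and the one-sided derivatives of $f\circ\gamma=(\pi^*f)\circ\tilde\gamma$ give exactly the two values whose minimum is the required infimum. Strictly speaking you only need the inequality $\liminf\geq\inf$, so even without pinning down which side lands on which preimage the conclusion follows; the identification is a pleasant bonus showing the liminf is actually attained.
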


The idea of the proof of this lemma is that at $\cG$, the requirement in (i) is
equivalent to Definition~\ref{def:gen-br-bich} since $\hat\pi^{-1}(q_0)$ contains
a single point. On the other hand, at $\cH$, the requirement in (ii) follows from
Definition~\ref{def:gen-br-bich} applied to the functions $f=\pm\xib$, using
\eqref{eq:pi-H_p-xib-at-bdy}, to conclude that
$\xib$ is strictly decreasing at $\cH$ along \GBB.
Since on $\dot\Sigma\cap\{x=0\}$, one has $\xib=0$, 
for a \GBBsp
$\gamma$ through $\gamma(s_0)=q_0\in\cH$, on a punctured neighborhood
of $s_0$, $\xib(\gamma(s))\neq 0$, so $\gamma(s)\notin\Tb^*_Y X$ (as
$\gamma(s)\in\dot\Sigma$). For the converse direction at $\cH$ we refer
to \cite{Lebeau:Propagation}; see \cite[Section~1]{Vasy:Propagation-EDP} for details.

\section{Microlocal elliptic regularity}
\label{sec:elliptic}

We first note the form of $\Box$ with commutator calculations in mind.
Note that rather than thinking of the tangential terms, $xD_y$, as
`too degenerate', we think of $xD_x$ as `too singular' in that it
causes the failure of $\Box$ to lie in $x^2\Diffb^2(X)$. This makes
the calculations rather analogous to the conformal case, and also
it facilitates the use of the symbolic machinery for b-ps.d.o's.

\begin{prop}\label{prop:Box-form}
On a collar neighborhood of $Y$, $\Box$ has the form
\begin{equation}\label{eq:Box-form}
-(xD_x)^* \alpha (xD_x)+(xD_x)^* M'+M''(xD_x)+ \tilde P,
\end{equation}
with
\begin{equation*}\begin{split}
&\alpha-1\in x\CI(X),\ M',M''\in x^2\Diffb^1(X)\subset x\Diffz^1(X),\\
& \tilde P\in x^2\Diffb^2(X),
\ \tilde P- x^2\Box_h\in x^3\Diffb^2(X)\subset x\Diffz^2(X),
\end{split}\end{equation*}
where $\Box_h$ is the d'Alembertian
of the conformal metric on the boundary (extended to a neighborhood
of $Y$ using the collar structure).
\end{prop}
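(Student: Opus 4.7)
The plan is to base everything on the conformal relation between $\Box_g$ and $\Box_{\hat g}$, then separate $xD_x$-dependence by hand. The starting point is the standard identity, valid for $g = \Omega^2\hat g$ with $\Omega = x^{-1}$:
$$
\Box_g = x^2\Box_{\hat g} + (n-2)x\,\hat g(dx,d\cdot),
$$
which in local coordinates reads $\Box_g = x^2\Box_{\hat g} + (n-2)x\hat g^{00}\pa_x + (n-2)x\hat g^{0j}\pa_{y_j}$. The structural hypothesis $\hat g = -dx^2 + h$ with $h|_Y$ a section of $T^*Y\otimes T^*Y$ forces $\hat g_{00}+1,\ \hat g_{0j}\in x\CI(X)$, and a direct inversion of the metric matrix then yields $\hat g^{00}+1\in x\CI(X)$, $\hat g^{0j}\in x\CI(X)$ and $\hat g^{ij}-h^{ij}(0,y)\in x\CI(X)$, where $h^{ij}(0,y)$ is the inverse of the Lorentzian metric $h|_Y$ on $Y$. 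These three facts carry the entire proposition.

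Next I expand $x^2\Box_{\hat g} = -\hat g^{\mu\nu}x^2\pa_\mu\pa_\nu + x^2 R^\mu\pa_\mu$, with $R^\mu = -\hat J^{-1}\pa_\nu(\hat J\hat g^{\nu\mu})\in\CI(X)$, and convert to the $0$-algebra by means of the identities
$$
x^2\pa_x^2 = -(xD_x)^2 - i(xD_x),\qquad x^2\pa_x\pa_{y_j} = -(xD_{y_j})(xD_x),\qquad x\pa_x = i(xD_x),
$$
together with $x^2\pa_{y_i}\pa_{y_j} = -(xD_{y_i})(xD_{y_j}) = -x^2 D_{y_i}D_{y_j}$ (which commutes through $x$). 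The terms containing $xD_x$ come exclusively from $-\hat g^{00}x^2\pa_x^2$, $-2\hat g^{0j}x^2\pa_x\pa_{y_j}$, $R^0 x^2\pa_x$ and $(n-2)x\hat g^{00}\pa_x$; everything else is tangential and will be collected into $\tilde P$.

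I now match the $(xD_x)^*\alpha(xD_x)$ piece. Applying Lemma~\ref{lemma:divergence} (or integrating by parts directly against the density $x^{-n}\hat J\,|dx\,dy|$) gives
$$
(xD_x)^* = xD_x + i(n-1) - ix\pa_x\log\hat J,
$$
so $(xD_x)^*\alpha(xD_x) = \alpha(xD_x)^2 + \bigl[\,i(n-1)\alpha - ix\pa_x(\alpha\log\hat J\cdot\text{correction})\,\bigr](xD_x)$. Setting $\alpha := -\hat g^{00}$ (so $\alpha-1\in x\CI(X)$) matches the $(xD_x)^2$ coefficient. Matching the $(xD_x)$ coefficient then produces a residue in $x\CI(X)\cdot(xD_x)$, which, together with the contributions of $R^0 x^2\pa_x$ and $(n-2)x\hat g^{00}\pa_x$ (whose coefficients also lie in $x\CI(X)$), is lumped into $M''(xD_x)$. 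The cross term $-2\hat g^{0j}x^2\pa_x\pa_{y_j} = 2\hat g^{0j}(xD_{y_j})(xD_x)$ contributes $M''(xD_x)$ with $M'' = 2\hat g^{0j}\cdot xD_{y_j} = 2(x\tilde g^{0j})\cdot x D_{y_j}\in x^2\Diffb^1(X)$ since $\hat g^{0j}\in x\CI(X)$. Because $\Box_g$ is formally self-adjoint, taking the adjoint of the $M''(xD_x)$ piece (or equivalently symmetrizing the cross term by writing it once as $M''(xD_x)$ and once as $(xD_x)^*M'$) produces $(xD_x)^*M'$ with $M'\in x^2\Diffb^1(X)$; the commutators $[xD_x,M']$ that arise in this symmetrization lie in $x^2\Diffb^1(X)$ by Lemma~\ref{lemma:xD_x-improvement} and are absorbed.

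Finally, $\tilde P$ collects the terms that contain no $xD_x$: the purely tangential second-order piece $-\hat g^{ij}x^2\pa_{y_i}\pa_{y_j} = -\hat g^{ij}\cdot(xD_{y_i})(xD_{y_j})\cdot(-1)^{\text{sign}}$ together with the tangential first-order terms $R^j x^2\pa_{y_j}$ and $(n-2)x\hat g^{0j}\pa_{y_j} = i(n-2)\hat g^{0j}\cdot xD_{y_j}$. Each of these sits in $x^2\Diffb^2(X)$ (since $x^2 D_{y_i}D_{y_j}\in x^2\Diffb^2$ and $\hat g^{0j}\in x\CI$ already gives an extra $x$ to the first-order tangential pieces). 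Comparing with $x^2\Box_h$, the principal symbols agree on $Y$ because $\hat g^{ij}|_{x=0} = h^{ij}|_Y$, so the difference in second-order tangential parts is $(\hat g^{ij}-h^{ij}|_Y)\cdot x^2 D_{y_i}D_{y_j}\in x^3\Diffb^2(X)$; the difference in first-order tangential parts is also in $x^3\Diffb^2(X)$ since it is built from coefficients that vanish at $Y$ (either because of the explicit $x$ multiplying $(n-2)\hat g^{0j}\pa_{y_j}$ and because $\hat g^{0j}\in x\CI$, or because the density-generated first-order part of $\Box_{\hat g}$ matches that of $\Box_h$ at $Y$ modulo smooth functions vanishing there). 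The containment $x^3\Diffb^2(X)\subset x\Diffz^2(X)$ from Lemma~\ref{lemma:b-to-0-conversion} closes the argument.

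The real work, and the only place requiring care, is the bookkeeping in the last two paragraphs: the various first-order tangential terms (coming from the density of $\hat g$, from the conformal correction $(n-2)x\hat g^{0j}\pa_{y_j}$, and from commuting $x$'s past $\pa_{y_j}$ in the symmetrization) all have to be shown to either lie in $x^2\Diffb^1(X)\cdot xD_x$ or to match the corresponding first-order terms of $x^2\Box_h$ up to an element of $x^3\Diffb^2(X)$. Each check reduces to the two facts $\hat g^{00}+1,\hat g^{0j}\in x\CI(X)$ and $\hat g^{ij}-h^{ij}|_Y\in x\CI(X)$, so no new ideas are needed beyond Taylor-expanding at $x=0$.
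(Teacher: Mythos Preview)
Your route via the conformal change formula is genuinely different from the paper's, which bypasses all of this bookkeeping with the one-line identity
\[
\Box_g=\sum_{i,j} D_{z_i}^* G_{ij} D_{z_j}
=\sum_{i,j}(xD_{z_i})^*\,\hat G_{ij}\,(xD_{z_j}),
\]
(adjoints with respect to $dg$), after which the four blocks $(i=j=n)$, $(i=n,j<n)$, $(i<n,j=n)$, $(i,j<n)$ are read off directly as $-(xD_x)^*\alpha(xD_x)$, $(xD_x)^*M'$, $M''(xD_x)$, $\tilde P$. Here $M'=\sum_j\hat G_{nj}(xD_{y_j})$ and $M''=\sum_j(xD_{y_j})^*\hat G_{jn}$ are automatically in $x^2\Diffb^1(X)$ because $\hat G_{nj}\in x\CI(X)$, and no first-order matching is needed at all.

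Your argument, by contrast, has a real gap in the first-order bookkeeping. You claim the scalar first-order-in-$(xD_x)$ leftovers lie in $x\CI(X)\cdot(xD_x)$ and can be absorbed into $M''(xD_x)$ with $M''\in x^2\Diffb^1(X)$. Two problems: first, $x\CI(X)\not\subset x^2\Diffb^1(X)$, so even if the claim were true it would be insufficient --- you need the scalar coefficient in $x^2\CI(X)$. Second, the claim itself is false as stated: the term $(n-2)x\hat g^{00}\pa_x=i(n-2)\hat g^{00}(xD_x)$ has coefficient $i(n-2)\hat g^{00}=-i(n-2)+O(x)$, which is \emph{not} in $x\CI(X)$; and the ``residue from matching'' $-\hat g^{00}x^2\pa_x^2$ against $-(xD_x)^*\alpha(xD_x)$ has $(xD_x)$-coefficient $i(n-2)\alpha+O(x)$, also not vanishing at $x=0$. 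What is true is that the \emph{total} scalar coefficient --- residue plus $R^0$-term plus $(n-2)$-term --- does vanish to second order: it equals $ix\big(R^0-\pa_x\alpha-\alpha\,\pa_x\log\hat J\big)$, and a short computation using $\hat g^{00}|_{x=0}=-1$, $\hat g^{0j}|_{x=0}=0$ shows the bracket vanishes at $x=0$. That cancellation is the missing step; once you verify it, your approach goes through, but the paper's divergence-form argument avoids the issue entirely.
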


\begin{proof}
Writing the coordinates as $(z_1,\ldots,z_n)$,
the operator $\Box_g$ is given by
$$
\Box_g=\sum_{ij} D_{z_i}^* G_{ij} D_{z_j},
$$
with adjoints taken with respect to $dg=|\det g|^{1/2}\,|dz_1\ldots dz_n|$.
With $z_n=x$, $z_j=y_j$ for $j=1,\ldots,n-1$, this can be rewritten as
\begin{equation*}\begin{split}
\Box_g&=\sum_{ij} (xD_{z_i})^* \hat G_{ij} (xD_{z_j})\\
&=(xD_x)^*\hat G_{nn} (xD_x)+\sum_{j=1}^{n-1} (xD_x)^* \hat G_{nj}(xD_{y_j})
+\sum_{j=1}^{n-1} (xD_{y_j})^* \hat G_{jn}(xD_{y_j})\\
&\qquad\qquad\qquad+\sum_{i,j=1}^{n-1} (xD_{y_i})^*\hat G_{ij}(xD_{y_j}).
\end{split}\end{equation*}
As $\hat G_{nn}+1\in x\CI(X)$, we may take $\alpha=-\hat G_{nn}$
and conclude that $\alpha-1\in x\CI(X)$. As
$\hat G_{jn},\hat G_{nj}\in x\CI(X)$, taking
$M'=\sum_{j=1}^{n-1}\hat G_{nj}(xD_{y_j})$ and
$M''=\sum_{j=1}^{n-1}(xD_{y_j})^*\hat G_{jn}$, $M',M''\in x^2\Diffb^1(X)$
follow. Finally,
$$
\tilde P=\sum_{ij=1}^{n-1} (xD_{y_i})^*\hat G_{ij}(xD_{y_j})
\in x^2\Diffb^2(X),
$$
and modulo $x^3\Diffb^2(X)$, we can pull out
the factors of $x$ and restrict $\hat G_{ij}$ to $Y$, so
$\tilde P$ differs from $x^2\Box_h=x^2\sum D_{y_i}^*h_{ij} D_{y_j}$
by an element of $x^3\Diffb^2(X)$, completing the proof.
\end{proof}

We next state the lemma regarding Dirichlet form which is of fundamental
use in both the elliptic and hyperbolic/glancing estimates.
Below the main assumption is that $P=\Box_g+\lambda$, with $\Box_g$
as in \eqref{eq:Box-form}. We first recall the notation for
local norms:

\begin{rem}\label{rem:localize}
Since $X$ is non-compact and our results are microlocal, we may always
fix a compact set $\tilde K\subset X$ and assume that all ps.d.o's
have Schwartz kernel supported in $\tilde K\times\tilde K$. We
also let $\tilde U$ be a neighborhood of $\tilde K$ in $X$ such that
$\tilde U$ has compact closure, and use the $H^1_0(\tilde U)$ norm
in place of the $H^1_0(X)$ norm to accommodate $u\in H^1_{0,\loc}(X)$.
(We may instead take $\phi\in\Cinf_\compl(\tilde U)$ identically $1$ in a
neighborhood of $\tilde K$, and use $\|\phi u\|_{H^1_0(X)}$.)
Below we use the notation $\|.\|_{H^1_{0,\loc}(X)}$ for
$\|.\|_{H^1_0(\tilde U)}$ to avoid having to specify $\tilde U$.
We also use $\|v\|_{H^{-1}_{0,\loc}(X)}$ for
$\|\phi v\|_{H^{-1}_0(X)}$.
\end{rem}

\begin{lemma}(cf.\ \cite[Lemma~4.2]{Vasy:Propagation-Wave})
\label{lemma:Dirichlet-form}
Suppose that $K\subset\Sb^*X$ is compact, $U\subset\Sb^*X$ is open,
$K\subset U$.
Suppose that $\cA=\{A_r:\ r\in(0,1]\}$ is a bounded
family of ps.d.o's in $\Psibc^s(X)$ with $\WFb'(\cA)\subset K$, and
with $A_r\in\Psib^{s-1}(X)$ for $r\in (0,1]$.
Then there are $G\in\Psib^{s-1/2}(X)$, $\tilde G\in\Psib^{s+1/2}(X)$
with $\WFb'(G),\WFb'(\tilde G)\subset U$
and $C_0>0$ such that for $r\in(0,1]$, $u\in H^{1,k}_{0,\bl,\loc}(X)$
(here $k\leq 0$)
with $\WFb^{1,s-1/2}(u)
\cap U=\emptyset$, $\WFb^{-1,s+1/2}(Pu)\cap U=\emptyset$, we have
\begin{equation*}\begin{split}
&|\langle d A_r u, dA_r u\rangle_G+\lambda\|A_r u\|^2|\\
&\qquad\leq
C_0(\|u\|^2_{H^{1,k}_{0,\bl,\loc}(X)}+\|Gu\|^2_{H^1_0(X)}
+\|Pu\|^2_{H^{-1,k}_{0,\bl,\loc}(X)}
+\|\tilde G Pu\|^2_{H^{-1}_0(X)}).
\end{split}\end{equation*}
\end{lemma}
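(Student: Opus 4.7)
The plan is to set $v_r:=A_ru$, write $\langle Pv_r,v_r\rangle$ as the Dirichlet form $\langle dv_r,dv_r\rangle_G+\lambda\|v_r\|^2$ modulo a quadratic error in $\|v_r\|_{H^1_0(X)}$ via the structural identity of Proposition~\ref{prop:Box-form}, and then recover the desired bound by expanding
\[
\langle Pv_r,v_r\rangle=\langle A_rPu,A_ru\rangle+\langle[P,A_r]u,A_ru\rangle
\]
and estimating each summand with the microlocal machinery of Section~\ref{sec:Diff0-Psib}.

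First, for each $r\in(0,1]$ the individual membership $A_r\in\Psib^{s-1}(X)$, combined with $\WFb^{1,s-1/2}(u)\cap U=\emptyset$ and $\WFb'(\cA)\subset K\subset U$, gives via Lemma~\ref{lemma:WFb-mic} that $\WFb^{1,1/2}(v_r)=\emptyset$ microlocally, so $v_r\in H^1_{0,\loc}(X)$ by Corollary~\ref{cor:WF-to-H1} (though not uniformly in~$r$); this is enough to integrate by parts legitimately. Using Proposition~\ref{prop:Box-form} this produces
\[
\langle Pv_r,v_r\rangle=-\langle\alpha(xD_x)v_r,(xD_x)v_r\rangle
+\langle\tilde Pv_r,v_r\rangle
+\langle M'v_r,(xD_x)v_r\rangle
+\langle(xD_x)v_r,(M'')^*v_r\rangle
+\lambda\|v_r\|^2.
\]
Writing $G$ in the basis $\{dx/x,dy_j/x\}$ of $\zT^*X$ and using $\hat G^{nn}=-\alpha$, $\hat G^{nj}|_Y=0$ for $j<n$, together with $\tilde P-x^2\Box_h\in x\Diffz^2(X)$, the first two terms on the right coincide with $\langle dv_r,dv_r\rangle_G$ modulo $O(\|v_r\|^2_{H^1_0(X)})$; the cross terms obey the same bound since $M',M''\in x^2\Diffb^1(X)\subset x\Diffz^1(X)$. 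Hence
\[
\bigl|\langle dv_r,dv_r\rangle_G+\lambda\|v_r\|^2-\langle Pv_r,v_r\rangle\bigr|
\le C\|v_r\|^2_{H^1_0(X)}.
\]

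For $\langle A_rPu,A_ru\rangle$, Cauchy--Schwarz gives $|\langle A_rPu,A_ru\rangle|\le\|A_rPu\|_{H^{-1}_0(X)}\|A_ru\|_{H^1_0(X)}$, and the two factors are bounded by Lemma~\ref{lemma:WFb-mic-q} applied to $\cA\subset\Psibc^s(X)$: the hypothesis $\WFb^{1,s-1/2}(u)\cap U=\emptyset$ yields $\|A_ru\|_{H^1_0(X)}\le C(\|u\|_{H^{1,k}_{0,\bl,\loc}(X)}+\|Gu\|_{H^1_0(X)})$ for a suitable $G\in\Psib^{s-1/2}(X)$, and the dual hypothesis $\WFb^{-1,s+1/2}(Pu)\cap U=\emptyset$ yields $\|A_rPu\|_{H^{-1}_0(X)}\le C(\|Pu\|_{H^{-1,k}_{0,\bl,\loc}(X)}+\|\tilde GPu\|_{H^{-1}_0(X)})$ with $\tilde G\in\Psib^{s+1/2}(X)$; the $\mp\tfrac12$ splits symmetrically distribute the one-order $b$-loss across the two sides of the pairing. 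The commutator term is handled by rewriting $\langle[P,A_r]u,A_ru\rangle=\langle A_r^*[P,A_r]u,u\rangle$: by Lemma~\ref{lemma:b-comm-improve} and Lemma~\ref{lemma:Psib-filtered-ring}, $\{A_r^*[P,A_r]\}$ is bounded in $\Diffz^2\Psibc^{2s-1}(X)$, so Corollary~\ref{cor:WFb-mic-q} with $2k=2s-1$---matching exactly the wavefront hypothesis on $u$---gives the required bound. The error $\|v_r\|^2_{H^1_0(X)}$ from the first step is controlled analogously by Lemma~\ref{lemma:WFb-mic-q}.

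The main obstacle is the integration-by-parts identification: showing that $\langle Pv_r,v_r\rangle-\lambda\|v_r\|^2$ differs from $\langle dv_r,dv_r\rangle_G$ only by $O(\|v_r\|^2_{H^1_0(X)})$ relies crucially on the asymptotic form \eqref{eq:g-form} of $g$ near $Y$, specifically the vanishing of the mixed normal-tangential metric coefficients at $Y$---encoded by $\hat G^{nj}|_Y=0$ for $j<n$ and reflected in the weights $x^2$ on $M',M''$ in Proposition~\ref{prop:Box-form}---which prevents boundary contributions from introducing terms one cannot control by $\|v_r\|^2_{H^1_0(X)}$. Once this identity is in hand, the remainder is a careful but essentially routine matching of Sobolev orders using the tools of Section~\ref{sec:Diff0-Psib}.
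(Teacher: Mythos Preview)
There is a genuine gap in your treatment of the pairing term $\langle A_rPu,A_ru\rangle$.  You apply Cauchy--Schwarz directly to obtain $\|A_rPu\|_{H^{-1}_0}\|A_ru\|_{H^1_0}$ and then invoke Lemma~\ref{lemma:WFb-mic-q} on the family $\cA\subset\Psibc^s(X)$.  But Lemma~\ref{lemma:WFb-mic-q} for an order-$s$ family requires $\WFb^{1,s}(u)\cap U=\emptyset$ to bound $\|A_ru\|_{H^1_0}$; you only have $\WFb^{1,s-1/2}(u)\cap U=\emptyset$, which is half an order too weak, so the bound you claim (with $G$ of order $s-1/2$) does not follow.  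Your phrase about the $\mp\tfrac12$ ``symmetrically distributing the one-order $b$-loss'' is correct in spirit, but the mechanism you wrote down does not implement it.  The paper achieves the shift by inserting an elliptic $\Lambda\in\Psib^{-1/2}(X)$ with parametrix $\Lambda^-\in\Psib^{1/2}(X)$: one writes $\langle A_rPu,A_ru\rangle=\langle\Lambda^-A_rPu,\Lambda^*A_ru\rangle$ modulo a smoothing error, and now $\Lambda^*A_r$ is bounded in $\Psibc^{s-1/2}$ (matching the hypothesis on $u$) while $\Lambda^-A_r$ is bounded in $\Psibc^{s+1/2}$ (matching the hypothesis on $Pu$).

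Your first step is also an unnecessary detour that introduces an error term you cannot control.  The identity $\langle dA_ru,dA_ru\rangle_G+\lambda\|A_ru\|^2=\langle PA_ru,A_ru\rangle$ is \emph{exact} for $A_ru\in H^1_0(X)$ with compact support, since $\Box_g=d^*d$ with adjoint taken in $L^2_0(X;dg)$; no approximation via Proposition~\ref{prop:Box-form} is needed.  Had there really been an $O(\|v_r\|_{H^1_0}^2)$ error as you claim, you would be unable to bound it for the same order-counting reason as above.  Your handling of the commutator term $A_r^*[P,A_r]$ via Lemma~\ref{lemma:b-comm-improve} and Corollary~\ref{cor:WFb-mic-q} is correct and agrees with the paper.
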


\begin{rem}
The point of this lemma is $G$ is $1/2$ order lower ($s-1/2$ vs. $s$)
than the {\em family} $\cA$. We will later take a limit, $r\to 0$,
which gives control of the Dirichlet form evaluated on $A_0u$,
$A_0\in\Psibc^s(X)$, in terms of lower order information.

The role of $A_r$, $r>0$, is to regularize such an argument, i.e.\ to make
sure various terms in a formal computation, in which one uses $A_0$
directly, actually make sense.

The main difference with \cite[Lemma~4.2]{Vasy:Propagation-Wave} is
that $\lambda$ is {\em not negligible}.
\end{rem}

\begin{proof}
Then for $r\in(0,1]$, $A_r u\in H^1_0(X)$, so
\begin{equation*}
\langle d A_r u,dA_r u\rangle+\lambda \|A_r u\|^2
=\langle PA_r u,A_r u\rangle.
\end{equation*}
Here the right hand side is the pairing of $H^{-1}_0(X)$ with $H^1_0(X)$.
Writing $PA_r=A_rP+[P,A_r]$,
the right hand side can be estimated by
\begin{equation}\label{eq:Dirichlet-form-8}
|\langle A_rPu,A_r u\rangle|+|\langle[P,A_r] u,A_r u\rangle|.
\end{equation}
The lemma is thus proved if we show that the first term of
\eqref{eq:Dirichlet-form-8} is bounded by
\begin{equation}\label{eq:Dirichlet-bd}
C_0'(\|u\|^2_{H^{1,k}_{0,\bl,\loc}(X)}+\|Gu\|^2_{H^1_0(X)}
+\|Pu\|^2_{H^{-1,k}_{0,\bl,\loc}(X)}
+\|\tilde G Pu\|^2_{H^{-1}_0(X)}),
\end{equation}
the second term
is bounded by $C_0''(\|u\|^2_{H^{1,k}_{0,\bl,\loc}(X)}+\|Gu\|^2_{H^1_0(X)})$.
{\em (Recall that the `local' norms were
defined in Remark~\ref{rem:localize}.)}

The first term is straightforward to estimate.
Let $\Lambda\in\Psib^{-1/2}(X)$ be elliptic with
$\Lambda^-\in\Psib^{1/2}(X)$ a parametrix, so
\begin{equation*}
E=\Lambda\Lambda^--\Id,E'=\Lambda^-\Lambda-\Id\in\Psib^{-\infty}(X).
\end{equation*}
Then
\begin{equation*}\begin{split}
\langle A_rPu,A_r u\rangle&=\langle(\Lambda\Lambda^--E)A_r Pu,
A_r u\rangle\\
&=\langle\Lambda^-A_r Pu,\Lambda^* A_r u\rangle
-\langle A_r Pu,E^*A_r u\rangle.
\end{split}\end{equation*}
Since $\Lambda^- A_r$ is uniformly bounded in $\Psibc^{s+1/2}(X)$,
and $\Lambda^* A_r$ is uniformly bounded in $\Psibc^{s-1/2}(X)$,
$\langle\Lambda^-A_r Pu,\Lambda^* A_r \rangle$
is uniformly bounded, with a bound like \eqref{eq:Dirichlet-bd}
using Cauchy-Schwartz and Lemma~\ref{lemma:WFb-mic-q}.
Indeed, by Lemma~\ref{lemma:WFb-mic-q},
choosing any $G\in\Psib^{s-1/2}(X)$ which is elliptic on
$K$, there is a constant $C_1>0$ such that
\begin{equation*}
\|\Lambda^*A_r u\|^2_{H^1_0(X)}\leq
C_1(\|u\|^2_{H^{1,k}_{0,\bl,\loc}(X)}+\|Gu\|^2_{H^1_0(X)}).
\end{equation*}
Similarly, by Lemma~\ref{lemma:WFb-mic-q} and its analogue for
$\WFb^{-1,s}$,
choosing any $\tilde G\in\Psib^{s+1/2}(X)$ which is elliptic on
$K$, there is a constant $C_1'>0$ such that
$\|\Lambda^- A_rPu\|^2_{H^{-1}_0(X)}\leq
C_1'(\|Pu\|^2_{H^{-1,k}_{0,\bl,\loc}(X)}+\|\tilde GPu\|^2_{H^{-1}_0(X)})$.
Combining these gives, with $C_0'=C_1+C_1'$,
\begin{equation*}\begin{split}
|\langle &\Lambda^-A_r Pu,\Lambda^* A_r u\rangle|\leq\|\Lambda^-A_r Pu\|
\,\|\Lambda^*A_r u\|\leq \|\Lambda^-A_r Pu\|^2+\|\Lambda^*A_r u\|^2\\
&\leq C_0'(\|u\|^2_{H^{1,k}_{0,\bl,\loc}(X)}+\|Gu\|^2_{H^1_0(X)}
+\|Pu\|^2_{H^{-1,k}_{0,\bl,\loc}(X)}+\|\tilde GPu\|^2_{H^{-1}_0(X)}),
\end{split}\end{equation*}
as desired.

A similar argument, using that
$A_r$ is uniformly bounded in $\Psibc^{s+1/2}(X)$ (in fact in
$\Psibc^{s}(X)$),
and $E^* A_r$ is uniformly bounded in $\Psibc^{s-1/2}(X)$
(in fact in $\Psibc^{-\infty}(X)$), shows that
$\langle A_r Pu,E^* A_r u\rangle$
is uniformly bounded.

Now we turn to the second term in \eqref{eq:Dirichlet-form-8}, whose
uniform boundedness is a direct consequence of
Lemma~\ref{lemma:b-comm-improve} and Corollary~\ref{cor:WFb-mic-q}.
Indeed, by Lemma~\ref{lemma:b-comm-improve},
$[P,A_r]$ is a bounded family in $\Diffz^2\Psibc^{s-1}(X)$, hence
$A_r^*[P,A_r]$ is a bounded family in $\Diffz^2\Psibc^{2s-1}(X)$.
Then one can apply Corollary~\ref{cor:WFb-mic-q} to conclude that
$$
\langle A_r^*[P,A_r]u,u\rangle\leq C'(\|u\|^2_{H^{1,k}_{0,\bl,\loc}(X)}+\|Gu\|^2_{H^1(X)}),$$
proving the lemma.
\end{proof}

A more precise version, in terms of requirements on $Pu$, is the following.
Here, as in Section~\ref{sec:Poincare}, we fix
a positive definite inner product on the fibers of $\zT^*X$
(i.e.\ a Riemannian 0-metric) to compute
$\|d v\|^2_{L^2(X;\zT^*X)}$; as $v$ has support in a compact set below, the
choice of the inner product is irrelevant.

\begin{lemma}\label{lemma:Dirichlet-form-2}
(cf.\ \cite[Lemma~4.4]{Vasy:Propagation-Wave})
Suppose that $K\subset\Sb^*X$ is compact, $U\subset\Sb^*X$ is open,
$K\subset U$.
Suppose that $\cA=\{A_r:\ r\in(0,1]\}$ is a bounded
family of ps.d.o's in $\Psibc^s(X)$ with $\WFb'(\cA)\subset K$, and
with $A_r\in\Psib^{s-1}(X)$ for $r\in (0,1]$.
Then there are $G\in\Psib^{s-1/2}(X)$, $\tilde G\in\Psib^{s}(X)$
with $\WFb'(G),\WFb'(\tilde G)\subset U$
and $C_0>0$ such that for $\ep>0$, $r\in(0,1]$, $u\in H^{1,k}_{0,\bl,\loc}(X)$
($k\leq 0$)
with $\WFb^{1,s-1/2}(u)
\cap U=\emptyset$, $\WFb^{-1,s}(Pu)\cap U=\emptyset$, we have
\begin{equation*}\begin{split}
|\langle dA_r u,&dA_r u\rangle_G+\lambda\|A_r u\|^2|\\
\leq
\ep&\|d A_r u\|^2_{L^2(X;\zT^*X)}
+C_0(\|u\|^2_{H^{1,k}_{0,\bl,\loc}(X)}+\|Gu\|^2_{H^1_0(X)}\\
&+\ep^{-1}\|Pu\|^2_{H^{-1,k}_{0,\bl,\loc}(X)}
+\ep^{-1}\|\tilde G Pu\|^2_{H^{-1}_0(X)}).
\end{split}\end{equation*}
\end{lemma}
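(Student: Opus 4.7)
The plan is to follow the proof of Lemma~\ref{lemma:Dirichlet-form} verbatim up to the point where the pairing $\langle A_rPu,A_ru\rangle$ is estimated, and then modify only this step so that the auxiliary operator $\tilde G$ can be taken half an order lower (namely in $\Psib^s(X)$ rather than $\Psib^{s+1/2}(X)$), at the price of an $\ep$ versus $\ep^{-1}$ split. Since $A_r\in\Psib^{s-1}(X)$ for each fixed $r>0$, we still have $A_ru\in H^1_0(X)$, so the integration-by-parts identity
\begin{equation*}
\langle dA_ru,dA_ru\rangle_G+\lambda\|A_ru\|^2=\langle PA_ru,A_ru\rangle=\langle A_rPu,A_ru\rangle+\langle[P,A_r]u,A_ru\rangle
\end{equation*}
still makes sense as a pairing of $H^{-1}_0(X)$ with $H^1_0(X)$.

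The commutator term is treated exactly as in Lemma~\ref{lemma:Dirichlet-form}: by Lemma~\ref{lemma:b-comm-improve}, $A_r^*[P,A_r]$ is a bounded family in $\Diffz^2\Psibc^{2s-1}(X)$, so Corollary~\ref{cor:WFb-mic-q} yields a uniform bound
\begin{equation*}
|\langle[P,A_r]u,A_ru\rangle|\leq C_1(\|u\|^2_{H^{1,k}_{0,\bl,\loc}(X)}+\|Gu\|^2_{H^1_0(X)})
\end{equation*}
for some $G\in\Psib^{s-1/2}(X)$ elliptic on $K$ with $\WFb'(G)\subset U$; no $\ep$-splitting is needed here.

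For the new term $\langle A_rPu,A_ru\rangle$, rather than inserting a factor $\Lambda\Lambda^-$ with $\Lambda\in\Psib^{-1/2}(X)$ as in Lemma~\ref{lemma:Dirichlet-form} (which forces an extra $1/2$ order onto the $Pu$ side), pair directly in the $H^{-1}_0$--$H^1_0$ duality and use that the $H^1_0(X)$ norm is equivalent, modulo lower order, to $\|d\cdot\|_{L^2(X;\zT^*X)}+\|\cdot\|_{L^2_0(X)}$:
\begin{equation*}
|\langle A_rPu,A_ru\rangle|\leq \|A_rPu\|_{H^{-1}_0(X)}\bigl(C_2\|dA_ru\|_{L^2(X;\zT^*X)}+C_2\|A_ru\|_{L^2_0(X)}\bigr).
\end{equation*}
By Lemma~\ref{lemma:WFb-mic-q} (applied in $H^{-1}_0$) with any $\tilde G\in\Psib^s(X)$ elliptic on $K$ and $\WFb'(\tilde G)\subset U$, and by its $L^2_0$-version applied to $A_r$ seen as bounded in $\Psibc^s(X)$, one controls
\begin{equation*}
\|A_rPu\|_{H^{-1}_0(X)}\leq C_3\bigl(\|Pu\|_{H^{-1,k}_{0,\bl,\loc}(X)}+\|\tilde GPu\|_{H^{-1}_0(X)}\bigr),\quad \|A_ru\|_{L^2_0(X)}\leq C_3\bigl(\|u\|_{H^{1,k}_{0,\bl,\loc}(X)}+\|Gu\|_{H^1_0(X)}\bigr),
\end{equation*}
uniformly in $r\in(0,1]$. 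Finally, apply the weighted Cauchy--Schwarz inequality $ab\leq \ep b^2+(4\ep)^{-1}a^2$ with $a=\|A_rPu\|_{H^{-1}_0(X)}$ and $b=\|dA_ru\|_{L^2(X;\zT^*X)}$ to produce exactly the $\ep\|dA_ru\|^2_{L^2(X;\zT^*X)}+\ep^{-1}(\cdots)$ structure claimed in the lemma, after collecting all lower-order pieces into the four allowed terms on the right-hand side.

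The main obstacle is really a bookkeeping one: the reason $\tilde G$ can be taken of order $s$ instead of $s+1/2$ is that we give up symmetry in the Cauchy--Schwarz splitting and accept an $\|dA_ru\|^2$ contribution on the right. This contribution is harmless in applications because, when combined with the coercivity that appears via the Poincar\'e inequality and the positivity of the energy form on time-like commutants, it will be absorbed into the left-hand side $\langle dA_ru,dA_ru\rangle_G+\lambda\|A_ru\|^2$; this is precisely the flexibility needed for the propagation-of-singularities estimates in Section~\ref{sec:prop-sing}. No delicate new analytic idea is required beyond the weighted Cauchy--Schwarz split.
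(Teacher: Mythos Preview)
Your overall strategy matches the paper's: keep the commutator estimate from Lemma~\ref{lemma:Dirichlet-form} unchanged, and for $\langle A_rPu,A_ru\rangle$ pair directly in the $H^{-1}_0$--$H^1_0$ duality and split with a weighted Cauchy--Schwarz inequality so that the $Pu$ side only sees $A_r$ at order $s$ (hence $\tilde G\in\Psib^s(X)$ suffices). However, one step does not go through as written.

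The claimed bound
\[
\|A_ru\|_{L^2_0(X)}\leq C_3\bigl(\|u\|_{H^{1,k}_{0,\bl,\loc}(X)}+\|Gu\|_{H^1_0(X)}\bigr)
\]
with $G\in\Psib^{s-1/2}(X)$ is not justified. The family $\{A_r\}$ is bounded only in $\Psibc^s(X)$, so Lemma~\ref{lemma:WFb-mic-q} (relative to $L^2_0$ or $H^1_0$) would require an elliptic $Q$ of order $s$, not $s-1/2$; and the hypothesis $\WFb^{1,s-1/2}(u)\cap U=\emptyset$ does not upgrade to order-$s$ control in $L^2_0$, since near $Y$ one cannot trade the extra $H^1_0$ regularity for half a b-derivative (only $x\partial_{y_j}$, not $\partial_{y_j}$, lies in $\Vz(X)$). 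The fix, which is exactly what the paper does, is to drop this step and instead invoke the Poincar\'e inequality (Proposition~\ref{prop:nonlocal-Poincare-no-weight}) to bound $\|A_ru\|_{H^1_0(X)}$ directly by $C_2\|dA_ru\|_{L^2(X;\zT^*X)}$. Then $\tilde\ep\|A_ru\|_{H^1_0}^2\leq \tilde\ep C_2^2\|dA_ru\|_{L^2}^2$ is absorbed into the $\ep\|dA_ru\|^2$ term after adjusting $\tilde\ep$, and no separate lower-order control of $\|A_ru\|_{L^2_0}$ is needed.
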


\begin{rem}
The point of this lemma is that on the one hand
the new term $\ep\|d A_r u\|^2$ can be absorbed
in the left hand side in the elliptic region, hence is negligible,
on the other hand, there is a gain in the
order of $\tilde G$ ($s$, versus $s+1/2$ in the previous lemma).
\end{rem}

\begin{proof}
We only need to modify the previous proof slightly. Thus, we need to
estimate the term $|\langle A_rPu,A_r u\rangle|$ in
\eqref{eq:Dirichlet-form-8} differently, namely
\begin{equation*}
|\langle A_rPu,A_r u\rangle|\leq \|A_r Pu\|_{H^{-1}_0(X)}
\|A_r u\|_{H^1_0(X)}\leq \tilde\ep\|A_r u\|^2_{H^1_0(X)}+\tilde\ep^{-1}
\|A_r Pu\|^2_{H^{-1}_0(X)}.
\end{equation*}
Now the lemma follows by using
Lemma~\ref{lemma:WFb-mic-q} and the remark following it,
namely
choosing any $\tilde G\in\Psib^{s}(X)$ which is elliptic on
$K$, there is a constant $C_1'>0$ such that
$\|A_rPu\|^2_{H^{-1}_0(X)}\leq
C_1'(\|Pu\|^2_{H^{-1,k}_{0,\bl,\loc}(X)}+\|\tilde G Pu\|^2_{H^{-1}_0(X)})$,
then using the Poincar\'e inequality to estimate
$\|A_r u\|_{H^1_0(X)}$ by $C_2\|dA_r u\|_{L^2(X)}$,
and finishing the proof exactly as for Lemma~\ref{lemma:Dirichlet-form}.
\end{proof}

We next state microlocal elliptic regularity. Note that for this result
the restrictions on $\lambda\in\Cx$ are weak (only a half-line is
disallowed), but
on the other hand, a solution $u$ satisfying our
hypotheses may not exist for values of $\lambda$ when $\lambda\notin
(-\infty,(n-1)^2/4)$.

\begin{prop}(Microlocal elliptic regularity.)\label{prop:elliptic}
Suppose that $P=\Box+\lambda$, $\lambda\in\Cx
\setminus[(n-1)^2/4,\infty)$ and $m\in\RR$ or $m=\infty$.
Suppose $u\in
H^{1,k}_{0,\bl,\loc}(X)$ for some $k\leq 0$. Then
\begin{equation*}
\WFb^{1,m}(u)\setminus\dot\Sigma\subset \WFb^{-1,m}(Pu).
\end{equation*}
\end{prop}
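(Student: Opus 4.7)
The plan is a microlocal bootstrap based on the Dirichlet form bound of Lemma~\ref{lemma:Dirichlet-form-2}. Since $\WFb^{1,\infty}(u)=\bigcap_{m\in\RR}\WFb^{1,m}(u)$ and analogously for $Pu$, it suffices to treat finite $m$. Fix $q_0\in\Sb^*X\setminus\dot\Sigma$ with $q_0\notin\WFb^{-1,m}(Pu)$; the task is to produce $q_0\notin\WFb^{1,m}(u)$. Starting from the a priori information $u\in H^{1,k}_{0,\bl,\loc}(X)$, I would raise the b-order of regularity at $q_0$ in half-integer steps: assuming inductively that $q_0\notin\WFb^{1,s-1/2}(u)$ for some $s\leq m$, I aim to conclude $q_0\notin\WFb^{1,s}(u)$, which after finitely many iterations reaches $s=m$.

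At the inductive step I would pick a small conic neighborhood $U\subset\Sb^*X$ of $q_0$ with $U\cap\dot\Sigma=\emptyset$ and $U\cap\WFb^{-1,m}(Pu)=\emptyset$, and a regularizing family $\{A_r\}_{r\in(0,1]}$ bounded in $\Psibc^s(X)$ with $A_r\in\Psib^{s-1}(X)$ for $r>0$, $A_0\in\Psib^s(X)$ elliptic at $q_0$, and $\WFb'(\{A_r\})\subset U$. Apply Lemma~\ref{lemma:Dirichlet-form-2}, choosing the auxiliary operators $G\in\Psib^{s-1/2}(X)$ and $\tilde G\in\Psib^s(X)$ with $\WFb'\subset U$. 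Each term on the right-hand side is then uniformly bounded in $r$: $\|u\|_{H^{1,k}_{0,\bl,\loc}(X)}$ by the a priori regularity, $\|Gu\|_{H^1_0(X)}$ because $\WFb'(G)\cap\WFb^{1,s-1/2}(u)=\emptyset$ (inductive hypothesis), and $\|\tilde G Pu\|_{H^{-1}_0(X)}$ because $\WFb'(\tilde G)\cap\WFb^{-1,m}(Pu)=\emptyset$ together with $s\leq m$.

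The crux of the proof is the matching lower bound
\[
c\,\|A_ru\|_{H^1_0(X)}^2\leq\bigl|\langle dA_ru,dA_ru\rangle_G+\lambda\|A_ru\|^2\bigr|+\epsilon\,\|dA_ru\|_{L^2_0(X;\zT^*X)}^2+\textrm{(controlled)},
\]
uniform in $r$, after which the $\epsilon$-term is absorbed into the LHS and the weak limit $r\to 0$ yields $A_0u\in H^1_0(X)$, i.e.\ $q_0\notin\WFb^{1,s}(u)$. Since the principal symbol of $P\in\Diffz^2(X)$ does not vanish on $U$, the Dirichlet form $\langle dA_ru,dA_ru\rangle_G$ has principal part of one definite sign and magnitude comparable to $\|dA_ru\|_{L^2_0(X;\zT^*X)}^2$, modulo lower order. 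To dominate the competing boundary contribution of $\lambda\|A_ru\|^2$ one invokes the sharp Hardy-type inequality of Lemma~\ref{lemma:local-sharp-Poincare} with constant $(n-1)/2$; this is exactly what the hypothesis $\re\lambda<(n-1)^2/4$ allows. In the complementary case $\im\lambda\neq 0$, taking imaginary parts of the identity
\[
\langle PA_ru,A_ru\rangle=\langle dA_ru,dA_ru\rangle_G+\lambda\|A_ru\|^2
\]
extracts $|\im\lambda|\,\|A_ru\|^2$ as a positive term, from which the $H^1_0$-bound is reconstructed using the off-characteristic ellipticity. The two cases together exhaust $\lambda\in\Cx\setminus[(n-1)^2/4,\infty)$.

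The main obstacle is this lower bound itself: $G$ is Lorentzian, so $\langle dA_ru,dA_ru\rangle_G$ is not intrinsically positive, and the conformal degeneration $g=x^{-2}\hat g$ at $Y$ means that naive integration by parts loses control of the boundary asymptotics. The combination of microlocalization off $\dot\Sigma$ (to obtain definiteness on the symbol level) with the sharp Poincar\'e/Hardy constant (to absorb the $\lambda$ term near $Y$) is what makes the estimate work; neither ingredient alone suffices, which explains both the half-integer bootstrap structure and the particular threshold $(n-1)^2/4$ appearing in the statement. Once the lower bound is in hand, the remainder is standard regularization together with the commutator and boundedness estimates assembled in Section~\ref{sec:Diff0-Psib}.
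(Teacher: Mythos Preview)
Your proposal is essentially the paper's proof: the same half-integer bootstrap, the same regularizing family fed into Lemma~\ref{lemma:Dirichlet-form-2}, the same dichotomy between $\im\lambda\neq 0$ (take imaginary parts) and $\im\lambda=0$, $\lambda<(n-1)^2/4$ (use the sharp Hardy/Poincar\'e constant), and the same weak-limit conclusion $A_0u\in H^1_0(X)$.

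One point to sharpen, since it is exactly where the paper spends its effort. Your sentence ``the Dirichlet form $\langle dA_ru,dA_ru\rangle_G$ has principal part of one definite sign and magnitude comparable to $\|dA_ru\|^2$'' is not accurate as stated: even off $\dot\Sigma$ the form is indefinite, because the normal piece $-\|xD_xA_ru\|^2$ always enters with a negative sign regardless of microlocalization. What the paper does concretely is freeze the coefficients of $\hat G$ at $(0,y(q_0))$ and split into two cases, $\xib(q_0)\neq 0$ versus $\xib(q_0)=0$ with $\zetab\cdot B(y_0)\zetab<0$. In each case only the \emph{tangential} symbol acquires a definite sign on $\WFb'(A_r)$ (and is rewritten as a square $\|BxA_ru\|^2$ modulo lower order), while the normal contribution is left as $(1-\tilde C\delta)\|xD_xA_ru\|^2-\re\lambda\|A_ru\|^2$, which is where Proposition~\ref{prop:Poincare-no-weight} is invoked to get positivity. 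So microlocalization and Poincar\'e control disjoint pieces of $\|dA_ru\|^2$ rather than one reinforcing the other; your final paragraph gestures at this but the earlier phrasing obscures it.
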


\begin{proof}
We first prove a slightly weaker result in which
$\WFb^{-1,m}(Pu)$ is replaced by $\WFb^{-1,m+1/2}(Pu)$ -- we rely on
Lemma~\ref{lemma:Dirichlet-form}. We then prove the
original statement using Lemma~\ref{lemma:Dirichlet-form-2}.

Suppose that $q\in\Tb^*_YX\setminus\dot\Sigma$.
We may assume iteratively that $q\notin\WFb^{1,s-1/2}(u)$;
we need to prove then that $q\notin\WFb^{1,s}(u)$ provided $s\leq m+1/2$
(note that the inductive
hypothesis holds for $s=k+1/2$ since $u\in H^{1,k}_{0,\bl,\loc}(X)$).
We use local coordinates $(x,y)$ as in Section~\ref{sec:GBB}, centered
so that $q\in\Tb^*_{(0,0)}X$, arranging that
\eqref{eq:B-orth-0} holds, and further group the variables
as $y=(y',y_{n-1})$, and hence the b-dual variables $(\zetab',\zetab_{n-1})$.
We denote the Euclidean norm by $|\zetab'|$.

Let $A\in\Psib^{s}(X)$ be such that
$$
\WFb'(A)\cap \WFb^{1,s-1/2}(u)=\emptyset,\ \WFb'(A)\cap\WFb^{1,s+1/2}(Pu)
=\emptyset,
$$
and have $\WFb'(A)$ in a small
conic neighborhood $U$ of $q$ so that for a suitable $C>0$ or $\ep>0$, in $U$
\begin{enumerate}
\item
$\zetab_{n-1}^2<C \xib^2$ if $\xib(q)\neq 0$,
\item
$|\xib|<\ep|\zetab|$ for all $j$, and
$\frac{|\zetab'|}{|\zetab_{n-1}|}>1+\ep$, if $\xib(q)=0$ and $\zetab(q)\cdot
B(y(q))\zetab(q)<0$.
\end{enumerate}
Let $\Lambda_r\in\Psib^{-2}(X)$ for $r>0$, such that $\cL=\{\Lambda_r:
\ r\in(0,1]\}$ is a bounded family in $\Psib^0(X)$, and $\Lambda_r\to\Id$
as $r\to 0$ in $\Psib^{\tilde\ep}(X)$, $\tilde\ep>0$,
e.g.\ the symbol of $\Lambda_r$ could be taken as
$(1+r(|\zetab|^2+|\xib|^2))^{-1}$. Let $A_r=\Lambda_r A$.
Let $a$ be the symbol of $A$, and
let $A_r$ have symbol $(1+r(|\zetab|^2+|\xib|^2))^{-1}a$, $r>0$,
so $A_r\in
\Psib^{s-2}(X)$ for $r>0$, and $A_r$ is uniformly bounded in
$\Psibc^{s}(X)$, $A_r\to A$ in $\Psibc^{s+\tilde\ep}(X)$.

By Lemma~\ref{lemma:Dirichlet-form},
\begin{equation*}
\langle dA_r u,dA_r u\rangle_G+\lambda\|A_r u\|^2
\end{equation*}
is uniformly bounded for $r\in(0,1]$, so
\begin{equation*}
\langle dA_r u,dA_r u\rangle_G+\re\lambda\|A_r u\|^2\Mand
\im\lambda\|A_r u\|^2
\end{equation*}
are uniformly bounded.
If $\im\lambda\neq 0$, then taking the imaginary part at once
shows that $\|A_r u\|$ is in fact uniformly bounded.
On the other hand, whether $\im\lambda=0$ or not,
\begin{equation*}\begin{split}
\langle dA_r u,dA_r u\rangle_G
=&\int_X A(x,y)xD_xA_r u \,\overline{xD_x A_r u}\,dg\\
&+\int_X \sum B_{ij}(x,y) xD_{y_i}A_r u \,\overline{xD_{y_j} A_r u}\,dg\\
&+\int_X \sum C_j(x,y) xD_x A_r u \,\overline{x D_{y_j} A_r u}\,dg\\
&+\int_X \sum C_j(x,y) xD_{y_j}A_r u \,\overline{x D_x A_r u}\,dg.
\end{split}\end{equation*}
Using that $A(x,y)=-1+x A'(x,y)+\sum (y_j-y_j(q))A_j(x,y)$, we see that
if $A_r$ is supported in $x<\delta$, $|y_j-y_j(q)|<\delta$ for all $j$,
then for some $C>0$ (independent of $A_r$),
\begin{equation}\begin{split}\label{eq:elliptic-28}
&\left|\int_X A(x,y)\,xD_xA_r u \,\overline{xD_x A_r u}\,dg-
\int_X A(0,y(q))\,xD_x A_r u \,\overline{xD_x A_r u}\,dg\right|\\
&\qquad\leq C\delta\|xD_xA_r u\|^2,
\end{split}\end{equation}
with analogous estimates\footnote{Recall that $C_j(0,y)=0$ and
$B_{ij}(0,y(q))=0$ if $i\neq j$, $B_{ij}(0,y(q))=1$ if $i=j=n-1$,
$B_{ij}(0,y(q))=-1$ if $i=j\neq n-1$.}
for $B_{ij}(x,y)-B_{ij}(0,y(q))$ and
for $C_{j}(x,y)$.
Thus,
there exists $\tilde C>0$ and $\delta_0>0$ such that if $\delta<\delta_0$
and $A$ is supported
in $|x|<\delta$ and $|y-y(q)|<\delta$ then
\begin{equation}\begin{split}\label{eq:ell-32}
&\int_X \Big((1-\tilde C\delta)|xD_x A_r u|^2-\re\lambda|A_r u|^2\Big)\,dg\\
&\qquad\qquad+\sum_{j=1}^{n-2}\int_X\Big((1-\tilde C\delta)\sum_j xD_{y_j} A_r u\,\overline{x D_{y_j} A_r u}\Big)\,dg\\
&\qquad\qquad
-\int_X\Big((1+\tilde C\delta)\sum_j xD_{y_{n-1}} A_r u\,\overline{x D_{y_{n-1}} A_r u}\Big)\,dg\\
\\
&\qquad\leq |\langle dA_r u,dA_r u\rangle_G+\re\lambda \|A_r u\|^2|.
\end{split}\end{equation}

Now we distinguish the cases $\xib(q)=0$ and $\xib(q)\neq 0$.
If $\xib(q)=0$,
we choose $\delta\in(0,\frac{1}{2\tilde C})$, $\delta<\delta_0$, so that
$$
(1-\tilde C\delta)\frac{|\zetab'|^2}{\zetab_{n-1}^2}>1+2\tilde C\delta
$$
on
a neighborhood of $\WFb'(A)$, which is possible in view of (ii) at the
beginning of the proof.
Then the second integral on the left hand side of \eqref{eq:ell-32}
can be written
as $\| BxA_r u\|^2$, with the symbol of $B$ given by
$$
\left((1-\tilde C\delta)|\zetab'|^2
-(1+\tilde C\delta)\zetab_{n-1}^2\right)^{1/2}
$$
(which is $\geq\delta|\zetab_{n-1}|$), modulo a term
\begin{equation*}
\int_X F xA_r u\,\overline{xA_r u}\,dg,\ F\in\Psib^{1}(X).
\end{equation*}
But $A_r^* xFxA_r$ is uniformly bounded in
$x^2\Psibc^{2s+1}(X)\subset\Diffz^2\Psibc^{2s-1}(X)$, so this expression
is uniformly bounded as $r\to 0$ by
Corollary~\ref{cor:WFb-mic-q}. We thus deduce that
\begin{equation*}
\int_X \Big((1-\tilde C\delta)|xD_x A_r u|^2-\re\lambda|A_r u|^2\Big)\,dg+\| BxA_r u\|^2
\end{equation*}
is uniformly bounded as $r\to 0$.

If $\xib(q)\neq 0$, and $A$ is supported in $|x|<\delta$,
\begin{equation*}
\tilde C\delta\int_X \delta^{-2}|x^2D_x A_r u|^2\,dg\leq \tilde C\delta
\int_X |xD_x A_r u|^2\,dg.
\end{equation*}
On the other hand, near $\{q':\ \xib(q')=0\}$,
for $\delta>0$ sufficiently small,
\begin{equation*}
\int_X\left(\frac{\tilde C\delta}{\delta^{2}}
|x^2 D_{x}A_r u|^2-|xD_{y_{n-1}} A_r u|^2\right)\,dg=\|BxA_r u\|^2
+\int_X F xA_r u\,\overline{xA_r u}\,dg,
\end{equation*}
with the symbol of $B$ given by
$(\frac{\tilde C}{\delta}\xib^2-\zetab_{n-1}^2)^{1/2}$
(which does not vanish on $U$ for
$\delta>0$ small), while
$F\in\Psib^{1}(X)$, so the second term on the right hand side
is uniformly bounded as $r\to 0$ just as above.
We thus deduce in this case that
\begin{equation*}
\int_X ((1-2\tilde C\delta)|xD_{x} A_r u|^2\,dg-\re\lambda|A_r u|^2)+\| BxA_r u\|^2
\end{equation*}
is uniformly bounded as $r\to 0$.

If $\im\lambda\neq 0$ then we already saw that $\|A_r u\|_{L^2}$ is
uniformly bounded, so we deduce that
\begin{equation}\label{eq:ell-unif-bd}
A_r u, xD_x A_r u, BxA_r u\ \text{are uniformly bounded in}\ L^2(X).
\end{equation}
If $\im\lambda=0$, but $\lambda<(n-1)^2/4$, then
the Poincar\'e inequality allows us to
reach the same conclusion, since on the one hand in case (ii)
$$
(1-\tilde C\delta)\|xD_x A_r u\|^2-\re\lambda\|A_r u\|^2,
$$
resp.\ in case (i)
$$
(1-2\tilde C\delta)\|xD_x A_r u\|^2-\re\lambda\|A_r u\|^2,
$$
is uniformly bounded, on the other hand by
Proposition~\ref{prop:Poincare-no-weight}, for $\delta>0$ sufficiently small
there exists $c>0$ such that
$$
(1-2\tilde C\delta)\|xD_x A_r u\|^2-\re\lambda\|A_r u\|^2\geq
c(\|xD_x A_r u\|^2+\|A_r u\|^2).
$$
Correspondingly
there are sequences $A_{r_k}u$, $xD_xA_{r_k}u$, $BxA_{r_k}u$,
weakly convergent in $L^2(X)$, and such that $r_k\to 0$, as $k\to\infty$.
Since they converge to $Au$, $xD_xAu$, $BxAu$, respectively, in $\dist(X)$,
we deduce that the weak limits are $Au$, $xD_xAu$, $BxAu$, which therefore
lie in $L^2(X)$. Consequently,
that $q\notin\WFb^{1,s}(u)$, hence proving the
proposition with $\WFb^{-1,m}(Pu)$ replaced by $\WFb^{-1,m+1/2}(Pu)$.

To obtain the optimal result, we note that due to
Lemma~\ref{lemma:Dirichlet-form-2} we still have, for any $\ep>0$,
that
\begin{equation*}\begin{split}
&\langle dA_r u,dA_ru\rangle_G-\ep\|d A_r u\|^2
\end{split}\end{equation*}
is uniformly bounded above for $r\in(0,1]$.
By arguing just as above, with $B$ as above,
for sufficiently small $\ep>0$, the right hand side gives an upper bound for
\begin{equation*}
\int_X \Big((1-2\tilde C\delta-\ep)|xD_x A_r u|^2-\re\lambda|A_r u|^2\Big)\,dg+\| BxA_r u\|^2,
\end{equation*}
which is thus uniformly bounded as $r\to 0$. The proof is then finished
exactly as above.
\end{proof}

We remark that the analogous argument works for the conformally
compact elliptic problem, i.e.\ on asymptotically hyperbolic spaces,
to give that for $\lambda\in\Cx\setminus[(n-1)^2/4,\infty)$,
local solutions of $(\Delta_g-\lambda)u$ are actually conormal to $Y$
provided they lie in $H^1_0(X)$ locally, or indeed in
$H^{1,-\infty}_{0,\bl}(X)$.

\section{Propagation of singularities}\label{sec:prop-sing}
In this section we prove propagation of singularities for $P$ by positive commutator
estimates. We do so by first performing a general commutator calculation
in Proposition~\ref{prop:commutator}, then
using it to prove rough propagation estimates first at hyperbolic, then at
glancing points, in Propositions~\ref{prop:normal-prop}, resp.\ \ref{prop:tgt-prop}.
An argument originally due to Melrose and Sj\"ostrand \cite{Melrose-Sjostrand:I}
then proves the main theorems, Theorems~\ref{thm:prop-sing} and
\ref{thm:prop-sing-im}. Finally we discuss consequences of these results.

We first describe the form of commutators of $P$ with $\Psib(X)$. We state this
as an analogue of \cite[Proposition~3.10]{Vasy:Maxwell}, and later in the
section we follow the structure of \cite{Vasy:Maxwell} as well.
Given Proposition~\ref{prop:commutator} below,
the proof of propagation of singularities proceeds
with the same commutant construction as in \cite{Vasy:Propagation-Wave},
see also \cite{Vasy:Propagation-Wave-Correction}. Although it is in
a setting that is more complicated in some ways, since it deals with the
equation on differentials forms, we follow the structure of
\cite{Vasy:Maxwell} as it was written in a
more systematic way than \cite{Vasy:Propagation-Wave}.
Recall from the introduction
that $\xib$ is the b-dual variable of $x$,
$\hat\xib=\xib/|\zetab_{n-1}|$.

\begin{prop}\label{prop:commutator}
Suppose $\cA=\{A_r:\ r\in(0,1]\}$ is a family of operators
$A_r\in\Psib^0(X)$ uniformly bounded in $\Psibc^{s+1/2}(X)$, of the
form $A_r=A\Lambda_r$, $A\in\Psib^0(X)$, $a=\sigma_{\bl,0}(A)$,
$w_r=\sigma_{\bl,s+1/2}(\Lambda_r)$. Then
\begin{equation}\label{eq:Box-commutator-form}
\imath [A^*_rA_r,\Box]=(xD_x)^* C_r^\sharp(xD_x)+(xD_x)^* xC_r'+xC_r''(xD_x)
+x^2C_r^\flat,
\end{equation}
with
$$
C_r^\sharp\in L^\infty((0,1];\Psibc^{2s}(X)),
\ C'_r, C''_r\in L^\infty((0,1];\Psibc^{2s+1}(X)),
\ C_r^\flat\in \Psibc^{2s+2}(X),
$$
and
\begin{equation*}\begin{split}
&\sigma_{\bl,2s}(C^\sharp_r)=2w_r^2 a(V^\sharp a+a\tilde c^\sharp_r),\\
&\sigma_{\bl,2s+1}(C'_r)=
\sigma_{\bl,2s+1}(C''_r)=2w_r^2 a(V' a+a\tilde c'_r),\\
&\sigma_{\bl,2s+2}(C_r^\flat)=2w_r^2 a(V^\flat a+a\tilde c_r^\flat),
\end{split}\end{equation*}
with $\tilde c^\sharp_r,\tilde c'_r,\tilde c_r^\flat$ uniformly bounded
in $S^{-1},S^0,S^1$ respectively,
$V^\sharp,V',V^\flat$ smooth and homogeneous of degree $-1,0,1$ respectively
on $\Tb^*X\setminus o$, $V^\sharp|_Y$ and $V'|_Y$ annihilate $\xib$
and
\begin{equation}\label{eq:V-form}
V^\flat|_Y=2h\pa_{\xib}-\sH_h.
\end{equation}
\end{prop}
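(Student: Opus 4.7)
The plan is to apply the decomposition $\Box=-(xD_x)^*\alpha(xD_x)+(xD_x)^*M'+M''(xD_x)+\tilde P$ from Proposition~\ref{prop:Box-form} and expand $\imath[A_r^*A_r,\Box]$ by Leibniz into a sum of commutators with each of the four summands. The decisive technical tool is Lemma~\ref{lemma:xD_x-improvement}: $[xD_x,A_r^*A_r]\in x\Psibc^{2s+1}(X)$ (and the same for $(xD_x)^*$), so every commutator that passes $A_r^*A_r$ across an $xD_x$ factor acquires an extra factor of $x$. Combined with the defining vanishing orders $\alpha-1\in x\CI(X)$, $M',M''\in x^2\Diffb^1(X)$, and $\tilde P-x^2\Box_h\in x^3\Diffb^2(X)$, and with the auxiliary fact $[A_r^*A_r,f]\in x^k\Psibc^{2s}(X)$ for $f\in x^k\CI(X)$, every term produced by Leibniz picks up enough factors of $x$ to be redistributed, via the ring structure of $\Diffz\Psibc(X)$ from Lemma~\ref{lemma:Psib-filtered-ring} and the conversion $x\Psibc^m\subset\Diffz^1\Psibc^{m-1}$ from Lemma~\ref{lemma:b-to-0-conversion-ps}, into one of the four claimed pieces $(xD_x)^*C_r^\sharp(xD_x)$, $(xD_x)^*xC_r'$, $xC_r''(xD_x)$, and $x^2C_r^\flat$, with the uniform bounds in $r$ inherited from the uniform boundedness of $\{A_r\}$ in $\Psibc^{s+1/2}(X)$.

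The principal symbols are then read off from the b-Hamilton vector field of $\sigma_2(\Box)=p=G(q,q)$, which near $Y$ has the form $p=-\xib^2+x^2 h+O(x^3)$. The formula $V^\flat|_Y=2h\partial_{\xib}-\sH_h$ is the central new symbolic input and arises from the splitting
\[
\imath[A_r^*A_r,x^2\Box_h]=\imath[A_r^*A_r,x^2]\Box_h+x^2\imath[A_r^*A_r,\Box_h].
\]
The second term contributes the $-\sH_h(a^2w_r^2)$ piece by the standard commutator formula for the tangential $\Box_h$. The first term, using that the b-Hamilton vector field of the symbol $x^2$ with respect to the b-symplectic form $x^{-1}d\xib\wedge dx+d\zetab\wedge dy$ is $-2x^2\partial_{\xib}$, has b-principal symbol $2x^2\partial_{\xib}(a^2w_r^2)$; multiplication by $\sigma(\Box_h)|_Y=h$ then produces the $2h\partial_{\xib}(a^2w_r^2)$ piece. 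The conditions $V^\sharp|_Y\xib=V'|_Y\xib=0$ follow because the pieces of $p$ that generate $V^\sharp$ and $V'$, namely $-\alpha\xib^2$ and the mixed contributions from $(xD_x)^*M'+M''(xD_x)$, either carry a $\xib^2$ factor with a smooth prefactor equal to $1$ at $Y$ (in which case the $x\partial_x$-derivative vanishes at $Y$) or vanish to sufficient order in $x$, so that the resulting b-Hamilton vector fields have no $\partial_{\xib}$ component at $Y$.

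The correction terms $\tilde c_r^\sharp,\tilde c_r',\tilde c_r^\flat$ of orders $S^{-1},S^0,S^1$ respectively absorb two sources of subleading contributions: the Leibniz action of $V^\sharp,V',V^\flat$ on $w_r$ rather than on $a$, producing factors $(Vw_r)/w_r$ of the appropriate order uniformly in $r$; and the genuine subprincipal terms in each commutator expansion, arising from $\alpha-1\neq 0$, from $\tilde P-x^2\Box_h\in x^3\Diffb^2$, and from commutators of $A_r^*A_r$ with smooth coefficients and with $x$ itself (using $[A_r^*A_r,x]\in x\Psibc^{2s}(X)$). The main obstacle is the careful bookkeeping required to match every Leibniz contribution to the four canonical slots and to verify the precise order and $r$-uniformity of every remainder; in practice one proceeds summand by summand through the four pieces of $\Box$, invoking Lemma~\ref{lemma:xD_x-improvement} each time to convert a raw $xD_x$ factor into the $x$ factor demanded by the structural form \eqref{eq:Box-commutator-form}, and reading off the leading symbols from the Hamilton vector field structure of the corresponding piece of $p$.
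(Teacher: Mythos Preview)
Your proposal is correct and follows essentially the same approach as the paper: decompose $\Box$ via Proposition~\ref{prop:Box-form}, expand the commutator term by term using Leibniz, invoke Lemma~\ref{lemma:xD_x-improvement} to gain the extra factor of $x$ when $A_r^*A_r$ is commuted past $xD_x$ or $(xD_x)^*$, and then read off the principal symbols. Your identification of $V^\flat|_Y=2h\partial_{\xib}-\sH_h$ from the split $\imath[A_r^*A_r,x^2\Box_h]=\imath[A_r^*A_r,x^2]\Box_h+x^2\imath[A_r^*A_r,\Box_h]$ and the b-Hamilton vector field $H_{x^2}=-2x^2\partial_{\xib}$ is exactly what the paper computes, and your explanation of why $V^\sharp|_Y,V'|_Y$ annihilate $\xib$ (the relevant $\partial_{\xib}$ coefficients carry a factor of $x\partial_x\alpha\in x\CI(X)$ or come from $M',M''\in x^2\Diffb^1$) matches the paper's reasoning.
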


\begin{proof}
We start by observing that, in Proposition~\ref{prop:Box-form},
$\Box$ is decomposed into a sum of products of weighted b-operators,
so analogously expanding the commutator,
all calculations can be done in $x^l\Psib(X)$ for various values of $l$.
In particular, keeping in mind Lemma~\ref{lemma:xD_x-improvement} (which
gives the additional order of decay),
$$
\imath[A_r^*A_r,xD_x],\imath[A_r^*A_r,(xD_x)^*]
\in L^\infty((0,1]_r,x\Psib^{2s+1}(X)),
$$
with principal symbol $-2 w_r^2 a x\pa_x a-2a^2 w_r (x\pa_x w_r)$.
By this observation, all commutators
with factors of $xD_x$ or $(xD_x)^*$
in \eqref{eq:Box-form} can be absorbed into the
`next term' of \eqref{eq:Box-commutator-form},
so $[A_r^*A_r,(xD_x)^*] \alpha (xD_x)$ is absorbed into 
$xC_r''(xD_x)$, $(xD_x)\alpha[A_r^*A_r,xD_x]$ is absorbed into
$(xD_x)^* xC_r'$, $[A_r^*A_r,(xD_x)^*] M'$ and $M''[A_r^*A_r,(xD_x)]$
are absorbed into $x^2C_r^\flat$. The principal symbols of these terms are of 
the desired form, i.e.\ after factoring out $2w_r^2 a$,
they are the result of a vector field applied
to $a$ plus a multiple of $a$, and this vector field is $-\alpha \pa_x$
in the case
of the first two terms (thus annihilates $\xib$), $-mx^{-1}\pa_x$ in the case
of the last two terms, which in view of $m=\sigma_{\bl,1}(M')=
\sigma_{\bl,1}(M'')\in x^2 S^1$, shows that it actually does not affect
$V^\flat|_Y$.

Next, $\imath(xD_x)^*[A_r^*A_r,\alpha](xD_x)$ can be absorbed
into (in fact taken equal to)
$(xD_x)^* C_r^\sharp(xD_x)$ with principal symbol of $C_r^\sharp$
given by
$$
-(\pa_y \alpha) \pa_\zetab (a^2 w_r^2)-(x\pa_x\alpha) \pa_\xib(a^2 w_r^2)
$$
in local coordinates,
thus again is of the desired form since the $\pa_\xib$ term has a vanishing
factor of $x$ preceding it.

Since $[A_r^*A_r,M'],[A_r^*A_r,M'']$ are uniformly bounded
in $x^2\Psib^{2s+1}(X)$, the corresponding commutators can be absorbed
into $(xD_x)^* xC_r'$, resp.\ $xC_r''(xD_x)$, without affecting the
principal symbols of $C_r'$ and $C_r''$ at $Y$, and possessing the
desired form.

Next, $\tilde P=x^2\Box_h+R$, $R\in x^3\Diffb^2(X)$, so
$[A_r^*A_r,R]$ is uniformly bounded in $x^3\Psib^{2s+2}(X)$, and thus
can be absorbed into $C_r^\flat$ without affecting its principal symbol at $Y$
and possessing the desired form. Finally, $\imath[A_r^*A_r,x^2\Box_h]
\in x^2\Psib^{2s+2}(X)$
has principal symbol $\pa_{\xib}(a^2 w_r^2) 2x^2h-x^2\sH_h(a^2 w_r^2)$,
thus can be absorbed into $C_r^\flat$
yielding the stated principal symbol at $Y$.
\end{proof}

We start our propagation results with the
propagation estimate at hyperbolic points.

\begin{prop}\label{prop:normal-prop}(Normal, or hyperbolic, propagation.)
Suppose that $P=\Box_g+\lambda$, $\lambda\in\Cx\setminus [(n-1)^2/4,\infty)$.
Let $q_0=(0,y_0,0,\zetab_0)\in\cH\cap \Tb^*_{Y} X$,
and let
$$
\eta=-\xibh
$$
be the
function defined
in the local coordinates discussed above, and suppose that $u\in
H^{1,k}_{0,\bl,\loc}(X)$ for some $k\leq 0$, $q_0\notin\WFb^{-1,\infty}(f)$,
$f=P u$.
If $\im\lambda\leq 0$ and
there exists a conic neighborhood $U$ of $q_0$ in $\Tb^*X\setminus o$
such that
\begin{equation}\begin{split}\label{eq:prop-9a}
q\in U\Mand \eta(q)<0\Rightarrow q\notin\WFb^{1,\infty}(u)
\end{split}\end{equation}
then $q_0\notin\WFb^{1,\infty}(u)$.

In fact, if the wave front set assumptions are relaxed to
$q_0\notin\WFb^{-1,s+1}(f)$ ($f=P u$)
and the existence of a conic neighborhood $U$ of
$q_0$ in $\Tb^*X\setminus o$ such that
\begin{equation}\begin{split}\label{eq:prop-9a-s}
q\in U\Mand \eta(q)<0\Rightarrow q\notin\WFb^{1,s}(u),
\end{split}\end{equation}
then we can still conclude that $q_0\notin\WFb^{1,s}(u)$.
\end{prop}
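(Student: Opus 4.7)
The plan is to carry out a positive commutator estimate in the style of Melrose--Sjöstrand, adapted to the b-setting at a hyperbolic boundary point, using the commutator calculation of Proposition~\ref{prop:commutator} as the main computational tool. Since $q_0 \in \cH$, by \eqref{eq:pi-H_p-xib-at-bdy} we have $\sH_p \xib|_{x=0} = -2\zetab \cdot B(y)\zetab < 0$ near $q_0$, so $\eta = -\xibh$ is strictly increasing along the lift of the Hamilton flow to $\Tb^* X$ at $q_0$. Correspondingly, in the local b-coordinates of Section~\ref{sec:GBB}, I would build a symbol
\[
a = \chi_0(\eta + \delta)\, \chi_1(\phi)\, |\zetab_{n-1}|^{s+1/2},
\]
where $\phi$ is a sum of squares of the remaining cosphere coordinates (and $x$) measuring distance from $q_0$ along level sets of $\eta$, and $\chi_0,\chi_1 \in \CI(\RR)$ are chosen so that $-\chi_0'\chi_0 \geq 0$ and $\chi_1$ is a standard nonnegative cutoff with $\chi_1' \leq 0$. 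The support of $a$ will lie in a small conic neighborhood $U$ of $q_0$, and will be split into a region where $\eta(q) < 0$ (where the hypothesis \eqref{eq:prop-9a-s} gives a priori $H^{1,s}_{0,\bl}$-control) and a region where $\eta(q) \geq -\delta/2$ touching $q_0$. Then $A = q(a) \in \Psib^{s+1/2}(X)$, regularized as $A_r = A\Lambda_r$ with $\Lambda_r$ as in Proposition~\ref{prop:elliptic}, will produce a bounded family in $\Psibc^{s+1/2}(X)$.

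Next I apply Proposition~\ref{prop:commutator} and pair with $u$, splitting
\[
\langle \imath[A_r^*A_r, P]u, u\rangle = \langle \imath[A_r^*A_r, \Box]u, u\rangle + 2\im\lambda \|A_r u\|^2.
\]
The left side equals $\langle \imath A_r^* A_r f, u\rangle - \langle \imath u, A_r^* A_r f\rangle$, which by Cauchy--Schwarz and Lemma~\ref{lemma:WFb-mic-q} is bounded in terms of $\|\tilde G f\|_{H^{-1,k}_{0,\bl}}$ plus $H^{1,s}_{0,\bl}$-control of $u$ on the good region. On the right side, Proposition~\ref{prop:commutator} expresses the commutator with $\Box$ in the form \eqref{eq:Box-commutator-form}, and when paired with $u$ the leading contribution at $x=0$ is (with $w_r = \sigma(\Lambda_r)$)
\[
\int_Y 2 w_r^2 a \bigl(V^\flat a + a \tilde c_r^\flat\bigr)\,|u|^2\,d\mu + (\text{lower order and interior terms}),
\]
where by \eqref{eq:V-form} and the construction of $a$, $V^\flat a = 2h\,\pa_{\xib} a - \sH_h a$ at $Y$ has a favorable sign coming from the $\chi_0'(\eta+\delta)$ factor, generating a nonnegative square $\|B_r u\|^2$ supported where $\eta \in [-\delta, -\delta/2]$ (controlled by hypothesis) plus a square $-\|E_r u\|^2$ with $E_r$ elliptic at $q_0$. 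The remaining commutator terms $(xD_x)^* C_r^\sharp (xD_x)$ etc.\ contribute integrands with extra factors of $x$ or $xD_x$, which after pairing with $u$ are absorbed into the Dirichlet form and handled precisely as in Lemma~\ref{lemma:Dirichlet-form-2}, controlled by the elliptic estimate and the hypothesis $\lambda \notin [(n-1)^2/4, \infty)$ via the Poincaré inequality of Section~\ref{sec:Poincare}.

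The main obstacle is the $\im\lambda$ term: since $W = $ ``$\sH_p$-type'' direction is effectively a b-vector field and $\|A_r u\|^2$ is a priori only of size $H^1_0$, the contribution $2\im\lambda\|A_r u\|^2$ is not small compared to the boundary leading term. This is exactly why the \emph{one-sided} hypothesis $\im\lambda \leq 0$ is imposed: under this sign condition, moving $2\im\lambda \|A_r u\|^2$ to the left of the commutator identity \emph{adds} a nonnegative quantity, and can therefore be dropped when estimating $\|E_r u\|$ from above; in the case $\im\lambda > 0$ the same construction would instead propagate in the opposite direction along $\eta$. Once all error terms are uniformly bounded as $r \to 0$, a weak-$*$ limit argument (as in the elliptic proof of Proposition~\ref{prop:elliptic}) identifies the limit of $E_r u$ as $E u$, which lies in $L^2_0$, and therefore $q_0 \notin \WFb^{1,s}(u)$. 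Iterating on $s$ (with base case from $u \in H^{1,k}_{0,\bl,\loc}$) and finally on $s \to \infty$ under the hypothesis $q_0 \notin \WFb^{-1,\infty}(f)$ yields the stated conclusion.
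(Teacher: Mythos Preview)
Your overall strategy matches the paper's---a positive commutator argument using Proposition~\ref{prop:commutator}, with the $\im\lambda$ term handled by the sign hypothesis and the real case closed via the Poincar\'e inequality---but two steps in your execution are wrong and would need repair.

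First, the identity $\langle \imath[A_r^*A_r,P]u,u\rangle = \langle \imath[A_r^*A_r,\Box]u,u\rangle + 2\im\lambda\|A_r u\|^2$ is false: $\lambda$ is a scalar, so $[A_r^*A_r,\lambda]=0$ and the two commutators coincide. The $\im\lambda$ term enters through the non-self-adjointness of $P$, not through the commutator: one has $\langle \imath A_r^*A_r Pu,u\rangle - \langle \imath A_r^*A_r u,Pu\rangle = \langle \imath[A_r^*A_r,P]u,u\rangle - 2\im\lambda\|A_r u\|^2$, as in \eqref{eq:pairing-identity}. This matters for the sign bookkeeping you invoke later. Second, there is no boundary integral $\int_Y(\ldots)|u|^2\,d\mu$: all four blocks in \eqref{eq:Box-commutator-form} are operators on $X$, and after pairing with $u$ the main positive term is the bulk quantity $\|Cx\tilde B_r u\|^2_{L^2_0(X)}$ arising from $x^2 C_r^\flat$ (see \eqref{eq:P-comm}--\eqref{eq:pairing-rewrite}), with $\sigma_{\bl,0}(C)=\hat h^{1/2}$ near $q_0$. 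The other three blocks carry factors of $xD_x$ and are \emph{not} lower order in any naive sense; they are controlled via operator-norm estimates on $R^\flat,\tilde R',\tilde R'',R^\sharp$ combined with Lemma~\ref{lemma:Dirichlet-form}, and their absorption requires the large parameter $\digamma$ in the symbol $\chi_0(\digamma^{-1}(2-\phi/\delta))$, which your construction omits. Finally, the sharp hypothesis $q_0\notin\WFb^{-1,s+1}(f)$ (as opposed to $s+3/2$) is obtained in the paper by an additional argument at the end, splitting the $Pu$ pairing with $\tilde T\in\Psib^{-1/2}$ and reabsorbing via $D_{y_{n-1}}\tilde T A = G\tilde B + F$; your sketch does not address this refinement.
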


\begin{rem}\label{rem:normal-remark}
As follows immediately from the proof given below,
in \eqref{eq:prop-9a} and \eqref{eq:prop-9a-s},
one can replace $\eta(q)<0$ by $\eta(q)>0$, i.e.\ one has the conclusion
for either direction (backward or forward) of propagation, {\em provided
one also switches the sign of $\im\lambda$, when it is non-zero,
i.e.\ the assumption should be $\im\lambda\geq 0$}.
In particular, if $\im\lambda=0$, one obtains propagation estimates
both along increasing and along decreasing $\eta$.

Note that $\eta$
is {\em increasing} along the \GBBsp of $\Box_{\hat g}$ by
\eqref{eq:pi-H_p-xibh-at-bdy}. Thus, the hypothesis
region, $\{q\in U:\ \eta(q)<0\}$, on the left hand side of \eqref{eq:prop-9a},
is {\em backwards} from $q_0$, so this proposition, roughly speaking, propagates
regularity {\em forwards}.

Moreover, every neighborhood $U$ of
$q_0=(y_0,\zetab_0)\in\cH\cap \Tb^*_YX$ in $\dot\Sigma$
contains an open set of the form
\begin{equation}\label{eq:prop-rem-9b}
\{q:\ |x(q)|^2+|y(q)-y_0|^2+|\zetabh(q)
-\zetabh_0|^2<\delta\},
\end{equation}
see \cite[Equation~(5.1)]{Vasy:Propagation-Wave}.
Note also that
\eqref{eq:prop-9a} implies the same statement with $U$ replaced by
any smaller neighborhood of $q_0$; in particular, for
the set \eqref{eq:prop-rem-9b}, provided that $\delta$ is sufficiently small.
We can also assume by the same observation that $\WFb^{-1,s+1}(Pu)\cap U
=\emptyset$. Furthermore, we
can also arrange that $h(x,y,\xib,\zetab)>|(\xib,\zetab)|^2
|\zetab_0|^{-2}
h(q_0)/2$ on $U$ since
$\zetab_0\cdot B(y_0)\zetab_0=h(0,y_0,0,\zetab_0)>0$. We write
$$
\hat h=|\zetab_{n-1}|^{-2}h=|\zetab_{n-1}|^{-2}\zetab\cdot B(y)\zetab
$$
for the rehomogenized
version of $h$, which is thus homogeneous of degree zero and bounded below
by a positive constant on $U$.
\end{rem}

\begin{proof}
This proposition is the analogue of
Proposition~6.2 in \cite{Vasy:Propagation-Wave}, and as
the argument is similar, we mainly emphasize the differences. These enter
by virtue of $\lambda$ not being negligible and the use of the
Poincar\'e inequality.
In \cite{Vasy:Propagation-Wave}, one uses a commutant
$A\in\Psib^0(X)$ and weights $\Lambda_r\in\Psib^0(X)$, $r\in(0,1)$,
uniformly bounded in $\Psibc^{s+1/2}(X)$, $A_r=A\Lambda_r$,
in order to obtain the
propagation of $\WFb^{1,s}(u)$ with the notation of that paper, whose
analogue is $\WFb^{1,s}(u)$ here (the difference is the space
relative to which one obtains b-regularity: $H^1(X)$ in the previous
paper, the zero-Sobolev space $H^1_0(X)$ here).
One can use {\em exactly the same} commutant as in
\cite{Vasy:Propagation-Wave}.
Then Proposition~\ref{prop:commutator} lets one calculate
$\imath[A^*_rA_r,P]$ to obtain a completely analogous expression
to Equation~(6.18) of \cite{Vasy:Propagation-Wave} in the hyperbolic
case.
We also refer the reader to \cite{Vasy:Maxwell} because,
although it studies a more delicate problem, namely natural boundary
conditions (which are not scalar), the main ingredient of the proof,
the commutator calculation, is written up exactly as above in
Proposition~\ref{prop:commutator}, see \cite[Proposition~3.10]{Vasy:Maxwell}
and the way it is used subsequently in Propositions 5.1 there.

As in \cite[Proof of Proposition~5.1]{Vasy:Maxwell},
we first construct a commutant by defining its scalar principal symbol, $a$.
This completely follows the scalar case, see \cite[Proof of
Proposition~6.2]{Vasy:Propagation-Wave}. Next we show how to obtain
the desired estimate.

So, as in \cite[Proof of
Proposition~6.2]{Vasy:Propagation-Wave}, let
\begin{equation}\label{eq:prop-omega-def}
\omega(q)=|x(q)|^2+|y(q)-y_0|^2+|\zetabh(q)-\zetabh_0|^2,
\end{equation}
with $|.|$ denoting the Euclidean norm.
For $\ep>0$, $\delta>0$, with other restrictions to be imposed later on,
let
\begin{equation}\label{eq:normal-phi-def}
\phi=\eta+\frac{1}{\ep^2\delta}\omega,
\end{equation}
Let $\chi_0\in\Cinf(\Real)$ be equal to $0$ on $(-\infty,0]$ and
$\chi_0(t)=\exp(-1/t)$ for $t>0$. Thus,
$t^2\chi_0'(t)=\chi_0(t)$ for $t\in\RR$.
Let $\chi_1\in\Cinf(\Real)$ be $0$
on $(-\infty,0]$, $1$ on $[1,\infty)$, with $\chi_1'\geq 0$ satisfying
$\chi_1'\in\Cinf_\compl((0,1))$. Finally, let $\chi_2\in\Cinf_\compl(\Real)$
be supported in $[-2c_1,2c_1]$, identically $1$ on $[-c_1,c_1]$,
where $c_1$ is such that
$|\xibh|^2<c_1/2$ in $\dot\Sigma\cap U$. Thus,
$\chi_2(|\xibh|^2)$ is a cutoff in $|\xibh|$, with its
support properties ensuring that $d\chi_2(|\xibh|^2)$ is supported in
$|\xibh|^2\in [c_1,2c_1]$ hence outside $\dot\Sigma$ -- it should
be thought of as a factor that microlocalizes near the characteristic
set but effectively commutes with $P$ (since we already have the
microlocal elliptic result).
Then, for $\cte>0$ large, to be determined, let
\begin{equation}\label{eq:prop-22}
a=\chi_0(\cte^{-1}(2-\phi/\delta))\chi_1(\eta/
\delta+2)\chi_2(|\xibh|^2);
\end{equation}
so $a$ is a homogeneous degree zero $\Cinf$ function on a conic neighborhood
of $q_0$ in $\Tb^*X\setminus o$.
Indeed, as we see momentarily, for any $\ep>0$,
$a$ has compact
support inside this neighborhood (regarded as a subset of $\Sb^*X$, i.e.
quotienting out by the $\Real^+$-action) for $\delta$ sufficiently small,
so in fact it is globally well-defined.
In fact, on $\supp a$ we have $\phi\leq 2\delta$ and $\eta\geq-2\delta$.
Since $\omega\geq 0$, the first of these inequalities implies that
$\eta\leq 2\delta$, so on $\supp a$
\begin{equation}
|\eta|\leq 2\delta.
\end{equation}
Hence,
\begin{equation}\label{eq:omega-delta-est}
\omega\leq \ep^2\delta(2\delta-\eta)\leq4\delta^2\ep^2.
\end{equation}
In view of \eqref{eq:prop-omega-def} and \eqref{eq:prop-rem-9b},
this shows that given any $\ep_0>0$ there exists $\delta_0>0$ such that
for any $\ep\in (0,\ep_0)$ and $\delta\in(0,\delta_0)$, $a$ is supported
in $U$.
The role that $\cte$ large
plays (in the definition of $a$)
is that it increases the size of the first derivatives of $a$ relative
to the size of $a$, hence it allows us to give a bound for
$a$ in terms of a small multiple of
its derivative along the Hamilton vector field, much like the stress
energy tensor was used to bound other terms, by making $\chi'$ large
relative to $\chi$, in the (non-microlocal) energy estimate.

Now let $A_0\in\Psib^0(X)$ with $\sigma_{\bl,0}(A_0)=a$, supported
in the coordinate chart.
Also let
$\Lambda_r$ be scalar, have symbol
\begin{equation}
|\zetab_{n-1}|^{s+1/2}(1+r|\zetab_{n-1}|^2)^{-s}\,\Id,\quad r\in[0,1),
\end{equation}
so $A_r=A\Lambda_r\in\Psib^{0}(X)$ for $r>0$ and it is
uniformly bounded in $\Psibc^{s+1/2}(X)$.
Then, for $r>0$,
\begin{equation}\begin{split}\label{eq:pairing-identity}
\langle \imath A^*_r A_r Pu,u\rangle
-\langle \imath A^*_r A_r u, Pu\rangle
&=\langle \imath [A^*_r A_r,P] u,u\rangle+\langle \imath(P-P^*)A^*_rA_r u,u\rangle\\
&=\langle \imath [A^*_r A_r,P] u,u\rangle-2\im\lambda\|A_r u\|^2.
\end{split}\end{equation}
We can compute this using Proposition~\ref{prop:commutator}. We arrange
the terms of the proposition so that the terms in which a vector
field differentiates $\chi_1$ are included in $E_r$, the terms in which
a vector fields differentiates $\chi_2$ are included in $E'_r$. Thus, we
have
\begin{equation}\begin{split}\label{eq:gend-commutator-normal}
&\imath A^*_r A_r P-\imath P A^*_r A_r\\
&\qquad =
(xD_x)^* C^\sharp_r(xD_x)+(xD_x)^* xC'_r+xC''_r(xD_x)+x^2C^\flat_r
+E_r+E'_r+F_r,
\end{split}\end{equation}
with
\begin{equation}\begin{split}\label{eq:gend-commutator-normal-symbol}
&\sigma_{\bl,2s}(C_r^\sharp)=w_r^2\Big(\digamma^{-1}\delta^{-1}a
|\zetab_{n-1}|^{-1}(\hat f^\sharp+\ep^{-2}\delta^{-1}
f^\sharp)
\chi_0'\chi_1\chi_2+a^2 \tilde c^\sharp_r\Big),\\
&\sigma_{\bl,2s+1}(C'_r)=w_r^2\Big(\digamma^{-1}\delta^{-1}a
(\hat f'+\delta^{-1}\ep^{-2}f')
\chi_0'\chi_1\chi_2+a^2 \tilde c'_r\Big),\\
&\sigma_{\bl,2s+1}(C''_r)=w_r^2\Big(\digamma^{-1}\delta^{-1}a
(\hat f''+\delta^{-1}\ep^{-2}f'')
\chi_0'\chi_1\chi_2+a^2 \tilde c''_r\Big),\\
&\sigma_{\bl,2s+2}(C_r)
=w_r^2\Big(\digamma^{-1}\delta^{-1}|\zetab_{n-1}|a
(4\hat h+\hat f^\flat+\delta^{-1}\ep^{-2}f^\flat)
\chi_0'\chi_1\chi_2+a^2 \tilde c^\flat_r\Big),
\end{split}\end{equation}
where $f^\sharp$, $f'$, $f''$ and $f^\flat$ as well as $\hat f^\sharp, \hat f',
\hat f''$ and $\hat f^\flat$ are all smooth
functions on $\Tb^* X\setminus o$,
homogeneous of degree 0 (independent of
$\ep$ and $\delta$), and
$\hat h=|\zetab_{n-1}|^{-2}h$
is the rehomogenized
version of $h$. Moreover, $f^\sharp, f', f'', f^\flat$ arise from when
$\omega$ is differentiated in $\chi(\cte^{-1}(2-\phi/\delta))$, and thus
vanish when $\omega=0$, while $\hat f^\sharp, \hat f',
\hat f''$ and $\hat f^\flat$ arise when $\eta$ is differentiated in
$\chi(\cte^{-1}(2-\phi/\delta))$, and comprise all such terms with
the exception of those arising from the $\pa_{\xib}$
component of $V^\flat|_Y$ (which gives $4\hat h=4|\zeta_{n-1}|^{-2}h$
on the last line above)
hence are the sums of functions vanishing at $x=0$ (corresponding to
us only specifying the restrictions of the vector fields in
\eqref{eq:V-form} at $Y$) and functions vanishing at $\hat\xib=0$
(when $|\zetab_{n-1}|^{-1}$ in $\eta=-\xib|\zetab_{n-1}|^{-1}$
is differentiated)\footnote{Terms of the latter kind
did not occur in \cite{Vasy:Propagation-Wave}
as time-translation invariance was assumed, but it does occur in
\cite{Vasy:Diffraction-edges} and \cite{Vasy:Maxwell}, where the Lorentzian scalar setting
is considered.}.

In this formula we think of
\begin{equation}\label{eq:normal-main-term}
4\digamma^{-1}\delta^{-1}w_r^2 a
|\zetab_{n-1}|\hat h\chi_0'\chi_1\chi_2
\end{equation}
as the main term; note that $\hat h$ is positive near $q_0$.
Compared to this,
the terms with $a^2$ are negligible, for they can all
be bounded by
$$
c\digamma^{-1}(\digamma^{-1}\delta^{-1}w_r^2 a
|\zetab_{n-1}|^{-1}\chi_0'\chi_1\chi_2)
$$
(cf.\ \eqref{eq:normal-main-term}), i.e.\ by a small multiple of
$\digamma^{-1}\delta^{-1}w_r^2 a
|\zetab_{n-1}|^{-1}\chi_0'\chi_1\chi_2$
when $\digamma$ is taken large, using that
$2-\phi/\delta\leq 4$ on $\supp a$ and
\begin{equation}\label{eq:chi_0-chi_0-prime}
\chi_0(\digamma^{-1}t)=(\digamma^{-1}t)^2
\chi_0'(\digamma^{-1}t)\leq 16\digamma^{-2}\chi'_0(\digamma^{-1}t),
\ t\leq 4;
\end{equation}
see the discussion in \cite[Section~6]{Vasy:Diffraction-edges}
and \cite{Vasy:Propagation-Wave} following
Equation~(6.19).

The vanishing condition on the $f^\sharp,f',f'',f^\flat$ ensures that,
on $\supp a$,
\begin{equation}\label{eq:normal-f-estimates}
|f^\sharp|,|f'|,|f''|,|f^\flat|\leq C\omega^{1/2}\leq 2C\ep\delta,
\end{equation}
so the corresponding terms can thus
be estimated using $w_r^2\digamma^{-1}\delta^{-1}a
|\zetab_{n-1}|^{-1}\chi_0'\chi_1\chi_2$ provided $\ep^{-1}$ is not too
large, i.e.\ there exists $\tilde\ep_0>0$ such that if $\ep>\tilde\ep_0$,
the terms with $f^\sharp,f',f'',f^\flat$ can be treated as error terms.

On the other hand, we have
\begin{equation}\label{eq:normal-hat-f-estimates}
|\hat f^\sharp|,|\hat f'|,|\hat f''|,|\hat f^\flat|
\leq C|x|+C|\hat\xib|\leq C\omega^{1/2}+C|\hat\xib|
\leq 2C\ep\delta+C|\hat\xib|.
\end{equation}
Now, on $\dot\Sigma$, $|\hat\xib|\leq 2|x|$ (for $|\xib|=x|\xi|\leq
2|x| |\zetab_{n-1}|$ with $U$ sufficiently small). Thus we can write
$\hat f^\sharp=\hat f^\sharp_\sharp+\hat f^\sharp_\flat$ with
$\hat f^\sharp_\flat$ supported away from $\dot\Sigma$ and
$\hat f^\sharp_\sharp$ satisfying
\begin{equation}\label{eq:normal-hat-f-sharp-estimates}
|\hat f^\sharp_\sharp|\leq C|x|+C|\hat\xib|\leq C'|x|\leq
C'\omega^{1/2}\leq 2C'\ep\delta;
\end{equation}
we can also obtain a similar decomposition for $\hat f',\hat f'',
\hat f^\flat$.

Indeed, using \eqref{eq:chi_0-chi_0-prime} it is useful to rewrite
\eqref{eq:gend-commutator-normal-symbol} as
\begin{equation}\begin{split}\label{eq:gend-commutator-normal-symbol-2}
&\sigma_{\bl,2s}(C^\sharp_r)=w_r^2\digamma^{-1}\delta^{-1}a
|\zetab_{n-1}|^{-1}(\hat f^\sharp+\ep^{-2}\delta^{-1}
f^\sharp+\digamma^{-1}\delta\hat c^\sharp_r)
\chi_0'\chi_1\chi_2,\\
&\sigma_{\bl,2s+1}(C'_r)=w_r^2\delta^{-1}\digamma^{-1}a(\hat f'+
\delta^{-1}\ep^{-2}f'+
\digamma^{-1}\delta\hat c'_r)
\chi_0'\chi_1\chi_2,\\
&\sigma_{\bl,2s+1}(C''_r)=w_r^2\delta^{-1}\digamma^{-1}a(\hat f''+
\delta^{-1}\ep^{-2}f''+
\digamma^{-1}\delta\hat c''_r)
\chi_0'\chi_1\chi_2,\\
&\sigma_{\bl,2s+2}(C^\flat_r)
=w_r^2\delta^{-1}\digamma^{-1}a|\zetab_{n-1}|
(4\hat h+\hat f^\flat+\delta^{-1}\ep^{-2}f^\flat+\digamma^{-1}\hat
c^\flat_r)
\chi_0'\chi_1\chi_2,
\end{split}\end{equation}
with
\begin{itemize}
\item
$f^\sharp$, $f'$, $f''$ and $f^\flat$ are all smooth
functions on $\Tb^* X\setminus o$, homogeneous of degree 0, satisfying
\eqref{eq:normal-f-estimates} (and are independent of $\digamma,\ep,\delta,r$),
\item
$\hat f^\sharp$, $\hat f'$, $\hat f''$ and $\hat f^\flat$ are all smooth
functions on $\Tb^* X\setminus o$, homogeneous of degree 0, with
$\hat f^\sharp=\hat f^\sharp_\sharp+\hat f^\sharp_\flat$,
$\hat f^\sharp_\sharp,\hat f'_\sharp,\hat f''_\sharp,
\hat f^\flat_\sharp$ satisfying
\eqref{eq:normal-hat-f-sharp-estimates}
(and are independent of $\digamma,\ep,\delta,r$),
while $\hat f^\sharp_\flat,\hat f'_\flat,\hat f''_\flat,
\hat f^\flat_\flat$ is supported away from $\dot\Sigma$,
\item
and $\hat c^\sharp_r$, $\hat c'_r$,
$\hat c''_r$ and $\hat c^\flat_r$ are all smooth
functions on $\Tb^* X\setminus o$, homogeneous of degree 0, uniformly bounded
in $\ep,\delta,r,\digamma$.
\end{itemize}

Let
$$
b_r=2w_r|\zetab_{n-1}|^{1/2}(\digamma\delta)^{-1/2}(\chi_0\chi'_0)^{1/2}\chi_1\chi_2,
$$
and let $\tilde B_r\in\Psib^{s+1}(X)$ with principal symbol
$b_r$.
Then let
\begin{equation*}
C\in\Psib^0(X),\ \sigma_{\bl,0}(C)=|\zetab_{n-1}|^{-1}
h^{1/2}\psi=\hat h^{1/2}\psi,
\end{equation*}
where
$\psi\in S^0_{\hom}(\Tb^*X\setminus o)$ is identically $1$ on $U$
considered as a subset
of $\Sb^*X$; recall from Remark~\ref{rem:normal-remark}
that $\hat h$ is bounded below by a positive
quantity here.

If $\tilde C_{r}\in\Psib^{2s}(X)$
with principal symbol
$$
\sigma_{\bl,2s}(\tilde C_r)=
-4w_r^2 \digamma^{-1}\delta^{-1}a |\zetab_{n-1}|^{-1}\chi_0'\chi_1\chi_2
=-|\zetab_{n-1}|^{-2}b_r^2,
$$
then we deduce from
\eqref{eq:gend-commutator-normal}-\eqref{eq:gend-commutator-normal-symbol-2}
that\footnote{The $f^\sharp_\sharp$ terms are included in $R^\sharp$,
while the $f^\sharp_\flat$ terms are included in $E'$, and similarly
for the other analogous terms in $f'$, $f''$, $f^\flat$. Moreover,
in view of Lemma~\ref{lemma:b-comm-improve},
we can freely rearrange factors, e.g.\ writing $C^* x^2 C$ as
$xC^*C x$, if we wish, with the exception of commuting powers of $x$
with $xD_x$ or $(xD_x)^*$ since we need to regard the latter as elements
of $\Diffz^1(X)$ rather than $\Diffb^1(X)$.
Indeed, the difference between rearrangements has lower b-order than
the product, in this case being in $x^2\Psib^{-1}(X)$, which
in view of Lemma~\ref{lemma:b-to-0-conversion-ps},
at the cost of dropping powers of $x$, can be translated into
a gain in 0-order, $x^2\Psib^{-1}(X)\subset \Diffz^2\Psib^{-3}(X)$, with the
result that these terms can be moved to the `error term', $R''\in
L^\infty((0,1);\Diffz^2\Psib^{2s-1}(X))$. }
\begin{equation}\begin{split}\label{eq:P-comm}
&\imath A^*_r A_r P-\imath P A^*_r A_r\\
&\quad=\tilde B^*_r\big(C^*x^2 C+x R^\flat x
+(xD_x)^* \tilde R' x+x\tilde R'' (xD_x)
+(xD_x)^*R^\sharp(xD_x)\big)\tilde B_r\\
&\qquad\qquad\qquad+R''_r+E_r+E'_r
\end{split}\end{equation}
with
\begin{equation*}\begin{split}
&R^\flat\in\Psib^0(X),\ \tilde R',\tilde R''\in\Psib^{-1}(X),
\ R^\sharp\in\Psib^{-2}(X),\\
&R''_r\in L^\infty((0,1);\Diffz^2\Psib^{2s-1}(X)),
\ E_r,E'_r\in L^\infty((0,1);\Diffz^2\Psib^{2s}(X)),
\end{split}\end{equation*}
with $\WFb'(E)\subset\eta^{-1}((-\infty,-\delta])
\cap U$, $\WFb'(E')\cap\dot\Sigma=\emptyset$,
and with
$r^\flat=\sigma_{\bl,0}(R^\flat)$, $\tilde r'=\sigma_{\bl,-1}(\tilde R')$,
$\tilde r''=\sigma_{\bl,-1}(\tilde R'')$,
$r^\sharp\in\sigma_{\bl,-2}(R^\sharp)$,
\begin{equation*}\begin{split}
&|r^\flat|\leq C_2(\delta\ep+\ep^{-1}+\delta\digamma^{-1}),
\ |\zetab_{n-1} \tilde r'|\leq C_2(\delta\ep+\ep^{-1}+\delta\digamma^{-1}),\\
&|\zetab_{n-1} \tilde r''|\leq C_2(\delta\ep+\ep^{-1}+\delta\digamma^{-1}),
\ |\zetab_{n-1}^2 r^\sharp|\leq C_2(\delta\ep+\ep^{-1}+\delta\digamma^{-1}).
\end{split}\end{equation*}
This is almost completely analogous to
\cite[Equation~(6.18)]{Vasy:Propagation-Wave} with the understanding that
each term of \cite[Equation~(6.18)]{Vasy:Propagation-Wave} inside
the parentheses attains an additional factor of $x^2$ (corresponding
to $\Box$ being in $\Diffz^2(X)$ rather than $\Diff^2(X)$) which
we partially include in $xD_x$ (vs.\ $D_x$).
The only difference is the presence of the
$\delta\digamma^{-1}$ term which however is treated like the $\ep\delta$ term
for $\digamma$ sufficiently large, hence the rest of the
proof proceeds very similarly to that paper.
We go through this argument to show the role that $\lambda$ and the
Poincar\'e inequality play, and in particular how the restrictions
on $\lambda$ arise.

Having calculated the commutator, we proceed to estimate the `error terms'
$R^\flat$, $\tilde R'$, $\tilde R''$
and $R^\sharp$ as operators. We start with $R^\flat$.
By the standard square root construction to prove the
boundedness of ps.d.o's on $L^2$, see e.g.\ the discussion after
\cite[Remark~2.1]{Vasy:Propagation-Wave}, there exists
$R^\flat_\flat\in\Psib^{-1}(X)$ such that
\begin{equation*}
\|R^\flat v\|\leq2\sup|r^\flat|\,\|v\|+\|R^\flat_\flat v\|
\end{equation*}
for all $v\in L^2(X)$. Here $\|\cdot\|$ is the $L^2(X)$-norm, as usual.
Thus, we can estimate, for any $\gamma>0$,
\begin{equation*}\begin{split}
|\langle R^\flat v,v\rangle|&\leq \|R^\flat v\|\,\|v\|
\leq 2\sup |r^\flat|\,\|v\|^2+\|R^\flat_\flat v\|\,\|v\|\\
&\leq 2C_2(\delta\ep+\ep^{-1}+\delta\digamma^{-1})\|v\|^2
+\gamma^{-1}\|R^\flat_\flat v\|^2+\gamma \|v\|^2.
\end{split}\end{equation*}

Now we turn to $\tilde R'$.
Let $T\in\Psib^{-1}(X)$ be elliptic (which we use to shift
the orders of ps.d.o's at our convenience), with symbol $|\zetab_{n-1}|^{-1}$
on $\supp a$,
$T^-\in\Psib^1(X)$ a parametrix, so
$T^-T=\Id+F$, $F\in\Psib^{-\infty}(X)$.
Then there exists
$\tilde R'_\flat\in\Psib^{-1}(X)$ such that
\begin{equation*}\begin{split}
\|(\tilde R')^* w\|&=\|(\tilde R')^* (T^- T -F)w\|
\leq\|((\tilde R')^* T^-)(Tw)\|+\|(\tilde R')^* Fw\|\\
&\leq 2C_2(\delta\ep+\ep^{-1}+\delta\digamma^{-1})\|Tw\|+
\|\tilde R'_\flat Tw\|+\|(\tilde R')^* Fw\|
\end{split}\end{equation*}
for all $w$ with $Tw\in L^2(X)$, and similarly,
there exists
$\tilde R''_\flat\in\Psib^{-1}(X)$ such that
\begin{equation*}\begin{split}
\|\tilde R'' w\|
&\leq 2C_2(\delta\ep+\ep^{-1}+\delta\digamma^{-1})\|Tw\|+
\|\tilde R''_\flat Tw\|+\|\tilde R'' Fw\|.
\end{split}\end{equation*}
Finally, there exists $R^\sharp_\flat\in\Psib^{-1}(X)$ such that
\begin{equation*}
\|(T^-)^*R^\sharp w\|\leq 2C_2(\delta\ep+\ep^{-1}+\delta\digamma^{-1})\|Tw\|+
\|R^\sharp_\flat Tw\|+\|(T^-)^*R^\sharp Fw\|
\end{equation*}
for all $w$ with $Tw\in L^2(X)$.
Thus,
\begin{equation*}\begin{split}
|\langle x v,(\tilde R')^*(xD_x)v\rangle|\leq
&2C_2(\delta\ep+\ep^{-1}+\delta\digamma^{-1})\|TxD_x v\|\,\|xv\|\\
&\qquad+2\gamma\|xv\|^2
+\gamma^{-1}\|\tilde R'_\flat T xD_x v\|^2+\gamma^{-1}\|F' xD_xv\|^2,\\
|\langle \tilde R'' xD_x v,xv\rangle|\leq
&2C_2(\delta\ep+\ep^{-1}+\delta\digamma^{-1})\|TxD_x v\|\,\|xv\|\\
&\qquad+2\gamma\|xv\|^2
+\gamma^{-1}\|\tilde R''_\flat T xD_x v\|^2+\gamma^{-1}\|F'' xD_xv\|^2,
\end{split}\end{equation*}
and, writing $xD_xv=T^-T(x D_x v)-F(xD_x v)$ in the right factor,
and taking the adjoint of $T^-$,
\begin{equation*}\begin{split}
|\langle R^\sharp xD_x v,xD_x v\rangle|\leq
&2C_2(\delta\ep+\ep^{-1}+\delta\digamma^{-1})\|T(xD_x)v\|\,\|T(xD_x)v\|\\
&\quad
+2\gamma\|T(xD_x)v\|^2+\gamma^{-1}\|R^\sharp_\flat T (xD_x)v\|^2
+\gamma^{-1}\|F (xD_x)v\|^2\\
&\quad+\|R^\sharp(x D_x)v\|\,\|F^\sharp (xD_xv)\|,
\end{split}\end{equation*}
with $F',F'',F^\sharp\in\Psib^{-\infty}(X)$.

Now, by \eqref{eq:P-comm},
\begin{equation}\begin{split}\label{eq:pos-comm}
\langle \imath[A_r^*A_r,P]u,u\rangle
&=\|Cx\tilde B_r u\|^2
+\langle R^\flat x\tilde B_r u,x\tilde B_r u\rangle\\
&\qquad+\langle \tilde R'' xD_x \tilde B_r u,x\tilde B_r u\rangle
+\langle x\tilde B_r u,(\tilde R')^*xD_x \tilde B_r u\rangle\\
&\qquad+\langle R^\sharp xD_x \tilde B_r u,
xD_x\tilde B_r u\rangle\\
&\qquad+\langle R''_ru, u\rangle
+\langle (E_r+E'_r)u,u\rangle
\end{split}\end{equation}
On the other hand, this commutator can be expressed as in
\eqref{eq:pairing-identity}, so
\begin{equation}\begin{split}\label{eq:pairing-rewrite}
\langle \imath A^*_r A_r Pu,u\rangle
-\langle \imath A^*_r A_r u, Pu\rangle
&=-2\im\lambda\|A_r u\|^2
+\|Cx\tilde B_r u\|^2
+\langle R^\flat x\tilde B_r u,x\tilde B_r u\rangle\\
&\qquad+\langle \tilde R'' xD_x \tilde B_r u,x\tilde B_r u\rangle
+\langle x\tilde B_r u,(\tilde R')^*xD_x \tilde B_r u\rangle\\
&\qquad+\langle R^\sharp xD_x \tilde B_r u,
xD_x\tilde B_r u\rangle\\
&\qquad+\langle R''_ru, u\rangle
+\langle (E_r+E'_r)u,u\rangle,
\end{split}\end{equation}
so the sign of the first two terms agree if $\im\lambda<0$, and
the $\im\lambda$ term vanishes if $\lambda$ is real.

Assume for the moment that $\WFb^{-1,s+3/2}(Pu)\cap U=\emptyset$ -- this
is certainly the case in our setup if $q_0\notin\WFb^{-1,\infty}(Pu)$, but this
assumption is a little stronger than $q_0\notin\WFb^{-1,s+1}(Pu)$,
which is what we need to assume for the second paragraph in the
statement of the proposition. We deal with the weakened
hypothesis $q_0\notin\WFb^{-1,s+1}(Pu)$ at the end of the proof.
Returning to \eqref{eq:pairing-rewrite}, the utility of the commutator
calculation is that we have good information about $Pu$ (this is
where we use that we have a microlocal solution of the PDE!). Namely,
we estimate the left hand side as
\begin{equation}\begin{split}\label{eq:Pu-s+32}
|\langle A_r Pu,A_r u\rangle|&\leq|\langle (T^-)^*A_rPu,TA_ru\rangle|
+|\langle A_rPu,FA_r u\rangle|\\
&\leq \|(T^-)^*A_r Pu\|_{H^{-1}_0(X)}
\|TA_r u\|_{H^1_0(X)}\\
&\qquad+\|A_r Pu\|_{H^{-1}_0(X)}\|FA_r u\|_{H^1_0(X)}.
\end{split}\end{equation}
Since $(T^-)^*A_r$ is uniformly bounded in $\Psibc^{s+3/2}(X)$,
$TA_r$ is uniformly bounded in $\Psibc^{s-1/2}(X)$, both with $\WFb'$
in $U$, with $\WFb^{-1,s+3/2}(Pu)$, resp.\ $\WFb^{1,s-1/2}(u)$ disjoint
from them, we deduce (using Lemma~\ref{lemma:WFb-mic-q} and its
$H^{-1}_0$ analogue) that $|\langle (T^-)^*A_rPu,TA_ru\rangle|$
is uniformly bounded. Similarly, taking into account that
$FA_r$ is uniformly bounded
in $\Psib^{-\infty}(X)$, we see that
$|\langle A_rPu,FA_r u\rangle|$ is also uniformly
bounded, so $|\langle A_r Pu,A_r u\rangle|$ is uniformly bounded
for $r\in(0,1]$.

Thus,
\begin{equation}\begin{split}\label{eq:prop-64}
&\|Cx\tilde B_r u\|^2-\im\lambda\|A_r u\|^2\\
&\leq
2|\langle A_rPu,A_ru\rangle|+|\langle (E_r+E'_r)u,u\rangle|\\
&\qquad
+\left(2C_2(\delta\ep+\ep^{-1}+\delta\digamma^{-1})+\gamma\right)
\|x\tilde B_r u\|^2
+\gamma^{-1}\|R^\flat_\flat x\tilde B_r u\|^2\\
&\qquad+4C_2(\delta\ep+\ep^{-1}+\delta\digamma^{-1})\|x\tilde B_ru\|
\|T (xD_x)\tilde B_ru\|\\
&\qquad
+\gamma^{-1}\|\tilde R'_\flat T (xD_x)
\tilde B_ru\|^2+\gamma^{-1}\|\tilde R''_\flat T (xD_x)
\tilde B_ru\|^2+4\gamma\|x\tilde B_ru\|^2\\
&\qquad+\left(2C_2(\delta\ep+\ep^{-1}+\delta\digamma^{-1})+2\gamma\right)\|T(xD_x)\tilde B_ru\|^2\\
&\qquad+\gamma^{-1}\|R^\sharp_\flat T(xD_x)\tilde B_r u\|^2
+\|R^\sharp (xD_x)\tilde B_r u\|\,\|F(xD_x)\tilde B_r u\|\\
&\qquad+\gamma^{-1}\|F (xD_x)\tilde B_r u\|^2\\
&\qquad+\gamma^{-1}\|F' (xD_x)\tilde B_r u\|^2
+\gamma^{-1}\|F'' (xD_x)\tilde B_r u\|^2.
\end{split}\end{equation}
All terms but the ones involving $C_2$ or $\gamma$ (not $\gamma^{-1}$)
remain bounded as $r\to 0$.
The $C_2$ and $\gamma$ terms can be estimated by
writing $T(xD_x)=(xD_x)T'+T''$ for some $T',T''\in\Psib^{-1}(X)$, and
using Lemma~\ref{lemma:Dirichlet-form} and the Poincar\'e lemma where
necessary. Namely, we use
either $\im\lambda\neq 0$ or $\lambda<(n-1)^2/4$ to
control $xD_x L\tilde B_r u$ and $L\tilde B_r u$ in $L^2(X)$ in terms of
$\|x \tilde B_r u\|_{L^2}$ where $L\in\Psib^{-1}(X)$; this is possible
by factoring $D_{y_{n-1}}$ (which is elliptic on $\WF'(\tilde B_r)$)
out of $\tilde B_r$ modulo an error $\tilde F_r$ bounded in $\Psibc^{s}(X)$,
which in turn can be incorporated into the `error' given
by the right hand side of Lemma~\ref{lemma:Dirichlet-form}.
Thus,
there exists $C_3>0$, $G\in\Psib^{s-1/2}(X)$, $\tilde G\in\Psib^{s+1/2}(X)$
as in Lemma~\ref{lemma:Dirichlet-form} such that
\begin{equation*}\begin{split}
&\|xD_x L\tilde B_r u\|^2+\|L\tilde B_r u\|\\
&\ \leq
C_3 (\|x\tilde B_r u\|^2+
\|u\|^2_{H^{1,k}_{0,\bl,\loc}(X)}+\|Gu\|^2_{H^1_0(X)}+\|Pu\|^2_{H^{-1,k}_{0,\bl,\loc}(X)}
+\|\tilde G Pu\|^2_{H^{-1}_0(X)}).
\end{split}\end{equation*}
We further estimate $\|x\tilde B_r u\|$ in terms
of $\|Cx\tilde B_r u\|$ and $\|u\|_{H^1_{0,\loc}(X)}$
using that $C$ is elliptic on $\WFb'(B)$ and Lemma~\ref{lemma:WFb-mic-q}.
We conclude, using $\im\lambda\leq 0$,
taking $\ep$ sufficiently large, then $\gamma,\delta_0$ sufficiently
small, and finally $\digamma$ sufficiently large,
that there exist $\gamma>0$, $\ep>0$,
$\delta_0>0$ and $C_4>0$, $C_5>0$ such that for
$\delta\in(0,\delta_0)$,
\begin{equation*}\begin{split}
C_4\|x\tilde B_r u\|^2\leq &2|\langle A_rPu,A_ru\rangle|
+|\langle (E_r+E'_r) u, u\rangle|\\
&\qquad+C_5(\|G u\|^2_{H^1_0(X)}
+\|\tilde GPu\|^2_{H^{-1}_0(X)})\\
&\qquad+C_5(\|u\|_{H^{1,k}_{0,\bl,\loc}(X)}+\|Pu\|_{H^{-1,k}_{0,\bl,\loc}(X)}).
\end{split}\end{equation*}
Letting $r\to 0$ now keeps the right hand side
bounded, proving that $\|x\tilde B_r u\|$ is uniformly bounded as
$r\to 0$, hence $x\tilde B_0 u\in L^2(X)$ (cf.\ the proof
of Proposition~\ref{prop:elliptic}).
In view of Lemma~\ref{lemma:Dirichlet-form} and the Poincar\'e
inequality (as in the proof of Proposition~\ref{prop:elliptic})
this proves that $q_0\notin\WFb^{1,s}(u)$, and hence proves the
first statement of the proposition.

In fact,
recalling that we needed $q_0\notin\WFb^{-1,s+3/2}(Pu)$ for
the uniform boundedness in \eqref{eq:Pu-s+32}, this proves a slightly
weaker version of the second statement of the proposition with
$\WFb^{-1,s+1}(Pu)$ replaced by $\WFb^{-1,s+3/2}(Pu)$.
For the more precise statement we modify \eqref{eq:Pu-s+32} -- this
is the only term in \eqref{eq:prop-64} that needs modification
to prove the optimal statement. Let
$\tilde T\in\Psib^{-1/2}(X)$ be elliptic, $\tilde T^-\in\Psib^{1/2}(X)$
a parametrix, $\tilde F=\tilde T^-\tilde T-\Id\in\Psib^{-\infty}(X)$.
Then, similarly to \eqref{eq:Pu-s+32}, we have for any $\gamma>0$,
\begin{equation}\begin{split}\label{eq:Pu-s+1}
|\langle A_r Pu,A_r u\rangle|&\leq|\langle (\tilde T^-)^*A_rPu,\tilde
TA_ru\rangle|
+|\langle A_rPu,\tilde FA_r u\rangle|\\
&\leq \gamma^{-1}\|(\tilde T^-)^*A_r Pu\|_{H^{-1}_0(X)}^2+
\gamma\|\tilde TA_r u\|_{H^1(X)}^2\\
&\qquad+\|A_r Pu\|_{H^{-1}(X)}\|\tilde FA_r u\|_{H^1_0(X)}.
\end{split}\end{equation}
The last term on the right hand side can be estimated as before.
As $(\tilde T^-)^*A_r$ is bounded in $\Psibc^{s+1}(X)$ with $\WFb'$
disjoint from $U$, we see that $\|(\tilde T^-)^*A_r Pu\|_{H^{-1}_0(X)}$
is uniformly bounded. Moreover, $\|\tilde TA\Lambda_r u\|^2_{H^1_0(X)}$ can
be estimated, using Lemma~\ref{lemma:Dirichlet-form} and the Poincar\'e
inequality,
by $\|xD_{y_{n-1}}\tilde TA\Lambda_r u\|^2_{L^2(X)}$
modulo terms that are uniformly bounded as $r\to 0$.
The principal symbol of $D_{y_{n-1}}\tilde TA$ is $\zetab_{n-1}\sigma_{\bl,-1/2}(\tilde T)a$,
with
$a=\chi_0\chi_1\chi_2$, where $\chi_0$ stands for
$\chi_0(A_0^{-1}(2-\frac{\phi}{\delta}))$, etc.,
so we can write:
\begin{equation*}\begin{split}
|\zetab_{n-1}|^{1/2}a&=|\zetab_{n-1}|^{1/2}\chi_0\chi_1\chi_2=A_0^{-1}(2-\phi/\delta)
|\zetab_{n-1}|^{1/2}(\chi_0\chi_0')^{1/2}\chi_1\chi_2\\
&=
\digamma^{-1/2}\delta^{1/2}(2-\phi/\delta)\tilde b, 
\end{split}\end{equation*}
where we used that
\begin{equation*}
\chi'_0(\digamma^{-1}(2-\phi/\delta))
=\digamma^2(2-\phi/\delta)^{-2}\chi_0(\digamma^{-1}(2-\phi/\delta))
\end{equation*}
when $2-\phi/\delta>0$, while $a$, $\tilde b$ vanish otherwise.
Correspondingly, using that
$|\zetab_{n-1}|^{1/2}\sigma_{\bl,-1/2}(\tilde T)$ is $\Cinf$,
homogeneous degree zero, near the support of $a$ in $\Tb^*X\setminus o$,
we can write $D_{y_{n-1}}\tilde TA=G\tilde B+F$, $G\in\Psib^0(X)$, $F\in\Psib^{-1/2}(X)$.
Thus, modulo terms that are bounded
as $r\to 0$, $\|xD_{y_{n-1}}\tilde TA\Lambda_r u\|^2$ (hence
$\|\tilde TA\Lambda_r u\|^2_{H^1_0(X)}$)
can be estimated from above
by $C_6\|x\tilde B_r u\|^2$. Therefore, modulo terms that
are bounded as $r\to 0$,
for $\gamma>0$ sufficiently small,
$\gamma\|\tilde TA_r u\|_{H^1_0(X)}^2$ can be absorbed
into $\|Cx\tilde B_r u\|^2$. As the treatment of the other terms
on the right hand side of \eqref{eq:prop-64} requires no change,
we deduce as above
that $x\tilde B_0 u\in L^2(X)$, which
(in view of Lemma~\ref{lemma:Dirichlet-form})
proves that $q_0\notin\WFb^{1,s}(u)$, completing the proof of the iterative
step.

We need to make one more remark to prove the proposition
for $\WFb^{1,\infty}(u)$, namely
we need to show that the neighborhoods of $q_0$ which are disjoint from
$\WFb^{1,s}(u)$ do not shrink uncontrollably to $\{q_0\}$ as $s\to\infty$.
This argument parallels to last paragraph of the proof of
\cite[Proposition~24.5.1]{Hor}.
In fact, note that above we have proved that the elliptic set of
$\tilde B=\tilde B_s$
is disjoint from $\WFb^{1,s}(u)$.
In the next step, when we are proving $q_0\notin\WFb^{1,s+1/2}(u)$,
we decrease $\delta>0$ slightly (by an arbitrary small amount),
thus decreasing the support
of $a=a_{s+1/2}$ in \eqref{eq:prop-22}, to make sure that $\supp a_{s+1/2}$
is a subset of the elliptic set of the union of $\tilde B_s$ with the
region $\eta<0$, and hence that $\WFb^{1,s}(u)
\cap\supp a_{s+1/2}=\emptyset$. Each iterative step thus shrinks the
elliptic set of $\tilde B_s$ by an arbitrarily small amount, which allows
us to conclude that $q_0$ has a neighborhood $U'$ such that $\WFb^{1,s}(u)
\cap U'=\emptyset$ for all $s$. This proves that $q_0\notin \WFb^{1,\infty}(u)$,
and indeed that $\WFb^{1,\infty}(u)\cap U'=\emptyset$, for if $A\in\Psib^m(X)$
with $\WFb'(A)\subset U'$ then $Au\in H^1_0(X)$ by Lemma~\ref{lemma:WFb-mic}
and Corollary~\ref{cor:WF-to-H1}.
\end{proof}

Before turning to tangential propagation we need
a technical lemma, roughly stating that when applied to solutions
of $Pu=0$, $u\in H^1_0(X)$, microlocally near $\cG$, $xD_x$ and $\Id$
are not merely bounded by $xD_{y_{n-1}}$, but it is small compared to it,
provided that $\lambda\in\Cx\setminus[(n-1)^2/4,\infty)$.
This result is the analogue of \cite[Lemma~7.1]{Vasy:Propagation-Wave},
and is proved as there, with the only difference being
that the term $\langle\lambda A_r u, A_r u\rangle$ cannot be dropped,
but it is treated just as in Proposition~\ref{prop:elliptic} above.
Below a $\delta$-neighborhood refers to a $\delta$-neighborhood with respect
to the metric associated to any Riemannian metric on the manifold $\Tb^*X$,
and we identify $\Sb^*X$ as the unit ball bundle
with respect to some fiber metric
on $\Tb^*X$.

\begin{lemma}(cf.\ \cite[Lemma~7.1]{Vasy:Propagation-Wave}.)\label{lemma:Dt-Dx}
Suppose that $P=\Box_g+\lambda$,
$$
\lambda\in\Cx\setminus [(n-1)^2/4,\infty).
$$
Suppose $u\in H^{1,k}_{0,\bl,\loc}(X)$, and suppose that we are given
$K\subset\Sb^*X$ compact satisfying
\begin{equation*}
K\subset\cG\cap T^*Y\setminus\WFb^{-1,s+1/2}(Pu).
\end{equation*}
Then
there exist $\delta_0>0$ and $C_0>0$ with the following property.
Let $\delta<\delta_0$,
$U\subset\Sb^*X$ open in a $\delta$-neighborhood of $K$,
and $\cA=\{A_r:\ r\in(0,1]\}$ be a bounded
family of ps.d.o's in $\Psibc^s(X)$ with $\WFb'(\cA)\subset U$, and
with $A_r\in\Psib^{s-1}(X)$ for $r\in (0,1]$.

Then
there exist $G\in\Psib^{s-1/2}(X)$, $\tilde G\in\Psib^{s+1/2}(X)$
with $\WFb'(G),\WFb'(\tilde G)\subset U$ and $\tilde C_0
=\tilde C_0(\delta)>0$ such that
for all $r>0$,
\begin{equation}\begin{split}\label{eq:Dt-Dx-est}
\|xD_x A_r u\|^2+\|A_ru\|^2
\leq C_0\delta\|xD_{y_{n-1}} A_r u\|^2
&+\tilde C_0\Big(\|u\|^2_{H^{1,k}_{0,\bl,\loc}(X)}
+\|Gu\|^2_{H^1_0(X)}\\
&\quad+\|Pu\|^2_{H^{-1,k}_{0,\bl,\loc}(X)}
+\|\tilde G Pu\|^2_{H^{-1}_0(X)}\Big).
\end{split}\end{equation}
The meaning of $\|u\|_{H^{1,k}_{0,\bl,\loc}(X)}$ and
$\|Pu\|^2_{H^{-1,k}_{0,\bl,\loc}(X)}$ is stated in
Remark~\ref{rem:localize}.
\end{lemma}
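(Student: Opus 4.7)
The plan is to adapt the strategy of the microlocal elliptic regularity proof, Proposition~\ref{prop:elliptic}, to the glancing setting, exploiting the fact that on $\cG\cap T^*Y$ with the normalization \eqref{eq:B-orth-0} the tangential metric function vanishes and the $y_{n-1}$ direction dominates. By a partition of unity, I reduce to the case where $K$ lies in a single coordinate chart centered at a point $q_0$ for which \eqref{eq:B-orth-0} holds, so that $A(0,y_0)=-1$ and $B(0,y_0)=\mathrm{diag}(-1,\ldots,-1,1)$.

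First I would apply Lemma~\ref{lemma:Dirichlet-form} to bound
$$D_r := \langle dA_r u, dA_r u\rangle_G + \lambda\|A_r u\|^2$$
in absolute value by a constant multiple of the parenthesized quantity on the right-hand side of \eqref{eq:Dt-Dx-est}, i.e.\ purely by the four lower-order terms. Then, expanding $\langle dA_r u, dA_r u\rangle_G$ exactly as in the proof of Proposition~\ref{prop:elliptic} and estimating the variation of the metric coefficients over a $\delta$-neighborhood as in \eqref{eq:elliptic-28}, up to multiplicative $O(\delta)$ errors and lower-order ps.d.o.\ remainders one obtains
$$\langle dA_r u, dA_r u\rangle_G \approx -\|xD_x A_r u\|^2 - \sum_{j<n-1}\|xD_{y_j}A_r u\|^2 + \|xD_{y_{n-1}}A_r u\|^2.$$

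The crucial new input is the glancing hypothesis. Since $K\subset\cG\cap T^*Y$, by \eqref{eq:H-G-exp2} the rescaled function $\hat h=|\zetab_{n-1}|^{-2}(\zetab_{n-1}^2-\sum_{j<n-1}\zetab_j^2)$ vanishes on $K$; as $\WFb'(\cA)\subset U$ is contained in a $\delta$-neighborhood of $K$, shrinking $\delta_0$ yields $|\hat h|\le C\delta$ throughout $\WFb'(\cA)$. Writing the tangential Dirichlet difference as $\langle H A_r u, A_r u\rangle$ with $H\in x^2\Psib^2(X)$ of principal symbol $x^2(\zetab_{n-1}^2-\sum_{j<n-1}\zetab_j^2)$, factoring this symbol on $\WFb'(\cA)$ as a bounded piece times one of size $C\delta|\zetab_{n-1}|^2$, and applying Corollary~\ref{cor:WFb-mic-q}, one obtains
$$\Bigl|\sum_{j<n-1}\|xD_{y_j}A_r u\|^2 - \|xD_{y_{n-1}}A_r u\|^2\Bigr| \le C_1\delta\|xD_{y_{n-1}}A_r u\|^2 + (\text{lower order}).$$
Combined with the bound on $|D_r|$ from Lemma~\ref{lemma:Dirichlet-form}, this gives
$$\bigl|\|xD_x A_r u\|^2 - \lambda\|A_r u\|^2\bigr| \le C_2\delta\|xD_{y_{n-1}}A_r u\|^2 + (\text{lower order}).$$

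Two cases finish the argument. If $\im\lambda\neq 0$, the imaginary part of $D_r$ is exactly $\im\lambda\,\|A_r u\|^2$, so $\|A_r u\|^2\le|\im\lambda|^{-1}|D_r|$ is already controlled by the lower-order terms, and the estimate for $\|xD_x A_r u\|^2$ follows from the real part. If instead $\lambda<(n-1)^2/4$ is real, the sharp Hardy inequality of Lemma~\ref{lemma:local-sharp-Poincare} supplies a constant $\tilde C$ arbitrarily close to $(n-1)/2$ with $\tilde C\|A_r u\|\le\|xD_x A_r u\|$ provided $\WFb'(\cA)$ lies in $\{x<x_0\}$, which is legitimate since $K\subset T^*Y$ and the spatial support of the commutant can be arranged near $Y$. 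Choosing $\tilde C^2>\lambda$, the coercive bound
$$\|xD_x A_r u\|^2-\lambda\|A_r u\|^2\ge \tfrac12\bigl(1-\lambda/\tilde C^2\bigr)\bigl(\|xD_x A_r u\|^2+\tilde C^2\|A_r u\|^2\bigr)$$
then yields \eqref{eq:Dt-Dx-est}. The principal obstacle is the careful symbol-calculus bookkeeping needed to absorb everything into either the $C_0\delta\|xD_{y_{n-1}}A_r u\|^2$ term or the four lower-order quantities on the right-hand side — in particular, uniformity in $r\in(0,1]$ of the factorization of $\hat h$ and of the Dirichlet form remainder — together with the fact that the sharp threshold $\lambda<(n-1)^2/4$ enters precisely because $(n-1)/2$ is the optimal constant in Lemma~\ref{lemma:local-sharp-Poincare}.
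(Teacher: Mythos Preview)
Your overall strategy matches the paper's: apply Lemma~\ref{lemma:Dirichlet-form}, freeze coefficients to isolate $\|xD_xA_ru\|^2-\lambda\|A_ru\|^2$, bound the tangential part using smallness of $\hat h$ near $\cG$, and finish via the imaginary part or the Hardy inequality according to whether $\im\lambda\neq 0$. There is, however, one slip that costs you the crucial $C_0\delta$ coefficient in \eqref{eq:Dt-Dx-est}. You freeze $B_{ij}$ at the single base point $(0,y_0)$, writing $\hat h=|\zetab_{n-1}|^{-2}(\zetab_{n-1}^2-\sum_{j<n-1}\zetab_j^2)$ and giving $H$ the symbol $x^2(\zetab_{n-1}^2-\sum_{j<n-1}\zetab_j^2)$. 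But these are $\hat h(y_0,\zetab)$ and $h(y_0,\zetab)$; points $(0,y,0,\zetab)\in K\subset\cG$ satisfy $\zetab\cdot B(y)\zetab=0$, which for $y\neq y_0$ need not imply $\zetab_{n-1}^2=\sum_{j<n-1}\zetab_j^2$. Thus on $U$ your frozen symbol is only $O(\operatorname{diam}K+\delta)$, and the freezing-in-$y$ step in your analogue of \eqref{eq:elliptic-28} likewise contributes $O(\operatorname{diam}K)$ rather than $O(\delta)$.

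The paper instead freezes only in $x$: since $A(0,y)\equiv-1$ and $C_j(0,y)\equiv 0$, the $x$-variation alone is $O(x)=O(\delta)$ on $U$, yielding \eqref{eq:Dt-Dx-15} with the $y$-dependent integrand $B_{ij}(0,y)$. Then \eqref{eq:Dt-Dx-16} follows because the full symbol $h(y,\zetab)=\sum B_{ij}(0,y)\zetab_i\zetab_j$ vanishes identically on $\cG$ and is therefore genuinely $O(\delta|\zetab_{n-1}|^2)$ on any $\delta$-neighborhood of $K$. With this correction your argument goes through unchanged.
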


\begin{rem}
As $K$ is compact, this is essentially a local result. In particular, we
may assume that $K$ is a subset of $\Tb^*X$ over
a suitable local coordinate patch. Moreover, we may assume that $\delta_0>0$
is sufficiently small so that $D_{y_{n-1}}$ is elliptic on $U$.
\end{rem}

\begin{proof}
By Lemma~\ref{lemma:Dirichlet-form}
applied with $K$ replaced by $\WFb'(\cA)$
in the hypothesis (note that the latter is compact), we already know that
\begin{equation}\begin{split}\label{eq:mic-ell-gl-8}
&|\langle d A_r u, dA_r u\rangle_G+\lambda\|A_r u\|^2|\\
&\qquad\leq
C_0'(\|u\|^2_{H^{1,k}_{0,\bl,\loc}(X)}+\|Gu\|^2_{H^1_0(X)}+\|Pu\|^2_{H^{-1,k}_{0,\bl,\loc}(X)}
+\|\tilde G Pu\|^2_{H^{-1}_0(X)}).
\end{split}\end{equation}
for some $C'_0>0$ and for some $G$, $\tilde G$ as in the statement of the
lemma. Freezing the coefficients at $Y$, as in the proof
of Proposition~\ref{prop:elliptic}, see
\cite[Lemma~7.1]{Vasy:Propagation-Wave} for details, we deduce that
\begin{equation}\begin{split}\label{eq:Dt-Dx-15}
&\left|\|xD_x A_r u\|^2-\lambda\|A_r u\|^2\right|\\
&\qquad\leq \int_X
\left(B_{ij}(0,y) (xD_{y_i})A_r u\,\overline{(xD_{y_j})A_r u}\right)\,|dg|
+C_1\delta\|xD_{y_{n-1}} A_r u\|^2\\
&\qquad\qquad
+C''_0(\|u\|^2_{H^{1,k}_{0,\bl,\loc}(X)}+\|Gu\|^2_{H^1_0(X)}
+\|Pu\|^2_{H^{-1,k}_{0,\bl,\loc}(X)}
+\|\tilde G Pu\|^2_{H^{-1}(X)}).
\end{split}\end{equation}
Now, one can show that
\begin{equation}\begin{split}\label{eq:Dt-Dx-16}
&\left|\int_X
\left(\sum D_{y_i}^* B_{ij}(0,y)
D_{y_j}) x A_r u\,\overline{x A_r u}\right)\,|dg|\right|\\
&\qquad\leq C_2\delta \|D_{y_{n-1}} A_r u\|^2+\tilde C_2(\delta)
(\|u\|^2_{H^{1,k}_{0,\bl,\loc}(X)}+\|Gu\|^2_{H^1_0(X)})
\end{split}\end{equation}
{\em precisely} as in the proof of
\cite[Lemma~7.1]{Vasy:Propagation-Wave}.
Equations~\eqref{eq:Dt-Dx-15}-\eqref{eq:Dt-Dx-16} imply \eqref{eq:Dt-Dx-est}
with the left hand side replaced by
$\left|\|xD_x A_r u\|^2-\lambda\|A_r u\|^2\right|$.
If $\im\lambda\neq 0$, taking
the imaginary part of $\|xD_x A_r u\|^2-\lambda\|A_r u\|^2$ gives
the desired bound for $\|A_r u\|^2$, hence taking the real part
gives the desired bound for $\|xD_x A_r u\|^2$ as well. If
$\im\lambda=0$ but $\lambda<(n-1)^2/4$, we finish the proof
using the Poincar\'e inequality, cf.\ the proof of
Proposition~\ref{prop:elliptic}.
\end{proof}

We finally state the tangential, or glancing, propagation result.

\begin{prop}\label{prop:tgt-prop}(Tangential, or glancing, propagation.)
Suppose that $P=\Box_g+\lambda$, $\lambda\in\Cx\setminus [(n-1)^2/4,\infty)$.
Let $U_0$ be a coordinate chart in $X$, $U$ open with $\overline{U}
\subset U_0$.
Let $u\in
H^{1,k}_{0,\bl,\loc}(X)$ for some $k\leq 0$, and let $\tilde\pi:T^*X\to T^*Y$ be
the coordinate projection
$$
\tilde\pi:(x,y,\xi,\zeta)\mapsto(y,\zeta).
$$
Given $K\subset\Sb^*_{U}X$ compact
with
\begin{equation}
K\subset(\cG\cap \Tb^*_{Y} X)\setminus\WFb^{-1,\infty}(f),\ f=P u,
\end{equation}
there exist constants $C_0>0$,
$\delta_0>0$ such that the following holds. If $\im\lambda\leq 0$,
$q_0=(y_0,\zetab_0)\in K$, $\alpha_0=\hat\pi^{-1}(q_0)$,
$W_0=\tilde\pi_*|_{\alpha_0}\sH_p$
considered as a constant vector field in local
coordinates, and
for some $0<\delta<\delta_0$, $C_0\delta\leq\epsilon<1$ and
for all $\alpha=(x,y,\xi,\zeta)\in \Sigma$
\begin{equation}\begin{split}\label{eq:tgt-prop-est}
\alpha\in T^*X &\Mand
|\tilde\pi\big(\alpha-(\alpha_0-\delta W_0)\big)|
\leq\epsilon\delta\Mand
|x(\alpha)|\leq\epsilon\delta\\
&\Rightarrow
\pi(\alpha)\notin\WFb^{1,\infty}(u),
\end{split}\end{equation}
then $q_0\notin\WFb^{1,\infty}(u)$.

In addition, $\WFb^{-1,\infty}(f)$, resp.\ $\WFb^{1,\infty}(u)$, may be replaced
by $\WFb^{-1,s+1}(f)$, resp.\ $\WFb^{1,s}(u)$, $s\in\RR$.
\end{prop}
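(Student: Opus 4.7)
The plan is to run a positive commutator argument closely paralleling the glancing case in \cite{Vasy:Propagation-Wave}, with Lemma~\ref{lemma:Dt-Dx} playing the role of the trivial bound $\|A_ru\|\leq C\|xD_{y_{n-1}}A_ru\|$ that is available in the conformally related (scalar wave) setting. Working in local coordinates centered at $q_0$ as in Section~\ref{sec:GBB}, so that $V^\flat|_Y=2h\pa_{\xib}-\sH_h$ and $h(q_0)=0$, I would choose a scalar symbol of the Melrose--Sj\"ostrand form
$$
a=\chi_0(\digamma^{-1}(2-\phi/\delta))\,\chi_1(\eta_\parallel/\delta+2)\,\chi_2(|\hat\xib|^2),
$$
with $\phi=\eta_\parallel+\ep^{-2}\delta^{-1}\omega$, where $\eta_\parallel$ is the projection of the displacement from $q_0$ onto $W_0$, $\omega$ is a quadratic weight measuring distance to the flowout $\{q_0-sW_0:\ s\geq 0\}$ in directions complementary to $W_0$, with parabolic scaling $x^2/\delta^2+|\hat\xib|^2/\delta$ in the normal variables and quadratic tangential terms. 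The cutoff $\chi_2$ microlocalizes to a neighborhood of $\dot\Sigma$ and $\chi_1$ restricts to a one-sided neighborhood of $q_0$. Let $A_0$ quantize $a$ and $A_r=A_0\Lambda_r$, $\Lambda_r\in\Psib^{-2}(X)$ regularizing as in the proof of Proposition~\ref{prop:normal-prop}, uniformly bounded in $\Psibc^{s+1/2}(X)$.

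Next I would compute $\imath[A_r^*A_r,P]$ using Proposition~\ref{prop:commutator} together with the pairing identity \eqref{eq:pairing-identity}. The main contribution comes from $V^\flat|_Y a=-\sH_h a$ in the $x^2C_r^\flat$ term, producing a square of the shape $\|xD_{y_{n-1}}\tilde B_ru\|^2$ (with $\tilde B_r$ having symbol $|\zetab_{n-1}|^{1/2}(\chi_0\chi_0')^{1/2}\chi_1\chi_2$, up to constants), plus errors that vanish on the flowout and are small relative to the main term by the parabolic weighting and by taking $\digamma$ large (so that \eqref{eq:chi_0-chi_0-prime} applies). The $C^\sharp,C',C''$ terms come with factors of $xD_x$ or $x$ and can be written, as in \eqref{eq:P-comm}, in the form $\tilde B_r^*(\cdots)\tilde B_r+E_r+E_r'+R_r''$, with error operators supported either in the backward parabolic neighborhood \eqref{eq:tgt-prop-est} (the inductive hypothesis region) or off $\dot\Sigma$ (covered by Proposition~\ref{prop:elliptic}).

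The principal obstacle will be the $\lambda$-dependent terms: the $(xD_x)^* C_r^\sharp(xD_x)$ piece and the $\re\lambda\|A_ru\|^2$ and $\im\lambda$ pieces from \eqref{eq:pairing-rewrite} are of the \emph{same b-order} as the main positive term, so unlike the smooth interior (or Riemannian) setting they cannot be discarded. This is handled in two steps: first, assuming $\im\lambda\leq 0$, the term $-2\im\lambda\|A_ru\|^2$ in \eqref{eq:pairing-rewrite} has the favorable sign and can be dropped from the lower bound. Second, Lemma~\ref{lemma:Dt-Dx}, applied to $K=\WFb'(\cA)\subset\cG\cap T^*Y$ (which is within the regime in which the lemma applies provided $\delta<\delta_0$), yields
$$
\|xD_x A_ru\|^2+\|A_ru\|^2\leq C_0\delta\,\|xD_{y_{n-1}}A_ru\|^2+(\text{background errors}),
$$
and by factoring an elliptic $D_{y_{n-1}}$ out of $\tilde B_r$ the right side is bounded by $C_0'\delta\|xD_{y_{n-1}}\tilde B_ru\|^2$ plus errors. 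For $\delta>0$ small, $\ep>0$ large and $\digamma$ large, these are absorbed into the main positive square, exactly as $\ep,\gamma,\digamma$ were tuned in the proof of Proposition~\ref{prop:normal-prop}. The remaining $(xD_x)^*xC_r'$, $xC_r''(xD_x)$, $(xD_x)^*C_r^\sharp(xD_x)$ contributions are handled by the same Cauchy--Schwartz with weights $T,T^-$ argument as there, giving a uniform-in-$r$ estimate on $\|xD_{y_{n-1}}\tilde B_ru\|$.

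Finally, letting $r\to 0$ by weak compactness as in Proposition~\ref{prop:elliptic} gives $xD_{y_{n-1}}\tilde B_0u\in L^2$, and Lemma~\ref{lemma:Dt-Dx} together with Lemma~\ref{lemma:Dirichlet-form} then upgrade this to $\tilde B_0u\in H^1_0(X)$ microlocally, proving $q_0\notin\WFb^{1,s+1/2}(u)$ under the hypothesis $q_0\notin\WFb^{-1,s+1}(f)$ and the backward regularity assumption. The passage to $\WFb^{1,\infty}$ is done by iteration on $s$, shrinking $\delta$ by an arbitrarily small amount at each step so that the elliptic set of $\tilde B_s$ stabilizes and gives a common neighborhood $U'$ of $q_0$ disjoint from $\WFb^{1,s}(u)$ for all $s$, exactly as in the last paragraph of the proof of Proposition~\ref{prop:normal-prop}.
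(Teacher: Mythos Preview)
Your approach is essentially the same as the paper's: the same Melrose--Sj\"ostrand commutant with $\phi=\tilde\eta+\ep^{-2}\delta^{-1}\omega$, the same appeal to Proposition~\ref{prop:commutator} for the commutator expansion, and the same use of Lemma~\ref{lemma:Dt-Dx} in place of the trivial $L^2$ bound from \cite{Vasy:Propagation-Wave}. The structure of the argument is right.

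One correction: your parameter tuning ``$\ep>0$ large'' is the hyperbolic prescription and does not work at glancing points. In the paper's proof the constraint is precisely the one stated in the proposition, $C_0\delta\leq\ep<1$, and it arises as follows. The error coefficients in the $C^\sharp$, $C'$, $C''$ lines of \eqref{eq:gend-commutator-tgt-symbol} are of size $O(\ep^{-1})$ and $O(1)$, \emph{not} small; what makes the corresponding operator terms absorbable is that they come with one or two factors of $xD_x$, and Lemma~\ref{lemma:Dt-Dx} (applied with support in an $\ep\delta$-neighborhood of $\cG$) gains a factor of $(\ep\delta)^{1/2}$ per such factor. The net requirements are then that the $C^\sharp$, $C'$/$C''$, $C^\flat$ coefficients be bounded by $c_0(\ep\delta)^{-1}$, $c_0(\ep\delta)^{-1/2}$, $c_0$ respectively, which forces $(C/c_0)^2\delta\leq\ep$ and $\delta<(c_0/C)^2$; taking $\ep$ large would make the support of $a$ too wide to stay in the hypothesis region \eqref{eq:tgt-prop-est}. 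With this adjustment (and taking $\omega=|x|^2+\sum\rho_j^2$ in tangential coordinates on $S^*Y$, leaving $\hat\xib$ to $\chi_2$ rather than putting it in $\omega$), your outline matches the paper's proof.
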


\begin{rem}\label{rem:tgt-rem}
Just like Proposition~\ref{prop:normal-prop},
this proposition gives regularity propagation in the {\em forward} direction along
$W_0$, i.e.\ in order to conclude regularity at $q_0$, one needs to know regularity
in the {\em backward} $W_0$-direction from $q_0$.

One can again change the direction of propagation, i.e.\ replace $\delta$
by $-\delta$ in $\alpha-(\alpha_0-\delta W_0)$, provided one also
changes the sign of $\im\lambda$ to $\im\lambda\geq 0$. In particular,
if $\im\lambda=0$, one obtains propagation estimates in both
the forward and backward directions.
\end{rem}

\begin{proof}
Again, the proof follows a proof in
\cite{Vasy:Propagation-Wave}
closely, in this case
Proposition 7.3,
as corrected at a point in \cite{Vasy:Propagation-Wave-Correction},
so we merely point out the main steps.
Again, one uses a commutant
$A\in\Psib^0(X)$ and weights $\Lambda_r\in\Psib^0(X)$, $r\in(0,1)$,
uniformly bounded in $\Psibc^{s+1/2}(X)$, $A_r=A\Lambda_r$,
in order to obtain the
propagation of $\WFb^{1,s}(u)$ with the notation of that paper, whose
analogue is $\WFb^{1,s}(u)$ here (the difference is the space
relative to which one obtains b-regularity: $H^1(X)$ in the previous
paper, the zero-Sobolev space $H^1_0(X)$ here).
One can use {\em exactly the same} commutants as in
\cite{Vasy:Propagation-Wave} (with a small correction
given in \cite{Vasy:Propagation-Wave-Correction}).
Then Proposition~\ref{prop:commutator} lets one calculate
$\imath[A^*_rA_r,P]$ to obtain a completely analogous expression
to the formulae below Equation~(7.16) of \cite{Vasy:Propagation-Wave},
as corrected in \cite{Vasy:Propagation-Wave-Correction}).
The rest of the argument is completely
analogous as well.
Again, we refer the reader to \cite{Vasy:Maxwell} because
the commutator calculation is written up exactly as above in
Proposition~\ref{prop:commutator}, see \cite[Proposition~3.10]{Vasy:Maxwell}
and it is used subsequently in 6.1 there the same way it needs to be
used here -- any modifications are analogous to those in
Proposition~\ref{prop:normal-prop} and arise due to the non-negligible
nature of $\lambda$.

Again, we first construct the symbol $a$ of our commutator following
\cite[Proof of Proposition~7.3]{Vasy:Propagation-Wave} as corrected
in \cite{Vasy:Propagation-Wave-Correction}.
Note that (with $\tilde p=x^{-2}\sigma_{\bl,2}(\tilde P)=h$)
$$
W_0(q_0)=\sH_{\tilde p}(q_0),
$$
and let
$$
W=|\zetab_{n-1}|^{-1}W_0,
$$
so $W$ is homogeneous of degree zero (with respect to the $\RR^+$-action on
the fibers of $T^*Y\setminus o$).
We use
$$
\tilde\eta=(\sgn(\zetab_{n-1})_0)(y_{n-1}-(y_{n-1})_0)
$$
now to measure propagation,
since $\zetab_{n-1}^{-1}\sH_{\tilde p}(y_{n-1})=2>0$ at $q_0$ by \eqref{eq:B-orth-0}, so
$\sH_{\tilde p}\tilde\eta$ is
$2|\zetab_{n-1}|>0$ at $q_0$. Note that $\tilde\eta$ is thus increasing along
\GBBsp of $\hat g$.

First, we require
$$
\rho_1=\tilde p(y,\zetabh)=|\zetab_{n-1}|^{-2}\tilde p(y,\zetab);
$$
note that $d\rho_1\neq 0$ at $q_0$ for $\zetab\neq 0$ there, but
$\sH_{\tilde p}\tilde p\equiv 0$, so
$$
W\rho_1(q_0)=0.
$$
Next, since
$\dim Y=n-1$, $\dim T^*Y=2n-2$, hence $\dim S^*Y=2n-3$. With a
slight abuse of notation, we also regard $q_0$ as a point in
$S^*Y$ -- recall that $S^*Y=(T^*Y\setminus o)/\RR^+$. We can also
regard $W$ as a vector field on $S^*Y$ in view of its homogeneity.
As $W$ does not vanish as a vector in $T_{q_0}S^*Y$
in view of
$W\tilde\eta(q_0)\neq 0$, $\tilde\eta$ being
homogeneous degree zero, hence a function on $S^*Y$,
the kernel of $W$ in $T^*_{q_0}S^*Y$ has dimension $2n-4$.
Thus there exist
$\rho_j$, $j=2,\ldots,2n-4$ be homogeneous degree zero functions
on $T^*Y$ (hence functions on $S^*Y$)
such that
\begin{equation}\begin{split}
&\rho_j(q_0)=0,\ j=2,\ldots,2n-4,\\
&W\rho_j(q_0)=0,\ j=2,\ldots,2n-4,\\
&d\rho_j(q_0),\ j=1,\ldots,2n-4\ \text{are linearly independent at}\ q_0.\\
\end{split}\end{equation}
By dimensional considerations,
$d\rho_j(q_0)$, $j=1,\ldots,2n-4$, together with $d\tilde\eta$
span the cotangent space of $S^*Y$ at $q_0$, i.e.\ of
the quotient of $T^*Y$ by the $\Real^+$-action, so the $\rho_j$, together
with $\tilde\eta$, can be used as local coordinates on a chart
$\tilde\cU_0\subset S^*Y$ near $q_0$. We also let $\tilde\cU$ be a neighborhood
of $q_0$ in $\Sb^*X$ such that $\rho_j$, together
with $\tilde\eta$, $x$ and $\xibh$ are local coordinates on $\tilde\cU$; this
holds if $\tilde\cU_0$ is identified with a subset of $\cG\cap\Sb^*_YX$ and $\tilde\cU$
is a product neighborhood of this in $\Sb^*X$ in terms of the coordinates
\eqref{eq:zeta-large}. Note that as $\xibh=0$ on $\dot\Sigma\cap\Sb^*_YX$,
for points $q$ in $\dot\Sigma$, one can ensure that $\xibh$ is small by ensuring
that $\tilde\pi(q)$ is close to $q_0$ and $x(q)$ is small; cf.\ the discussion
around \eqref{eq:prop-rem-9b} and after \eqref{eq:normal-phi-def}. By reducing
$\tilde\cU$ if needed (this keeps all previously discussed properties), we
may also assume that it is disjoint from $\WFb^{-1,\infty}(f)$.

Hence,
$$
|\zetab_{n-1}|^{-1}W_0 \rho_j=\sum_{i=1}^{2n-4}\tilde F_{ji}\rho_i+
\tilde F_{j,2n-3}\tilde\eta,\ j=2,\ldots,2n-4,
$$
with $\tilde F_{ji}$ smooth, $i=1,\ldots,2n-3$, $j=2,\ldots,2n-4$.
Then we
extend $\rho_j$ to a function on $\Tb^*X\setminus o$ (using the coordinates
$(x,y,\xib,\zetab)$), and conclude that
\begin{equation}\label{eq:sH-rho_2}
|\zetab_{n-1}|^{-1}\sH_{\tilde p} \rho_j=
\sum_{l=1}^{2n-4}\tilde F_{jl}\rho_l+
\tilde F_{j,2n-3}\tilde\eta+ \tilde F_{j0} x,\ j=2,\ldots,2n-4,
\end{equation}
with $\tilde F_{jl}$ smooth.
Similarly,
\begin{equation}\label{eq:sH-tilde-eta}
|\zetab_{n-1}|^{-1}\sH_{\tilde p} \tilde\eta=2+\sum_{l=1}^{2n-4}
\check F_l\rho_l
+\check F_{2n-3}\tilde\eta
+\check F_{0} x,
\end{equation}
with $\check F_l$ smooth.

Let
\begin{equation}
\omega=|x|^2+\sum_{j=1}^{2n-4}\rho_j^2.
\end{equation}
Finally, we let
\begin{equation}\label{eq:glancing-phi-def}
\phi=\tilde\eta+\frac{1}{\ep^2\delta}\omega,
\end{equation}
and define $a$ by
\begin{equation}\label{eq:prop-22t}
a=\chi_0(\digamma^{-1}(2-\phi/\delta))\chi_1((\tilde\eta\delta)/
\ep\delta+1)\chi_2(|\xib|^2/\zetab_{n-1}^2),
\end{equation}
with $\chi_0,\chi_1$ and $\chi_2$ as in the case of the normal
propagation estimate, stated after \eqref{eq:normal-phi-def}.
We always assume $\ep<1$, so on $\supp a$ we have
\begin{equation*}
\phi\leq 2\delta\Mand \tilde\eta\geq-\ep\delta-\delta\geq-2\delta.
\end{equation*}
Since $\omega\geq 0$,
the first of these inequalities implies that
$\tilde\eta\leq 2\delta$, so on $\supp a$
\begin{equation}
|\tilde\eta|\leq 2\delta.
\end{equation}
Hence,
\begin{equation}\label{eq:omega-delta-est-t}
\omega\leq \ep^2\delta(2\delta-\tilde\eta)\leq4\delta^2\ep^2.
\end{equation}
Thus, $\supp a$ lies in $\tilde\cU$ for $\delta>0$ sufficiently small.
Moreover, on $\supp d\chi_1$,
\begin{equation}\label{eq:dchi_1-supp}
\tilde\eta\in[-\delta-\ep\delta,-\delta],\ \omega^{1/2}
\leq 2\ep\delta,
\end{equation}
so this region lies in \eqref{eq:tgt-prop-est} after $\ep$ and $\delta$
are both replaced by appropriate constant multiples, namely the present
$\delta$ should be replaced by $\delta/(2|(\zetab_{n-1})_0|)$.

We proceed as in the case of hyperbolic points, letting
$A_0\in\Psib^0(X)$ with $\sigma_{\bl,0}(A_0)=a$, supported
in the coordinate chart.
Also let
$\Lambda_r$ be scalar, have symbol
\begin{equation}
|\zetab_{n-1}|^{s+1/2}(1+r|\zetab_{n-1}|^2)^{-s}\,\Id,\quad r\in[0,1),
\end{equation}
so $A_r=A\Lambda_r\in\Psib^{0}(X)$ for $r>0$ and it is
uniformly bounded in $\Psibc^{s+1/2}(X)$.
Then, for $r>0$,
\begin{equation}\begin{split}\label{eq:pairing-identity-tgt}
\langle \imath A^*_r A_r Pu,u\rangle
-\langle \imath A^*_r A_r u, Pu\rangle
&=\langle \imath [A^*_r A_r,P] u,u\rangle+\langle \imath(P-P^*)A^*_rA_r u,u\rangle\\
&=\langle \imath [A^*_r A_r,P] u,u\rangle-2\im\lambda\|A_r u\|^2.
\end{split}\end{equation}
and we compute the commutator here
using Proposition~\ref{prop:commutator}. We arrange
the terms of the proposition so that the terms in which a vector
field differentiates $\chi_1$ are included in $E_r$, the terms in which
a vector fields differentiates $\chi_2$ are included in $E'_r$. Thus, we
have
\begin{equation}\begin{split}\label{eq:gend-commutator-tgt}
&\imath A^*_r A_r P-\imath P A^*_r A_r\\
&\qquad =
(xD_x)^* C^\sharp_r(xD_x)+(xD_x)^* xC'_r+xC''_r(xD_x)+x^2C^\flat_r
+E_r+E'_r+F_r,
\end{split}\end{equation}
with
\begin{equation}\begin{split}\label{eq:gend-commutator-tgt-symbol}
&\sigma_{\bl,2s}(C_r^\sharp)=w_r^2\Big(\digamma^{-1}\delta^{-1}a
|\zetab_{n-1}|^{-1}(\hat f^\sharp+\ep^{-2}\delta^{-1}
f^\sharp)
\chi_0'\chi_1\chi_2+a^2 \tilde c^\sharp_r\Big),\\
&\sigma_{\bl,2s+1}(C'_r)=w_r^2\Big(\digamma^{-1}\delta^{-1}a
(\hat f'+\delta^{-1}\ep^{-2}f')
\chi_0'\chi_1\chi_2+a^2 \tilde c'_r\Big),\\
&\sigma_{\bl,2s+1}(C''_r)=w_r^2\Big(\digamma^{-1}\delta^{-1}a
(\hat f''+\delta^{-1}\ep^{-2}f'')
\chi_0'\chi_1\chi_2+a^2 \tilde c''_r\Big),\\
&\sigma_{\bl,2s+2}(C_r^\flat)
=w_r^2\Big(\digamma^{-1}\delta^{-1}|\zetab_{n-1}|a
(4+\hat f^\flat+\delta^{-1}\ep^{-2}f^\flat)
\chi_0'\chi_1\chi_2+a^2 \tilde c^\flat_r\Big),
\end{split}\end{equation}
where $f^\sharp$, $f'$, $f''$ and $f^\flat$ as well as $\hat f^\sharp, \hat f',
\hat f''$ and $\hat f^\flat$ are all smooth
functions on $\Tb^* X\setminus o$,
homogeneous of degree 0 (independent of
$\ep$ and $\delta$). Moreover, $f^\sharp, f', f'', f^\flat$ arise from when
$\omega$ is differentiated in $\chi_0(\cte^{-1}(2-\phi/\delta))$,
while $\hat f^\sharp, \hat f',
\hat f''$ and $\hat f^\flat$ arise when $\tilde\eta$ is differentiated in
$\chi_0(\cte^{-1}(2-\phi/\delta))$, and comprise all such terms with
the exception of part of that arising from the $-\sH_h$
component of $V^\flat|_Y$ (which gives the $4$
on the last line above, modulo a term included in $\hat f^\flat$ and
which vanishes at $\omega=0$).
In addition, as $V^\bullet \rho^2=2\rho V^\bullet\rho$
for any function $\rho$, the terms $f^\bullet$, $\bullet=\sharp,','',\flat$,
have vanishing factors
of $\rho_l$, resp.\ $x$, with the structure of
the remaining factor dictated by the form of $V^\bullet\rho_l$,
resp.\ $V^\bullet x$.
Thus, using \eqref{eq:sH-rho_2} to compute $f^\flat$,
\eqref{eq:sH-tilde-eta} to compute $\hat f^\flat$, we have
\begin{equation*}\begin{split}
f^\sharp&=\sum_{k} \rho_k f^\sharp_{k}+x
f^\sharp_{0},\\
f^\bullet&=\sum_k \rho_k f_{k}^\bullet
+x f_{0}^\bullet,\ \bullet=','',\\
f^\flat&=\sum_{kl}\rho_k\rho_l f^\flat_{kl}
+\sum_{k} \rho_k x f^\flat_{k}+ x^2  f_{0}
+\sum_{k} \rho_k\tilde\eta f^\flat_{k+},\\
\hat f^\flat&= x \hat f^\flat_{0}+\sum_k\rho_k \hat f^\flat_{k}+
\tilde\eta \hat f^\flat_{+},
\end{split}\end{equation*}
with $f^\sharp_{k}$, etc., smooth.
We deduce that
\begin{equation}\begin{split}\label{eq:tgt-prop-quad-terms}
\ep^{-2}\delta^{-1}|f^\sharp|\leq C\ep^{-1},
\ |\hat f^\sharp|\leq C,
\end{split}\end{equation}
while
\begin{equation}\label{eq:tgt-prop-lin-terms}
\ep^{-2}\delta^{-1}|f^\bullet|\leq C\ep^{-1},
\ |\hat f^\bullet|\leq C,
\end{equation}
$\bullet=',''$, and
\begin{equation}\label{eq:tgt-prop-tgt-terms}
\ep^{-2}\delta^{-1}|f^\flat|\leq C\ep^{-1}\delta,\ |\hat f^\flat|\leq C\delta.
\end{equation}
We remark that although thus far we worked with a single $q_0\in K$, the
same construction works with $q_0$ in a neighborhood $\cU_{q'_0}$ of a fixed
$q'_0\in K$, with a {\em uniform} constant $C$. In view of the
compactness of $K$, this suffices (by the rest
of the argument we present below) to give the uniform estimate
of the proposition.

Since \eqref{eq:tgt-prop-quad-terms}-\eqref{eq:tgt-prop-tgt-terms}
are exactly the same (with slightly different notation) as
\cite[Equations~(6.16)-(6.18)]{Vasy:Maxwell}, the rest of the
proof is analogous, except that \cite[Lemma~4.6]{Vasy:Maxwell} is
replaced by Lemma~\ref{lemma:Dt-Dx} here.
Thus,
for a small constant $c_0>0$ to be determined, which we may assume to be less
than $C$, we demand below that
the expressions on the right hand sides of \eqref{eq:tgt-prop-quad-terms}
are bounded by $c_0(\ep\delta)^{-1}$,
those on the right hand sides of
\eqref{eq:tgt-prop-lin-terms} are bounded by $c_0(\ep\delta)^{-1/2}$,
while those on the right hand sides of \eqref{eq:tgt-prop-tgt-terms}
are bounded by $c_0$. This demand is due to the appearance of two, resp.\ one,
resp.\ zero, factors of $xD_x$ in \eqref{eq:gend-commutator-tgt} for
the terms whose principal symbols are affected by these, taking
into account that in view of Lemma~\ref{lemma:Dt-Dx}
we can estimate $\|Q_i v\|$ by
$C_{\cG,K}(\ep\delta)^{1/2}\|D_{y_{n-1}}v\|$
if $v$ is microlocalized to a $\ep\delta$-neighborhood of
$\cG$, which
is the case for us with $v=A_r u$ in terms of support properties of
$a$.

Thus, recalling that $c_0>0$ is to be determined, we require that
\begin{equation}\label{eq:ep-estimate}
(C/c_0)^2\delta\leq\ep\leq 1,
\end{equation}
and
\begin{equation}\label{eq:delta-estimate}
\delta<(c_0/C)^2;
\end{equation}
see \cite[Proposition~6.1]{Vasy:Maxwell} for motivation.
Then with $\ep,\delta$ satisfying \eqref{eq:ep-estimate} and
\eqref{eq:delta-estimate}, hence
$\delta^{-1}>(C/c_0)^2>C/c_0$,
\eqref{eq:tgt-prop-quad-terms}-\eqref{eq:tgt-prop-tgt-terms} give that
\begin{equation}\begin{split}\label{eq:tgt-prop-quad-terms-rev}
\ep^{-2}\delta^{-1}|f^\sharp|\leq c_0\delta^{-1}\ep^{-1},
\ |\hat f^\sharp|\leq c_0\delta^{-1}\ep^{-1},
\end{split}\end{equation}
while
\begin{equation}\label{eq:tgt-prop-lin-terms-rev}
\ep^{-2}\delta^{-1}|f^\bullet|\leq c_0\delta^{-1/2}\ep^{-1/2},
\ |\hat f^\bullet|\leq c_0\delta^{-1/2}\ep^{-1/2},
\end{equation}
$\bullet=',''$, and
\begin{equation}\label{eq:tgt-prop-tgt-terms-rev}
\ep^{-2}\delta^{-1}|f^\flat|\leq c_0,
\ |\hat f^\flat|\leq c_0,
\end{equation}
as desired. One deduces that
\begin{equation}\begin{split}\label{eq:P-comm-tgt}
&\imath A^*_r A_r P-\imath P A^*_r A_r\\
&\quad=\tilde B^*_r\big(C^*x^2 C+x R^\flat x
+(xD_x)^* \tilde R' x+x\tilde R'' (xD_x)
+(xD_x)^*R^\sharp(xD_x)\big)\tilde B_r\\
&\qquad\qquad\qquad+R''_r+E_r+E'_r
\end{split}\end{equation}
with
\begin{equation*}\begin{split}
&R^\flat\in\Psib^0(X),\ \tilde R',\tilde R''\in\Psib^{-1}(X),
\ R^\sharp\in\Psib^{-2}(X),\\
&R''_r\in L^\infty((0,1);\Diffz^2\Psib^{2s-1}(X)),
\ E_r,E'_r\in L^\infty((0,1);\Diffz^2\Psib^{2s}(X)),
\end{split}\end{equation*}
with
$$
\WFb'(E)\subset\tilde\eta^{-1}((-\delta-\ep\delta,-\delta])
\cap\omega^{-1}([0,4\delta^2\ep^2))\subset \tilde\cU
$$
(cf.\ \eqref{eq:dchi_1-supp}),
$\WFb'(E')\cap\dot\Sigma=\emptyset$,
and with
$r^\flat=\sigma_{\bl,0}(R^\flat)$, $\tilde r'=\sigma_{\bl,-1}(\tilde R')$,
$\tilde r''=\sigma_{\bl,-1}(\tilde R'')$,
$r^\sharp\in\sigma_{\bl,-2}(R^\sharp)$,
\begin{equation*}\begin{split}
&|r^\flat|\leq 2c_0+C_2\delta\digamma^{-1},
\ |\zetab_{n-1} \tilde r'|\leq 2c_0\delta^{-1/2}\ep^{-1/2}+C_2\delta\digamma^{-1},\\
&|\zetab_{n-1} \tilde r''|\leq 2c_0\delta^{-1/2}\ep^{-1/2}+C_2\delta\digamma^{-1},
\ |\zetab_{n-1}^2 r^\sharp|\leq  2c_0\delta^{-1}\ep^{-1}
+C_2\delta\digamma^{-1}.
\end{split}\end{equation*}
These are analogues of
the result of the second displayed equation after
\cite[Equation~(7.16)]{Vasy:Propagation-Wave}, as corrected in
\cite{Vasy:Propagation-Wave-Correction}, with the small (at this point
arbitrary) constant $c_0$ replacing some constants given there
in terms of $\ep$ and $\delta$; see \cite[Equation~(6.25)]{Vasy:Maxwell}
for estimates stated in exactly the same form in the form-valued
setting.
The rest of the argument thus proceeds as in
\cite[Proof of Proposition~7.3]{Vasy:Propagation-Wave}, taking into
account \cite{Vasy:Propagation-Wave-Correction}, and using
Lemma~\ref{lemma:Dt-Dx} in place of \cite[Lemma~7.1]{Vasy:Propagation-Wave}.
\end{proof}

Since for $\lambda$ real, $\lambda<(n-1)^2/4$, both forward and backward
propagation is covered by these two results, see
Remarks~\ref{rem:normal-remark} and \ref{rem:tgt-rem},
we deduce our main result on the propagation of singularities:

\begin{thm}
\label{thm:prop-sing}
Suppose that $P=\Box+\lambda$, $\lambda<(n-1)^2/4$,
$m\in\RR$ or $m=\infty$.
Suppose $u\in H^{1,k}_{0,\bl,\loc}(X)$ for some $k\leq 0$.
Then
$$
(\WFb^{1,m}(u)\cap\dot\Sigma)\setminus\WFb^{-1,m+1}(Pu)
$$
is a union of maximally extended generalized broken bicharacteristics
of the conformal metric $\hat g$ in
$$
\dot\Sigma\setminus\WFb^{-1,m+1}(Pu).
$$

In particular, if $P u=0$ then $\WFb^{1,\infty}(u)\subset\dot\Sigma$
is a union of maximally extended generalized broken bicharacteristics
of $\hat g$.
\end{thm}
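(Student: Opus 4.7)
The plan is to follow the Melrose--Sj\"ostrand scheme: reduce the theorem to showing that the set
$$
\Xi=(\WFb^{1,m}(u)\cap\dot\Sigma)\setminus\WFb^{-1,m+1}(Pu)
$$
is invariant (in both directions) under the \GBBsp flow, and then extract maximally extended curves by a Zorn/compactness argument. Microlocal elliptic regularity (Proposition~\ref{prop:elliptic}) already gives $\WFb^{1,m}(u)\setminus\dot\Sigma\subset\WFb^{-1,m}(Pu)$, which justifies restricting attention to $\Xi$. Since $\lambda<(n-1)^2/4$ is real, Remarks~\ref{rem:normal-remark} and \ref{rem:tgt-rem} give both the forward and backward versions of the propagation estimates, so it suffices to show the following: if $q_0\in\Xi$, then through $q_0$ there passes a non-trivial \GBBsp $\gamma$ contained in $\Xi$ on some interval around the parameter value mapping to $q_0$.

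First, in the interior $X^\circ$, where $\pi$ is a diffeomorphism, this reduces to the classical H\"ormander propagation of singularities along $\sH_p$, with $\WFb^{1,m}(u)$ reducing to the usual $\WF^{m+1}(u)$ and \GBBsp reducing to integral curves of $\sH_p$. Second, at a hyperbolic boundary point $q_0\in\cH\cap\Tb^*_Y X$, Lemma~\ref{lemma:gen-br-bichar}(ii) forces every \GBBsp through $q_0$ to leave $\Tb^*_Y X$ immediately on both sides; using \eqref{eq:pi-H_p-xibh-at-bdy}, $\eta=-\xibh$ is strictly monotone along the two branches, which are interior bicharacteristics approaching $q_0$. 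Applying Proposition~\ref{prop:normal-prop} in both the forward and backward directions, one concludes that if $q_0\in\Xi$ then at least one of the two branches meets $\Xi$ arbitrarily close to $q_0$ on the relevant side, and the resulting branches concatenate at $q_0$ into a genuine \GBBsp lying in $\Xi$.

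The main obstacle is the glancing set. At $q_0\in\cG\cap\Tb^*_Y X$ the unique tangent vector is $W_0=\tilde\pi_*|_{\alpha_0}\sH_p$, but Proposition~\ref{prop:tgt-prop} only provides \emph{approximate} propagation: regularity on a narrow $\ep\delta$--tube in the direction $-W_0$ transports to regularity at $q_0$. To assemble a full \GBBsp through $q_0\in\Xi$, I would run the Melrose--Sj\"ostrand iteration. Assume for contradiction that $q_0\in\Xi$ but no \GBBsp through $q_0$ lies in $\Xi$ on any backward interval; then one constructs, inductively using Propositions~\ref{prop:normal-prop} and \ref{prop:tgt-prop}, a sequence of approximate backward \GBB segments $\gamma_k$ of length bounded below (by the uniform $\delta_0$ available from the compactness of small closed subsets of $\dot\Sigma\setminus\WFb^{-1,m+1}(Pu)$) whose starting points are increasingly far from $\Xi$; passing to an Arzel\`a--Ascoli limit on the space of Lipschitz curves $I\to\dot\Sigma$ (here one uses that $\cG$ and $\cH$ are handled by separate propagation results but with uniform step sizes on compacta, and that a limit of \GBBs is a \GBB) yields a true \GBBsp $\gamma$ through $q_0$ disjoint from $\Xi$ backwards, contradicting the hypothesis $q_0\in\Xi$ via the propagation estimate applied to $\gamma$ at $q_0$ itself.

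Finally, once $\Xi$ is known to be bi-directionally invariant under the \GBBsp flow, a standard Zorn's lemma argument (totally ordered chains of \GBBs in $\Xi$ through $q_0$ have an upper bound by pointwise union, in view of continuity of \GBBs and closedness of the wave front set) produces a maximally extended \GBBsp through each $q_0\in\Xi$, entirely contained in $\Xi$. The ``in particular'' statement is the special case $m=\infty$ with $Pu=0$, so that $\WFb^{-1,m+1}(Pu)=\emptyset$ and $\WFb^{1,\infty}(u)\subset\dot\Sigma$ already follows from Proposition~\ref{prop:elliptic}, after which the general statement gives the conclusion.
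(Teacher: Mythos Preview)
Your overall strategy coincides with the paper's: reduce by Zorn's lemma to a local statement, treat interior points by Duistermaat--H\"ormander, and handle boundary points in $\cH$ and $\cG$ via Propositions~\ref{prop:normal-prop} and~\ref{prop:tgt-prop} together with a Melrose--Sj\"ostrand/Lebeau compactness argument. The hyperbolic and Zorn steps are fine as sketched.

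The glancing step, however, is garbled as written. You assume by contradiction that no backward \GBBsp through $q_0$ lies in $\Xi$, then claim to build approximate segments ``whose starting points are increasingly far from $\Xi$'' and pass to a limit \GBBsp $\gamma$ \emph{disjoint} from $\Xi$ backward, concluding by ``the propagation estimate applied to $\gamma$ at $q_0$''. Two problems: first, Proposition~\ref{prop:tgt-prop} requires the whole hypothesis \emph{region} in \eqref{eq:tgt-prop-est} to be disjoint from $\WFb^{1,s}(u)$, not merely a single curve $\gamma$, so knowing one \GBBsp is outside $\Xi$ does not let you apply the estimate at $q_0$; second, the contrapositive of the propagation estimates, starting from $q_0\in\Xi$, produces points \emph{inside} $\Xi$ in the backward tube, not points far from it. The correct logic (which is what the paper and Melrose--Sj\"ostrand--Lebeau do) is constructive: since $q_0\in\Xi$, the contrapositive of Proposition~\ref{prop:tgt-prop} gives $q_1\in\Xi$ in the backward $\epsilon\delta$-tube; iterate with step size $\delta=2^{-N}\ep_0$ to get dyadic points $q_j^{(N)}\in\Xi$; extract a convergent diagonal subsequence as $N\to\infty$ (Arzel\`a--Ascoli on the piecewise-linear interpolants, plus closedness of $\Xi$) and check the limit is a \GBBsp in $\Xi$. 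So your ingredients are right, but the direction of the argument is inverted; rewrite the glancing step as a direct construction of a \GBBsp in $\Xi$, not as a contradiction via a curve outside $\Xi$.
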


\begin{proof}
The proof proceeds as in \cite[Proof of Theorem~8.1]{Vasy:Propagation-Wave},
since Propositions \ref{prop:normal-prop}
and \ref{prop:tgt-prop} are complete
analogues of \cite[Proposition~6.2]{Vasy:Propagation-Wave}
and \cite[Proposition~7.3]{Vasy:Propagation-Wave}.
Given the results of the preceding sections of \cite{Vasy:Propagation-Wave},
the argument of \cite[Proof of Theorem~8.1]{Vasy:Propagation-Wave} is
itself only a slight modification of an argument originally
due to Melrose and Sj\"ostrand \cite{Melrose-Sjostrand:I}, as presented
by Lebeau \cite{Lebeau:Propagation} (although we do not need
Lebeau's treatment of corners here).

For the convenience of the reader we give a very sketchy version of the proof.
To start with, propagation of singularities has already been proved in $X^\circ$;
this is the theorem of Duistermaat and H\"ormander \cite{FIOII, Hormander:Existence}.
Now, the theorem can easily be localized -- the global version follows by
a Zorn's lemma argument, see \cite[Proof of Theorem~8.1]{Vasy:Propagation-Wave}
for details. Indeed, in view of the Duistermaat-H\"ormander result
it suffices to show that if
\begin{equation}\label{eq:prop-103}
q_0\in\WFb^{1,m}(u)\setminus\WFb^{-1,m+1}(Pu)\Mand q_0\in \Tb^*_YX
\end{equation}
then
\begin{equation}\label{eq:prop-104}\begin{split}
&\text{there exists a generalized broken bicharacteristic}
\ \gamma:[-\ep_0,0]\to\dot\Sigma,\ \ep_0>0,\\
&\qquad\qquad \gamma(0)=q_0,
\ \gamma(s)\in\WFb^{1,m}(u)\setminus\WFb^{-1,m+1}(Pu),\ s\in[-\ep_0,0],
\end{split}\end{equation}
for the existence of a \GBBsp on $[0,\ep_0]$ can be demonstrated similarly
by replacing the forward propagation
estimates by backward ones, and, directly from
Definition~\ref{def:gen-br-bich}, piecing together the two
\GBB's gives one defined on $[-\ep_0,\ep_0]$. Note that
by microlocal
elliptic regularity, Proposition~\ref{prop:elliptic}, \eqref{eq:prop-103} implies
that $q_0\in\cG\cup\cH$.

Now suppose $q_0\in (\WFb^{1,m}(u)\setminus\WFb^{-1,m+1}(Pu))\cap
\Tb^*_YX\cap\cH$. We use the notation of
Proposition~\ref{prop:normal-prop}. Then $\gamma$ in \eqref{eq:prop-103} is
constructed by taking a sequence $q_n\to q_0$, $q_n\in T^* X^\circ$ with
$\eta(q_n)=-\xibh(q_n)<0$ and \GBBsp $\gamma_n:[-\ep_0,0]\to \dot\Sigma$
with $\gamma_n(0)=q_n$ and with $\gamma_n(s)\in
(\WFb^{1,m}(u)\setminus\WFb^{-1,m+1}(Pu))\cap T^*X^{\circ}$ for $s\in [-\ep_0,0]$.
Once this is done, by compactness of \GBBsp with image in a compact set, see
\cite[Proposition~5.5]{Vasy:Propagation-Wave} and Lebeau's paper
\cite[Proposition~6]{Lebeau:Propagation}, one can extract a uniformly
convergent subsequence, converging to some $\gamma$, giving \eqref{eq:prop-104}.
Now, the $q_n$ arise directly from Proposition~\ref{prop:normal-prop}, by shrinking
$U$ (via shrinking $\delta$ in \eqref{eq:prop-rem-9b}), namely under our assumption
on $q_0$, for each such $U$ there must exist a $q\in \WFb^{1,m}(u)$ in
$U\cap\{\eta<0\}$. The $\gamma_n$ then arise from the theorem of
Duistermaat and H\"ormander, using that $\eta(q_n)<0$ implies that the backward
\GBBsp from $q_n$ cannot meet $Y$ for some time $\ep_0$, uniform in $n$ -- this
is essentially due to $\eta$ being strictly increasing along \GBBsp
microlocally, and $\eta$ vanishing at
$\dot\Sigma\cap\Tb^*_YX$, so as long as $\eta$ is negative, the \GBBsp cannot
hit the boundary.
See \cite[Proof of Theorem~8.1]{Vasy:Propagation-Wave} for more details.

Finally, suppose $q_0\in (\WFb^{1,m}(u)\setminus\WFb^{-1,m+1}(Pu))\cap
\Tb^*_YX\cap\cG$, which is the more technical case.
This part of the argument is present in essentially the same
form in the paper of Melrose and Sj\"ostrand \cite{Melrose-Sjostrand:I};
Lebeau's paper \cite[Proposition~VII.1]{Lebeau:Propagation} gives a very nice
presentation, see \cite[Proof of Theorem~8.1]{Vasy:Propagation-Wave} for an
overview with more details. The rough idea for constructing the \GBBsp
$\gamma$ for \eqref{eq:prop-104}
is to define approximations to it
using Proposition~\ref{prop:tgt-prop}. First, recall
that in Proposition~\ref{prop:tgt-prop}, applied at $q_0$,
$W_0$ is the coordinate projection
(push forward) of $\sH_p$, evaluated at $\hat\pi^{-1}(q_0)$,
to $T^*Y$. Thus, one should think of
the point $\tilde\pi(q_0)-\delta W_0$ in $T^*Y$ as an
$\mathcal{O}(\delta^2)$ approximation
of where a backward \GBBsp should be after `time' (i.e.\ parameter value) $\delta$.
This is used as follows: given $\delta>0$,
Proposition~\ref{prop:tgt-prop}
gives the existence of a point $q_1$ in $\WFb^{1,m}(u)$ which is, roughly speaking,
$\mathcal{O}(\delta^2)$
from $\tilde\pi(q_1)-(\tilde\pi(q_0)-\delta W_0)$, with $x(q_1)$
being $\mathcal{O}(\delta^2)$ as well.
Then, from $q_1$, one can repeat this procedure (replacing $q_0$ by $q_1$ in
Proposition~\ref{prop:tgt-prop}) -- there are
some technical issues corresponding to $q_1$ being in the boundary or not, and also
whether in the former case the backward \GBBsp hits the boundary in time $\delta$.
Taking $\delta=2^{-N}\ep_0$, this gives $2^N+1$ points $q_j$
corresponding to the dyadic
points on the parameter interval $[-\ep_0,0]$. It is helpful to consider this as
analogous to a discrete approximation of solving an ODE without the presence
of the boundary by taking steps of size $2^{-N}\ep_0$. Defining $\gamma_N(s)$
for only these dyadic values, one can then get a subsequence $\gamma_{N_k}$
which converges, as $k\to\infty$,
at $s=2^{-n}j\ep_0$ for all $n\geq 1$ and $0\leq j\leq 2^n$ integers.
(Note that $\gamma_{N_k}(s)$ is defined for these values of $s$ for $k$ sufficiently
large!) One then checks
as in Lebeau's proof that the result is the restriction of a \GBBsp to dyadic parameter
values. Again, we refer to Lebeau's paper \cite[Proposition~VII.1]{Lebeau:Propagation}
and \cite[Proof of Theorem~8.1]{Vasy:Propagation-Wave} for more detail.
\end{proof}

In fact, even if $\im\lambda\neq 0$, we get one-sided statements:

\begin{thm}
\label{thm:prop-sing-im}
Suppose that $P=\Box+\lambda$, $\im\lambda>0$, resp.\ $\im\lambda<0$, and
$m\in\RR$ or $m=\infty$.
Suppose $u\in H^{1,k}_{0,\bl,\loc}(X)$ for some $k\leq 0$.
Then
$$
(\WFb^{1,m}(u)\cap\dot\Sigma)\setminus\WFb^{-1,m+1}(Pu)
$$
is a union of maximally {\em forward extended, resp.\ backward extended}
generalized broken bicharacteristics
of the conformal metric $\hat g$ in
$$
\dot\Sigma\setminus\WFb^{-1,m+1}(Pu).
$$

In particular, if $P u=0$ then $\WFb^{1,\infty}(u)\subset\dot\Sigma$
is a union of maximally extended generalized broken bicharacteristics
of $\hat g$.
\end{thm}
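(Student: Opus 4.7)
The plan is to follow the proof of Theorem~\ref{thm:prop-sing} essentially verbatim, using throughout only the one-sided versions of the hyperbolic and glancing propagation estimates; the single crucial adaptation is that every two-sided extension of a \GBB\ is replaced by a single-sided extension in the direction dictated by the sign of $\im\lambda$.

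First I would reduce, as before, to a local statement near a point $q_0\in(\WFb^{1,m}(u)\cap\dot\Sigma)\setminus\WFb^{-1,m+1}(Pu)$. Microlocal elliptic regularity, Proposition~\ref{prop:elliptic}, is valid for all $\lambda\in\Cx\setminus[(n-1)^2/4,\infty)$, hence in particular for $\im\lambda\neq 0$, so the argument of the previous theorem already forces $q_0\in\dot\Sigma$. For interior points $q_0\in T^*X^\circ\setminus o$ one uses the H\"ormander propagation theorem for operators of real principal type with complex lower order terms (as in \cite[Theorem~26.1.4]{Hor}), which is itself one-sided: if $\im\lambda>0$, singularities propagate along $\sH_p$ in the forward direction, and if $\im\lambda<0$, in the backward direction. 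Thus only the boundary points require the modifications of Sections~\ref{sec:elliptic}--\ref{sec:prop-sing}.

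For a boundary point $q_0\in\Tb^*_YX\cap\cH$, I would apply Proposition~\ref{prop:normal-prop} itself (under the hypothesis $\im\lambda\leq 0$ it propagates regularity forward along \GBB\ and hence singularities backward) in the case $\im\lambda<0$, and its variant from Remark~\ref{rem:normal-remark} (obtained by replacing $\eta(q)<0$ by $\eta(q)>0$, valid under $\im\lambda\geq 0$, and hence propagating regularity backward and singularities forward) in the case $\im\lambda>0$. Analogously for $q_0\in\Tb^*_YX\cap\cG$, I would use Proposition~\ref{prop:tgt-prop} in the one direction selected by Remark~\ref{rem:tgt-rem}. In both the hyperbolic and glancing cases this produces a point $q_1\neq q_0$ lying in $\WFb^{1,m}(u)\setminus\WFb^{-1,m+1}(Pu)$ which is close to $q_0$ along the flow in the ``past'' direction if $\im\lambda>0$ (forcing forward extension of the \GBB) or the ``future'' direction if $\im\lambda<0$ (forcing backward extension).

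Once this one-step information is in hand, the remainder of the argument is the iteration/compactness scheme used in the proof of Theorem~\ref{thm:prop-sing}: at hyperbolic points one constructs $\gamma$ directly using Duistermaat--H\"ormander away from $Y$ and a limit of interior trajectories; at glancing points one uses the dyadic approximation of Melrose--Sj\"ostrand \cite{Melrose-Sjostrand:I} in the form given by Lebeau \cite{Lebeau:Propagation}, extracting a convergent subsequence via compactness of \GBB s with image in a fixed compact set. The only difference from Theorem~\ref{thm:prop-sing} is that, rather than piecing together a forward and a backward segment to obtain a \GBB\ on $[-\ep_0,\ep_0]$, we obtain the one-sided segment $\gamma:[0,\ep_0]\to\dot\Sigma$ (if $\im\lambda>0$) or $\gamma:[-\ep_0,0]\to\dot\Sigma$ (if $\im\lambda<0$), which is precisely the ``maximally forward/backward extended'' statement in the conclusion.

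The step I expect to require the most care is the glancing case: Lebeau's dyadic-approximation construction is inherently unilateral, producing approximate trajectory points one step at a time in a chosen direction, so in principle it transfers without modification; what must be checked is that the one-sided version of Proposition~\ref{prop:tgt-prop} supplies the same ``next-step'' information ($q_{j+1}\in\WFb^{1,m}(u)$ within $\mathcal{O}(\delta^2)$ of $\tilde\pi(q_j)\mp \delta W_0$, with $x(q_{j+1})=\mathcal{O}(\delta^2)$) used in Lebeau's argument, and this is indeed exactly what Remark~\ref{rem:tgt-rem} provides, since the hypothesis $\im\lambda\neq 0$ only suppresses propagation in the opposite direction without degrading the estimate in the retained direction.
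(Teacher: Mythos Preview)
Your approach is essentially the paper's: use the one-directional versions of Propositions~\ref{prop:normal-prop} and~\ref{prop:tgt-prop} (via Remarks~\ref{rem:normal-remark} and~\ref{rem:tgt-rem}) in place of the two-directional ones, then run the Melrose--Sj\"ostrand/Lebeau construction in the single permitted direction. The paper's own proof says exactly this in two sentences.

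Two small corrections. First, your summary sentence about the location of $q_1$ has the directions swapped. For $\im\lambda<0$ the contrapositive of Proposition~\ref{prop:normal-prop} produces a singular $q_1$ in the \emph{backward} region $\eta<0$, and iterating gives a backward-extended \GBB; for $\im\lambda>0$ the variant from Remark~\ref{rem:normal-remark} produces $q_1$ in the \emph{forward} region $\eta>0$, yielding a forward-extended \GBB. (Your earlier sentence ``regularity forward \dots\ singularities backward'' for $\im\lambda<0$ is correct; the later summary about $q_1$ contradicts it.) The paper's proof states this explicitly for $\im\lambda<0$: the new singular point lies along a \emph{backward} \GBB\ segment from $q_0$. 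Second, interior propagation by Duistermaat--H\"ormander for operators of real principal type is two-sided regardless of complex lower-order terms; the one-sidedness in this theorem comes entirely from the boundary commutator estimates. This inaccuracy is harmless for the argument (two-sided is stronger than needed), but the one-sided claim and the citation should be dropped.
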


\begin{proof}
The proof proceeds again as for Theorem~\ref{thm:prop-sing}, but now
Propositions \ref{prop:normal-prop}
and \ref{prop:tgt-prop} only allow propagation in one direction.
Thus, if $\im\lambda<0$,
they allow one to conclude that if a point in $\dot\Sigma\setminus
\WFb^{-1,m+1}(Pu)$
is in $\WFb^{1,m}(u)$, then there is another point
in $\WFb^{1,m}(u)$ which is roughly along a {\em backward} \GBBsp segment
emanating from it. Then an actual backward \GBBsp can be constructed
as in the works of
Melrose and Sj\"ostrand \cite{Melrose-Sjostrand:I}, and
Lebeau \cite{Lebeau:Propagation}.
\end{proof}

In the absence of b-wave front set we can easily read off the actual
expansion at the boundary as well.

\begin{prop}\label{prop:asymp}
Suppose that $P=\Box+\lambda$, $\lambda\in\Cx$.
Let  $s_\pm(\lambda)=\frac{n-1}{2}\pm\sqrt{\frac{(n-1)^2}{4}-\lambda}$.
Suppose $u\in H^1_{0,\loc}(X)$, $\WFb^{1,\infty}(u)=\emptyset$ and
$Pu\in\dCI(X)$.
Then
\begin{equation}\label{eq:asymp-exp}
u=x^{s_+(\lambda)}v_+,\ v_+\in\CI(X).
\end{equation}

Conversely, if $\lambda<(n-1)^2/4$,
given any $g_+\in\CI(Y)$, there exists $v_+\in\CI(X)$,
$v_+|_Y=g_+$ such that $u=x^{s_+(\lambda)}v_+$ satisfies $Pu\in\dCI(X)$;
in particular $u\in H^1_{0,\loc}(X)$ and $\WFb^{1,\infty}(u)=\emptyset$.
\end{prop}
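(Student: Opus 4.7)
The plan is to base both directions on the indicial operator of $P$ at $Y$, computed relative to the density $dg=x^{-n}\,d\hat g$. By Proposition~\ref{prop:Box-form}, near $Y$ one has
$$
P = -(xD_x)^*(xD_x) + \lambda + R,
$$
where $R$ consists of terms carrying an extra factor of $x$ or an extra tangential derivative $xD_{y_j}$, neither of which contributes to the indicial operator. A direct integration by parts against $x^{-n}\,d\hat g$ yields $(xD_x)^* = xD_x + \imath(n-1)$ as the $L^2_0$-adjoint. Hence the indicial operator, acting on functions of $x$ alone, is
$$
I(P) = -(xD_x)^2 - \imath(n-1)(xD_x) + \lambda,
$$
which multiplies $x^s$ by the indicial polynomial $p(s)=s^2-(n-1)s+\lambda$, whose roots are exactly $s_\pm(\lambda)$. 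For $\lambda<(n-1)^2/4$, $s_+>(n-1)/2>s_-$, and $p(s_++k)=k(k+s_+-s_-)\neq 0$ for every integer $k\geq 1$.

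For the forward direction, $\WFb^{1,\infty}(u)=\emptyset$ together with Lemma~\ref{lemma:WFb-infty-independence} and the remark at the end of Section~\ref{sec:Diff0-Psib} implies that $u$ is b-conormal in $H^1_{0,\loc}(X)$, equivalently $u\in\cA^{(n-1)/2}(X)$ locally (using $L^2_0=x^{(n-1)/2}L^2_\bl$). Exploiting $Pu\in\dCI(X)$ and inverting $I(P)$ iteratively in the Mellin variable, one would extract a formal Taylor expansion
$$
u \sim x^{s_+}\sum_{k\geq 0} a_k(y)\,x^k + x^{s_-}\sum_{k\geq 0} b_k(y)\,x^k.
$$
The leading $s_-$-coefficient $b_0$ must vanish because $x^{s_-}\notin L^2_{0,\loc}$ (as $\re s_-<(n-1)/2$), and the indicial recursion $p(s_-+k)b_k=(\text{explicit in }b_0,\ldots,b_{k-1})$ then gives $b_k=0$ inductively whenever $s_-+k$ is not itself an indicial root; at the potentially resonant step $s_-+k=s_+$ the right-hand side already vanishes (since all preceding $b_j=0$), so no $\log x$ term is produced and $b_k$ is merely absorbed into the $s_+$-tower. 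Borel-summing $\sum a_k(y)x^k$ yields $v_+\in\CI(X)$ with $v_+|_Y=a_0$, and the residual $u-x^{s_+}v_+$ lies in $\dCI(X)$, which is re-absorbed into $v_+$ via $x^{-s_+}\dCI(X)=\dCI(X)$.

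For the converse, given $g_+\in\CI(Y)$, set $a_0=g_+$ and recursively define $a_k(y)\in\CI(Y)$ for $k\geq 1$ by solving
$$
p(s_++k)\,a_k(y) = -r_k(y),
$$
where $r_k$ is the explicit data produced by $P$ from $a_0,\ldots,a_{k-1}$ and the Taylor coefficients of the metric at $Y$. Since $p(s_++k)\neq 0$, each $a_k$ is uniquely determined. Borel summation of $\sum a_k(y)x^k$ yields $v_+\in\CI(X)$ extending $g_+$ with the prescribed Taylor series at $Y$, so that $P(x^{s_+}v_+)\in\dCI(X)$ by construction. The remaining assertions are then immediate: $u=x^{s_+}v_+\in H^1_{0,\loc}(X)$ because $s_+>(n-1)/2$ and $v_+\in\CI(X)$, while $\WFb^{1,\infty}(u)=\emptyset$ follows since $\Diffb(X)$ preserves the class $x^{s_+}\CI(X)\subset H^1_{0,\loc}(X)$.

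The main obstacle will be the clean extraction of the polyhomogeneous expansion in the forward direction, in particular excluding the $s_-$-tower and ruling out potential $\log x$ factors. The Mellin transform of a cutoff of $u$ near $Y$ is meromorphic in $\re s>(n-1)/2-\epsilon$ with poles only at indicial roots of $P$ lying there; at $s=s_+$ the residue recovers $a_0$, and the sensitive case $s_+-s_-\in\ZZ_{>0}$ (where a double pole could in principle produce a $\log x$) is tamed because $b_0=0$ propagates through the indicial recursion to force $b_k=0$ at all preceding levels, so no secondary $s_+$-contribution with $\log x$ appears. All other steps — the indicial polynomial computation, the invertibility of $p$ at $s_++k$, and the Borel summation — are routine b-calculus.
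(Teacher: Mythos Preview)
Your proposal is correct and follows essentially the same route as the paper. The paper's proof computes the same indicial operator (its equation \eqref{eq:P-b-normal}, which is exactly your $-(xD_x)^2 - \imath(n-1)(xD_x) + \lambda$), invokes Lemma~\ref{lemma:WFb-infty-independence} to get $u\in\cA^{(n-1)/2}(X)$, and then defers the iterative asymptotic-expansion argument to \cite[Lemma~4.13]{Vasy:De-Sitter}, noting that the weight $(n-1)/2$ rules out the $s_-$ indicial root; you have simply unpacked that referenced argument (Mellin inversion, exclusion of the $s_-$-tower, handling of the integer-gap case, Borel summation) explicitly.
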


This proposition reiterates the importance of the constraint on $\lambda$
in that
$$
x^{(n-1)/2+i\alpha}\notin H^1_{0,\loc}(X)
$$
for $\alpha\in\RR$;
for $\lambda\geq (n-1)^2/4$, the growth or decay relative to $H^1_{0,\loc}(X)$
does not distinguish between the two approximate solutions
$x^{s_\pm(\lambda)}v_\pm$, $v_\pm\in\CI(X)$.

\begin{proof}
For the first part of the lemma,
by Lemma~\ref{lemma:WFb-infty-independence} and the subsequent remark,
under our assumptions we have $u\in \cA^{(n-1)/2}(X)$.
By \eqref{eq:Box-form},
\begin{equation}\label{eq:P-b-normal}
P+\big(((xD_x+\imath(n-1))(xD_x)-\lambda\big)\in x\Diffb^2(X).
\end{equation}
This is, up to a change in overall the sign of the second summand,
$$
(xD_x+\imath(n-1))(xD_x)-\lambda,
$$
the same as the analogous expression
in the de Sitter setting, see the first line of the proof
of Lemma~4.13 of \cite{Vasy:De-Sitter}. Thus, the proof
of that lemma goes through without changes -- the reader needs to keep in mind
that $u\in\cA^{(n-1)/2}(X)$ excludes one of the indicial roots from
appearing in the argument of that lemma. (In the De Sitter setting,
in Lemma~4.13 of \cite{Vasy:De-Sitter}, there
was no a priori weight (relative to which one has conormality) specified.)

The converse again works as in Lemma~4.13 of \cite{Vasy:De-Sitter} using
\eqref{eq:P-b-normal}.
\end{proof}

We can now state the `inhomogeneous Dirichlet problem':

\begin{thm}\label{thm:inhomog-Dirichlet}
Assume (TF) and (PT).
Suppose $\lambda<(n-1)^2/4$, and
$s_+(\lambda)-s_-(\lambda)=2\sqrt{\frac{(n-1)^2}{4}-\lambda}$
is not an integer, $P=P(\lambda)=\Box_g+\lambda$.

Given $v_0\in\CI(Y)$ and $f\in\dCI(X)$, both supported in $\{t\geq t_0\}$,
the problem
$$
Pu=f,\ u|_{t<t_0}=0,\ u=x^{s_-(\lambda)}v_-
+x^{s_+(\lambda)}v_+,\ v_\pm\in\CI(X),\ v_-|_Y=v_0,
$$
has a unique solution

If $s_+(\lambda)-s_-(\lambda)$
is an integer, the same conclusion holds if we replace
$v_-\in\CI(X)$ by $v_-=\CI(X)
+x^{s_+(\lambda)-s_-(\lambda)}\log x\,\CI(X)$.
\end{thm}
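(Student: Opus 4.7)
The plan is to split the solution as $u = x^{s_-(\lambda)} v_- + u_+$, where $v_- \in \CI(X)$ is a formal asymptotic solution carrying the boundary datum $v_0$ and $u_+ \in H^1_{0,\loc}(X)$ absorbs a residual source that lies in $\dCI(X)$. Using Proposition~\ref{prop:Box-form} and \eqref{eq:P-b-normal}, the indicial polynomial of $P$ at $Y$ is $p(s) = -(s-s_-(\lambda))(s-s_+(\lambda))$. In the non-integer case, $p(s_-(\lambda)+j) \neq 0$ for every positive integer $j$, so I would construct $v_-$ by solving the transport hierarchy $p(s_-(\lambda)+j)\,a_j = R_j[a_0,\dots,a_{j-1}]$ order by order starting from $a_0 = v_0$, where $R_j$ is produced by the $x\Diffb^2(X)$ part of $P$ acting on the lower-order contributions. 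Borel summation (with supports in $\{t\geq t_0\}$ preserved by choosing each $a_j$ supported there) followed by a cutoff in $x$ then yields $v_- \in \CI(X)$ supported in $\{t\geq t_0\}$ near $Y$, with $v_-|_Y = v_0$ and $P(x^{s_-(\lambda)} v_-) \in \dCI(X)$.

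I would then set $f_1 = f - P(x^{s_-(\lambda)} v_-) \in \dCI(X)$, supported in $\{t\geq t_0\}$, and apply Theorem~\ref{thm:well-posed} to produce the unique forward solution $u_+ \in H^1_{0,\loc}(X)$ of $P u_+ = f_1$, $\supp u_+ \subset \{t\geq t_0\}$. Since $\WFb^{-1,\infty}(f_1) = \emptyset$, Theorem~\ref{thm:prop-sing} places every point of $\WFb^{1,\infty}(u_+)$ on a maximally extended \GBBsp in $\dot\Sigma$; assumption (TF) forces such a bicharacteristic to meet $\{t<t_0\}$, where $u_+$ vanishes, so $\WFb^{1,\infty}(u_+) = \emptyset$. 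Proposition~\ref{prop:asymp} then gives $u_+ = x^{s_+(\lambda)} v_+$ with $v_+ \in \CI(X)$, and the decomposition $u = x^{s_-(\lambda)} v_- + x^{s_+(\lambda)} v_+$ is the desired solution.

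For uniqueness in the non-integer case, I would take the difference $w = x^{s_-(\lambda)} V_- + x^{s_+(\lambda)} V_+$ of two such solutions (so $V_-|_Y = 0$, $Pw = 0$, $w|_{t<t_0} = 0$) and examine the formal Taylor expansion $V_- \sim \sum_{j\geq 1} x^j a_j$ at $Y$. Matching the $x^{s_-(\lambda)+j}$ coefficient of $Pw = 0$ and using that $p(s_-(\lambda)+j) \neq 0$ for every $j\geq 1$ (by the non-integer hypothesis) forces $a_j = 0$ inductively, so $V_-$ is flat at $Y$. Hence $x^{s_-(\lambda)} V_-$ vanishes to infinite order at $Y$ and lies in $H^1_{0,\loc}(X)$, while $x^{s_+(\lambda)} V_+ \in H^1_{0,\loc}(X)$ because $s_+(\lambda) > (n-1)/2$; so $w \in H^1_{0,\loc}(X)$, and Corollary~\ref{cor:unique} gives $w = 0$.

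The integer case $s_+(\lambda) - s_-(\lambda) = N_0$ proceeds in the same way, except that the obstruction at step $N_0$ of the transport hierarchy, where $p(s_+(\lambda)) = 0$, is resolved by enlarging the Ansatz by terms of the form $x^{N_0} \log x \cdot \CI(X)$: the identity $P(x^{s_+(\lambda)} \log x) = p'(s_+(\lambda))\, x^{s_+(\lambda)} + O(x^{s_+(\lambda)+1}(1+|\log x|))$, with $p'(s_+(\lambda)) = -(s_+(\lambda)-s_-(\lambda)) \neq 0$ for $\lambda < (n-1)^2/4$, lets one absorb the residue by choosing the coefficient of $\log x$ appropriately, and the same formal argument with $\log x$ tracked yields uniqueness. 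The main technical obstacle is this bookkeeping in the integer case; otherwise the argument is a relatively direct combination of Borel summation, the forward well-posedness result Theorem~\ref{thm:well-posed}, the propagation theorem Theorem~\ref{thm:prop-sing}, and the asymptotic expansion Proposition~\ref{prop:asymp}.
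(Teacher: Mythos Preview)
Your approach is essentially the same as the paper's: construct the formal $x^{s_-(\lambda)}v_-$ piece by indicial analysis and Borel summation, solve away the $\dCI(X)$ remainder using Theorem~\ref{thm:well-posed}, and invoke Theorem~\ref{thm:prop-sing} together with Proposition~\ref{prop:asymp} to obtain the $x^{s_+(\lambda)}v_+$ form. The paper outsources the first step to Lemma~4.13 of \cite{Vasy:De-Sitter} and states uniqueness in a single sentence via Theorem~\ref{thm:well-posed}; your write-up is more explicit on both counts, and in particular your uniqueness argument (showing $V_-$ must be flat at $Y$ before appealing to $H^1_0$-uniqueness) spells out a step the paper leaves to the reader, since $x^{s_-(\lambda)}V_-$ with merely $V_-|_Y=0$ need not lie in $H^1_{0,\loc}(X)$ for general $\lambda$.
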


\begin{proof}
The proof of Lemma~4.13 of \cite{Vasy:De-Sitter} shows that there
exists $\tilde u$, supported in $t\geq t_0$, such that
$\tilde u=x^{s_-(\lambda)}v_-$, $v_-$ as in the statement of the theorem,
and $P\tilde u\in\dCI(X)$. Now let $u'$ be the solution of
$Pu'=f-P\tilde u$ supported in $\{t\geq t_0\}$, whose
existence follows from Theorem~\ref{thm:well-posed}, and which is
of the form $x^{s_+(\lambda)}v_+$ by Theorem~\ref{thm:prop-sing}
and Proposition~\ref{prop:asymp}.
Then $u=\tilde u+u'$
solves the PDE as stated. Uniqueness follows from the basic well-posedness
theorem, Theorem~\ref{thm:well-posed}.
\end{proof}

Finally we add well-posedness of possibly rough initial data:

\begin{thm}\label{thm:well-posed-precise}
Assume (TF) and (PT).
Suppose $f\in H^{-1,m+1}_{0,\bl,\loc}(X)$ for some $m\in\RR$, and
let $m'\leq m$. Then
\eqref{eq:mixed-problem} has a unique solution in $H^{1,m'}_{0,\bl,\loc}(X)$,
which in fact lies in $H^{1,m}_{0,\bl,\loc}(X)$,
and for all compact $K\subset X$
there exists a compact $K'\subset X$ and a constant
$C>0$ such that
$$
\|u\|_{H^{1,m}_0(K)}\leq C\|f\|_{H^{-1,m+1}_{0,\bl}(K')}.
$$
\end{thm}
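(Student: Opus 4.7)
The plan is to get a solution at some low b-regularity by a duality argument, then upgrade the regularity using propagation of singularities, and obtain uniqueness by the same upgrade plus Corollary~\ref{cor:unique}.

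First I would establish weak existence at a b-regularity compatible with the data. If $m+1\ge 0$ then $f\in H^{-1,m+1}_{0,\bl,\loc}(X)\subset H^{-1}_{0,\loc}(X)$, so Proposition~\ref{prop:weak-exist} immediately yields $u\in H^{1,-1}_{0,\bl,\loc}(X)$ with $Pu=f$ and the correct support. If $m+1<0$, I would repeat the Hahn--Banach argument of Lemma~\ref{lemma:weak-local-exist} and Proposition~\ref{prop:weak-exist} but with the test space $\dCI_\compl(X)^\bullet_{[t_0,T_1]}$ equipped with the $H^{-1,m+1}_{0,\bl}(X)|_{[t_0,T_1]}$ norm on the image of $P^*$. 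The key input is the backward energy estimate \eqref{eq:Pphi-est}, whose $\Diffb$-commuted version controls $\|\phi\|_{H^{1,-m-1}_{0,\bl}}$ by $\|P^*\phi\|_{H^{-1,-m}_{0,\bl}}$ (obtained by taking adjoints and using that $\Diffb$ is preserved under adjoints modulo lower-order terms), so the conjugate linear functional $\psi\mapsto \langle f,(P^*)^{-1}\psi\rangle$ is continuous in the $H^{-1,m+1}_{0,\bl}$ norm, giving $u\in H^{1,m'_0}_{0,\bl,\loc}(X)$ for some $m'_0$ (which may be very negative). This is the step I expect to be the most technically irritating, since one must track the adjoint pairing carefully on spaces of negative b-regularity.

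Next I would upgrade this weak solution to $H^{1,m}_{0,\bl,\loc}(X)$ using propagation of singularities. By Proposition~\ref{prop:elliptic},
$$
\WFb^{1,m}(u)\setminus\dot\Sigma\subset\WFb^{-1,m}(Pu)=\emptyset.
$$
By Theorem~\ref{thm:prop-sing}, $\WFb^{1,m}(u)\cap\dot\Sigma$ is a union of maximally extended \GBB\ in $\dot\Sigma\setminus\WFb^{-1,m+1}(Pu)=\dot\Sigma$; since $u$ vanishes in $\{t<t_0\}$ and, by assumption (TF), every maximally extended \GBB\ has $t\circ\rho\circ\gamma$ surjective onto $\RR$, each such \GBB\ enters $\{t<t_0\}$ where $\WFb^{1,m}(u)$ is empty. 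Hence $\WFb^{1,m}(u)=\emptyset$, and Corollary~\ref{cor:WF-to-H1} gives $u\in H^{1,m}_{0,\bl,\loc}(X)$.

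For uniqueness in $H^{1,m'}_{0,\bl,\loc}(X)$, suppose $u\in H^{1,m'}_{0,\bl,\loc}(X)$ solves $Pu=0$ with $\supp u\subset\{t\ge t_0\}$. Applying Proposition~\ref{prop:elliptic} and Theorem~\ref{thm:prop-sing} with $m=\infty$ (using the same \GBB\ argument, since $\WFb^{-1,\infty}(Pu)=\emptyset$) yields $u\in H^{1,\infty}_{0,\bl,\loc}(X)\subset H^1_{0,\loc}(X)$, so Corollary~\ref{cor:unique} forces $u=0$. Combined with the previous paragraph this gives existence and uniqueness of a solution in $H^{1,m}_{0,\bl,\loc}(X)$, which is then the unique $H^{1,m'}_{0,\bl,\loc}(X)$-solution for any $m'\le m$.

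Finally, the quantitative estimate
$\|u\|_{H^{1,m}_0(K)}\le C\|f\|_{H^{-1,m+1}_{0,\bl}(K')}$ follows either from the closed graph theorem applied to the now well-defined linear solution map $f\mapsto u$ on the relevant (locally) Fr\'echet spaces, or, more concretely, by tracking constants in the quantitative versions of the propagation estimates (Propositions~\ref{prop:normal-prop} and~\ref{prop:tgt-prop}) and the basic energy estimate of Theorem~\ref{thm:well-posed}; in either case the choice of $K'$ is dictated by the union of \GBB\ through $K$ together with the time strip back to $t_0$, so $K'$ depends only on $K$ and $t_0$ as in Theorem~\ref{thm:well-posed}.
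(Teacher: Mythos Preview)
Your overall strategy---low-regularity existence, upgrade via propagation of singularities, uniqueness via the same upgrade---is correct and matches the paper. For $m\ge -1$ your argument is essentially identical to the paper's, and the uniqueness paragraph is fine.

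The genuine difference is in how you handle existence when $m+1<0$. You propose to rerun the Hahn--Banach argument using a ``$\Diffb$-commuted version'' of \eqref{eq:Pphi-est} that would control $\|\phi\|_{H^{1,-m-1}_{0,\bl}}$ by $\|P^*\phi\|_{H^{-1,-m}_{0,\bl}}$. This is plausible (the paper itself remarks after the theorem that more elementary energy estimates would suffice), but it is not established anywhere in the paper, and your sentence ``obtained by taking adjoints and using that $\Diffb$ is preserved under adjoints modulo lower-order terms'' is not a proof: you would need an inductive argument commuting $Q\in\Diffb^{-m-1}(X)$ through $P^*$, using Lemma~\ref{lemma:Diffb-commutant} to show $[P^*,Q]\in\Diffz^2\Diffb^{-m-2}(X)$, and absorbing the commutator error at each step. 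That is doable but it is real work you have skipped.

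The paper avoids this entirely. For $m<0$ it takes $f_j=A_{r_j}f$ with $A_r\in\Psibc^{-\infty}(X)$ bounded in $\Psibc^0(X)$ and $A_r\to\Id$, so $f_j\in H^{-1,1}_{0,\bl,\loc}(X)$ and $f_j\to f$ in $H^{-1,m+1}_{0,\bl,\loc}(X)$. Theorem~\ref{thm:well-posed} then gives solutions $u_j\in H^1_{0,\loc}(X)$, and the \emph{quantitative} propagation of singularities (applied to $u_k-u_j$ with right-hand side $f_k-f_j$) yields a uniform Cauchy estimate in $H^{1,m}_{0,\bl}(K)$; the limit $u$ is the desired solution. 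This route uses only results already proved and delivers the stability estimate directly, whereas your closed-graph suggestion is less concrete and your ``tracking constants'' alternative is closer to what the paper actually does. The trade-off: your duality approach, if completed, would be more self-contained at the energy-estimate level; the paper's approximation argument is shorter given the tools already in hand.
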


\begin{rem}
It should be emphasized that if one only wants to prove this result,
without microlocal propagation, one could use more elementary
energy estimates.
\end{rem}

\begin{proof}
If $m\geq 0$, then by Theorem~\ref{thm:well-posed},
\eqref{eq:mixed-problem} has a unique solution in $H^{1}_{0,\loc}(X)$,
and by propagation of singularities it lies in $H^{1,m}_{0,\bl,\loc}(X)$,
with the desired estimate. Moreover, again by the propagation
of singularities, any solution of
\eqref{eq:mixed-problem} in $H^{1,m'}_{0,\bl,\loc}(X)$ lies
in $H^{1,m}_{0,\bl,\loc}(X)$, so the solution is indeed unique
even in $H^{1,m'}_{0,\bl,\loc}(X)$.

If $m<0$, uniqueness and the stability estimate follow as above. To see
existence, let $T_0<t_0$, and
let $f_j\to f$ such that $f_j\in H^{-1,1}_{0,\bl,\loc}$ and
$\supp f_j\subset\{t>T_0\}$. This can be achieved by taking
$A_r\in\Psibc^{-\infty}(X)$ with properly supported Schwartz kernel
(of sufficiently small support)
such that $\{A_r:\ r\in(0,1]\}$ is a bounded family in $\Psibc^0(X)$,
converging to $\Id$ in $\Psibc^{\ep}(X)$ for $\ep>0$, then with
$f_j=A_{r_j}f$, $r_j\to 0$, we have the desired properties.
By Theorem~\ref{thm:well-posed}, \eqref{eq:mixed-problem} with
$f$ replaced by $f_j$ has a unique solution $u_j\in H^1_{0,\loc}(X)$.
Moreover, by the propagation of singularities, one has a uniform estimate
$$
\|u_k-u_j\|_{H^{1,m}_0(K)}\leq C\|f_k-f_j\|_{H^{-1,m+1}_{0,\bl}(K')},
$$
with $C$ independent of $j,k$. In view of the convergence of the $f_j$
in $H^{-1,m+1}_{0,\bl}(K')$, we deduce the convergence of the
$u_j$ in $H^{1,m}_{0,\bl}(K)$ to some $u\in H^{1,m}_{0,\bl}(K)$,
hence (by uniqueness)
we deduce the existence of $u\in H^{1,m}_{0,\bl,\loc}(X)$ solving
$Pu=f$ with support in $\{t\geq T_0\}$. However, as $\supp f\subset
\{t\geq t_0\}$, uniqueness shows the vanishing of $u$ on $\{t<t_0\}$,
proving the theorem.
\end{proof}

\def\cprime{$'$} \def\cprime{$'$}

\end{document}